\newtheorem{thm}{Theorem}[section]
\newtheorem{prop}[thm]{Proposition}
\newtheorem{cor}[thm]{Corollary}
\newtheorem{lem}[thm]{Lemma}
\newtheorem{defn}[thm]{Definition}
\newtheorem{remark}[thm]{Remark}
\theoremstyle{definition}
\newcommand{\comment}[1]{}
\numberwithin{equation}{section}
\def\lsim{\raisebox{-1ex}{$~\stackrel{\textstyle <}{\sim}~$}}
\theoremstyle{definition}
\begin{document}
\title[Dual of $\mathcal{H}_{\mathrm{loc}}^{(q,p)}$ and Pseudo-differential operators]{Duals of Hardy-amalgam spaces $\mathcal{H}_{\mathrm{loc}}^{(q,p)}$ and Pseudo-differential operators}
\author[Z.V.P. Abl\'e]{Zobo Vincent de Paul Abl\'e}
\address{Laboratoire de Math\'ematiques Fondamentales, UFR Math\'ematiques et Informatique, Universit\'e F\'elix Houphou\"et-Boigny Abidjan-Cocody, 22 B.P 582 Abidjan 22. C\^ote d'Ivoire}
\email{{\tt vincentdepaulzobo@yahoo.fr}}
\author[J. Feuto]{Justin Feuto}
\address{Laboratoire de Math\'ematiques Fondamentales, UFR Math\'ematiques et Informatique, Universit\'e F\'elix Houphou\"et-Boigny Abidjan-Cocody, 22 B.P 1194 Abidjan 22. C\^ote d'Ivoire}
\email{{\tt justfeuto@yahoo.fr}}

\subjclass{42B30, 46E30, 42B35, 47G30}
\keywords{Amalgam spaces, Hardy-Amalgam spaces, Atomic decomposition, Molecular decomposition, Duality, Pseudo-differential operators.}

\date{}

\begin{abstract}
In this paper, we carry on with the study of the Hardy-Amalgam spaces $\mathcal{H}_{\mathrm{loc}}^{(q,p)}$ spaces introduced in \cite{AbFt}. We investigate their dual spaces and establish some results of boundedness of pseudo-differential operators in these spaces.  
\end{abstract}

\maketitle

\section{Introduction}
The celebrated paper of C. Fefferman and E. Stein \cite{FS} has been crucial in recent developments of the real variable theory of Hardy spaces. Given a function $\varphi\in\mathcal C^\infty(\mathbb R^d)$ with support on $B(0,1)$ such that $\int_{\mathbb R^d}\varphi dx=1$, where $B(0,1)$ is the unit open ball centered at $0$ and $\mathcal C^\infty(\mathbb R^d)$ denotes the space of infinitely differentiable complex values functions on $\mathbb R^d$, and 
$t>0$, we denote by $\varphi_t$ the dilated function $\varphi_t(x)=t^{-d}\varphi(x/t)$, $x\in\mathbb R^d$. The Hardy space $\mathcal H^q:=\mathcal H^q(\mathbb R^d)$ is defined as the space of tempered distributions $f$ such that the maximal function
\begin{equation}
\mathcal M_{\varphi}(f):=\sup_{t>0}|f\ast\varphi_t| \label{maximal}
\end{equation}
is in $L^q(\mathbb R^d)$, while its local version $\mathcal{H}_{\mathrm{loc}}^q:=\mathcal{H}_{\mathrm{loc}}^q(\mathbb R^d)$ introduced by D. Goldberg in \cite{DGG} is that for which ${\mathcal{M}_{\mathrm{loc}}}_{_{\varphi}}(f)\in L^q(\mathbb R^d)$, where ${\mathcal{M}_{\mathrm{loc}}}_{_{\varphi}}(f)$ is defined as in (\ref{maximal}) with $0<t\leq 1$. These spaces have been the subject of several studies and generalizations (see \cite{AT}, \cite{MBOW}, \cite{MBBLDYYZ}, \cite{DCUSLW}, \cite{Gc}, \cite{JHWH}, \cite{Ky}, \cite{RL}, \cite{NEYS}, \cite{NEYS1}, \cite{SAY}, \cite{LT}, \cite{YDYS} and \cite{YDYSB}). Also, following the maximal function approach and using Amalgams, we introduced in \cite{AbFt} the spaces $\mathcal{H}^{(q,p)}$ and $\mathcal{H}_{\mathrm{loc}}^{(q,p)}$ which generalize $\mathcal H^q$ and $\mathcal{H}_{\mathrm{loc}}^q$ spaces.

For recall, Amalgams arise naturally in harmonic analysis and were introduced by N. Wiener \cite{NW} in 1926. For a systematic study of these spaces and their role in Fourier analysis, we refer to \cite{BDD}, \cite{FSTW} and \cite{FH}. For $0<q,p\leq+\infty$, the amalgam of $L^{q} := L^{q}(\mathbb R^{d})$ and $L^{p} := L^{p}(\mathbb R^{d})$ is the space $(L^{q},\ell^{p}) :=(L^{q},\ell^{p})(\mathbb R^{d})$ of measurable functions $f :\mathbb R^{d}\rightarrow\mathbb C$ which are locally in $L^{q}$ and such that the sequence $\left\{\left\|f\chi_{Q_{k}}\right\|_{q}\right\}_{k\in\mathbb Z^{d}}$ belongs to $\ell^{p}:=\ell^{p}(\mathbb Z^{d})$, where $Q_k:=k+\left[0,1\right)^d=\prod_{i=1}^d[k_i,k_i+1)$, $\chi_{Q_{k}}$ denotes the characteristic function of $Q_{k}$ and $\left\|f\right\|_{q}:=\left(\int_{\mathbb R^{d}}\left|f(x)\right|^{q}dx\right)^{\frac{1}{q}}$, with the usual modification when $q=+\infty$. In \cite{AbFt}, we defined the Hardy-amalgam spaces $\mathcal{H}^{(q,p)}$ and $\mathcal{H}_{\mathrm{loc}}^{(q,p)}$, for $0<q,p<+\infty$, by taking the Wiener-amalgam "norm" in the definition of Hardy space $\mathcal{H}^q$ and local Hardy space $\mathcal{H}_{\mathrm{loc}}^q$ instead of the Lebesgue "norm". Next, in \cite{AbFt2}, we characterized the dual space of $\mathcal{H}^{(q,p)}$, whenever $0<q\leq p\leq 1$, and obtained some results of boundedness of some classical linear operators such as Calder\'on-Zygmund, convolution and Riesz potential operators, when $0<q\leq 1$ and $q\leq p<+\infty$.

The aim of this paper is to extend some well known results for $\mathcal{H}_{\mathrm{loc}}^q$ spaces to Hardy-amalgam spaces $\mathcal{H}_{\mathrm{loc}}^{(q,p)}$, when $0<q\leq 1$ and $q\leq p<+\infty$, as we did in \cite{AbFt2} for $\mathcal{H}^{(q,p)}$. This article is organized as follows. 

In Section 2, we recall some properties of Wiener amalgam spaces $(L^q,\ell^p)$ and Hardy-amalgam spaces $\mathcal{H}^{(q,p)}$ and $\mathcal{H}_{\mathrm{loc}}^{(q,p)}$ obtained in \cite{AbFt} we will need. In Section 3, we establish the atomic and molecular decompositions of $\mathcal{H}_{\mathrm{loc}}^{(q,p)}$ spaces, when $0<q\leq 1$ and $q\leq p<+\infty$, which correspond to the local versions of those obtained in \cite{AbFt} for $\mathcal{H}^{(q,p)}$.  

Next, in Section 4, with some results of Sections 2 and 3, we characterize the dual space of $\mathcal{H}_{\mathrm{loc}}^{(q,p)}$, whenever $0<q\leq p\leq 1$. Lastly, in Section 5, as applications of the results of Sections 3 and 4, we study the boundedness of pseudo-differential operators in $\mathcal{H}_{\mathrm{loc}}^{(q,p)}$ spaces, when $0<q\leq 1$ and $q\leq p<+\infty$. 

Throughout the paper, we let $\mathbb{N}=\left\{1,2,\ldots\right\}$ and $\mathbb{Z}_{+}=\left\{0,1,2,\ldots\right\}$. We use $\mathcal S := \mathcal S(\mathbb R^{d})$ will denote the Schwartz class of rapidly decreasing smooth functions endowed with the topology defined by the family of norms $\left\{\mathcal{N}_{m}\right\}_{m\in\mathbb{Z}_{+}}$, where for all $m\in\mathbb{Z}_{+}$ and $\psi\in\mathcal{S}$, $$\mathcal{N}_{m}(\psi):=\underset{x\in\mathbb R^{d}}\sup(1 + |x|)^{m}\underset{|\beta|\leq m}\sum|{\partial}^\beta \psi(x)|$$ with $|\beta|=\beta_1+\ldots+\beta_d$, ${\partial}^\beta=\left(\partial/{\partial x_1}\right)^{\beta_1}\ldots\left(\partial/{\partial x_d}\right)^{\beta_d}$ for all $\beta=(\beta_1,\ldots,\beta_d)\in\mathbb{Z}_{+}^d$ and $|x|:=(x_1^2+\ldots+x_d^2)^{1/2}$. The dual space of $\mathcal S$ is the space of tempered distributions denoted by $\mathcal S':= \mathcal S'(\mathbb R^{d})$ equipped with the weak-$\ast$ topology. If $f\in\mathcal{S'}$ and $\theta\in\mathcal{S}$, we denote the evaluation of $f$ on $\theta$ by $\left\langle f,\theta\right\rangle$. The letter $C$ will be used for non-negative constants independent of the relevant variables that may change from one occurrence to another. When a constant depends on some important parameters $\alpha,\gamma,\ldots$, we denote it by $C(\alpha,\gamma,\ldots)$. Constants with subscript, such as $C_{\alpha,\gamma,\ldots}$, do not change in different occurrences but depend on the parameters mentioned in it. We propose the following abbreviation $\mathrm{\bf A}\lsim \mathrm{\bf B}$ for the inequalities $\mathrm{\bf A}\leq C\mathrm{\bf B}$, where $C$ is a positive constant independent of the main parameters. If $\mathrm{\bf A}\lsim \mathrm{\bf B}$ and $\mathrm{\bf B}\lsim \mathrm{\bf A}$, then we write $\mathrm{\bf A}\approx \mathrm{\bf B}$. For any given (quasi-) normed spaces $\mathcal{A}$ and $\mathcal{B}$ with the corresponding (quasi-) norms $\left\|\cdot\right\|_{\mathcal{A}}$ and $\left\|\cdot\right\|_{\mathcal{B}}$, the symbol $\mathcal{A}\hookrightarrow\mathcal{B}$ means that for all $f\in\mathcal{A}$, then $f\in\mathcal{B}$ and $\left\|f\right\|_{\mathcal{B}}\lsim\left\|f\right\|_{\mathcal{A}}$.

For $\lambda>0$ and a cube $Q\subset\mathbb R^{d}$ (by a cube we mean a cube whose edges are parallel to the coordinate axes), we write $\lambda Q$ for the cube with same center as $Q$ and side-length $\lambda$ times side-length of $Q$, while $\left\lfloor \lambda \right\rfloor$ stands for the greatest integer less or equal to $\lambda$. Also, for $x\in\mathbb R^{d}$ and $\ell>0$, $Q(x,\ell)$ will denote the cube centered at $x$ and side-length $\ell$. We use the same notations for balls. We denote by  $E^{c}$ the set $\mathbb R^d\backslash{E}$ and $\mathrm{diam}(E):=\sup_{x,y\in E}|x-y|$. For a measurable set $E\subset\mathbb R^d$, we denote by $\chi_{_{E}}$ the characteristic function of $E$ and $\left|E\right|$ for the Lebesgue measure. Also, for any set $A$, we denote by $\# A:=\mathrm{card}(A)$ its cardinality.
 
Throughout this paper, without loss of generality and unless otherwise specified, we assume that cubes are closed and denote by $\mathcal{Q}$ the set of all cubes.

\section{Recalls on Wiener amalgam spaces $(L^q,\ell^p)$ and Hardy-amalgam spaces $\mathcal{H}^{(q,p)}$ and $\mathcal{H}_{\mathrm{loc}}^{(q,p)}$} 

\subsection{Wiener amalgam spaces $(L^q,\ell^p)$} 

For $f\in(L^q,\ell^p)$, we set $$\left\|f\right\|_{q,p}:=\left\|\left\{\left\|f\chi_{_{Q_k}}\right\|_{q}\right\}_{k\in\mathbb{Z}^d}\right\|_{\ell^{p}}.$$ Endowed with the (quasi)-norm $\left\|\cdot\right\|_{q,p}$, the amalgam space $(L^q,\ell^p)$ is a complete space. It is well known that it is a Banach space if $1\leq q, p\leq+\infty$ and its dual is $(L^{q'},\ell^{p'})$ for $1\leq q, p<+\infty$, where $\frac{1}{q}+\frac{1}{q'}=1$ and $\frac{1}{p}+\frac{1}{p'}=1$ with the usual conventions (see \cite{BDD}, \cite{RBHS}, \cite{FSTW}, \cite{FH} and \cite{JSTW}). The following result is also well known (see \cite{BDD}, \cite{RBHS}, \cite{FSTW}, \cite{FH} and \cite{JSTW}).

\begin{prop}\label{ratotute} Let $0<q,q_1,p,p_1\leq +\infty$. We have
\begin{enumerate}
\item $(L^q,\ell^q)=L^q$ with $\left\|\cdot\right\|_{q,q}=\left\|\cdot\right\|_{q}$.
\item $\left\|f\right\|_{q,p_1}\leq\left\|f \right\|_{q,p}$ , if $p\leq p_1$ and $f\in (L^q,\ell^p)$.
\item $\left\|f\right\|_{q,p}\leq\left\|f \right\|_{q_1,p}$ , if $q\leq q_1$ and $f\in (L^{q_1},\ell^p)$.
\end{enumerate}
\end{prop}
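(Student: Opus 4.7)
The plan is to treat the three parts separately, each reducing to a standard fact about $L^q$ or $\ell^p$ norms applied to the tiling of $\mathbb{R}^d$ by the unit cubes $\{Q_k\}_{k\in\mathbb{Z}^d}$.

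For part (1), the key observation is that the family $\{Q_k\}_{k\in\mathbb{Z}^d}$ is a partition of $\mathbb{R}^d$ up to a set of measure zero, so $\sum_{k\in\mathbb{Z}^d}|f\chi_{Q_k}|^q=|f|^q$ a.e. Taking integrals and using $\|\cdot\|_{q,q}^q=\sum_k\|f\chi_{Q_k}\|_q^q=\sum_k\int_{Q_k}|f|^q\,dx=\int_{\mathbb{R}^d}|f|^q\,dx=\|f\|_q^q$ yields the equality of norms (for $q=+\infty$ one takes the essential supremum instead). This is the only part where I need to invoke the specific geometry of the $Q_k$.

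For part (2), I would apply the elementary embedding $\ell^p\hookrightarrow\ell^{p_1}$ for $p\leq p_1$ (with norm constant $1$) to the nonnegative sequence $a_k:=\|f\chi_{Q_k}\|_q$. This gives $\|f\|_{q,p_1}=\|\{a_k\}\|_{\ell^{p_1}}\leq\|\{a_k\}\|_{\ell^p}=\|f\|_{q,p}$ directly. The endpoint $p_1=+\infty$ reduces to $\sup_k a_k\leq(\sum_k a_k^p)^{1/p}$.

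For part (3), I would use that each cube $Q_k$ has Lebesgue measure one and apply H\"older's inequality (or Jensen's inequality for the probability measure on $Q_k$) with exponent $q_1/q\geq1$ to get $\|f\chi_{Q_k}\|_q\leq|Q_k|^{\frac{1}{q}-\frac{1}{q_1}}\|f\chi_{Q_k}\|_{q_1}=\|f\chi_{Q_k}\|_{q_1}$ for each $k$. Taking the $\ell^p$ norm of both sides (monotonicity of $\ell^p$ under pointwise domination of nonnegative sequences) yields $\|f\|_{q,p}\leq\|f\|_{q_1,p}$. The case $q_1=+\infty$ follows from $\|f\chi_{Q_k}\|_q\leq|Q_k|^{1/q}\|f\chi_{Q_k}\|_\infty=\|f\chi_{Q_k}\|_\infty$.

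There is no real obstacle here: each assertion is a one-line consequence of a classical fact (partition of unity by $\{Q_k\}$, nesting of $\ell^p$, or H\"older on the unit cube), and the proof is essentially bookkeeping. I would only need to be careful to state the endpoint cases $p=+\infty$ or $q=+\infty$ explicitly so that the proof is uniformly valid over the full range $0<q,q_1,p,p_1\leq+\infty$.
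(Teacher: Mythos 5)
Your proof is correct: part (1) is the additivity of $\int|f|^q$ over the essentially disjoint tiling $\{Q_k\}$, part (2) is the nesting $\ell^p\hookrightarrow\ell^{p_1}$, and part (3) is H\"older on the unit cube combined with monotonicity of the $\ell^p$ quasi-norm, with the endpoint cases handled as you indicate. The paper gives no proof of this proposition at all (it is stated as well known with references to the amalgam literature), and your argument is exactly the standard one found there, so there is nothing further to compare.
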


Also, we have the following reverse Minkowski's inequality for $(L^q, \ell^p)$, with $0<q<1$ and $0<p\leq 1$. 

\begin{prop}[\cite{AbFt2}, Proposition 2.2] \label{InverseHoldMink3}
Let $0<q<1$ and $0<p\leq 1$. For all finite sequence $\left\{f_n\right\}_{n=0}^m$ of elements of $(L^q, \ell^p)$, we have
\begin{eqnarray}
\sum_{n=0}^m\left\|f_n\right\|_{q,p}\leq\left\|\sum_{n=0}^m|f_n|\right\|_{q,p}. \label{InverseHoldMink4}
\end{eqnarray}
\end{prop}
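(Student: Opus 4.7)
\emph{Proof plan.} The idea is to decompose the amalgam quasi-norm $\|\cdot\|_{q,p}$ into its two layers---the $L^q$ integration on each unit cube $Q_k$, and the $\ell^p$ aggregation over $k\in\mathbb Z^d$---and to apply a scalar reverse Minkowski inequality at each layer. Both outer exponents ($q$ for the integration layer and $p$ for the summation layer) lie in $(0,1]$, so Minkowski reverses at each stage.

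First, for each fixed $k\in\mathbb Z^d$, I would apply the classical reverse Minkowski inequality in $L^q$ with $0<q<1$ to the non-negative functions $|f_n|\chi_{Q_k}$, obtaining
$$\sum_{n=0}^m \left\|f_n\chi_{Q_k}\right\|_q \leq \left\|\sum_{n=0}^m |f_n|\chi_{Q_k}\right\|_q.$$
This is standard for $0<q<1$ on non-negative integrands; it follows by induction from the two-term case, which is itself a consequence of the reverse H\"older inequality using the (negative) conjugate exponent $q/(q-1)$, or equivalently by expanding $\int F^q = \int F\cdot F^{q-1}$ with $F=\sum_n |f_n|\chi_{Q_k}$.

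Next, I would set $F_n(k):=\left\|f_n\chi_{Q_k}\right\|_q$ and $G(k):=\left\|\sum_{n=0}^m |f_n|\chi_{Q_k}\right\|_q$, so that the previous step reads $\sum_n F_n(k)\leq G(k)$ pointwise in $k$. Monotonicity of the $\ell^p$ quasi-norm on non-negative sequences then gives
$$\left\|\sum_{n=0}^m F_n\right\|_{\ell^p}\leq \left\|G\right\|_{\ell^p}=\left\|\sum_{n=0}^m |f_n|\right\|_{q,p}.$$

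Finally, I would apply the reverse Minkowski inequality in $\ell^p$ for $0<p\leq 1$ to the non-negative sequences $\{F_n(k)\}_k$ to obtain
$$\sum_{n=0}^m \left\|F_n\right\|_{\ell^p}\leq \left\|\sum_{n=0}^m F_n\right\|_{\ell^p}.$$
Since $\left\|F_n\right\|_{\ell^p}=\left\|f_n\right\|_{q,p}$ by the very definition of the amalgam quasi-norm, chaining the last two displays yields (\ref{InverseHoldMink4}). The argument is essentially frictionless; the only delicate point is remembering the two separate reverse-Minkowski facts and verifying that the endpoint $p=1$ still works, where the $\ell^p$ step collapses to Fubini-type equality and the inequality is retained.
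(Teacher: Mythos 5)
Your proof is correct: both reverse Minkowski steps (in $L^q$ on each cube for $0<q<1$, and in $\ell^p$ for $0<p\leq 1$, both on non-negative objects) are valid, the intermediate monotonicity step is sound, and the finiteness issues are harmless since each $f_n\chi_{Q_k}\in L^q(Q_k)$ and a finite sum of $L^q$ functions is again in $L^q$ when $q\leq 1$. The paper itself only cites \cite{AbFt2} for this proposition rather than proving it, but your two-layer argument is exactly the standard route one would expect there.
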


\begin{prop}[\cite{AbFt}, (2.4) and (2.5), p. 1902] \label{okaido}
We have $\mathcal{S}\hookrightarrow(L^{q},\ell^{p})$, for $0<q, p\leq+\infty$. Also, $(L^{q},\ell^{p})\hookrightarrow\mathcal{S'}$, for $ 1\leq q, p\leq+\infty$.
\end{prop}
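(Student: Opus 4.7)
The plan is to handle the two embeddings separately, both resting on the decay of Schwartz functions and (for the second) a H\"older inequality for amalgams.

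For the first embedding $\mathcal S\hookrightarrow(L^q,\ell^p)$, let $\psi\in\mathcal S$. The strategy is to control $\|\psi\chi_{Q_k}\|_q$ by a negative power of $(1+|k|)$ that is summable in $\ell^p(\mathbb Z^d)$. The key geometric observation is that, for $x\in Q_k$, we have $|x-k|\leq\sqrt d$, so $(1+|x|)\approx(1+|k|)$ with constants depending only on $d$. Using the Schwartz bound $|\psi(x)|\leq\mathcal N_m(\psi)(1+|x|)^{-m}$ for any $m\in\mathbb Z_+$ yields
$$\|\psi\chi_{Q_k}\|_q\leq\mathcal N_m(\psi)\left(\int_{Q_k}(1+|x|)^{-mq}dx\right)^{1/q}\lsim\mathcal N_m(\psi)(1+|k|)^{-m},$$
where the estimate holds as well when $q=+\infty$ (just take the supremum on $Q_k$). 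Choosing $m$ so that $mp>d$ (and $m\geq 1$ to cover $p=+\infty$), the sequence $\{(1+|k|)^{-m}\}_{k\in\mathbb Z^d}$ lies in $\ell^p(\mathbb Z^d)$, and hence $\|\psi\|_{q,p}\lsim\mathcal N_m(\psi)$. This gives the continuous embedding.

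For the second embedding $(L^q,\ell^p)\hookrightarrow\mathcal S'$ when $1\leq q,p\leq+\infty$, I would first record the H\"older-type inequality for amalgams:
$$\int_{\mathbb R^d}|f(x)g(x)|dx=\sum_{k\in\mathbb Z^d}\int_{Q_k}|fg|dx\leq\sum_{k\in\mathbb Z^d}\|f\chi_{Q_k}\|_q\|g\chi_{Q_k}\|_{q'}\leq\|f\|_{q,p}\|g\|_{q',p'},$$
obtained by applying the classical H\"older inequality on each $Q_k$ with exponents $q,q'$ and then on the resulting sequences with exponents $p,p'$. Given $f\in(L^q,\ell^p)$, define $\langle f,\psi\rangle:=\int_{\mathbb R^d}f(x)\psi(x)dx$ for $\psi\in\mathcal S$; this is well-defined by the displayed inequality. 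Applying the first part of the proposition with $(q,p)$ replaced by $(q',p')$, which is admissible since $1\leq q',p'\leq+\infty$, provides an integer $m$ such that $\|\psi\|_{q',p'}\lsim\mathcal N_m(\psi)$, and therefore
$$|\langle f,\psi\rangle|\leq\|f\|_{q,p}\|\psi\|_{q',p'}\lsim\|f\|_{q,p}\mathcal N_m(\psi).$$
This shows both the linearity and the continuity of $f$ as a functional on $\mathcal S$, establishing the claimed embedding.

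I do not anticipate any serious obstacle; the only minor care needed is in covering the endpoint cases $q=+\infty$ or $p=+\infty$ (where the arithmetic of exponents degenerates but the estimates still work) and in choosing $m$ uniformly large enough so that both $(1+|k|)^{-m}\in\ell^p$ and the Schwartz bound is applicable with a single seminorm.
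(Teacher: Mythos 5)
Your argument is correct: the pointwise bound $(1+|x|)^{-m}\lsim(1+|k|)^{-m}$ on $Q_k$ combined with $\{(1+|k|)^{-m}\}_k\in\ell^p$ for $mp>d$ gives the first embedding, and the H\"older inequality for amalgams together with the first part applied to $(q',p')$ gives the second. The paper itself does not prove this proposition but only recalls it from \cite{AbFt}, and your proof is exactly the standard argument behind the cited inequalities (2.4) and (2.5) there, with the endpoint cases handled correctly.
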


\begin{remark}\label{remqpourcompl}
For $0<q,p\leq+\infty$, the convergence in $\mathcal{S}$ implies the convergence in $(L^{q},\ell^{p})$.

Likewise, for $1\leq q, p\leq+\infty$, the convergence in $(L^{q},\ell^{p})$ implies the convergence in $\mathcal{S'}$. Consequently, for all $1\leq q\leq+\infty$ and $0<p\leq+\infty$, the convergence in $(L^{q},\ell^{p})$ implies the convergence in $\mathcal{S'}$, more precisely $(L^{q},\ell^{p})\hookrightarrow\mathcal{S'}$. 
\end{remark}

Another important property of amalgam spaces is the boundedness of the Hardy-Littlewood maximal operator $\mathfrak{M}$. Let $f$ be a locally integrable function. The centralized Hardy-Littlewood maximal function $\mathfrak{M}(f)$ is defined by
\begin{eqnarray*}
\mathfrak{M}(f)(x)=\underset{r>0}\sup\ |B(x,r)|^{-1}\int_{B(x,r)}|f(y)|dy,\ \ x\in\mathbb{R}^d.
\end{eqnarray*} 
Carton-Lebrun and al. proved in \cite{CLHS}, Theorems 4.2 and 4.5, the following result.

\begin{prop} \label{operamaximacalebr}
Let $1<q,p<+\infty$. Then, for any locally integrable function $f$,
\begin{eqnarray*}
\left\|\mathfrak{M}(f)\right\|_{q,p}\lsim\left\|f\right\|_{q,p}.
\end{eqnarray*}
\end{prop}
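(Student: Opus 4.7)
The plan is to split $\mathfrak{M}(f)$ into a piece coming from small radii (a local part) and a piece coming from large radii (a global part), relative to each unit cube $Q_k$. The local part will be handled by the classical $L^q$-boundedness of the Hardy--Littlewood maximal operator on $\mathbb{R}^d$ (valid since $q>1$), while the global part will be dominated pointwise on $Q_k$ by the discrete Hardy--Littlewood maximal function on $\mathbb{Z}^d$ applied to the sequence $a_j := \|f\chi_{Q_j}\|_q$, whose $\ell^p$-boundedness for $p>1$ gives the required control.

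More precisely, for $x\in\mathbb{R}^d$ I would introduce
$$\mathfrak{M}^{\mathrm{loc}}(f)(x) := \sup_{0<r\leq \sqrt{d}} |B(x,r)|^{-1}\int_{B(x,r)}|f(y)|dy, \qquad \mathfrak{M}^{\mathrm{glob}}(f)(x) := \sup_{r>\sqrt{d}} |B(x,r)|^{-1}\int_{B(x,r)}|f(y)|dy,$$
so that $\mathfrak{M}(f)\leq \mathfrak{M}^{\mathrm{loc}}(f) + \mathfrak{M}^{\mathrm{glob}}(f)$. For $x\in Q_k$ and $r\leq \sqrt{d}$, the ball $B(x,r)$ is contained in $3Q_k$, so $\mathfrak{M}^{\mathrm{loc}}(f)(x)\leq \mathfrak{M}(f\chi_{3Q_k})(x)$. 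The classical $L^q$ inequality then yields $\|\mathfrak{M}^{\mathrm{loc}}(f)\chi_{Q_k}\|_q \lsim \|f\chi_{3Q_k}\|_q \lsim \sum_{j:\,Q_j\subset 3Q_k}\|f\chi_{Q_j}\|_q$. Taking the $\ell^p$-norm in $k\in\mathbb{Z}^d$, the finite number $3^d$ of admissible shifts is absorbed into the implicit constant, giving $\|\{\|\mathfrak{M}^{\mathrm{loc}}(f)\chi_{Q_k}\|_q\}_k\|_{\ell^p} \lsim \|f\|_{q,p}$.

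For the global part, if $x\in Q_k$ and $r>\sqrt{d}$ then $B(x,r)\subset \bigcup_{|j-k|_\infty\leq r+\sqrt{d}}Q_j$, and by H\"older's inequality together with $|Q_j|=1$, $\int_{Q_j}|f(y)|dy\leq \|f\chi_{Q_j}\|_q$. Hence
$$|B(x,r)|^{-1}\int_{B(x,r)}|f(y)|dy \lsim r^{-d}\sum_{|j-k|_\infty\leq r+\sqrt{d}}\|f\chi_{Q_j}\|_q \lsim \mathfrak{M}_d\bigl(\{a_j\}_j\bigr)(k),$$
where $\mathfrak{M}_d$ denotes the discrete centered Hardy--Littlewood maximal function on $\mathbb{Z}^d$. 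Taking the supremum in $r$ and then the $L^q$-norm over $Q_k$ (the integrand is constant in $x\in Q_k$ up to uniform constants when restricted to the right-hand side), I get $\|\mathfrak{M}^{\mathrm{glob}}(f)\chi_{Q_k}\|_q \lsim \mathfrak{M}_d(\{a_j\}_j)(k)$. Since $p>1$, $\mathfrak{M}_d$ is bounded on $\ell^p(\mathbb{Z}^d)$, which produces $\|\{\|\mathfrak{M}^{\mathrm{glob}}(f)\chi_{Q_k}\|_q\}_k\|_{\ell^p} \lsim \|\{a_j\}\|_{\ell^p}=\|f\|_{q,p}$. Combining the two estimates finishes the proof.

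The main obstacle I expect is in the global estimate: one has to verify that the supremum over large radii $r$ is uniformly controlled by a \emph{single} discrete maximal function value at $k$. This works precisely because the factor $|B(x,r)|^{-1}\sim r^{-d}$ offsets the cardinality $\lsim r^d$ of the cubes $Q_j$ that meet $B(x,r)$, so the continuous average on $\mathbb{R}^d$ matches, up to constants, a discrete average on $\mathbb{Z}^d$. The remaining bookkeeping (handling the small dependence of the localizing cube $3Q_k$ on $k$, the passage from the sup inside to the sup outside the $\ell^p$-norm, and the reduction to the Fefferman--Stein / discrete boundedness theorem) is routine.
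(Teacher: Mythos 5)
Your proposal is correct in substance, but note that the paper does not prove this proposition at all: it simply quotes Carton--Lebrun, Heinig and Hofmann \cite{CLHS}, Theorems 4.2 and 4.5. So what you have written is a self-contained proof where the paper offers only a citation. Your route --- splitting $\mathfrak{M}$ into a small-radius part controlled cube-by-cube by the classical $L^q$ maximal theorem and a large-radius part dominated on each $Q_k$ by the discrete maximal function of the sequence $a_j=\|f\chi_{Q_j}\|_q$, then invoking $\ell^p(\mathbb{Z}^d)$-boundedness --- is the standard mechanism behind the cited result (and behind the vector-valued version, Proposition \ref{operamaxima}, quoted from \cite{LSUYY}), and the two exponent hypotheses $q>1$ and $p>1$ enter exactly where they should. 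The key point you identify, that $|B(x,r)|^{-1}\sim r^{-d}$ offsets the $\lsim r^d$ cubes meeting $B(x,r)$ so that the continuous average is majorized by a single discrete average at $k$, is precisely what makes the argument work.

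One small geometric slip: for $x\in Q_k$ and $d\geq 2$ the inclusion $B(x,r)\subset 3Q_k$ fails for $r$ near $\sqrt{d}$, since $3Q_k=k+[-1,2]^d$ only extends one unit beyond $Q_k$ in each coordinate while the ball reaches out $\sqrt{d}>1$. This is harmless: either set the cutoff at $r\leq 1$, or replace $3Q_k$ by $(2\left\lceil\sqrt{d}\right\rceil+1)Q_k$; the count $3^d$ of shifts becomes another dimensional constant and the $\ell^p$ triangle inequality absorbs it as before. You should also either cite or prove the $\ell^p(\mathbb{Z}^d)$-boundedness of the discrete centered maximal operator; the quickest route is to observe that $\mathfrak{M}_d(\{a_j\})(k)\lsim\mathfrak{M}(F)(x)$ for $x\in Q_k$ with $F:=\sum_j a_j\chi_{Q_j}$, so it follows from the continuous theorem. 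With these two touch-ups the proof is complete.
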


A very useful generalization of this result is the following 

\begin{prop}[\cite{LSUYY}, Proposition 11.12] \label{operamaxima}
Let $1<p,u\leq+\infty$ and $1<q<+\infty$. Then, for all sequence of measurable functions $\left\{f_n\right\}_{n\geq 0}$,
\begin{eqnarray*}
\left\|\left(\underset{n=0}{\overset{+\infty}\sum}\left[\mathfrak{M}(f_n)\right]^{u}\right)^{\frac{1}{u}}\right\|_{q,p}\approx\left\|\left(\underset{n=0}{\overset{+\infty}\sum}|f_n|^{u}\right)^{\frac{1}{u}}\right\|_{q,p}, 
\end{eqnarray*}
with the implicit equivalent positive constants independent of $\left\{f_n\right\}_{n\geq 0}$ .
\end{prop}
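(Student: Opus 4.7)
The plan is to treat the two inequalities separately. The lower bound $\gsim$ is immediate from the Lebesgue differentiation theorem: since $|f_{n}(x)|\leq\mathfrak{M}(f_{n})(x)$ for almost every $x$, the pointwise inequality $(\sum_{n}|f_{n}(x)|^{u})^{1/u}\leq(\sum_{n}[\mathfrak{M}(f_{n})(x)]^{u})^{1/u}$ passes to the amalgam quasi-norm by monotonicity.

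For the non-trivial direction $\lsim$, the strategy is to combine the scalar boundedness of $\mathfrak{M}$ on $(L^{q},\ell^{p})$ (Proposition \ref{operamaximacalebr}) with the classical vector-valued Fefferman--Stein inequality on $L^{q}(\mathbb R^{d})$. First, the endpoint case $u=+\infty$ follows from the pointwise bound $\sup_{n}\mathfrak{M}(f_{n})\leq\mathfrak{M}(\sup_{n}|f_{n}|)$ together with Proposition \ref{operamaximacalebr}. For $1<u<+\infty$, I would localize: fix $k\in\bZ^{d}$ and, for $x\in Q_{k}$, decompose $f_{n}=f_{n}\chi_{3Q_{k}}+f_{n}\chi_{(3Q_{k})^{c}}$. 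On the local piece, the classical $L^{q}(\ell^{u})$ Fefferman--Stein inequality (applicable since $1<q,u<+\infty$) gives
\[
\Bigl\|\Bigl(\sum_{n}[\mathfrak{M}(f_{n}\chi_{3Q_{k}})]^{u}\Bigr)^{1/u}\chi_{Q_{k}}\Bigr\|_{q}\lsim\Bigl\|\Bigl(\sum_{n}|f_{n}|^{u}\Bigr)^{1/u}\chi_{3Q_{k}}\Bigr\|_{q},
\]
and, the overlap of the $\{3Q_{k}\}_{k}$ being uniformly bounded, taking the $\ell^{p}$-norm in $k$ produces a quantity controlled by the right-hand side of the claim. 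For the far piece, a standard geometric argument (any ball centered in $Q_{k}$ and meeting $Q_{j}$ with $|k-j|$ large has radius comparable to $|k-j|$) yields a pointwise estimate of the form
\[
\mathfrak{M}(f_{n}\chi_{(3Q_{k})^{c}})(x)\lsim\sum_{j\in\bZ^{d}}\frac{1}{(1+|k-j|)^{N}}\cdot\frac{1}{|Q_{j}|}\int_{Q_{j}}|f_{n}|,\qquad x\in Q_{k},
\]
for any large $N$, the right-hand side being a discrete convolution in $k$. Raising to the $q$-th power, integrating on $Q_{k}$, taking the $\ell^{p}$-norm in $k$, and invoking a discrete Young-type inequality in $\ell^{p}$ together with Proposition \ref{operamaximacalebr} dispatches the far contribution.

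The hard part will be carefully coordinating the three interlocking summations---the vector sum in $n$, the discrete convolution tail in $k$, and the outer $\ell^{p}$ aggregation---so as to obtain a single implicit constant depending only on $q$, $p$, $u$ and $d$, especially when $p$ or $u$ lies near the endpoints. A cleaner but less elementary alternative would be to invoke Rubio de Francia extrapolation: one first verifies that $\mathfrak{M}$ is bounded on $(L^{q},\ell^{p})$ against weights in a suitable Muckenhoupt-type class, and the extrapolation principle then promotes this scalar bound automatically to the vector-valued inequality for every $1<u\leq+\infty$, bypassing the local/far decomposition entirely.
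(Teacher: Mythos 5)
First, a point of comparison: the paper does not prove this proposition at all --- it is quoted from \cite{LSUYY}, Proposition 11.12 --- so your argument has to stand on its own rather than be measured against an internal proof. Your lower bound (Lebesgue differentiation), your treatment of $u=+\infty$ via $\sup_n\mathfrak{M}(f_n)\leq\mathfrak{M}(\sup_n|f_n|)$, and your handling of the local piece $f_n\chi_{3Q_k}$ (classical Fefferman--Stein on $L^q(\ell^u)$, bounded overlap of the $3Q_k$, and the triangle inequality in $\ell^p$ for $p>1$) are all sound.

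The far piece, however, contains a genuine gap. The claimed pointwise bound with $(1+|k-j|)^{-N}$ ``for any large $N$'' is false for $N>d$: testing on $f_n=\chi_{Q_j}$ gives $\mathfrak{M}(\chi_{Q_j})(x)\approx(1+|k-j|)^{-d}$ for $x\in Q_k$, so the Hardy--Littlewood kernel has only the critical decay $N=d$. With $N=d$ the kernel $(1+|k|)^{-d}$ is not in $\ell^1(\mathbb{Z}^d)$, and the discrete convolution $b\mapsto\sum_j(1+|k-j|)^{-d}b_j$ is in fact unbounded on $\ell^p(\mathbb{Z}^d)$ for every finite $p$ (take $b=\chi_{\{|j|\leq R\}}$: the convolution is $\gsim\log R$ on $\{|k|\leq R/2\}$, while $\left\|b\right\|_{\ell^p}\approx R^{d/p}$), so no discrete Young-type inequality can close this step. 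The standard repair is to keep the supremum over radii on the outside: a ball $B(x,r)$ with $x\in Q_k$ meeting $(3Q_k)^c$ has $r\gsim 1$ and meets $O(r^d)$ cubes $Q_j$, all with $|k-j|\lsim r$, whence
\[
\mathfrak{M}\bigl(f_n\chi_{(3Q_k)^c}\bigr)(x)\lsim M_{\mathbb{Z}^d}\Bigl(\Bigl\{\int_{Q_j}|f_n|\Bigr\}_j\Bigr)(k),\qquad x\in Q_k,
\]
where $M_{\mathbb{Z}^d}$ is the discrete Hardy--Littlewood maximal operator; one then needs the \emph{discrete vector-valued Fefferman--Stein inequality} on $\ell^p(\ell^u)$ (a genuine extra ingredient, not a convolution estimate), followed by Minkowski's inequality in $n$ and H\"older on each $Q_j$. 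Be warned that this discrete inequality fails at $p=+\infty$ for finite $u$ (the example $f_n=\chi_{[2^n,2^{n+1}]}$ transfers, and indeed it shows the equivalence in the statement itself is problematic when $p=+\infty$ and $u<+\infty$), so that endpoint cannot be absorbed into the same scheme. Finally, the extrapolation alternative you sketch would require first establishing weighted $A_s$ bounds for $\mathfrak{M}$ compatible with the amalgam norm and verifying the hypotheses of extrapolation in this non-rearrangement-invariant setting; as written it is a pointer to a different proof, not a proof.
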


\subsection{Hardy-amalgam spaces $\mathcal{H}^{(q,p)}$ and $\mathcal{H}_{\mathrm{loc}}^{(q,p)}$} 

Let $0<q,p<+\infty$ and $\varphi\in\mathcal{S}$ with $\text{supp}(\varphi)\subset B(0,1)$ and $\int_{\mathbb{R}^d}\varphi(x)dx=1$. In \cite{AbFt}, we defined the space $\mathcal{H}^{(q,p)}$ as the space of tempered distributions $f$ having their maximal function $\mathcal{M}_{\varphi}(f)$ in $(L^q,\ell^p)$. For $f\in\mathcal{H}^{(q,p)}$, we set $$\left\|f\right\|_{\mathcal{H}^{(q,p)}}:=\left\|\mathcal{M}_{\varphi}(f)\right\|_{q,p}.$$ Similarly, we defined the space $\mathcal{H}_{\mathrm{loc}}^{(q,p)}$ by replacing the maximal operator $\mathcal{M}_{\varphi}$ by its local version ${\mathcal{M}_{\mathrm{loc}}}_{_{\varphi}}$. Also, $\left\|\cdot\right\|_{\mathcal{H}^{(q,p)}}$ and $\left\|\cdot\right\|_{\mathcal{H}_{\mathrm{loc}}^{(q,p)}}$ respectively define norms on $\mathcal{H}^{(q,p)}$ and $\mathcal{H}_{\mathrm{loc}}^{(q,p)}$, whenever $1\leq q,p<+\infty$, and quasi-norms otherwise. Moreover,
\begin{eqnarray}
\left\|f\right\|_{\mathcal{H}_{\mathrm{loc}}^{(q,p)}}\leq\left\|f\right\|_{\mathcal{H}^{(q,p)}}, \label{depous}
\end{eqnarray}
for all $f\in\mathcal{H}^{(q,p)}$. We obtained the following result in \cite{AbFt}.

\begin{thm}[\cite{AbFt}, Theorem 3.2] \label{gto} Let $1\leq q, p<+\infty$. 
\begin{enumerate}
\item If $1< q, p<+\infty$, then $\mathcal{H}^{(q,p)}=\mathcal{H}_{\mathrm{loc}}^{(q,p)}=\left(L^q,\ell^p\right)$, with norms equivalences. 
\item If $q=1$, then $\mathcal{H}^{(1,p)}\subset\mathcal{H}_{\mathrm{loc}}^{(1,p)}\subset\left(L^1,\ell^p\right)$. Furthermore, 
\begin{eqnarray*}
\left\|f\right\|_{1,p}\leq\left\|f\right\|_{\mathcal{H}_{\mathrm{loc}}^{(1,p)}}\leq\left\|f\right\|_{\mathcal{H}^{(1,p)}}, 
\end{eqnarray*}
for all $f\in\mathcal{H}^{(1,p)}$.
\end{enumerate}
\end{thm}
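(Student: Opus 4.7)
The plan is to establish (1) via the chain of embeddings
$$(L^q,\ell^p) \hookrightarrow \mathcal{H}^{(q,p)} \hookrightarrow \mathcal{H}_{\mathrm{loc}}^{(q,p)} \hookrightarrow (L^q,\ell^p),$$
and to read off (2) from the last two embeddings, since the first one fails at $q=1$. The middle embedding is immediate from (\ref{depous}), because pointwise ${\mathcal{M}_{\mathrm{loc}}}_{\varphi}(f)\le\mathcal{M}_\varphi(f)$.

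For the first embedding, with $1<q,p<+\infty$: by Proposition \ref{okaido} every $f\in(L^q,\ell^p)$ is a tempered distribution, so $f\ast\varphi_t$ is well defined. A standard argument, using that $\varphi\in\mathcal{S}$ admits a radially-decreasing integrable majorant, yields the pointwise bound $\mathcal{M}_\varphi(f)(x)\lsim\mathfrak{M}(f)(x)$. Combining this with Proposition \ref{operamaximacalebr} gives $\|f\|_{\mathcal{H}^{(q,p)}}=\|\mathcal{M}_\varphi(f)\|_{q,p}\lsim\|\mathfrak{M}(f)\|_{q,p}\lsim\|f\|_{q,p}$. The hypothesis $q>1$ enters precisely here: at $q=1$ the Hardy-Littlewood maximal operator is unbounded on $(L^1,\ell^p)$, which is why this inclusion fails and part (2) has only one-sided containments.

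The main obstacle is the third embedding $\mathcal{H}_{\mathrm{loc}}^{(q,p)}\hookrightarrow(L^q,\ell^p)$, which must be proved uniformly for $q\ge 1$ and which supplies the key inequality $\|f\|_{q,p}\le\|f\|_{\mathcal{H}_{\mathrm{loc}}^{(q,p)}}$ in both parts. The difficulty is that $f$ is a priori only a distribution, so one must first upgrade it to a locally integrable function. My approach: the hypothesis forces ${\mathcal{M}_{\mathrm{loc}}}_{\varphi}(f)\in(L^q,\ell^p)\subset L^q_{\mathrm{loc}}$, so the family $\{f\ast\varphi_t\}_{0<t\le 1}$ is pointwise dominated by ${\mathcal{M}_{\mathrm{loc}}}_{\varphi}(f)$ and hence uniformly bounded in $L^q$ on every compact set. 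This family is weakly relatively compact in $L^q_{\mathrm{loc}}$ (by reflexivity when $q>1$, and by Dunford--Pettis when $q=1$, since the dominating function is locally integrable and hence the family is locally equi-integrable). Extracting a weakly convergent subsequence $f\ast\varphi_{t_n}\rightharpoonup g$ in $L^q_{\mathrm{loc}}$ as $t_n\to 0^+$ and comparing with the $\mathcal{S}'$-limit $f\ast\varphi_t\to f$ forces $f=g$ almost everywhere, so $f\in L^q_{\mathrm{loc}}$. Lebesgue's differentiation theorem then yields $f(x)=\lim_{t\to 0^+}(f\ast\varphi_t)(x)$ for a.e.\ $x$, whence $|f(x)|\le{\mathcal{M}_{\mathrm{loc}}}_{\varphi}(f)(x)$ a.e., and taking $(L^q,\ell^p)$-norms closes the argument in both parts.
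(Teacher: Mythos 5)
Your proof is correct and follows essentially the same route as the source this theorem is quoted from: the chain of embeddings, the pointwise domination of $\mathcal{M}_{\varphi}(f)$ by the Hardy--Littlewood maximal function together with Proposition \ref{operamaximacalebr} for the inclusion $(L^q,\ell^p)\hookrightarrow\mathcal{H}^{(q,p)}$ when $1<q,p<+\infty$, and the weak-compactness (reflexivity, resp.\ uniform integrability) plus Lebesgue-point argument identifying $f$ with a locally integrable function satisfying $|f|\leq{\mathcal{M}_{\mathrm{loc}}}_{_{\varphi}}(f)$ a.e.\ for the reverse inclusion. The present paper only recalls the statement from \cite{AbFt}, whose proof proceeds along exactly these classical Fefferman--Stein/Goldberg lines, so there is nothing to correct.
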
 

\begin{cor}[\cite{AbFt}, Remark 3.3] \label{bravoooheiner}
Let $1\leq q<+\infty$ and $0<p<+\infty$. Then, $\mathcal{H}^{(q,p)}\subset\mathcal{H}_{\mathrm{loc}}^{(q,p)}\subset L_{\mathrm{loc}}^1$.
\end{cor}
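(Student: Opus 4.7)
The plan is to handle the two inclusions separately. The first inclusion $\mathcal{H}^{(q,p)}\subset\mathcal{H}_{\mathrm{loc}}^{(q,p)}$ is essentially immediate from the pointwise bound ${\mathcal{M}_{\mathrm{loc}}}_{_{\varphi}}(f)\leq\mathcal{M}_{\varphi}(f)$, which is precisely the content of (\ref{depous}) above; no further argument is needed.

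For the second inclusion $\mathcal{H}_{\mathrm{loc}}^{(q,p)}\subset L_{\mathrm{loc}}^1$, my strategy is to reduce to a range of parameters that Theorem \ref{gto} already covers by ``raising'' the exponent $p$. More precisely, given $f\in\mathcal{H}_{\mathrm{loc}}^{(q,p)}$, I would pick any $p_1$ satisfying both $p\leq p_1$ and $p_1>1$ (for instance $p_1=p+2$). Proposition \ref{ratotute}(2) then yields
\begin{eqnarray*}
\left\|{\mathcal{M}_{\mathrm{loc}}}_{_{\varphi}}(f)\right\|_{q,p_1}\leq\left\|{\mathcal{M}_{\mathrm{loc}}}_{_{\varphi}}(f)\right\|_{q,p}<+\infty,
\end{eqnarray*}
so $f\in\mathcal{H}_{\mathrm{loc}}^{(q,p_1)}$. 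I would then split into two sub-cases: when $q>1$, both $q$ and $p_1$ exceed $1$, so Theorem \ref{gto}(1) identifies $\mathcal{H}_{\mathrm{loc}}^{(q,p_1)}$ with $(L^q,\ell^{p_1})$; when $q=1$, Theorem \ref{gto}(2) gives the inclusion $\mathcal{H}_{\mathrm{loc}}^{(1,p_1)}\subset(L^1,\ell^{p_1})$. In either case the conclusion follows because elements of $(L^q,\ell^{p_1})$ are locally in $L^q$ by the very definition of the amalgam norm, and $q\geq 1$ forces local integrability.

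I do not expect any genuine obstacle, since the corollary is largely a re-packaging of Theorem \ref{gto} and the $p$-monotonicity of amalgam norms. The only point requiring a little care is that Theorem \ref{gto}(1) demands the strict inequalities $1<q,p<+\infty$; the preliminary monotonicity step with $p_1>1$ serves precisely to accommodate both $0<p\leq 1$ and the otherwise uncovered edge case $q>1,\,p=1$ in a uniform way.
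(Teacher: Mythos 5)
Your argument is correct. The paper gives no proof of this corollary (it is recalled from \cite{AbFt}, Remark 3.3), and your deduction --- the first inclusion from the pointwise domination ${\mathcal{M}_{\mathrm{loc}}}_{_{\varphi}}(f)\leq\mathcal{M}_{\varphi}(f)$, the second by raising $p$ to some $p_1>1$ via Proposition \ref{ratotute}(2) so that Theorem \ref{gto} applies and then using that amalgam functions are locally $L^q\subset L^1_{\mathrm{loc}}$ for $q\geq 1$ --- is exactly the intended route, with the monotonicity step correctly handling the cases $0<p\leq 1$ not covered by the hypotheses of Theorem \ref{gto}.
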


Also, in \cite{AbFt}, we showed that $\mathcal{H}^{(q,p)}$ and $\mathcal{H}_{\mathrm{loc}}^{(q,p)}$ can be characterized with other maximal functions and do not depend on the choice of the function $\varphi$. Endeed, given $a, b>0$, $\Phi\in\mathcal{S}$ and $f\in\mathcal{S'}$, we define the non-tangential maximal function $\mathcal{M}_{\Phi,a}^{\ast}(f)$ of $f$ with respect to $\Phi$ as $$\mathcal{M}_{\Phi,a}^{\ast}(f)(x):=\underset{t>0}\sup\left\{\underset{\left|x-y\right|\leq at}\sup\left|(f\ast\Phi_t)(y)\right|\right\},\ x\in\mathbb{R}^d$$ and the auxiliary maximal function $\mathcal{M}_{\Phi,b}^{\ast\ast}(f)$ of $f$ with respect to $\Phi$ as $$\mathcal{M}_{\Phi,b}^{\ast\ast}(f)(x):=\underset{t>0}\sup\left\{\underset{y\in\mathbb{R}^d}\sup\frac{\left|(f\ast\Phi_t)(x-y)\right|}{(1+t^{-1}|y|)^b}\right\},\ x\in\mathbb{R}^d.$$ Moreover, given a positive integer $N$, we define $$\mathfrak{N}_N(\phi):=\int_{\mathbb{R}^d}(1+|x|)^N\left(\sum_{|\alpha|\leq N+1}|\partial^{\alpha}\phi(x)|\right)dx,\ \ \phi\in\mathcal{S}$$ and $$\mathcal{F}_N:=\left\{\phi\in\mathcal{S}:\ \mathfrak{N}_N(\phi)\leq 1\right\}.$$ The non-tangential grand maximal function of $f$ is defined as $$\mathcal{M}_{\mathcal{F}_N}(f)(x):=\underset{\psi\in\mathcal{F}_N}\sup\mathcal{M}_{\psi,1}^{\ast}(f)(x),\ x\in\mathbb{R}^d$$ and the radial grand maximal function of $f$ as $$\mathcal{M}_{\mathcal{F}_N}^0(f)(x):=\underset{\psi\in\mathcal{F}_N}\sup\mathcal{M}_{\psi}(f)(x),\ x\in\mathbb{R}^d.$$ 

Similarly, we define the local versions of these maximal functions by taking $t$ in $(0;1]$. We have the following theorem (see \cite{AbFt}, Theorem 3.7, p. 1910).

\begin{thm}\label{bravoooh}
Let $0<q,p<+\infty$ and $\Phi\in\mathcal{S}$ with $\int_{\mathbb{R}^d}\Phi(x)dx\neq 0$. Then, for any $f\in\mathcal{S'}$, the following assertions are equivalent: 
\begin{enumerate}
\item $\mathcal{M}_{\Phi}(f)\in(L^q,\ell^p)$.
\item For all $a>0$, $\mathcal{M}_{\Phi,a}^{\ast} (f)\in(L^q,\ell^p)$.
\item For all  $b>\max\left\{\frac{d}{q},\frac{d}{p}\right\}$, $\mathcal{M}_{\Phi,b}^{\ast\ast} (f)\in(L^q,\ell^p)$. 
\item There exists an integer $N>\max\left\{\frac{d}{q},\frac{d}{p}\right\}$ such that $\mathcal{M}_{\mathcal{F}_N} (f)\in(L^q,\ell^p)$.
\end{enumerate}
Moreover, for any $N\geq\max\left\{\left\lfloor \frac{d}{q}\right\rfloor,\left\lfloor \frac{d}{p}\right\rfloor\right\}+1$ and $\max\left\{\frac{d}{q},\frac{d}{p}\right\}<b<N$ with $N=\left\lfloor b\right\rfloor+1$, 
\begin{eqnarray*}
\left\|\mathcal{M}_{\mathcal{F}_N}^0(f)\right\|_{q,p}\approx\left\|\mathcal{M}_{\mathcal{F}_N}(f)\right\|_{q,p}\approx\left\|\mathcal{M}_{\Phi}(f)\right\|_{q,p}\approx\left\|\mathcal{M}_{\Phi,b}^{\ast\ast}(f)\right\|_{q,p}\approx\left\|\mathcal{M}_{\Phi,a}^{\ast}(f)\right\|_{q,p}.
\end{eqnarray*}
\end{thm}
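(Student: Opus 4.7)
I plan to prove the four assertions equivalent by closing a circle of implications in the spirit of the Fefferman--Stein maximal characterization (cf.~\cite{FS}), with all the amalgam-specific content absorbed into Proposition~\ref{operamaxima}. The easy half consists of pointwise majorizations coming directly from the definitions, while the hard half is a single pointwise estimate by a Hardy--Littlewood maximal function composed with the raising to a power $r$.

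\textbf{The trivial inequalities.} Setting $y=x$ in the definition of $\mathcal{M}_{\Phi,a}^\ast$ and $y=0$ in that of $\mathcal{M}_{\Phi,b}^{\ast\ast}$ gives, pointwise,
$$
\mathcal{M}_{\Phi}(f)\leq \mathcal{M}_{\Phi,a}^\ast(f)\leq(1+a)^{b}\,\mathcal{M}_{\Phi,b}^{\ast\ast}(f),\qquad \mathcal{M}_{\Phi}(f)\leq \mathcal{M}_{\Phi,b}^{\ast\ast}(f).
$$
Moreover, for $N$ large enough, $\mathfrak{N}_{N}(\Phi)^{-1}\Phi\in\mathcal{F}_N$, so $\mathcal{M}_{\Phi,1}^\ast(f)\lsim \mathcal{M}_{\mathcal{F}_N}(f)$ and $\mathcal{M}_{\Phi}(f)\lsim \mathcal{M}_{\mathcal{F}_N}^{0}(f)$. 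These already give $(4)\Rightarrow(2)\Rightarrow(1)$ and $(3)\Rightarrow(1)$, together with the corresponding norm inequalities.

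\textbf{The decisive step $(1)\Rightarrow(4)$.} The heart of the proof is the pointwise Fefferman--Stein estimate
$$
\mathcal{M}_{\mathcal{F}_N}(f)(x)+\mathcal{M}_{\Phi,b}^{\ast\ast}(f)(x)\;\lsim\;\bigl[\mathfrak{M}\bigl(\mathcal{M}_{\Phi}(f)^{r}\bigr)(x)\bigr]^{1/r}, \qquad x\in\mathbb{R}^{d},
$$
valid for any $0<r<\min\{q,p\}$ with $br>d$, provided $N$ is the specified integer. This is obtained from a Calder\'on-type reproducing identity available because $\int\Phi\neq 0$: writing $\psi_{t}$ as a superposition $\int \zeta_{s}\ast\Phi_{st}\,\frac{ds}{s}$ (plus an analogous term near $s=0$), taking absolute values and using the Schwartz seminorms of $\psi\in\mathcal{F}_N$ to dominate $|(f\ast\psi_{t})(y)|$ by a weighted integral of $\mathcal{M}_{\Phi}(f)$ over a ball of radius $\sim t(1+|x-y|/t)$. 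The tail summations converge exactly because $N>\max\{d/q,d/p\}$ and $br>d$, and the sup over $t>0$ and $|x-y|\leq t$ is absorbed into the centered maximal $\mathfrak{M}$ by raising to the power $r<1$. Granted this inequality, I choose $r$ with $q/r,p/r>1$ and apply Proposition~\ref{operamaxima} (with $u=1$):
$$
\left\|\mathcal{M}_{\mathcal{F}_N}(f)\right\|_{q,p}\;\lsim\;\left\|\mathfrak{M}\bigl(\mathcal{M}_{\Phi}(f)^{r}\bigr)\right\|_{q/r,\,p/r}^{1/r}\;\lsim\;\left\|\mathcal{M}_{\Phi}(f)\right\|_{q,p},
$$
and identically for $\mathcal{M}_{\Phi,b}^{\ast\ast}(f)$. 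The radial grand maximal function $\mathcal{M}_{\mathcal{F}_N}^{0}(f)$ is squeezed in because $\mathcal{M}_{\mathcal{F}_N}^{0}(f)\leq \mathcal{M}_{\mathcal{F}_N}(f)$, while the same pointwise bound applied with $\mathcal{M}_{\mathcal{F}_N}^{0}$ in place of $\mathcal{M}_{\Phi}$ provides the reverse norm control.

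\textbf{Main obstacle.} The substantive difficulty is the pointwise Fefferman--Stein estimate: it requires the reproducing identity (hence $\int\Phi\neq 0$), uniform seminorm bounds on arbitrary $\psi\in\mathcal{F}_N$ (forcing $N>\max\{d/q,d/p\}$), and a careful near/far splitting so the residual tail is actually dominated by $\mathfrak{M}(\mathcal{M}_{\Phi}(f)^{r})(x)$. Once it is in hand, the passage to the amalgam quasi-norm is routine, handled by Proposition~\ref{operamaxima}; the strict inequalities $q/r,\,p/r>1$ are the \emph{only} place the exponents $q,p$ intervene, so the equivalence then holds uniformly in the full range $0<q,p<+\infty$.
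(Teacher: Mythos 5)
Your outline follows exactly the route the paper relies on: Theorem \ref{bravoooh} is quoted from \cite{AbFt}, Theorem 3.7, whose proof is the Fefferman--Stein circle of implications you describe, with the only amalgam-specific ingredient being the boundedness of $\mathfrak{M}$ on $(L^{q/r},\ell^{p/r})$ for $0<r<\min\{q,p\}$. The trivial pointwise dominations are correct, and the endgame (raise to the power $r$, apply the maximal inequality, take $r$-th roots) is the standard and correct one. A minor slip: Proposition \ref{operamaxima} is stated for $1<u\leq+\infty$, so ``with $u=1$'' is outside its hypotheses; for a single function you should invoke Proposition \ref{operamaximacalebr} (or apply Proposition \ref{operamaxima} to a one-term sequence with any admissible $u$, which gives the same thing).

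The genuine gap is inside your ``decisive step''. The inequality
$\mathcal{M}_{\mathcal{F}_N}(f)+\mathcal{M}^{\ast\ast}_{\Phi,b}(f)\lsim\bigl[\mathfrak{M}\bigl((\mathcal{M}_{\Phi}f)^{r}\bigr)\bigr]^{1/r}$
conceals the passage from the \emph{radial} maximal function $\mathcal{M}_{\Phi}$ to the nontangential one, and that passage cannot be carried out for an arbitrary $f\in\mathcal{S}'$ by the direct near/far splitting you sketch. The step $\mathcal{M}^{\ast\ast}_{\Phi,b}(f)\leq\bigl[\mathfrak{M}\bigl((\mathcal{M}^{\ast}_{\Phi,1}f)^{r}\bigr)\bigr]^{1/r}$ (for $br\geq d$) and the step $\mathcal{M}_{\mathcal{F}_N}(f)\lsim\mathcal{M}^{\ast\ast}_{\Phi,b}(f)$ via the reproducing identity are indeed direct computations; but replacing $\mathcal{M}^{\ast}_{\Phi,1}f$ by $\mathcal{M}_{\Phi}f$ on the right requires the mean-value/bootstrap argument, which produces an error term of the form $(c\delta)^{r}\,\mathcal{M}^{\ast}_{\Phi,a'}(f)(x)^{r}$ that must be \emph{absorbed} into the left-hand side. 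Absorption is legitimate only if the quantity being absorbed is known a priori to be finite, which fails for a general tempered distribution. The standard remedy --- used in \cite{FS}, in Stein's book, and in the proof of \cite{AbFt}, Theorem 3.7 --- is to run the whole argument first for truncated and damped maximal functions such as
$\sup_{0<t<1/\varepsilon}\sup_{|y-x|\leq t}|f\ast\psi_t(y)|\left(\tfrac{t}{t+\varepsilon}\right)^{L}(1+\varepsilon|y|)^{-L}$,
which are finite everywhere for every $f\in\mathcal{S}'$ once $L$ exceeds the order of $f$, obtain bounds uniform in $\varepsilon$, and then let $\varepsilon\to0$ by monotone convergence. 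Without this device your claimed pointwise estimate is not yet established, and the proof does not close. Everything else in the outline is sound.
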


\begin{remark}
Theorem \ref{bravoooh} holds when the maximal functions are replaced by their local versions. Thus, we obtain a variety of maximal characterizations for $\mathcal{H}^{(q,p)}$ and $\mathcal{H}_{\mathrm{loc}}^{(q,p)}$ and their independence of the choice of the function $\varphi$. 
\end{remark}
 
For the sequel, we retain three equivalences in Theorem \ref{bravoooh} and their local versions; more precisely, for any $N\geq\max\left\{\left\lfloor \frac{d}{q}\right\rfloor,\left\lfloor \frac{d}{p}\right\rfloor\right\}+1$, 
\begin{eqnarray}
\left\|\mathcal{M}_{\mathcal{F}_N}^0(f)\right\|_{q,p}\approx\left\|\mathcal{M}_{\mathcal{F}_N}(f)\right\|_{q,p}\approx\left\|\mathcal{M}_{\Phi}(f)\right\|_{q,p}. \label{jaifito}
\end{eqnarray}

We have the following remark (see \cite{AbFt}, (3.12) and (3.13), pp. 1911-1912).

\begin{remark} \label{remarqconver}
Let $f\in\mathcal{H}^{(q,p)}$ and $\Phi\in\mathcal{S}$. We have  
\begin{eqnarray}
|f\ast\Phi(x)|\leq C(\Phi,d,q,p)\left\|\mathcal{M}_{\mathcal{F}_N}(f)\right\|_{q,p}\ , \label{converop}
\end{eqnarray}
for all $x\in\mathbb{R}^d$, with $C(\Phi,d,q,p):=C(d,q,p)\mathfrak{N}_N(\Phi)$ a nonnegative constant independent of $x$ and $f$. Thus, $f$ is a bounded distribution,
\begin{eqnarray}
|\left\langle f,\Phi\right\rangle|\leq C(d,q,p)\mathfrak{N}_N(\Phi)\left\|\mathcal{M}_{\mathcal{F}_N}(f)\right\|_{q,p}\label{conver}
\end{eqnarray}
and $\mathcal{H}^{(q,p)}\hookrightarrow\mathcal{S'}$. Also,  
\begin{eqnarray}
\left\|f\ast\Phi\right\|_{q,p}\leq\mathfrak{N}_N(\Phi)\left\|\mathcal{M}_{\mathcal{F}_N}(f)\right\|_{q,p}. \label{0conver1}
\end{eqnarray}
\end{remark}

In Remark \ref{remarqconver}, we can replace $\mathcal{H}^{(q,p)}$ by $\mathcal{H}_{\mathrm{loc}}^{(q,p)}$ and $\mathcal{M}_{\mathcal{F}_N}$ by its local version $\mathcal{M}_{{\mathrm{loc}}_{_{\mathcal{F}_N}}}$.

\begin{cor} [\cite{AbFt}, Proposition 3.8] \label{priscatun}
The convergence in $\mathcal{H}^{(q,p)}$ and $\mathcal{H}_{\mathrm{loc}}^{(q,p)}$ implies the convergence in $\mathcal{S'}$. Furthermore, $\mathcal{H}^{(q,p)}$ and $\mathcal{H}_{\mathrm{loc}}^{(q,p)}$ are Banach spaces whenever $1\leq q,p<+\infty$ and quasi-Banach spaces otherwise. 
\end{cor}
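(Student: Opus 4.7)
The plan is to derive both assertions from the embedding into $\mathcal{S'}$ already encoded in Remark \ref{remarqconver}, followed by a Fatou-type argument in the amalgam space $(L^q,\ell^p)$.

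\textbf{Step 1: the embedding.} The first assertion is essentially a restatement of the embeddings $\mathcal{H}^{(q,p)}\hookrightarrow\mathcal{S'}$ and $\mathcal{H}_{\mathrm{loc}}^{(q,p)}\hookrightarrow\mathcal{S'}$. Combining inequality (\ref{conver}) with the equivalence (\ref{jaifito}) gives
\begin{equation*}
|\langle f,\Phi\rangle|\lsim\mathfrak{N}_N(\Phi)\,\|f\|_{\mathcal{H}^{(q,p)}}\quad\text{for every }\Phi\in\mathcal{S},
\end{equation*}
so convergence in $\mathcal{H}^{(q,p)}$ forces $\langle f_n,\Phi\rangle\to\langle f,\Phi\rangle$ for every test function, which is precisely convergence in $\mathcal{S'}$. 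The same estimate with $\mathcal{M}_{\mathcal{F}_N}$ replaced by its local counterpart handles $\mathcal{H}_{\mathrm{loc}}^{(q,p)}$.

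\textbf{Step 2: completeness.} I would then take a Cauchy sequence $\{f_n\}$ in $\mathcal{H}^{(q,p)}$. By Step 1 it is Cauchy in $\mathcal{S'}$, so there exists $f\in\mathcal{S'}$ with $f_n\to f$ in $\mathcal{S'}$. Fixing $\varphi\in\mathcal{S}$ with $\mathrm{supp}(\varphi)\subset B(0,1)$ and $\int_{\mathbb R^d}\varphi=1$ and using that convolution with $\varphi_t$ is continuous on $\mathcal{S'}$, we get $(f_n\ast\varphi_t)(x)\to(f\ast\varphi_t)(x)$ for every $x\in\mathbb R^d$ and every $t>0$. Taking the supremum over $t$ yields the pointwise lower-semicontinuity
\begin{equation*}
\mathcal{M}_{\varphi}(f)(x)\leq\liminf_{n\to+\infty}\mathcal{M}_{\varphi}(f_n)(x),\qquad x\in\mathbb R^d.
\end{equation*}
A Fatou-type lemma for $(L^q,\ell^p)$---obtained by applying the classical Fatou lemma first inside each $L^q(Q_k)$ and then inside $\ell^p$---then gives $\|\mathcal{M}_{\varphi}(f)\|_{q,p}\leq\liminf_n\|\mathcal{M}_{\varphi}(f_n)\|_{q,p}$, and since Cauchy sequences are bounded this places $f$ in $\mathcal{H}^{(q,p)}$. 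Applying the same reasoning to the differences $f_n-f_m$ and letting $m\to+\infty$ yields
\begin{equation*}
\|f_n-f\|_{\mathcal{H}^{(q,p)}}\leq\liminf_{m\to+\infty}\|f_n-f_m\|_{\mathcal{H}^{(q,p)}}\underset{n\to+\infty}{\longrightarrow}0
\end{equation*}
by the Cauchy property, so $f_n\to f$ in $\mathcal{H}^{(q,p)}$. Replacing $\mathcal{M}_{\varphi}$ by ${\mathcal{M}_{\mathrm{loc}}}_{_{\varphi}}$ throughout handles $\mathcal{H}_{\mathrm{loc}}^{(q,p)}$.

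\textbf{Main obstacle.} I do not anticipate a serious difficulty here. The only point meriting attention is that the Fatou-type property of $(L^q,\ell^p)$ used above survives in the quasi-norm regime $0<q<1$ or $0<p<1$; this is immediate since both ingredients (Fatou in $L^q$ and in $\ell^p$) are pure monotonicity statements, insensitive to whether we work with a norm or a quasi-norm. Consequently the same argument produces a Banach space when $1\leq q,p<+\infty$ (where $\|\cdot\|_{q,p}$ is a genuine norm) and a quasi-Banach space otherwise.
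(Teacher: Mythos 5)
Your argument is correct and is essentially the standard proof of this recalled result (the paper only cites \cite{AbFt}, Proposition 3.8, whose proof proceeds exactly via the embedding (\ref{conver})--(\ref{jaifito}) into $\mathcal{S'}$ followed by the Fatou property of $\left\|\cdot\right\|_{q,p}$ applied to the maximal functions of the differences). The only point worth making explicit is that the weak-$\ast$ limit $f$ of the Cauchy sequence is indeed a tempered distribution, which follows directly from the uniform bound $|\left\langle f_n,\Phi\right\rangle|\lsim\mathfrak{N}_N(\Phi)\sup_n\left\|f_n\right\|_{\mathcal{H}^{(q,p)}}$ rather than from any abstract completeness of $\mathcal{S'}$.
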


We give a very useful lemma in passing from $\mathcal{H}^{(q,p)}$ to $\mathcal{H}_{\mathrm{loc}}^{(q,p)}$.  

\begin{lem} \label{lemmdepass}
Let  $0<q,p<+\infty$ and $f\in\mathcal{H}_{\mathrm{loc}}^{(q,p)}$. Then, there exist $u\in\mathcal{H}^{(q,p)}$ and $v\in\mathcal{H}_{\mathrm{loc}}^{(q,p)}\cap\mathcal{C}^{\infty}(\mathbb{R}^d)$ such that $f=u+v$ and 
\begin{eqnarray*}
\left\|f\right\|_{\mathcal{H}_{\mathrm{loc}}^{(q,p)}}\approx\left\|u\right\|_{\mathcal{H}^{(q,p)}}+\left\|v\right\|_{\mathcal{H}_{\mathrm{loc}}^{(q,p)}},
\end{eqnarray*}
with the implicit equivalent positive constants independent of $f$.
\end{lem}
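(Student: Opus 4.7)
The plan is to define $v$ as a convolution-smoothing of $f$ against a carefully chosen Schwartz bump, so that $u:=f-v$ inherits local regularity from $f$ but also carries sufficient cancellation at infinity to belong to the global space $\mathcal{H}^{(q,p)}$. Concretely, I fix $\phi \in \mathcal{C}_c^\infty(\mathbb{R}^d)$ with $\mathrm{supp}(\phi) \subset B(0,1)$, $\int \phi(y)\,dy = 1$, and vanishing positive moments $\int \phi(y)\, y^\alpha \,dy = 0$ for $1 \leq |\alpha| \leq M$, where the integer $M$ is taken large in terms of $d$, $q$, $p$ (such $\phi$ is built from a non-negative bump by subtracting finitely many derivatives of compactly supported functions). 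Setting $v := f \ast \phi$ and $u := f - v$, the smoothness $v \in \mathcal{C}^\infty(\mathbb{R}^d)$ is immediate from $\phi \in \mathcal{S}$, $f \in \mathcal{S}'$.

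The bound on $v$ is routine: since $v \ast \varphi_t = f \ast (\phi \ast \varphi_t)$ and for $0 < t \leq 1$ the function $\phi \ast \varphi_t$ is Schwartz, supported in $B(0,2)$, with $\mathfrak{N}_N(\phi \ast \varphi_t)$ uniformly bounded in $t$, the local analogue of Theorem \ref{bravoooh} gives pointwise $|v \ast \varphi_t(x)| \lsim \mathcal{M}_{{\mathrm{loc}}_{_{\mathcal{F}_N}}}(f)(x)$, so $\|v\|_{\mathcal{H}_{\mathrm{loc}}^{(q,p)}} \lsim \|f\|_{\mathcal{H}_{\mathrm{loc}}^{(q,p)}}$. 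The very same estimate controls the range $0 < t \leq 1$ of $\mathcal{M}_\varphi(u)$ via $|u \ast \varphi_t| \leq |f \ast \varphi_t| + |v \ast \varphi_t|$.

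The main obstacle is the range $t > 1$, where $u \ast \varphi_t = f \ast \Psi_t$ with $\Psi_t := \varphi_t - \phi \ast \varphi_t$: the function $\Psi_t$ lives at scale $t > 1$, while the local grand maximal characterization only sees $f$ against scale-$1$ test functions. Taylor-expanding $\varphi_t(\cdot - y)$ in $y$ and using the vanishing moments of $\phi$ up to order $M$, I obtain for any $L > 0$
\begin{equation*}
|\partial^\beta \Psi_t(y)| \leq C_L\, t^{-d-M-1-|\beta|}(1+|y|/t)^{-L}, \qquad t>1,\ |\beta| \leq N+1.
\end{equation*}
I then pick a smooth $\mathbb{Z}^d$-periodic partition of unity $\{\eta(\cdot-k)\}_{k\in\mathbb{Z}^d}$ subordinate to the unit cubes and decompose $\Psi_t = \sum_k \Theta_{k,t}$ with $\Theta_{k,t} := \eta(\cdot - k)\Psi_t$ supported in $B(k,1)$. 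After translating by $-k$, each piece $\widetilde{\Theta}_{k,t}(\cdot) := \Theta_{k,t}(\cdot+k)$ is a scale-$1$ Schwartz function in $B(0,1)$ with $\mathfrak{N}_N(\widetilde{\Theta}_{k,t}) \lsim t^{-d-M-1}(1+|k|/t)^{-L}$, so applying the local grand maximal bound at $x-k$ yields $|f\ast \Theta_{k,t}(x)| = |f\ast \widetilde{\Theta}_{k,t}(x-k)| \lsim t^{-d-M-1}(1+|k|/t)^{-L}\,\mathcal{M}_{{\mathrm{loc}}_{_{\mathcal{F}_N}}}(f)(x-k)$. An elementary optimization in $t$ gives $\sup_{t > 1} t^{-d-M-1}(1+|k|/t)^{-L} \leq C(1+|k|)^{-d-M-1}$.

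Summing over $k$ dominates $\sup_{t>1}|u \ast \varphi_t(x)|$ pointwise by $\sum_k (1+|k|)^{-(d+M+1)}\mathcal{M}_{{\mathrm{loc}}_{_{\mathcal{F}_N}}}(f)(x-k)$. Taking the $\|\cdot\|_{q,p}$ quasi-norm, using the integer-translation invariance of the amalgam together with the standard sub-additivity of $\|\cdot\|_{q,p}$ on non-negative sums of translates, and choosing $M$ so large that the resulting geometric series in $k$ converges gives $\|u\|_{\mathcal{H}^{(q,p)}} \lsim \|f\|_{\mathcal{H}_{\mathrm{loc}}^{(q,p)}}$. The reverse direction in the claimed norm equivalence is then immediate from (\ref{depous}) combined with the quasi-triangle inequality in $\mathcal{H}_{\mathrm{loc}}^{(q,p)}$, namely $\|f\|_{\mathcal{H}_{\mathrm{loc}}^{(q,p)}} \lsim \|u\|_{\mathcal{H}_{\mathrm{loc}}^{(q,p)}} + \|v\|_{\mathcal{H}_{\mathrm{loc}}^{(q,p)}} \leq \|u\|_{\mathcal{H}^{(q,p)}} + \|v\|_{\mathcal{H}_{\mathrm{loc}}^{(q,p)}}$.
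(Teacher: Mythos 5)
Your proof is correct, but it takes a genuinely different route from the paper's. The paper chooses the smoothing kernel $\theta=\mathcal{F}^{-1}(\psi)$ to be band-limited ($\widehat{\theta}=\psi$ equal to $1$ on $B(0,1)$ and supported in $B(0,2)$), so that $\widehat{u}=\widehat{f}(1-\psi)$ vanishes on $B(0,1)$; testing against a special $\phi$ with $\mathrm{supp}(\widehat{\phi})\subset B(0,1)$ then makes $u\ast\phi_t$ \emph{identically zero} for $t>1$, whence $\mathcal{M}_{\phi}(u)=\mathcal{M}_{\mathrm{loc}_\phi}(u)$ and the global membership of $u$ follows from the maximal characterization (Theorem \ref{bravoooh}) with no further estimate. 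You instead take a compactly supported kernel with vanishing moments up to order $M$ and control the range $t>1$ quantitatively: Taylor expansion gives $\|\Psi_t\|_\infty\lsim t^{-d-M-1}$, a unit-scale partition reduces $f\ast\Psi_t$ to translates of the local grand maximal function, and the $\min\{q,p,1\}$-subadditivity of $\|\cdot\|_{q,p}$ together with its invariance under integer translations sums the series once $M>d/\min\{q,p,1\}-d-1$. The paper's argument is shorter and scale-free (no parameter $M$ to tune, the $t>1$ contribution simply disappears), at the cost of working on the Fourier side and invoking the equivalence of maximal functions for a non-compactly-supported $\phi$; yours stays entirely in physical space with the defining $\varphi$, at the cost of the moment construction, the translation-summation step, and a choice of $M$ depending on $q,p,d$. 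Two small points you should make explicit if you write this up: the optimization $\sup_{t>1}t^{-d-M-1}(1+|k|/t)^{-L}\lsim(1+|k|)^{-d-M-1}$ needs $L\geq d+M+1$, and the subadditivity used when summing over $k$ is the $\kappa$-power triangle inequality with $\kappa=\min\{q,p,1\}$, which is what forces the lower bound on $M$.
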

\begin{proof} 
The idea of our proof is the one of the classical case, namely in passing from $\mathcal{H}^{q}$ to $\mathcal{H}_{\mathrm{loc}}^{q}$ (see \cite{GH}, Lemma 2.2.5, p. 55). One can also see \cite{DGG}, Lemma 4. Fix an integer $N\geq\max\left\{\left\lfloor \frac{d}{q}\right\rfloor,\left\lfloor \frac{d}{p}\right\rfloor\right\}+1$ with $N>d$. Let $\psi\in\mathcal{C}^{\infty}(\mathbb{R}^d)$ with $\text{supp}(\psi)\subset B(0,2)$, $0\leq\psi\leq 1$ and $\psi\equiv1$ on $B(0,1)$. Set $\theta(x):=\left(\mathcal{F}^{-1}(\psi)\right)(x)$, $v(x):=(f\ast\theta)(x)$, for all $x\in\mathbb{R}^d$, and $u:=f-f\ast\theta$, where $\mathcal{F}^{-1}$ denotes the inverse Fourier transform $\mathcal{F}$ defined by $\left(\mathcal{F}^{-1}(\psi)\right)(x)=\left(\mathcal{F}(\psi)\right)(-x)=:\widehat{\psi}(-x)$ and $$\left(\mathcal{F}(\psi)\right)(x)=:\widehat{\psi}(x)=\int_{\mathbb{R}^d}e^{-2\pi ix\xi}\psi(\xi)d\xi,\ \ x\in\mathbb{R}^d,$$ with $i^2=-1$ (see \cite{LGkos}, Definitions 2.2.8 and 2.2.13). Clearly, we have $v\in\mathcal{C}^{\infty}(\mathbb{R}^d)\cap\mathcal{S'}$, since $\theta\in\mathcal{S}$ by its  definition and $f\in\mathcal{S'}$. Fix $\phi\in\mathcal{S}$ with $\text{supp}(\widehat{\phi})\subset B(0,1)$, $0\leq\widehat{\phi}\leq 1$ and $\widehat{\phi}\equiv1$ on $B(0,1/2)$. Then, there exists a constant $C>0$ such that 
\begin{eqnarray}
C(\theta\ast\phi_t)\in\mathcal{F}_N, \label{lemmdepasspour1}
\end{eqnarray} 
for all $0<t\leq 1$. Endeed, since $\int_{\mathbb{R}^d}\phi(x)dx=\widehat{\phi}(0)=1$, we have $\theta\ast\phi_t\rightarrow\theta$ in $\mathcal{S}$ as $t\rightarrow0$. Therefore, for all multi-indexes $\beta$ and all $0<t\leq 1$, we have 
\begin{eqnarray*}
\left|\partial^\beta(\theta\ast\phi_t)(x)\right|\leq C_{\beta,N}(1+|x|)^{-2N}, 
\end{eqnarray*}
for all $x\in\mathbb R^d$, where $C_{\beta,N}>0$ is a constant independent of $t$ and $x$. Thus,
\begin{eqnarray*}
\mathfrak{N}_N(\theta\ast\phi_t)&=&\int_{\mathbb{R}^d}(1+|x|)^{N}\left(\sum_{|\beta|\leq N+1}\left|\partial^\beta(\theta\ast\phi_t)(x)\right|\right)dx\\
&\leq&C_{N}\int_{\mathbb{R}^d}(1+|x|)^{-N}dx=C(N)<+\infty,
\end{eqnarray*}
since $N>d$. Hence $C(N)^{-1}(\theta\ast\phi_t)\in\mathcal{F}_N$. This establishes (\ref{lemmdepasspour1}).  

From (\ref{lemmdepasspour1}), it follows that 
\begin{eqnarray*}
\left|C((f\ast\theta)\ast\phi_t)(x)\right|=\left|\left((C(\theta\ast\phi_t))_{1}\ast f\right)(x)\right|\leq\mathcal{M}_{{\mathrm{loc}}_{_{\mathcal{F}_N}}}^0(f)(x),
\end{eqnarray*}
for all $x\in\mathbb{R}^d$ and all $0<t\leq 1$. Hence  
\begin{eqnarray*}
\mathcal{M}_{\mathrm{loc}_\phi}(v)(x)=\sup_{0<t\leq 1}|((f\ast\theta)\ast\phi_t)(x)|\leq C\mathcal{M}_{{\mathrm{loc}}_{_{\mathcal{F}_N}}}^0(f)(x),
\end{eqnarray*}
for all $x\in\mathbb{R}^d$. Thus,   
\begin{equation}
\left\|\mathcal{M}_{\mathrm{loc}_\phi}(v)\right\|_{q,p}\leq C\left\|\mathcal{M}_{{\mathrm{loc}}_{_{\mathcal{F}_N}}}^0(f)\right\|_{q,p}\approx\left\|f\right\|_{\mathcal{H}_{\mathrm{loc}}^{(q,p)}}, \label{lemmdepass1}
\end{equation}
by the local version of (\ref{jaifito}). Since $\int_{\mathbb{R}^d}\phi(x)dx=\widehat{\phi}(0)=1\neq 0$, it follows from (\ref{lemmdepass1}) that $v\in\mathcal{H}_{\mathrm{loc}}^{(q,p)}$ and 
\begin{eqnarray}
\left\|v\right\|_{\mathcal{H}_{\mathrm{loc}}^{(q,p)}}\approx\left\|\mathcal{M}_{\mathrm{loc}_\phi}(v)\right\|_{q,p}\leq C\left\|f\right\|_{\mathcal{H}_{\mathrm{loc}}^{(q,p)}}, \label{lemmdepass2} 
\end{eqnarray}
by the local version of (\ref{jaifito}). Moreover, 
\begin{eqnarray*}
(u\ast\phi_t)(x)&=&\mathcal{F}^{-1}(\widehat{u\ast\phi_t})(x)=\mathcal{F}^{-1}(\widehat{u}\widehat{(\phi_t)})(x)=\mathcal{F}^{-1}(\widehat{u}[\widehat{(\phi)}(t.)])(x)\\
&=&\mathcal{F}^{-1}((\widehat{f}-\widehat{f\ast\theta})[\widehat{(\phi)}(t.)])(x)=\mathcal{F}^{-1}((\widehat{f}-\widehat{f}\widehat{\theta})[\widehat{(\phi)}(t.)])(x)\\
&=&\mathcal{F}^{-1}(\widehat{f}(1-\widehat{\theta})[\widehat{(\phi)}(t.)])(x)=\mathcal{F}^{-1}(\widehat{f}(1-\widehat{\mathcal{F}^{-1}(\psi)})[\widehat{(\phi)}(t.)])(x)\\
&=&\mathcal{F}^{-1}(\widehat{f}(1-\psi)[\widehat{(\phi)}(t.)])(x)
\end{eqnarray*}
and $(1-\psi)[\widehat{(\phi)}(t.)]\equiv0$ if $t>1$, since $\psi\equiv1$ on $B(0,1)$ and $\text{supp}(\widehat{(\phi)}(t.))\subset B(0,1/t)\subset B(0,1)$. Hence $(u\ast\phi_t)(x)=0$, for all $t>1$. Thus,
\begin{eqnarray*}
\mathcal{M}_\phi(u)(x)=\sup_{t>0}|(u\ast\phi_t)(x)|=\sup_{0<t\leq 1}|(u\ast\phi_t)(x)|=\mathcal{M}_{\mathrm{loc}_\phi}(u)(x) 
\end{eqnarray*}
and  
\begin{eqnarray*}
\left\|\mathcal{M}_\phi(u)\right\|_{q,p}=\left\|\mathcal{M}_{\mathrm{loc}_\phi}(u)\right\|_{q,p}=\left\|\mathcal{M}_{\mathrm{loc}_\phi}(f-v)\right\|_{q,p}\approx\left\|f-v\right\|_{\mathcal{H}_{\mathrm{loc}}^{(q,p)}}, 
\end{eqnarray*}
since $f,v\in\mathcal{H}_{\mathrm{loc}}^{(q,p)}$ and $\int_{\mathbb{R}^d}\phi(x)dx\neq 0$. It follows that $u\in\mathcal{H}^{(q,p)}$,
\begin{eqnarray}
\left\|u\right\|_{\mathcal{H}^{(q,p)}}\approx\left\|\mathcal{M}_\phi(u)\right\|_{q,p}=\left\|\mathcal{M}_{\mathrm{loc}_\phi}(u)\right\|_{q,p}\approx\left\|u\right\|_{\mathcal{H}_{\mathrm{loc}}^{(q,p)}} \label{lemmdepass2bis}
\end{eqnarray}
and
\begin{eqnarray*}
\left\|u\right\|_{\mathcal{H}^{(q,p)}}\approx\left\|u\right\|_{\mathcal{H}_{\mathrm{loc}}^{(q,p)}}=\left\|f-v\right\|_{\mathcal{H}_{\mathrm{loc}}^{(q,p)}}\leq C(q,p)\left(\left\|f\right\|_{\mathcal{H}_{\mathrm{loc}}^{(q,p)}}+\left\|v\right\|_{\mathcal{H}_{\mathrm{loc}}^{(q,p)}}\right),
\end{eqnarray*}
by (\ref{jaifito}) and its local version. Hence 
\begin{eqnarray*}
\left\|u\right\|_{\mathcal{H}^{(q,p)}}+\left\|v\right\|_{\mathcal{H}_{\mathrm{loc}}^{(q,p)}}\leq C\left\|f\right\|_{\mathcal{H}_{\mathrm{loc}}^{(q,p)}}+C\left\|v\right\|_{\mathcal{H}_{\mathrm{loc}}^{(q,p)}}\leq C\left\|f\right\|_{\mathcal{H}_{\mathrm{loc}}^{(q,p)}},
\end{eqnarray*}
by (\ref{lemmdepass2}). Conversely, since $v,u\in\mathcal{H}_{\mathrm{loc}}^{(q,p)}$, we have
\begin{eqnarray*}
\left\|f\right\|_{\mathcal{H}_{\mathrm{loc}}^{(q,p)}}=\left\|u+v\right\|_{\mathcal{H}_{\mathrm{loc}}^{(q,p)}}\leq C\left(\left\|u\right\|_{\mathcal{H}^{(q,p)}}+\left\|v\right\|_{\mathcal{H}_{\mathrm{loc}}^{(q,p)}}\right),
\end{eqnarray*}
by (\ref{lemmdepass2bis}). This completes the proof of Lemma \ref{lemmdepass}.
\end{proof} 

From now on, for simplicity, we denote $\mathcal{M}_{\mathcal{F}_N}^0$ by $\mathcal{M}^0$ and assume that $0<q\leq 1$ and $q\leq p<+\infty$. We recall the following definition which is also the one of an atom for $\mathcal{H}^{q}$.

\begin{defn}\label{defhqpatom}
Let $1<r\leq+\infty$ and $s\geq\left\lfloor d\left(\frac{1}{q}-1\right)\right\rfloor$ be an integer. A function $\textbf{a}$ is a $(q,r,s)$-atom on $\mathbb{R}^d$ for $\mathcal{H}^{(q,p)}$ if there exist a cube $Q$ such that  
\begin{enumerate}
\item $\text{supp}(\textbf{a})\subset Q$ ,
\item $\left\|\textbf{a}\right\|_r\leq|Q|^{\frac{1}{r}-\frac{1}{q}}$ ; \label{defratom1}
\item $\int_{\mathbb{R}^d}x^{\beta}\textbf{a}(x)dx=0$, for all multi-indexes $\beta$ with $|\beta|\leq s$ . \label{defratom2}
\end{enumerate}
\end{defn}

Condition (\ref{defratom2}) is called the vanishing condition or the vanishing moment. We denote by $\mathcal{A}(q,r,s)$ the set of all $(\textbf{a},Q)$ such that $\textbf{a}$ is a $(q,r,s)$-atom and $Q$ is the associated cube (with respect to Definition \ref{defhqpatom}). We obtained the following result in \cite{AbFt}. 

\begin{thm} [\cite{AbFt}, Theorem 4.4] \label{fondamentaltheo}
Let $\delta\geq\left\lfloor d\left(\frac{1}{q}-1\right)\right\rfloor$ be an integer. For all $f\in\mathcal{H}^{(q,p)}$, there exist a sequence $\left\{(\textbf{a}_n, Q^n)\right\}_{n\geq 0}$ in $\mathcal{A}(q,\infty,\delta)$ and a sequence of scalars $\left\{{\lambda}_n\right\}_{n\geq 0}$ such that  
\begin{eqnarray*}
f=\sum_{n\geq 0}{\lambda}_n\textbf{a}_n\ \ \text{in}\ \mathcal{S'}\ \text{and}\ \mathcal{H}^{(q,p)}, 
\end{eqnarray*}
and, for any real $\eta>0$,
\begin{eqnarray*}
\left\|\sum_{n\geq 0}\left(\frac{|\lambda_n|}{\left\|\chi_{_{Q^n}}\right\|_{q}}\right)^{\eta}\chi_{_{Q^{n}}}\right\|_{\frac{q}{\eta},\frac{p}{\eta}}^{\frac{1}{{\eta}}}\lsim\left\|f\right\|_{\mathcal{H}^{(q,p)}}.
\end{eqnarray*}
\end{thm}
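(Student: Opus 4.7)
The plan is to execute the classical Calder\'on--Zygmund--Whitney construction adapted to the amalgam setting, where ``size'' is measured via the grand maximal function $\mathcal{M}^{0}(f)$ rather than via $f$ itself. Fix an integer $N\ge\max\{\lfloor d/q\rfloor,\lfloor d/p\rfloor\}+1$ with $N\ge\delta+1$, and for each $k\in\mathbb{Z}$ set $\Omega_{k}:=\{x\in\mathbb{R}^{d}:\mathcal{M}^{0}(f)(x)>2^{k}\}$. By lower semicontinuity $\Omega_{k}$ is open, and since $\mathcal{M}^{0}(f)\in(L^{q},\ell^{p})$ by Theorem \ref{bravoooh}, $\Omega_{k}$ has finite measure. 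Apply a Whitney decomposition $\Omega_{k}=\bigcup_{i}Q_{i}^{k}$ with bounded overlap of fixed dilates and $\mathrm{diam}(Q_{i}^{k})\approx\mathrm{dist}(Q_{i}^{k},\Omega_{k}^{c})$, together with a smooth partition of unity $\{\zeta_{i}^{k}\}$ subordinate to $\{Q_{i}^{k}\}$.

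Next, I would project $f\zeta_{i}^{k}$ onto the orthogonal complement of polynomials of degree at most $\delta$ in $L^{2}(Q_{i}^{k},\zeta_{i}^{k}\,dx)$, obtaining bad functions $b_{i}^{k}:=(f-P_{i}^{k})\zeta_{i}^{k}$ with vanishing moments of order $\le\delta$. Setting $g_{k}:=f-\sum_{i}b_{i}^{k}$, Whitney-geometry estimates combined with the definition of $\mathcal{M}^{0}$ give $|g_{k}|\lsim 2^{k}$ almost everywhere, $g_{k}\to f$ in $\mathcal{S}'$ as $k\to+\infty$, and $g_{k}\to 0$ in $\mathcal{S}'$ as $k\to-\infty$. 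Telescoping $f=\sum_{k\in\mathbb{Z}}(g_{k+1}-g_{k})$ and rearranging via the Coifman--Latter identity show that $g_{k+1}-g_{k}=\sum_{i}\lambda_{i}^{k}\textbf{a}_{i}^{k}$ where each $\textbf{a}_{i}^{k}$ is supported in a fixed dilate of $Q_{i}^{k}$, satisfies $\|\textbf{a}_{i}^{k}\|_{\infty}\le|Q_{i}^{k}|^{-1/q}$, has vanishing moments up to order $\delta$, and $|\lambda_{i}^{k}|\approx 2^{k}|Q_{i}^{k}|^{1/q}=2^{k}\|\chi_{Q_{i}^{k}}\|_{q}$. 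Relabeling $(i,k)$ as $n$ yields the required $(q,\infty,\delta)$-atomic representation.

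The pivotal estimate is the $\eta$-quasi-norm bound. Using $|\lambda_{i}^{k}|/\|\chi_{Q_{i}^{k}}\|_{q}\approx 2^{k}$ on $Q_{i}^{k}$ and the bounded overlap of $\{Q_{i}^{k}\}_{i}$ in $\Omega_{k}$, a layer-cake comparison yields pointwise
\begin{eqnarray*}
\sum_{n\ge 0}\left(\frac{|\lambda_{n}|}{\|\chi_{Q^{n}}\|_{q}}\right)^{\eta}\chi_{Q^{n}}\lsim\sum_{k\in\mathbb{Z}}2^{k\eta}\chi_{\Omega_{k}}\lsim[\mathcal{M}^{0}(f)]^{\eta}.
\end{eqnarray*}
Taking the $\|\cdot\|_{q/\eta,p/\eta}^{1/\eta}$ quasi-norm and using homogeneity together with the equivalence (\ref{jaifito}) bounds the left-hand side by $\|\mathcal{M}^{0}(f)\|_{q,p}\approx\|f\|_{\mathcal{H}^{(q,p)}}$. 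Convergence of the atomic series in $\mathcal{H}^{(q,p)}$ follows by applying the same estimate to tails $\sum_{n>M}\lambda_{n}\textbf{a}_{n}$, combined with a single-atom molecular bound $\|\textbf{a}_{n}\|_{\mathcal{H}^{(q,p)}}\lsim 1$ obtained by estimating $\mathcal{M}^{0}(\textbf{a}_{n})$ separately on a fixed dilate of $Q^{n}$ and on its complement, where the vanishing moments yield a decaying tail controllable in $(L^{q},\ell^{p})$.

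I expect the main obstacle to be the $L^{\infty}$-atom normalization together with the synchronization of bounds. Producing an atom with the sharp sup-norm control $\|\textbf{a}_{i}^{k}\|_{\infty}\le|Q_{i}^{k}|^{-1/q}$ demands a uniform pointwise bound $|P_{i}^{k}|\lsim 2^{k}$ on $Q_{i}^{k}$; this in turn requires testing $\mathcal{M}^{0}$ against bump functions supported in $Q_{i}^{k}$ and exploiting that some point adjacent to $Q_{i}^{k}$ lies in $\Omega_{k}^{c}$, where $\mathcal{M}^{0}(f)\le 2^{k}$. Arranging this uniformly in $(i,k)$, so that both the $L^{\infty}$ bound and the coefficient estimate $|\lambda_{i}^{k}|\approx 2^{k}\|\chi_{Q_{i}^{k}}\|_{q}$ hold simultaneously, is the delicate core of the argument and what ultimately powers the layer-cake inequality above.
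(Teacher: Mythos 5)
Your architecture coincides with the paper's (more precisely with the proof of \cite{AbFt}, Theorem 4.4, which this paper reproduces in local form for Theorem \ref{fondamentaltheoloc}): level sets of the grand maximal function, Whitney cubes with a subordinate partition of unity, polynomial projections, the telescoping $f=\sum_{k}(g_{k+1}-g_{k})$ and the Coifman--Latter rearrangement into $L^{\infty}$-normalized atoms. Two of the steps you assert, however, do not go through as written. First, the bound $|g_{k}|\lsim 2^{k}$ a.e.\ is not available for an arbitrary $f\in\mathcal{H}^{(q,p)}$ with $0<q\le 1$: on $\Omega_{k}^{c}$ one has $g_{k}=f$ up to tails of the polynomial corrections, and the inequality $|f(x)|\le\mathcal{M}^{0}(f)(x)\le 2^{k}$ for a.e.\ $x\in\Omega_{k}^{c}$ presupposes that the distribution $f$ is locally integrable, which elements of $\mathcal{H}^{(q,p)}$ need not be. This is precisely the pitfall the paper flags in Remark \ref{remarqdenseyloc} and Proposition \ref{denseyloc}; the remedy is to run the construction first for $f\in L^{1}_{\mathrm{loc}}\cap\mathcal{H}^{(q,p)}$ and conclude by a density argument (itself nontrivial, proved via an estimate of $\mathcal{M}^{0}(g_{j})$ in $(L^{1},\ell^{p/q})$). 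Relatedly, your claim that $\Omega_{k}$ has finite measure is false in general when $q<p$ (membership of $\mathcal{M}^{0}(f)$ in $(L^{q},\ell^{p})$ only gives that $\Omega_{k}$ is open and proper), so the classical shortcut of showing $\mathcal{M}(g_{k})\in L^{1}$ to identify $g_{k}$ with a bounded function is also unavailable.

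Second, the pointwise layer-cake inequality $\sum_{n}(|\lambda_{n}|/\|\chi_{Q^{n}}\|_{q})^{\eta}\chi_{Q^{n}}\lsim\sum_{k}2^{k\eta}\chi_{\Omega_{k}}$ is not literally true: the support cubes $Q^{n}$ of the atoms are fixed dilates $C_{0}Q_{i}^{k}$ of the Whitney cubes, and these dilates stick out of $\Omega_{k}$. The correct route --- the one used in \cite{AbFt} --- is to dominate $\chi_{C_{0}Q_{i}^{k}}$ by $[\mathfrak{M}(\chi_{Q_{i}^{k}})]^{u}$ for a suitable exponent $u$, apply the vector-valued maximal inequality on amalgams (Proposition \ref{operamaxima}) with $u$ chosen so large that $qu/\eta>1$ and $pu/\eta>1$ (this is how the estimate is obtained for \emph{every} $\eta>0$), and only then use the bounded overlap of the undilated Whitney cubes inside $\Omega_{k}$ together with $\sum_{k}2^{k\eta}\chi_{\Omega_{k}}\lsim[\mathcal{M}^{0}(f)]^{\eta}$. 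Both repairs are standard and compatible with your plan, but without them the argument fails for general $f$ and general $\eta$.
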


Notice that in Theorem \ref{fondamentaltheo} the sequence of elements of $\mathcal{A}(q,\infty,\delta)$ can be replaced by a sequence of elements of $\mathcal{A}(q,r,\delta)$, with $1<r<+\infty$, since $\mathcal{A}(q,\infty,\delta)\subset\mathcal{A}(q,r,\delta)$. 

Also, we have the following result (see \cite{AbFt}, Proposition 4.5, p. 1920) which will be very useful for the next sections. 

\begin{prop} \label{propofonda}
Let $1<u,v<s\leq+\infty$. Then, for every sequence of scalars $\left\{\delta_n\right\}_{n\geq 0}$ and every sequence $\left\{\mathfrak b_n\right\}_{n\geq 0}$ in $L^s$ with the support of each $\mathfrak b_n$ embedded in a cube $Q^n$ and $\left\|\mathfrak b_n\right\|_s\leq|Q^n|^{\frac{1}{s}-\frac{1}{u}}$, we have 
\begin{eqnarray*}
\left\|\sum_{n\geq 0}|\delta_n \mathfrak b_n|\right\|_{u,v}\lsim\left\|\sum_{n\geq 0}\frac{|\delta_n|}{\left\|\chi_{_{Q^n}}\right\|_{u}}\chi_{_{Q^n}}\right\|_{u,v}, 
\end{eqnarray*}
with implicit constant independent of $\left\{\mathfrak b_n\right\}_{n\geq 0}$ and $\left\{\delta_n\right\}_{n\geq 0}$.
\end{prop}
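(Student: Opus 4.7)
The plan is to reduce the inequality, by duality on $(L^{u},\ell^{v})$, to a scalar boundedness statement for the Hardy--Littlewood maximal operator $\mathfrak{M}$ on amalgams, supplied by Proposition \ref{operamaximacalebr}. Since $1<u,v<\infty$, one has $(L^{u},\ell^{v})^{\ast}=(L^{u'},\ell^{v'})$, so it will suffice to establish
\[
\int_{\mathbb{R}^{d}}g\sum_{n\geq 0}|\delta_{n}\mathfrak{b}_{n}|\,dx\lsim\Bigl\|\sum_{n\geq 0}\tfrac{|\delta_{n}|}{\|\chi_{Q^{n}}\|_{u}}\chi_{Q^{n}}\Bigr\|_{u,v}
\]
uniformly for nonnegative $g$ with $\|g\|_{u',v'}\leq 1$, and then take the supremum over $g$. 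The case $s=+\infty$ is set aside at once: there $|\mathfrak{b}_{n}|\leq|Q^{n}|^{-1/u}\chi_{Q^{n}}$ pointwise, so summing and taking the $(L^{u},\ell^{v})$-quasinorm yields the result immediately.

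I carry out the estimate term by term when $s<+\infty$. For each $n$, H\"older's inequality with exponents $(s,s')$ on $Q^{n}$, combined with $\|\mathfrak{b}_{n}\|_{s}\leq|Q^{n}|^{1/s-1/u}$, gives
\[
\int_{Q^{n}}g|\mathfrak{b}_{n}|\,dx\leq|Q^{n}|^{1/s-1/u}\Bigl(\int_{Q^{n}}g^{s'}\,dx\Bigr)^{1/s'}.
\]
Since $\mathfrak{M}(g^{s'})(x)\gsim|Q^{n}|^{-1}\int_{Q^{n}}g^{s'}$ holds for every $x\in Q^{n}$ by the very definition of $\mathfrak{M}$, raising to the $1/s'$-th power and integrating in $x$ over $Q^{n}$ produces
\[
\Bigl(\int_{Q^{n}}g^{s'}\Bigr)^{1/s'}\lsim|Q^{n}|^{1/s'-1}\int_{Q^{n}}\bigl[\mathfrak{M}(g^{s'})\bigr]^{1/s'}dx.
\]
Combining the two displays and using $1/s+1/s'=1$ cancels all powers of $|Q^{n}|$ apart from $|Q^{n}|^{-1/u}=\|\chi_{Q^{n}}\|_{u}^{-1}$, so multiplying by $|\delta_{n}|$ and summing in $n$ yields
\[
\int_{\mathbb{R}^{d}}g\sum_{n\geq 0}|\delta_{n}\mathfrak{b}_{n}|\,dx\lsim\int_{\mathbb{R}^{d}}\Bigl(\sum_{n\geq 0}\tfrac{|\delta_{n}|}{|Q^{n}|^{1/u}}\chi_{Q^{n}}\Bigr)\bigl[\mathfrak{M}(g^{s'})\bigr]^{1/s'}dx.
\]

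To close the argument I apply H\"older's inequality for the duality between $(L^{u},\ell^{v})$ and $(L^{u'},\ell^{v'})$ to the last integral, and then control the resulting factor $\|[\mathfrak{M}(g^{s'})]^{1/s'}\|_{u',v'}$ through the scaling identity $\|F^{1/s'}\|_{a,b}=\|F\|_{a/s',b/s'}^{1/s'}$. This reduces matters to the boundedness of $\mathfrak{M}$ on $(L^{u'/s'},\ell^{v'/s'})$: the exponents $u'/s'$ and $v'/s'$ are both strictly greater than $1$ precisely because $u<s$ and $v<s$, so Proposition \ref{operamaximacalebr} applies and yields $\|[\mathfrak{M}(g^{s'})]^{1/s'}\|_{u',v'}\lsim\|g\|_{u',v'}\leq 1$. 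The main difficulty I anticipate is this exponent bookkeeping: the powers $1/s'$ and $s'$ must be tracked carefully so that Proposition \ref{operamaximacalebr} eventually returns exactly $\|g\|_{u',v'}$, and it is here that the hypothesis $u,v<s$ is used in an essential way.
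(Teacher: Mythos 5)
Your argument is correct: the duality reduction, the per-cube H\"older step, the replacement of $\bigl(|Q^n|^{-1}\int_{Q^n}g^{s'}\bigr)^{1/s'}$ by the average of $[\mathfrak{M}(g^{s'})]^{1/s'}$ over $Q^n$, and the final application of Proposition \ref{operamaximacalebr} on $(L^{u'/s'},\ell^{v'/s'})$ (legitimate precisely because $u,v<s$ forces $u'/s',v'/s'>1$) all check out, and the exponents of $|Q^n|$ do cancel to $-1/u$ as claimed. This is essentially the same proof as in the cited source (\cite{AbFt}, Proposition 4.5), which likewise runs through duality and the boundedness of the Hardy--Littlewood maximal operator on amalgams, so no further comparison is needed.
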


\section{Atomic and molecular Decompositions of $\mathcal{H}_{\mathrm{loc}}^{(q,p)}$} 

In this section, our aim is to give atomic and molecular decompositions of $\mathcal{H}_{\mathrm{loc}}^{(q,p)}$ spaces, namely reconstruction and decomposition theorems, when $0<q\leq 1$ and $q\leq p<+\infty$. In fact, in our earlier paper \cite{AbFt}, p. 1929, the idea of such decompositions for $\mathcal{H}_{\mathrm{loc}}^{(q,p)}$ spaces has been mentioned without more details. In the sequel, unless otherwise specified, we assume that $0<q\leq 1$ and $q\leq p<+\infty$. Also, for simplicity, we adopt the following notations: $$\mathcal{M}_{\mathrm{loc}}:=\mathcal{M}_{{\mathrm{loc}}_{_{\mathcal{F}_N}}}\ ,\ \mathcal{M}_{\mathrm{loc}}^0:=\mathcal{M}_{{\mathrm{loc}}_{_{\mathcal{F}_N}}}^0\ \text{ and }\ \mathcal{M}_{{\mathrm{loc}}_{_{0}}}:={\mathcal{M}_{\mathrm{loc}}}_{_{\varphi}}.$$

\subsection{Atomic Decomposition} 

Our definition of atom on $\mathbb{R}^d$ for $\mathcal{H}_{\mathrm{loc}}^{(q,p)}$ is the one of $\mathcal{H}^{(q,p)}$ except that, when $|Q|\geq1$, the vanishing condition (\ref{defratom2}) in Definition \ref{defhqpatom} is not requiered. Likewise, we denote by $\mathcal{A}_{\mathrm{loc}}(q,r,s)$ the set of all $(\textbf{a},Q)$ such that $\textbf{a}$ is a $(q,r,s)$-atom on $\mathbb{R}^d$ for $\mathcal{H}_{\mathrm{loc}}^{(q,p)}$ and $Q$ is the associated cube. It follows from this definition the following remark. 

\begin{remark}\label{fondamentalremarloc}
Let $1<r\leq v\leq+\infty$. Then,
\begin{enumerate}
\item $(\textbf{a},Q)\in\mathcal{A}_{\mathrm{loc}}(q,v,s)\ \Rightarrow\ (\textbf{a},Q)\in\mathcal{A}_{\mathrm{loc}}(q,r,s)$. 
\item $\mathcal{A}(q,r,s)\subset\mathcal{A}_{\mathrm{loc}}(q,r,s)$.
\end{enumerate}
\end{remark}

Also, it is easy to see that, for all $(\textbf{a},Q)\in\mathcal{A}_{\mathrm{loc}}(q,r,s)$, 
\begin{eqnarray}
\left\|\textbf{a}\right\|_{\mathcal{H}_{\mathrm{loc}}^{(q,p)}}\leq C, \label{locfondamentalpropo}
\end{eqnarray}
where $C>0$ is a constant independent of the $(q,r,s)$-atom $\textbf{a}$. 

We have the following reconstruction theorems.    

\begin{thm} \label{thafondaloc} 
Let $0<\eta\leq 1$ and $\delta\geq\left\lfloor d\left(\frac{1}{q}-1\right)\right\rfloor$ be an integer. Then, for all sequences $\left\{(\textbf{a}_n, Q^n)\right\}_{n\geq 0}$ in $\mathcal{A}_{\mathrm{loc}}(q,\infty,\delta)$ and all sequences of scalars $\left\{{\lambda}_n\right\}_{n\geq 0}$ such that 
\begin{eqnarray}
\left\|\sum_{n\geq 0}\left(\frac{|\lambda_n|}{\left\|\chi_{_{Q^n}}\right\|_{q}}\right)^{\eta}\chi_{_{Q^{n}}}\right\|_{\frac{q}{\eta},\frac{p}{\eta}}<+\infty, \label{inversetheoloc}
\end{eqnarray}
the series $f:=\sum_{n\geq 0}{\lambda}_n\textbf{a}_n$ converges in $\mathcal{S'}$ and $\mathcal{H}_{\mathrm{loc}}^{(q,p)}$, with 
\begin{eqnarray*}
\left\|f\right\|_{\mathcal{H}_{\mathrm{loc}}^{(q,p)}}\lsim\left\|\sum_{n\geq 0}\left(\frac{|\lambda_n|}{\left\|\chi_{_{Q^n}}\right\|_{q}}\right)^{\eta}\chi_{_{Q^{n}}}\right\|_{\frac{q}{\eta},\frac{p}{\eta}}^{\frac{1}{\eta}}.
\end{eqnarray*}

\end{thm}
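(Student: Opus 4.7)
The plan is to decompose the sum according to the size of the supporting cubes and treat the two regimes by different methods. Set $I_s := \{n\geq 0 : |Q^n| < 1\}$ and $I_\ell := \{n\geq 0 : |Q^n| \geq 1\}$, and write each finite partial sum as $S_M = S_M^{(s)} + S_M^{(\ell)}$ accordingly. For $n \in I_s$ each $\textbf{a}_n$ is, by Remark \ref{fondamentalremarloc}, \emph{also} a global $(q,\infty,\delta)$-atom in $\mathcal{A}(q,\infty,\delta)$. Applying the converse of Theorem \ref{fondamentaltheo} (the reconstruction theorem for $\mathcal{H}^{(q,p)}$ established in \cite{AbFt}) I would obtain the convergence of $\sum_{n\in I_s}\lambda_n \textbf{a}_n$ in $\mathcal{S}'$ and in $\mathcal{H}^{(q,p)}$, together with
\begin{equation*}
\Bigl\|\sum_{n \in I_s} \lambda_n \textbf{a}_n\Bigr\|_{\mathcal{H}^{(q,p)}} \lsim \Bigl\|\sum_{n \in I_s}\Bigl(\frac{|\lambda_n|}{\|\chi_{_{Q^n}}\|_{q}}\Bigr)^{\eta} \chi_{_{Q^n}}\Bigr\|_{\frac{q}{\eta},\,\frac{p}{\eta}}^{\frac{1}{\eta}},
\end{equation*}
which transfers immediately to $\mathcal{H}_{\mathrm{loc}}^{(q,p)}$ via the embedding (\ref{depous}).

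For $n \in I_\ell$ is where the \emph{local} maximal characterization is crucial. Since $\varphi$ is supported in $B(0,1)$ and only $0<t\leq 1$ enters the definition of $\mathcal{M}_{\mathrm{loc}_{0}}$, we have $\mathrm{supp}\,\mathcal{M}_{\mathrm{loc}_{0}}(\textbf{a}_n) \subset \tilde{Q}^n$, where $\tilde{Q}^n$ is the cube concentric with $Q^n$ of side-length $\ell(Q^n)+2$; the hypothesis $|Q^n|\geq 1$ forces $|\tilde{Q}^n| \leq 3^d |Q^n|$. Combined with the brutal bound $|(\textbf{a}_n \ast \varphi_t)(x)| \leq \|\varphi\|_{1}\|\textbf{a}_n\|_{\infty} \leq \|\varphi\|_{1}|Q^n|^{-1/q}$, this yields the clean pointwise estimate $\mathcal{M}_{\mathrm{loc}_{0}}(\textbf{a}_n)(x) \lsim \|\chi_{_{Q^n}}\|_{q}^{-1} \chi_{_{\tilde{Q}^n}}(x)$ for $n\in I_\ell$. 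Subadditivity of $\mathcal{M}_{\mathrm{loc}_{0}}$ combined with $(a+b)^{\eta}\leq a^{\eta}+b^{\eta}$ for $0<\eta\leq 1$ then gives
\begin{equation*}
\bigl[\mathcal{M}_{\mathrm{loc}_{0}}(S_M^{(\ell)})\bigr]^{\eta} \lsim \sum_{n \in I_\ell,\, n\leq M}\Bigl(\frac{|\lambda_n|}{\|\chi_{_{Q^n}}\|_{q}}\Bigr)^{\eta} \chi_{_{\tilde{Q}^n}}.
\end{equation*}

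To remove the enlargement $\tilde{Q}^n \leadsto Q^n$, I would use the pointwise bound $\chi_{_{\tilde{Q}^n}} \lsim [\mathfrak{M}(\chi_{_{Q^n}})]^{u}$, valid for any $u>0$ with a constant depending only on $d$ and $u$ (because $\mathfrak{M}(\chi_{_{Q^n}})\gtrsim 1$ on $\tilde{Q}^n$), and then invoke Proposition \ref{operamaxima} via the identity $c_n[\mathfrak{M}(\chi_{_{Q^n}})]^u = [\mathfrak{M}(c_n^{1/u}\chi_{_{Q^n}})]^u$. Choosing $u > \max\{1,\eta/q,\eta/p\}$ ensures that the "outer" amalgam exponents $uq/\eta,\, up/\eta$ both exceed $1$, so the vector-valued maximal inequality applies. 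Together with the identity $\|F\|_{q,p}^{\eta}=\|F^{\eta}\|_{q/\eta,\,p/\eta}$ and the local version of (\ref{jaifito}), this produces
\begin{equation*}
\|S_M^{(\ell)}\|_{\mathcal{H}_{\mathrm{loc}}^{(q,p)}} \lsim \Bigl\|\sum_{n \in I_\ell,\, n\leq M}\Bigl(\frac{|\lambda_n|}{\|\chi_{_{Q^n}}\|_{q}}\Bigr)^{\eta} \chi_{_{Q^n}}\Bigr\|_{\frac{q}{\eta},\,\frac{p}{\eta}}^{\frac{1}{\eta}}.
\end{equation*}

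Adding the two bounds through the quasi-triangle inequality in $\mathcal{H}_{\mathrm{loc}}^{(q,p)}$ produces a uniform estimate on $\|S_M\|_{\mathcal{H}_{\mathrm{loc}}^{(q,p)}}$. Applying the very same argument to the tail differences $S_M - S_{M'}$, together with dominated convergence in $(L^{q/\eta},\ell^{p/\eta})$ (whose finiteness is the hypothesis (\ref{inversetheoloc})), shows that $\{S_M\}$ is Cauchy in $\mathcal{H}_{\mathrm{loc}}^{(q,p)}$; completeness (Corollary \ref{priscatun}) supplies a limit $f$ satisfying the required norm bound, and convergence in $\mathcal{S}'$ is automatic from the same corollary. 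The main obstacle is the $I_\ell$ piece: because those atoms bear no vanishing moment, the global $\mathcal{H}^{(q,p)}$ machinery breaks down and one must lean on the compact support of $\varphi$ and the restriction $t\leq 1$; a secondary subtlety is the adjustment of the auxiliary exponent $u$ so that Proposition \ref{operamaxima} applies in the quasi-Banach regime $q/\eta,\,p/\eta \leq 1$.
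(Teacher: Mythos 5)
Your argument is correct, and its two halves rest on the same core estimates as the paper's proof. For the large cubes your computation (support of $\mathcal{M}_{{\mathrm{loc}}_{_{0}}}(\textbf{a}_n)$ contained in a fixed dilate of $Q^n$, the sup-norm bound, domination by a power of $\mathfrak{M}(\chi_{_{Q^n}})$, then Proposition \ref{operamaxima}) is essentially verbatim what the paper does in the case $|Q^n|\geq 1$ of its estimate (\ref{inversetheo6loc}). The organizational difference is that the paper does not split the series: it proves the single pointwise bound $\mathcal{M}_{{\mathrm{loc}}_{_{0}}}(\textbf{a}_n)\lsim\left[\mathfrak{M}(\chi_{_{Q^n}})\right]^{(d+\delta+1)/d}\left\|\chi_{_{Q^{n}}}\right\|_q^{-1}$ for \emph{every} local atom (re-deriving the small-cube case from the proof of the global reconstruction theorem in \cite{AbFt}) and then sums all atoms at once with the fixed exponent $u=(d+\delta+1)/d$, whereas you treat the small-cube piece by citing the global reconstruction theorem at the level of norms and transferring via (\ref{depous}), and you exploit the fact that for large cubes no decay is needed, so the auxiliary exponent $u$ can be chosen freely (independently of $\delta$). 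Your version is slightly more modular — the large-cube estimate is visibly independent of the moment condition — at the cost of a quasi-triangle inequality to recombine the two pieces; both routes deliver the same bound and the same Cauchy/completeness argument for convergence in $\mathcal{H}_{\mathrm{loc}}^{(q,p)}$ and hence in $\mathcal{S}'$.
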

\begin{proof}
For any $(\textbf{a}_n, Q^n)\in\mathcal{A}_{\mathrm{loc}}(q,\infty,\delta)$, we have 
\begin{eqnarray}
\mathcal{M}_{{\mathrm{loc}}_{_{0}}}(\textbf{a}_n)(x)\lsim\left[\mathfrak{M}(\chi_{_{Q^n}})(x)\right]^{\frac{d+\delta+1}{d}}\left\|\chi_{_{Q^{n}}}\right\|_q^{-1}\ , \label{inversetheo6loc}
\end{eqnarray}
for all $x\in\mathbb{R}^d$. For the proof of (\ref{inversetheo6loc}), we distinguish two cases: $|Q^{n}|<1$ and $|Q^{n}|\geq1$. Denote by $x_n$ and $\ell_n$ respectively the center and the side-lenght of $Q^{n}$. When $|Q^{n}|<1$, we know that the atom $\textbf{a}_n$ satisfies the vanishing condition. Thus, setting $\widetilde{Q^{n}}:=2\sqrt{d}Q^n$, we obtain, as in the proof of \cite{AbFt}, Theorem 4.3, pp. 1914-1915,  
\begin{eqnarray*}
\mathcal{M}_{{\mathrm{loc}}_{_{0}}}(\textbf{a}_n)(x)\lsim\left[\mathfrak{M}(\chi_{_{Q^n}})(x)\right]^{\frac{d+\delta+1}{d}}\left\|\chi_{_{Q^{n}}}\right\|_q^{-1}\ , 
\end{eqnarray*}
for all $x\in\mathbb{R}^d$. When $|Q^{n}|\geq1$, the vanishing condition of the atom $\textbf{a}_n$ not being requiered, we consider $\widetilde{Q^{n}}:=4\sqrt{d}Q^n$. Then, it is straigthforward to see that 
\begin{eqnarray*}
\mathcal{M}_{{\mathrm{loc}}_{_{0}}}(\textbf{a}_n)(x)=0,
\end{eqnarray*}
for all $x\notin\widetilde{Q^{n}}$. Thus, 
\begin{eqnarray*}
\mathcal{M}_{{\mathrm{loc}}_{_{0}}}(\textbf{a}_n)(x)&=&\mathcal{M}_{{\mathrm{loc}}_{_{0}}}(\textbf{a}_n)(x)\chi_{_{\widetilde{Q^{n}}}}(x)\\
&\lsim&\mathfrak{M}(\textbf{a}_n)(x)\chi_{_{\widetilde{Q^{n}}}}(x)\\
&\lsim&\left\|\textbf{a}_n\right\|_{\infty}\chi_{_{\widetilde{Q^{n}}}}(x)\\
&\lsim&\frac{\ell_n^{d+\delta+1}}{\ell_n^{d+\delta+1}+|x-x_n|^{d+\delta+1}}\left\|\chi_{_{Q^{n}}}\right\|_q^{-1}\\
&\lsim&\left[\mathfrak{M}\chi_{_{Q^{n}}}(x)\right]^{\frac{d+\delta+1}{d}}\left\|\chi_{_{Q^{n}}}\right\|_q^{-1},
\end{eqnarray*}
for all $x\in\mathbb{R}^d$. Combining these two cases, we obtain (\ref{inversetheo6loc}).  

With (\ref{inversetheo6loc}), we end as in the proof of \cite{AbFt}, Theorem 4.3.  
\end{proof}

\begin{thm} \label{thafondamloc}
Let $\max\left\{p,1\right\}<r<+\infty$, $0<\eta<q$ and $\delta\geq\left\lfloor d\left(\frac{1}{q}-1\right)\right\rfloor$ be an integer. Then, for all sequences $\left\{(\textbf{a}_n, Q^n)\right\}_{n\geq 0}$ in $\mathcal{A}_{\mathrm{loc}}(q,r,\delta)$ and all sequences of scalars $\left\{{\lambda}_n\right\}_{n\geq 0}$ such that 
\begin{eqnarray}
\left\|\sum_{n\geq 0}\left(\frac{|\lambda_n|}{\left\|\chi_{_{Q^n}}\right\|_{q}}\right)^{\eta}\chi_{_{Q^{n}}}\right\|_{\frac{q}{\eta},\frac{p}{\eta}}<+\infty, \label{inversetheor2loc}
\end{eqnarray}
the series $f:=\sum_{n\geq 0}{\lambda}_n\textbf{a}_n$ converges in $\mathcal{S'}$ and $\mathcal{H}_{\mathrm{loc}}^{(q,p)}$, with 
\begin{eqnarray*}
\left\|f\right\|_{\mathcal{H}_{\mathrm{loc}}^{(q,p)}}\lsim\left\|\sum_{n\geq 0}\left(\frac{|\lambda_n|}{\left\|\chi_{_{Q^n}}\right\|_{q}}\right)^{\eta}\chi_{_{Q^{n}}}\right\|_{\frac{q}{\eta},\frac{p}{\eta}}^{\frac{1}{\eta}}.
\end{eqnarray*}
\end{thm}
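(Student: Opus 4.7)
The plan is to mirror the proof of Theorem \ref{thafondaloc}, replacing the $L^\infty$ control on atoms by an $L^r$ control routed through Propositions \ref{operamaxima} and \ref{propofonda}. First, I would fix $(\textbf{a}_n,Q^n) \in \mathcal{A}_{\mathrm{loc}}(q,r,\delta)$ and set $\widetilde{Q^n} := 2\sqrt{d}\,Q^n$ when $|Q^n|<1$ (so that the vanishing moments of $\textbf{a}_n$ are available) and $\widetilde{Q^n} := 4\sqrt{d}\,Q^n$ when $|Q^n|\geq 1$. Repeating verbatim the exterior-of-$\widetilde{Q^n}$ argument from the proof of Theorem \ref{thafondaloc} would give
$$\mathcal{M}_{\mathrm{loc}_0}(\textbf{a}_n)(x)\chi_{(\widetilde{Q^n})^c}(x) \lsim \|\chi_{Q^n}\|_q^{-1} \bigl[\mathfrak{M}(\chi_{Q^n})(x)\bigr]^{(d+\delta+1)/d},$$
while a direct comparison between a smooth average and the Hardy--Littlewood maximal operator yields $\mathcal{M}_{\mathrm{loc}_0}(\textbf{a}_n)\chi_{\widetilde{Q^n}} \lsim \mathfrak{M}(\textbf{a}_n)\chi_{\widetilde{Q^n}}$ on the interior piece. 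This is exactly where the $L^r$-bound on $\textbf{a}_n$ will enter, in place of the $L^\infty$-bound used in Theorem \ref{thafondaloc}.

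Next, I would exploit the $\eta$-subadditivity $(\sum_n t_n)^\eta \leq \sum_n t_n^\eta$ for $t_n\geq 0$ (valid since $\eta < q \leq 1$) together with the pointwise bound above to reduce the claim to controlling
$$I := \sum_n |\lambda_n|^\eta \bigl[\mathfrak{M}(\textbf{a}_n)\chi_{\widetilde{Q^n}}\bigr]^\eta \quad\text{and}\quad II := \sum_n \|\chi_{Q^n}\|_q^{-\eta}|\lambda_n|^\eta\bigl[\mathfrak{M}(\chi_{Q^n})\bigr]^{(d+\delta+1)\eta/d}$$
in the amalgam quasi-norm $\|\cdot\|_{q/\eta,p/\eta}$. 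The bound for $II$ is identical to the one in Theorem \ref{thafondaloc}: since $\delta \geq \lfloor d(1/q-1)\rfloor$ ensures $(d+\delta+1)/d > 1/q$, each summand is a power of a Hardy--Littlewood maximal function after the rescaling $c_n := (|\lambda_n|/\|\chi_{Q^n}\|_q)^{d/(d+\delta+1)}$, and Proposition \ref{operamaxima} applied in the amalgam $(q(d+\delta+1)/(d\eta), p(d+\delta+1)/(d\eta))$ with index $u=(d+\delta+1)\eta/d$ will dominate $\|II\|_{q/\eta,p/\eta}$ by $\|\sum_n (|\lambda_n|/\|\chi_{Q^n}\|_q)^\eta \chi_{Q^n}\|_{q/\eta,p/\eta}$.

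The principal new step is the near part $I$. Setting $\mathfrak{b}_n := [\mathfrak{M}(\textbf{a}_n)\chi_{\widetilde{Q^n}}]^\eta$, the $L^r$-boundedness of $\mathfrak{M}$ (note $r>1$) will give
$$\|\mathfrak{b}_n\|_{r/\eta} = \|\mathfrak{M}(\textbf{a}_n)\chi_{\widetilde{Q^n}}\|_r^\eta \lsim \|\textbf{a}_n\|_r^\eta \leq |Q^n|^{\eta(1/r - 1/q)} \approx |\widetilde{Q^n}|^{1/(r/\eta) - 1/(q/\eta)},$$
with $\mathfrak{b}_n$ supported in $\widetilde{Q^n}$. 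The assumptions $\max\{p,1\}<r$ and $\eta<q$ force $1 < q/\eta \leq p/\eta < r/\eta$, so Proposition \ref{propofonda} with $u=q/\eta$, $v=p/\eta$, $s=r/\eta$ and coefficients $\delta_n := |\lambda_n|^\eta$ will yield
$$\|I\|_{q/\eta,p/\eta} \lsim \Bigl\|\sum_n \bigl(|\lambda_n|/\|\chi_{Q^n}\|_q\bigr)^\eta \chi_{\widetilde{Q^n}}\Bigr\|_{q/\eta,p/\eta},$$
after using $\|\chi_{\widetilde{Q^n}}\|_{q/\eta}\approx \|\chi_{Q^n}\|_q^\eta$. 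Since $\widetilde{Q^n}$ is a fixed dilation of $Q^n$, the bound $\chi_{\widetilde{Q^n}} \lsim [\mathfrak{M}(\chi_{Q^n})]^\beta$ holds for any $\beta>0$; choosing $\beta > 1$ and invoking Proposition \ref{operamaxima} a second time (in the amalgam $(q\beta/\eta,p\beta/\eta)$ with index $u=\beta$) will replace $\chi_{\widetilde{Q^n}}$ by $\chi_{Q^n}$ up to constants and close the estimate.

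Finally, convergence in $\mathcal{H}_{\mathrm{loc}}^{(q,p)}$ will follow by applying the same estimate to tails $\sum_{n\geq N}\lambda_n \textbf{a}_n$ combined with completeness (Corollary \ref{priscatun}), and convergence in $\mathcal{S}'$ is then automatic from the local analogue of the embedding $\mathcal{H}_{\mathrm{loc}}^{(q,p)}\hookrightarrow\mathcal{S}'$ noted after Remark \ref{remarqconver}. I expect the main technical hurdle to be this near-part analysis, specifically the simultaneous exponent juggling required for Propositions \ref{operamaxima} and \ref{propofonda} to both apply, which is exactly where the assumption $\max\{p,1\}<r$ is consumed.
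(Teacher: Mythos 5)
Your overall route is the same as the paper's: establish the pointwise bound $\mathcal{M}_{{\mathrm{loc}}_{_{0}}}(\textbf{a}_n)\lsim\mathfrak{M}(\textbf{a}_n)\chi_{_{\widetilde{Q^n}}}+\left\|\chi_{_{Q^n}}\right\|_q^{-1}\left[\mathfrak{M}(\chi_{_{Q^n}})\right]^{\mu}$ with $\mu=\frac{d+\delta+1}{d}$ by splitting into the cases $|Q^n|<1$ (vanishing moments) and $|Q^n|\geq 1$ (local maximal function vanishes off the dilated cube), then finish with Proposition \ref{propofonda} on the near part and Proposition \ref{operamaxima} on the far part; this is exactly the paper's (\ref{inversetheor5loc}) followed by the argument of \cite{AbFt}, Theorem 4.6. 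Your near-part analysis is correct: the exponents $1<q/\eta\leq p/\eta<r/\eta$ do satisfy the hypotheses of Proposition \ref{propofonda}, and the subsequent passage from $\chi_{_{\widetilde{Q^n}}}$ to $\chi_{_{Q^n}}$ via $\left[\mathfrak{M}(\chi_{_{Q^n}})\right]^{\beta}$ with $\beta>1$ is fine.

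There is, however, a genuine flaw in your far-part step. Because you raise the whole pointwise bound to the power $\eta$ \emph{before} estimating, the far part becomes $\sum_n c_n^{\eta}\left[\mathfrak{M}(\chi_{_{Q^n}})\right]^{\mu\eta}$ and the Fefferman--Stein index you must feed into Proposition \ref{operamaxima} is $u=\mu\eta$. That proposition requires $u>1$, but $\mu\eta$ can be $\leq 1$ for small $\eta$ (e.g.\ $d=1$, $\delta=0$, $\eta<1/2$ gives $\mu\eta=2\eta<1$); the hypothesis $\delta\geq\left\lfloor d(\frac{1}{q}-1)\right\rfloor$ only guarantees $\mu>1/q$, hence $\mu\eta>\eta/q$, which is not $>1$. (Your amalgam exponents are also off: writing $\left\|g\right\|_{q/\eta,p/\eta}=\left\|g^{1/(\mu\eta)}\right\|_{\mu q,\mu p}^{\mu\eta}$, the correct ambient amalgam is $(\mu q,\mu p)$, not $(q\mu/\eta,p\mu/\eta)$.) The repair is the order of operations used in the paper: apply the quasi-triangle inequality to the two pieces of the pointwise bound first, estimate the far piece at exponent one, i.e.\ $\left\|\sum_n|\lambda_n|\left\|\chi_{_{Q^n}}\right\|_q^{-1}\left[\mathfrak{M}(\chi_{_{Q^n}})\right]^{\mu}\right\|_{q,p}\lsim\left\|\sum_n|\lambda_n|\left\|\chi_{_{Q^n}}\right\|_q^{-1}\chi_{_{Q^n}}\right\|_{q,p}$ via Proposition \ref{operamaxima} with $u=\mu>1$ in the amalgam $(\mu q,\mu p)$ (both $>1$ since $\mu>1/q$ and $p\geq q$), and only then use the pointwise inequality $\sum_n t_n\leq\left(\sum_n t_n^{\eta}\right)^{1/\eta}$ to reach the stated right-hand side. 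Alternatively, one can observe that $\left\|\sum_n(\cdot)^{\eta}\chi_{_{Q^n}}\right\|_{q/\eta,p/\eta}^{1/\eta}$ is non-increasing in $\eta$, so it suffices to prove the theorem for one $\eta_0\in(d/(d+\delta+1),q)$, where your argument does go through; but as written your proof does not cover all $0<\eta<q$.
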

\begin{proof} 
For any $(\textbf{a}_n, Q^n)\in\mathcal{A}_{\mathrm{loc}}(q,r,\delta)$, we have
\begin{eqnarray}
\mathcal{M}_{{\mathrm{loc}}_{_{0}}}(\textbf{a}_n)(x)\lsim\mathfrak{M}(\textbf{a}_n)(x)\chi_{_{\widetilde{Q^n}}}(x)+\frac{\left[\mathfrak{M}(\chi_{_{Q^{n}}})(x)\right]^{\mu}}{\left\|\chi_{_{Q^{n}}}\right\|_q}\ , \label{inversetheor5loc}
\end{eqnarray}
for all $x\in\mathbb{R}^d$, where $\widetilde{Q^n}:=4\sqrt{d}Q^n$ and $\mu=\frac{d+\delta+1}{d}\cdot$ To prove (\ref{inversetheor5loc}), denote by $x_n$ and $\ell_n$ respectively the center and the side-lenght of $Q^n$. It's clear that 
\begin{eqnarray}
\mathcal{M}_{{\mathrm{loc}}_{_{0}}}(\textbf{a}_n)(x)\lsim
\mathfrak{M}(\textbf{a}_n)(x), \label{inversetheor3loc}
\end{eqnarray}
for all $x\in\widetilde{Q^n}$. Also, 
\begin{eqnarray}
\mathcal{M}_{{\mathrm{loc}}_{_{0}}}(\textbf{a}_n)(x)\lsim
\left[\mathfrak{M}(\chi_{_{Q^{n}}})(x)\right]^{\frac{d+\delta+1}{d}}\left\|\chi_{_{Q^{n}}}\right\|_q^{-1}, \label{inversetheor4loc}
\end{eqnarray}
for all $x\notin\widetilde{Q^n}$. Endeed, when $|Q^n|<1$, by proceeding as in the proof of \cite{AbFt}, Theorem 4.3, pp. 1914-1915, we obtain (\ref{inversetheor4loc}). And when $|Q^n|\geq1$, (\ref{inversetheor4loc}) is clearly  satisfied since
\begin{eqnarray*}
\mathcal{M}_{{\mathrm{loc}}_{_{0}}}(\textbf{a}_n)(x)=0,
\end{eqnarray*}
for all $x\notin\widetilde{Q^{n}}$. Thus, combining (\ref{inversetheor3loc}) and (\ref{inversetheor4loc}), we obtain (\ref{inversetheor5loc}).

With (\ref{inversetheor5loc}), we end as in the proof of \cite{AbFt}, Theorem 4.6, p. 1921.  
\end{proof}

For our decomposition theorem, we need some lemmas. Also, we borrow some ideas from \cite{LT} and \cite{YDYS}. But before, we have the useful following remark.

\begin{remark} \label{remarqdenseyloc}
In our earlier paper \cite{AbFt}, Lemma 4.1 has been very useful to establish the decomposition theorem  of $\mathcal{H}^{(q,p)}$ spaces (\cite{AbFt}, Theorem 4.4, p. 1916). Also, in the same paper (see p. 1929), we noticed that, in Lemma 4.1, the maximal operator $\mathcal{M}^0$ can be replaced by its local version $\mathcal{M}_{\mathrm{loc}}^0$, more precisely with $b_k=(f-c_k)\eta_k$, if $|Q_\ast^k|<1$, and $b_k=f\eta_k$, if $|Q_\ast^k|\geq1$, for each integer $k\geq0$. That allowed us to show that $L_{\mathrm{loc}}^1\cap\mathcal{H}_{\mathrm{loc}}^{(q,p)}$ is dense in $\mathcal{H}_{\mathrm{loc}}^{(q,p)}$ in the quasi-norm $\left\|\cdot\right\|_{\mathcal{H}_{\mathrm{loc}}^{(q,p)}}$ (see \cite{AbFt}, Proposition 4.11, p. 1929). However, we discovered a mistake in the proof of \cite{AbFt}, Proposition 4.11, p. 1929. Here, we correct this error. 
\end{remark}

\begin{prop}\label{denseyloc} Suppose that $0<q\leq1$ and $0<p<+\infty$. Let $p\leq s\leq+\infty$ and $0<r\leq+\infty$. Then, $L_{\mathrm{loc}}^1\cap\mathcal{H}_{\mathrm{loc}}^{(q,p)}$, $(L^q,\ell^s)\cap\mathcal{H}_{\mathrm{loc}}^{(q,p)}$ and $(L^r,\ell^{\infty})\cap\mathcal{H}_{\mathrm{loc}}^{(q,p)}$ are dense subspaces of $\mathcal{H}_{\mathrm{loc}}^{(q,p)}$. 
\end{prop}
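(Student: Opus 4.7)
The plan is to approximate every $f\in\mathcal{H}_{\mathrm{loc}}^{(q,p)}$ by the ``good parts'' of its Calder\'on--Zygmund decompositions at increasingly high heights, and to verify that these good parts lie in each of the three intersection spaces simultaneously. First, I would apply the local version of \cite{AbFt}, Lemma~4.1 (in the form described in Remark~\ref{remarqdenseyloc}) at level $2^k$: set $\Omega_k := \{x\in\mathbb{R}^d : \mathcal{M}_{\mathrm{loc}}^{0}(f)(x) > 2^k\}$, take a Whitney-type decomposition $\{Q_i^k\}_i$ of $\Omega_k$ with a subordinated smooth partition of unity $\{\eta_{i,k}\}_i$, and write
\[
f = g_k + \sum_i b_{i,k}\qquad\text{in } \mathcal{S}',
\]
where each $b_{i,k}$ is supported in a fixed dilate $Q_{i,\ast}^k$ of $Q_i^k$, has $L^{\infty}$-norm $\lsim 2^k$, and carries vanishing moments of order $\leq\delta$ whenever $|Q_{i,\ast}^k|<1$; the good part satisfies $g_k = f$ on $\Omega_k^c$ and $|g_k|\lsim 2^k$ almost everywhere.

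Step one is to show that $g_k$ belongs to $L_{\mathrm{loc}}^{1}$, to $(L^q,\ell^s)$ for every $s\geq p$ and to $(L^r,\ell^{\infty})$ for every $0<r\leq+\infty$. Splitting $g_k = g_k\chi_{_{\Omega_k^c}} + g_k\chi_{_{\Omega_k}}$, the first summand is pointwise dominated by $\mathcal{M}_{\mathrm{loc}}^{0}(f)\in(L^q,\ell^p)$, while Chebyshev's inequality gives $|\Omega_k\cap Q_j|\leq 2^{-kq}\|\mathcal{M}_{\mathrm{loc}}^{0}(f)\chi_{_{Q_j}}\|_q^q$ for each $j\in\mathbb{Z}^d$, yielding
\[
\|g_k\chi_{_{Q_j}}\|_q \lsim \|\mathcal{M}_{\mathrm{loc}}^{0}(f)\chi_{_{Q_j}}\|_q
\]
uniformly in $j$. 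Hence $g_k\in(L^q,\ell^p)$, and Proposition~\ref{ratotute}(2) upgrades this to $(L^q,\ell^s)$ for every $s\geq p$. For $(L^r,\ell^{\infty})$ the uniform bound $|g_k|\lsim 2^k$ is enough, since $\|g_k\chi_{_{Q_j}}\|_r\lsim 2^k$, and local integrability is then automatic. Finally, $g_k\in\mathcal{H}_{\mathrm{loc}}^{(q,p)}$ follows from step two combined with $f\in\mathcal{H}_{\mathrm{loc}}^{(q,p)}$.

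Step two is to prove $\|f-g_k\|_{\mathcal{H}_{\mathrm{loc}}^{(q,p)}}\to 0$ as $k\to +\infty$. After renormalization each $b_{i,k}$ is a constant multiple of a $(q,\infty,\delta)$-atom for $\mathcal{H}_{\mathrm{loc}}^{(q,p)}$ associated with $Q_{i,\ast}^k$ with coefficient $\lambda_{i,k}\approx 2^k|Q_{i,\ast}^k|^{1/q}$, so $\lambda_{i,k}/\|\chi_{_{Q_{i,\ast}^k}}\|_q\approx 2^k$. Applying the reconstruction Theorem~\ref{thafondaloc} with $\eta = q$, together with the bounded overlap of $\{Q_{i,\ast}^k\}_i$ and the maximal function estimates of Proposition~\ref{operamaxima} to control $\sum_i\chi_{_{Q_{i,\ast}^k}}$ by a constant times $\chi_{_{\Omega_k}}$ in the relevant amalgam norm,
\[
\|f-g_k\|_{\mathcal{H}_{\mathrm{loc}}^{(q,p)}}^{p} \lsim 2^{kp}\sum_{j\in\mathbb{Z}^d}|\Omega_k\cap Q_j|^{p/q}.
\]
The sharper Chebyshev estimate $2^{kp}|\Omega_k\cap Q_j|^{p/q}\leq\bigl(\int_{\Omega_k\cap Q_j}(\mathcal{M}_{\mathrm{loc}}^{0}(f))^q\bigr)^{p/q}$ shows that each $j$th summand tends to $0$ as $k\to+\infty$, because $\chi_{_{\Omega_k}}\to 0$ almost everywhere (since $\mathcal{M}_{\mathrm{loc}}^{0}(f)<+\infty$ almost everywhere), and is dominated by the summable sequence $\|\mathcal{M}_{\mathrm{loc}}^{0}(f)\chi_{_{Q_j}}\|_q^p$. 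A dominated convergence argument on the index set $\mathbb{Z}^d$ then forces the displayed right-hand side to vanish.

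The main obstacle---and the exact point where the argument in \cite{AbFt}, Proposition~4.11 went astray---is that the naive global bound $\|\chi_{_{\Omega_k}}\|_{q,p}\lsim 2^{-k}\|\mathcal{M}_{\mathrm{loc}}^{0}(f)\|_{q,p}$ only cancels the prefactor $2^k$ and produces a $k$-independent bound. What one really needs is the refined pointwise-in-$j$ estimate, integrating $(\mathcal{M}_{\mathrm{loc}}^{0}f)^q$ over the \emph{shrinking} set $\Omega_k\cap Q_j$ rather than over the fixed cube $Q_j$; this is what activates dominated convergence on $\mathbb{Z}^d$ and yields genuine decay. Executing this inside the amalgam scale (rather than in a pure $L^q$ setting) is the central technical point.
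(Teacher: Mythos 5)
Your argument has a genuine gap, and it is precisely the one this proposition is designed to repair. You assert at the outset that the good part satisfies $g_k=f$ on $\Omega_k^c$ and $|g_k|\lsim 2^k$ almost everywhere, and you then lean on these pointwise facts repeatedly: to dominate $g_k\chi_{_{\Omega_k^c}}$ by $\mathcal{M}_{\mathrm{loc}}^{0}(f)$, to convert the Chebyshev bound on $|\Omega_k\cap Q_j|$ into $\left\|g_k\chi_{_{Q_j}}\right\|_q\lsim\left\|\mathcal{M}_{\mathrm{loc}}^{0}(f)\chi_{_{Q_j}}\right\|_q$, and to place $g_k$ in $(L^r,\ell^{\infty})$. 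But for $0<q\leq 1$ an element of $\mathcal{H}_{\mathrm{loc}}^{(q,p)}$ is in general only a tempered distribution, not a locally integrable function, so neither the pointwise identity $g_k=f$ on $\Omega_k^c$ nor the bound $|g_k|\lsim 2^k$ is available a priori; these require $f\in L_{\mathrm{loc}}^1$. This is exactly the mistake in \cite{AbFt}, Proposition 4.11 that Remark \ref{remarqdenseyloc} flags and that the present proposition corrects. Your diagnosis of where the earlier argument ``went astray'' is also off target: the convergence $\left\|f-g_j\right\|_{\mathcal{H}_{\mathrm{loc}}^{(q,p)}}\lsim\left\|\chi_{_{\mathcal{O}^j}}\mathcal{M}_{\mathrm{loc}}^0(f)\right\|_{q,p}\rightarrow 0$ is unproblematic (it follows from a.e.\ convergence and domination in $(L^q,\ell^p)$, no refined Chebyshev estimate needed); the defect was the unjustified $L^{\infty}$ bound on the good part.

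The correct order of operations is the one the paper follows: first establish the grand maximal function estimate (\ref{caldzygm1loc}) for $g_j$, which via Proposition \ref{operamaxima} yields $\mathcal{M}_{\mathrm{loc}}^0(g_j)\in(L^1,\ell^{p/q})$ and hence $g_j\in\mathcal{H}_{\mathrm{loc}}^{(1,\frac{p}{q})}\subset L_{\mathrm{loc}}^1$ by Corollary \ref{bravoooheiner}. Only after $g_j$ is known to be locally integrable can one invoke $|g_j|\leq\mathcal{M}_{{\mathrm{loc}}_{_{0}}}(g_j)$ a.e., which settles membership in $(L^q,\ell^s)$. For $(L^r,\ell^{\infty})$ the bound $|g_j|\lsim 2^j$ still requires $f\in L_{\mathrm{loc}}^1$, so for a general $f$ a two-stage approximation is unavoidable: approximate $f$ by $g_j\in L_{\mathrm{loc}}^1\cap\mathcal{H}_{\mathrm{loc}}^{(q,p)}$, then approximate each $g_j$ by the good parts of its own decomposition, which now do lie in $L^{\infty}$. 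As written, your proof only covers the case $f\in L_{\mathrm{loc}}^1$.
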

\begin{proof}  
Let $f\in\mathcal{H}_{\mathrm{loc}}^{(q,p)}$ and $\delta\geq\max\left\{\left\lfloor d\left(\frac{1}{q}-1\right)\right\rfloor,\left\lfloor d\left(\frac{1}{p}-1\right)\right\rfloor\right\}$ be an integer. For each $j\in\mathbb{Z}_{+}$, we consider the level set $$\mathcal{O}^j:=\left\{x\in\mathbb{R}^d:\ \mathcal{M}_{\mathrm{loc}}^0(f)(x)>2^j\right\}.$$ By Remark \ref{remarqdenseyloc}, $f$ can be decomposed as follows: $f=g_j+b_j$ with $b_j=\sum_{n\geq 0}b_{j,n}$ and $$b_{j,n}=\left\{\begin{array}{lll}(f-c_{j,n})\eta_{_{j,n}},&\text{ if }&|Q_{\ast}^{j,n}|<1\\ f\eta_{_{j,n}},&\text{ if }&|Q_{\ast}^{j,n}|\geq1,\end{array}\right.$$ where each $\eta_{j,n}$ is supported in $Q_{\ast}^{j,n}$, with 
\begin{eqnarray}
\bigcup_{n\geq0}Q_{\ast}^{j,n}=\mathcal{O}^j, \sum_{n\geq 0}\chi_{Q_{\ast}^{j,n}}\lsim 1; \sum_{n\geq 0}\eta_{_{j,n}}=\chi_{\mathcal{O}^j}; \ 0\leq\eta_{_{j,n}}\leq1; \label{caldzygm1loc0}
\end{eqnarray}
\begin{equation}
\mathcal{M}_{\mathrm{loc}}^0(b_{j,n})(x)\lsim\mathcal{M}_{\mathrm{loc}}^0(f)(x)\chi_{_{Q_{\ast}^{j,n}}}(x)+\frac{2^j \ell_{j,n}^{d+\delta+1}}{|x-x_{j,n}|^{d+\delta+1}}\chi_{_{\mathbb{R}^d\backslash{Q_{\ast}^{j,n}}}}(x) \label{caldzygm2loc}
\end{equation}
and
\begin{equation}
\mathcal{M}_{\mathrm{loc}}^0(g_j)(x)\lsim\mathcal{M}_{\mathrm{loc}}^0(f)(x)\chi_{{\mathbb{R}^d\backslash{\mathcal{O}^j}}}(x)+2^j\sum_{n\geq 0}\frac{\ell_{j,n}^{d+\delta+1}}{(\ell_{j,n}+ |x-x_{j,n}|)^{d+\delta+1}}\ , \label{caldzygm1loc}
\end{equation}
for all $x\in\mathbb{R}^d$, where $x_{j,n}$ and $\ell_{j,n}$ denote respectively the center and the side-length of $Q_{\ast}^{j,n}$ and $\mathbb{R}^d\backslash{\mathcal{O}^j}=\left\{x\in\mathbb{R}^d:\ \mathcal{M}_{\mathrm{loc}}^0(f)(x)\leq 2^j\right\}$. Set $u=\frac{d+\delta+1}{d}\cdot$ We have $1<u$ and $1<\min\left\{qu,pu\right\}$, since $\delta\geq\max\left\{\left\lfloor d\left(\frac{1}{q}-1\right)\right\rfloor,\left\lfloor d\left(\frac{1}{p}-1\right)\right\rfloor\right\}\cdot$ Thus, with (\ref{caldzygm2loc}) and (\ref{caldzygm1loc0}), by arguing as in the proof of \cite{AbFt}, Theorem 4.4, p. 1916, we obtain $$\left\|b_j\right\|_{\mathcal{H}_{\mathrm{loc}}^{(q,p)}}\lsim\left\|\chi_{_{\mathcal{O}^j}}\mathcal{M}_{\mathrm{loc}}^0(f)\right\|_{q,p},$$ for all $j\in\mathbb{Z}_{+}$, so that 
\begin{eqnarray}
g_j=f-b_j\in\mathcal{H}_{\mathrm{loc}}^{(q,p)}, \label{caldzygm3loc}
\end{eqnarray}
for all $j\in\mathbb{Z}_{+}$ and $\left\|f-g_j\right\|_{\mathcal{H}_{\mathrm{loc}}^{(q,p)}}=\left\|b_j\right\|_{\mathcal{H}_{\mathrm{loc}}^{(q,p)}}\lsim\left\|\chi_{_{\mathcal{O}^j}}\mathcal{M}_{\mathrm{loc}}^0(f)\right\|_{q,p}\rightarrow 0$ as $j\rightarrow+\infty$. Hence the sequence $\left\{g_j\right\}_{j\geq 0}\subset\mathcal{H}_{\mathrm{loc}}^{(q,p)}$ converges to $f$ in $\mathcal{H}_{\mathrm{loc}}^{(q,p)}$. Moreover,
\begin{eqnarray*}
\left\|\mathcal{M}_{\mathrm{loc}}^0(g_j)\right\|_{1,\frac{p}{q}}&\lsim&\left\|\mathcal{M}_{\mathrm{loc}}^0(f)\chi_{{\mathbb{R}^d\backslash{\mathcal{O}^j}}}+2^j\sum_{n\geq 0}\left[\mathfrak{M}(\chi_{_{Q_{\ast}^{j,n}}})\right]^{\frac{d+\delta+1}{d}}\right\|_{1,\frac{p}{q}}\\
&\lsim&\left\|\mathcal{M}_{\mathrm{loc}}^0(f)\chi_{{\mathbb{R}^d\backslash{\mathcal{O}^j}}}\right\|_{1,\frac{p}{q}}
+2^j\left\|\sum_{n\geq 0}\left[\mathfrak{M}(\chi_{_{Q_{\ast}^{j,n}}})\right]^{\frac{d+\delta+1}{d}}\right\|_{1,\frac{p}{q}},
\end{eqnarray*}
for all $j\in\mathbb{Z}_{+}$, by (\ref{caldzygm1loc}). But, since $0<q\leq1$, we have
\begin{eqnarray*}
\left\|\mathcal{M}_{\mathrm{loc}}^0(f)\chi_{{\mathbb{R}^d\backslash{\mathcal{O}^j}}}\right\|_{1,\frac{p}{q}}\leq 2^{j(1-q)}\left\|\mathcal{M}_{\mathrm{loc}}^0(f)\right\|_{q,p}^q\approx 
2^{j(1-q)}\left\|f\right\|_{\mathcal{H}_{\mathrm{loc}}^{(q,p)}}^q.
\end{eqnarray*}
Also, since $0<q\leq1<pu$, we have $\frac{p}{q}u>1$. Thus, by Proposition \ref{operamaxima} and (\ref{caldzygm1loc0}),   
\begin{eqnarray*}
\left\|\sum_{n\geq 0}\left[\mathfrak{M}(\chi_{_{Q_{\ast}^{j,n}}})\right]^{\frac{d+\delta+1}{d}}\right\|_{1,\frac{p}{q}}&=&\left\|\left(\sum_{n\geq 0}\left[\mathfrak{M}(\chi_{_{Q_{\ast}^{j,n}}})\right]^u\right)^{\frac{1}{u}}\right\|_{u,\frac{p}{q}u}^u\\
&\lsim&\left\|\sum_{n\geq 0}\chi_{_{Q_{\ast}^{j,n}}}\right\|_{1,\frac{p}{q}}\lsim\left\|\chi_{_{\mathcal{O}^j}}\right\|_{1,\frac{p}{q}}
\end{eqnarray*}
and 
\begin{eqnarray*}
\left\|\chi_{_{\mathcal{O}^j}}\right\|_{1,\frac{p}{q}}=\left[\sum_{k\in\mathbb{Z}^d}\left(\int_{Q_k}\chi_{_{\mathcal{O}^j}}(x)dx\right)^{\frac{p}{q}}\right]^{\frac{q}{p}}&\leq&\left[2^{-jp}\left\|\mathcal{M}_{\mathrm{loc}}^0(f)\right\|_{q,p}^p\right]^{\frac{q}{p}}\\
&\approx&2^{-jq}\left\|f\right\|_{\mathcal{H}_{\mathrm{loc}}^{(q,p)}}^q.
\end{eqnarray*}
Hence
\begin{eqnarray*}
\left\|\sum_{n\geq 0}\left[\mathfrak{M}(\chi_{_{Q_{\ast}^{j,n}}})\right]^{\frac{d+\delta+1}{d}}\right\|_{1,\frac{p}{q}}\lsim 2^{-jq}\left\|f\right\|_{\mathcal{H}_{\mathrm{loc}}^{(q,p)}}^q.
\end{eqnarray*}
Thus,
\begin{eqnarray*}
\left\|\mathcal{M}_{\mathrm{loc}}^0(g_j)\right\|_{1,\frac{p}{q}}\lsim 2^{j(1-q)}\left\|f\right\|_{\mathcal{H}_{\mathrm{loc}}^{(q,p)}}^q<+\infty,
\end{eqnarray*}
for all $j\in\mathbb{Z}_{+}$. Therefore, for all $j\in\mathbb{Z}_{+}$, 
\begin{eqnarray}
g_j\in\mathcal{H}_{\mathrm{loc}}^{(1,\frac{p}{q})}\subset L_{\mathrm{loc}}^1,\label{caldzygm4loc}
\end{eqnarray}
by Corollary \ref{bravoooheiner}. Combining (\ref{caldzygm3loc}) and (\ref{caldzygm4loc}), we obtain 
\begin{eqnarray}
g_j\in L_{\mathrm{loc}}^1\cap\mathcal{H}_{\mathrm{loc}}^{(q,p)},
\end{eqnarray}
for all $j\in\mathbb{Z}_{+}$. Thus, $L_{\mathrm{loc}}^1\cap\mathcal{H}_{\mathrm{loc}}^{(q,p)}$ is dense in $\mathcal{H}_{\mathrm{loc}}^{(q,p)}$. 

Also, since $g_j\in L_{\mathrm{loc}}^1$ for all $j\in\mathbb{Z}_{+}$, we have $|g_j(x)|\leq\mathcal{M}_{{\mathrm{loc}}_{_{0}}}(g_j)(x)$, for almost all $x\in\mathbb{R}^d$ and all $j\in\mathbb{Z}_{+}$. It follows from this estimate that  
\begin{eqnarray*}
\left\|g_j\right\|_{q,s}\leq\left\|\mathcal{M}_{{\mathrm{loc}}_{_{0}}}(g_j)\right\|_{q,s}\leq\left\|\mathcal{M}_{{\mathrm{loc}}_{_{0}}}(g_j)\right\|_{q,p}=\left\|g_j\right\|_{\mathcal{H}_{\mathrm{loc}}^{(q,p)}}<+\infty,
\end{eqnarray*}
 all $j\in\mathbb{Z}_{+}$. Hence $g_j\in(L^q,\ell^s)\cap\mathcal{H}_{\mathrm{loc}}^{(q,p)}$, for all $j\in\mathbb{Z}_{+}$. Thus, $(L^q,\ell^s)\cap\mathcal{H}_{\mathrm{loc}}^{(q,p)}$ is dense in $\mathcal{H}_{\mathrm{loc}}^{(q,p)}$.

For the proof of the density of $(L^r,\ell^{\infty})\cap\mathcal{H}_{\mathrm{loc}}^{(q,p)}$ in $\mathcal{H}_{\mathrm{loc}}^{(q,p)}$, our approach in the proof of \cite{AbFt}, Proposition 4.11 is not correct, since the estimate $\mathcal{M}_{{\mathrm{loc}}_{_{0}}}(g_j)(x)\lsim 2^j$, for almost all $x\in\mathbb{R}^d$, used in \cite{AbFt} is not valid when the tempered distribution $f\notin L_{\mathrm{loc}}^1$. Here, we close this gap by showing that for any $\epsilon>0$, there exists $f_\epsilon\in(L^r,\ell^{\infty})\cap\mathcal{H}_{\mathrm{loc}}^{(q,p)}$ such that  
\begin{eqnarray}
\left\|f-f_\epsilon\right\|_{\mathcal{H}_{\mathrm{loc}}^{(q,p)}}\leq\epsilon, \label{1correctif01}
\end{eqnarray}
which implies that $(L^r,\ell^{\infty})\cap\mathcal{H}_{\mathrm{loc}}^{(q,p)}$ is dense in $\mathcal{H}_{\mathrm{loc}}^{(q,p)}$. For the proof of (\ref{1correctif01}), we distinguish two cases. 

Case 1: Assume that $f\in L_{\mathrm{loc}}^1$. Then, 
\begin{eqnarray}
|g_j(x)|\leq\mathcal{M}_{{\mathrm{loc}}_{_{0}}}(g_j)(x)\lsim 2^j
, \label{1correctif2}
\end{eqnarray}
for almost all $x\in\mathbb{R}^d$ and all $j\in\mathbb{Z}_{+}$. Hence $g_j\in L^{\infty}\subset(L^r,\ell^{\infty})$, for all $j\in\mathbb{Z}_{+}$. Thus, $g_j\in(L^r,\ell^{\infty})\cap\mathcal{H}_{\mathrm{loc}}^{(q,p)}$, for all $j\in\mathbb{Z}_{+}$. Furthermore, by the above results, we know that $\lim_{j\rightarrow+\infty}\left\|f-g_j\right\|_{\mathcal{H}_{\mathrm{loc}}^{(q,p)}}=0.$ Hence for any $\epsilon>0$, there exists an integer $j_\epsilon\geq0$ such that $g_{j_\epsilon}\in(L^r,\ell^{\infty})\cap\mathcal{H}_{\mathrm{loc}}^{(q,p)}$ and
\begin{eqnarray*}
\left\|f-g_{j_\epsilon}\right\|_{\mathcal{H}_{\mathrm{loc}}^{(q,p)}}\leq\epsilon.
\end{eqnarray*}

Case 2: Assume that $f\notin L_{\mathrm{loc}}^1$. Then, we have no longer $g_j\in L^{\infty}$, since (\ref{1correctif2}) does not hold. However, by the above results, we know that $g_j\in L_{\mathrm{loc}}^1\cap\mathcal{H}_{\mathrm{loc}}^{(q,p)}$, for all $j\in\mathbb{Z}_{+}$, and 
\begin{eqnarray}
\lim_{j\rightarrow+\infty}\left\|f-g_j\right\|_{\mathcal{H}_{\mathrm{loc}}^{(q,p)}}=0.\label{1correctif3} 
\end{eqnarray}
Since $g_j\in L_{\mathrm{loc}}^1\cap\mathcal{H}_{\mathrm{loc}}^{(q,p)}$, we can apply the preceeding argument to each $g_j$. Thus, for each $g_j$, there exists a sequence $\left\{g_k^j\right\}_{k\geq0}\subset(L^r,\ell^{\infty})\cap\mathcal{H}_{\mathrm{loc}}^{(q,p)}$ with
\begin{eqnarray}
\lim_{k\rightarrow+\infty}\left\|g_j-g_k^j\right\|_{\mathcal{H}_{\mathrm{loc}}^{(q,p)}}=0. \label{1correctif4}
\end{eqnarray}
Let $\epsilon>0$ and $C_{q,p}\geq1$ the modulus of concavity of the quasi-norm $\left\|\cdot\right\|_{\mathcal{H}_{\mathrm{loc}}^{(q,p)}}$. Then, by (\ref{1correctif3}), there exists an integer $j_\epsilon\geq0$ such that $\left\|f-g_{j_\epsilon}\right\|_{\mathcal{H}_{\mathrm{loc}}^{(q,p)}}\leq\frac{\epsilon}{2C_{q,p}}\cdot$ Also, by (\ref{1correctif4}), there exists an integer $k_{j_\epsilon}\geq0$ such that $g_{k_{j_\epsilon}}^{j_\epsilon}\in(L^r,\ell^{\infty})\cap\mathcal{H}_{\mathrm{loc}}^{(q,p)}$ and $\left\|g_{j_\epsilon}-g_{k_{j_\epsilon}}^{j_\epsilon}\right\|_{\mathcal{H}_{\mathrm{loc}}^{(q,p)}}\leq\frac{\epsilon}{2C_{q,p}}\cdot$ Thus, $g_{k_{j_\epsilon}}^{j_\epsilon}\in(L^r,\ell^{\infty})\cap\mathcal{H}_{\mathrm{loc}}^{(q,p)}$ and
\begin{eqnarray*}
\left\|f-g_{k_{j_\epsilon}}^{j_\epsilon}\right\|_{\mathcal{H}_{\mathrm{loc}}^{(q,p)}}\leq C_{q,p}\left(\left\|f-g_{j_\epsilon}\right\|_{\mathcal{H}_{\mathrm{loc}}^{(q,p)}}+\left\|g_{j_\epsilon}-g_{k_{j_\epsilon}}^{j_\epsilon}\right\|_{\mathcal{H}_{\mathrm{loc}}^{(q,p)}}\right)\leq\epsilon.
\end{eqnarray*}

Combining the cases 1 and 2, we obtain (\ref{1correctif01}). This completes the proof of Proposition \ref{denseyloc}.
\end{proof}

Now, consider $f\in\mathcal{H}_{\mathrm{loc}}^{(q,p)}$ and an integer $\delta\geq0$. For each $j\in\mathbb{Z}$, we set $\mathcal{O}^j:=\left\{x\in\mathbb{R}^d:\ \mathcal{M}_{\mathrm{loc}}^0(f)(x)>2^j\right\}$. By Remark \ref{remarqdenseyloc}, $f$ can be decomposed as follows: $f=g_j+b_j$, with $b_j=\sum_{k\geq 0} b_{j,k}$ , $b_{j,k}=(f-c_{j,k})\eta_{_{j,k}}$ , if $|Q_{\ast}^{j,k}|<1$, and $b_{j,k}=f\eta_{_{j,k}}$ , if $|Q_{\ast}^{j,k}|\geq1$, where each function $\eta_{_{j,k}}$ is supported on $Q_{\ast}^{j,k}$. Also, $c_{j,k}\in\mathcal{P_{\delta}}:=\mathcal{P_{\delta}}(\mathbb{R}^d)$ (the space of polynomial functions of degree at most $\delta$) satisfies  
\begin{eqnarray}
\int_{\mathbb{R}^d}[f(x)-c_{j,k}(x)]\eta_{j,k}(x)\mathfrak{p}(x)dx=0, \label{operamaxi40re1}
\end{eqnarray}
for all $\mathfrak{p}\in\mathcal{P_{\delta}}$; namely $c_{j,k}=P_{k}^{j}(f)$, where $P_{k}^{j}$ is the projection operator from $\mathcal{S'}$ onto $\mathcal{P_{\delta}}$ with respect to the norm 
\begin{eqnarray}
\left\|\mathfrak{p}\right\|=\left(\int_{\mathbb{R}^d}|\mathfrak{p}(x)|^2\tilde{\eta}_{j,k}(x)dx\right)^{1/2},\ \ \mathfrak{p}\in\mathcal{P_{\delta}}, \label{normeprecisbis}
\end{eqnarray}
with $\tilde{\eta}_{j,k}=\frac{\eta_{j,k}}{\int_{Q_{\ast}^{j,k}}\eta_{j,k}(x)dx}\cdot$ (\ref{operamaxi40re1}) is understood as $$\left\langle f\ ,\  \eta_{j,k}\mathfrak{p}\right\rangle=\int_{\mathbb{R}^d}c_{j,k}(x)\eta_{j,k}(x)\mathfrak{p}(x)dx.$$ We denote by $c_{k}^{\ell}$ the unique polynomial of $\mathcal{P_{\delta}}$ which satisfies
\begin{eqnarray}
\left\langle \left(f-c_{j+1,\ell}\right)\eta_{j,k}\ ,\ \mathfrak{p}\eta_{j+1,\ell}\right\rangle=\int_{\mathbb{R}^d}c_{k}^{\ell}(x)\mathfrak{p}(x)\eta_{j+1,\ell}(x)dx, \label{operamaxi40re3}
\end{eqnarray}
for all $\mathfrak{p}\in\mathcal{P_{\delta}}$; namely $c_{k}^{\ell}=P_{\ell}^{j+1}\left((f-c_{j+1,\ell})\eta_{j,k}\right)$, where $P_{\ell}^{j+1}$ is the projection operator from $\mathcal{S'}$ onto $\mathcal{P_{\delta}}$ with respect to the norm  
\begin{eqnarray}
\left\|\mathfrak{p}\right\|=\left(\int_{\mathbb{R}^d}|\mathfrak{p}(x)|^2\tilde{\eta}_{j+1,\ell}(x)dx\right)^{1/2},\ \ \mathfrak{p}\in\mathcal{P_{\delta}}. \label{normeprecis}
\end{eqnarray} 
We will often write (\ref{operamaxi40re3}) as follows:
\begin{eqnarray*}
\int_{\mathbb{R}^d}\left(f(x)-c_{j+1,\ell}(x)\right)\eta_{j,k}(x)\mathfrak{p}(x)\eta_{j+1,\ell}(x)dx=\int_{\mathbb{R}^d}c_{k}^{\ell}(x)\mathfrak{p}(x)\eta_{j+1,\ell}(x)dx.
\end{eqnarray*} 
We have
\begin{eqnarray}
c_{j,k}=\sum_{n=1}^m\left(\int_{\mathbb{R}^d}f(x)e_n(x)\tilde{\eta}_{j,k}(x)dx\right)\overline{e_n} \label{operamaxi40re31}
\end{eqnarray}
and
\begin{equation}
c_{k}^{\ell}=\sum_{n=1}^m\left(\int_{\mathbb{R}^d}\left(f(x)-c_{j+1,\ell}(x)\right)\eta_{j,k}(x)\pi_n(x)\tilde{\eta}_{j+1,\ell}(x)dx\right)\overline{\pi_n}, \label{operamaxi40re30}
\end{equation}
where $\left\{e_1,\ldots,e_m\right\}$ and $\left\{\pi_1,\ldots,\pi_m\right\}$ are orthonormal bases of $\mathcal{P}_{\delta}$ respectively with respect to the norms (\ref{normeprecisbis}) and (\ref{normeprecis}) with $m=\text{dim}\mathcal{P}_{\delta}$, and $\overline{\mathfrak{p}}$ stands for the conjugate of the polynomial $\mathfrak{p}\in\mathcal{P}_{\delta}$. The integrals in (\ref{operamaxi40re31}) and (\ref{operamaxi40re30}) are understood respectively as $\left\langle f\ ,\ e_n\tilde{\eta}_{j,k}\right\rangle$ and $\left\langle\left(f-c_{j+1,\ell}\right)\eta_{j,k}\ ,\ \pi_n\tilde{\eta}_{j+1,\ell}\right\rangle$. We have
\begin{eqnarray}
Q_{\ast}^{j,k}\cap Q_{\ast}^{j+1,\ell}=\emptyset\ \Rightarrow\ c_{k}^{\ell}=0, \label{operamaxi5}
\end{eqnarray}
by (\ref{operamaxi40re30}). We have the following lemma whose proof is identical to those of \cite{AbFt}, (4.9), (4.10) and (4.11), p. 1918.

\begin{lem}\label{lemdecompoat3}\
\begin{enumerate}
\item If $Q_{\ast}^{j,k}\cap Q_{\ast}^{j+1,\ell}\neq\emptyset$, then $\text{diam}(Q_{\ast}^{j+1,\ell})\leq C\text{diam}(Q_{\ast}^{j,k})$ and $Q_{\ast}^{j+1,\ell}\subset C_0Q_{\ast}^{j,k}$ with $1<C<C_0$ constants independent of $f$, $j$; $k$ and $\ell$. \label{operamaxi6}
\item $\ \sharp\left\{k\in\mathbb{Z}_{+}:\ Q_{\ast}^{j,k}\cap Q_{\ast}^{j+1,\ell}\neq\emptyset\right\}\lsim 1$, for each $j\in\mathbb{Z}$ and $\ell\in\mathbb{Z}_{+}$. \label{operamaxi9}
\item Let $j\in\mathbb{Z}$ and $k\in\mathbb{Z}_{+}$. For any real $\lambda\geq 9d$, $\ \lambda Q_{\ast}^{j,k}\cap(\mathbb{R}^d\backslash\mathcal{O}^j)\neq\emptyset$. \label{operamaxi8bis}
\end{enumerate}
\end{lem}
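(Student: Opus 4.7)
The plan is to derive all three items from the standard Whitney-type structure underlying the cubes $Q_{\ast}^{j,k}$ that cover the level set $\mathcal{O}^j=\{\mathcal{M}_{\mathrm{loc}}^0(f)>2^j\}$: namely the comparability $\ell_{j,k}\approx\mathrm{dist}(Q_{\ast}^{j,k},(\mathcal{O}^j)^c)$ built into the construction, the bounded overlap $\sum_k\chi_{_{Q_{\ast}^{j,k}}}\lsim 1$ recorded in (\ref{caldzygm1loc0}), and the monotonicity $\mathcal{O}^{j+1}\subset\mathcal{O}^j$ (equivalently $(\mathcal{O}^j)^c\subset(\mathcal{O}^{j+1})^c$) coming from $2^{j+1}>2^j$. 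These are exactly the ingredients used in the proofs of (4.9)-(4.11) in \cite{AbFt}, and my plan is to follow that template.

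For (3), the upper bound in the Whitney comparability produces a point $w\in(\mathcal{O}^j)^c$ at distance $\lsim\ell_{j,k}$ from the center $x_{j,k}$ of $Q_{\ast}^{j,k}$; a direct geometric check shows $w\in\lambda Q_{\ast}^{j,k}$ as soon as $\lambda$ exceeds a dimensional constant, and $\lambda\geq 9d$ is comfortably enough. For the first half of (1), I would fix a common point $x_0\in Q_{\ast}^{j,k}\cap Q_{\ast}^{j+1,\ell}$ and estimate $\mathrm{dist}(x_0,(\mathcal{O}^{j+1})^c)$ from below by a constant times $\ell_{j+1,\ell}$ (Whitney at level $j+1$) and from above by a constant times $\ell_{j,k}$ (using $(\mathcal{O}^j)^c\subset(\mathcal{O}^{j+1})^c$ together with Whitney at level $j$ and $\mathrm{diam}(Q_{\ast}^{j,k})\approx\ell_{j,k}$). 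Comparing the two estimates yields $\ell_{j+1,\ell}\lsim\ell_{j,k}$, and then $Q_{\ast}^{j+1,\ell}\subset C_0 Q_{\ast}^{j,k}$ follows from a triangle-inequality argument based at $x_0$: every point of $Q_{\ast}^{j+1,\ell}$ lies within $\sqrt{d}\,\ell_{j+1,\ell}\lsim\ell_{j,k}$ of $x_0\in Q_{\ast}^{j,k}$, hence within a controlled multiple of $\ell_{j,k}$ of $x_{j,k}$.

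For (2), fix $(j+1,\ell)$ and set $\mathcal{K}=\{k:\,Q_{\ast}^{j,k}\cap Q_{\ast}^{j+1,\ell}\neq\emptyset\}$. I would split $\mathcal{K}=\mathcal{K}_1\cup\mathcal{K}_2$ according to whether $\ell_{j,k}$ is much larger than $\ell_{j+1,\ell}$ or comparable to it (the lower bound $\ell_{j,k}\gsim\ell_{j+1,\ell}$ being guaranteed by (1)). For $k\in\mathcal{K}_1$, the cube $Q_{\ast}^{j,k}$ absorbs $Q_{\ast}^{j+1,\ell}$ into a small fixed dilation of itself, so the center of $Q_{\ast}^{j+1,\ell}$ lies in boundedly many such dilated cubes and bounded overlap caps $|\mathcal{K}_1|$. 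For $k\in\mathcal{K}_2$ every such $Q_{\ast}^{j,k}$ is contained in a fixed dilation $E$ of $Q_{\ast}^{j+1,\ell}$ with $|E|\lsim|Q_{\ast}^{j+1,\ell}|$ and $|Q_{\ast}^{j,k}|\approx|Q_{\ast}^{j+1,\ell}|$, so integrating $\sum_{k\in\mathcal{K}_2}\chi_{_{Q_{\ast}^{j,k}}}\lsim 1$ over $E$ yields $|\mathcal{K}_2|\lsim 1$. The delicate point, and where I expect to spend the most care, is the bounded-overlap claim for the slightly dilated cubes in the first subcase; this is standard once the $Q_{\ast}^{j,k}$ are identified with the mildly enlarged Whitney cubes of the construction, whose dilation factor is a small dimensional constant, so that dilating once more by a fixed amount preserves bounded overlap.
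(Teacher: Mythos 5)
Your argument is correct and is essentially the same as the paper's: the paper gives no proof here, stating only that the proof is identical to those of (4.9)--(4.11) in \cite{AbFt}, and those are precisely the standard Whitney-decomposition facts (side-length comparable to distance to the complement, nesting $\mathcal{O}^{j+1}\subset\mathcal{O}^j$, bounded overlap) that you deploy. The one point you rightly flag as delicate — bounded overlap of the dilated cubes in your first subcase of (2) — does go through, but only because the needed dilation factor is small (a factor $2$ suffices when $\ell_{j,k}\gg\ell_{j+1,\ell}$, and for such small factors every dilated cube containing the fixed interior point $y_\ell$ has side length comparable to $\mathrm{dist}(y_\ell,(\mathcal{O}^j)^c)$, so a volume count finishes); for large dilation factors bounded overlap of dilated Whitney cubes can genuinely fail.
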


Set $E_1^j:=\left\{k\in\mathbb{Z}_{+}:\ |Q_{\ast}^{j,k}|<1\right\}$ and $E_2^j:=\left\{k\in\mathbb{Z}_{+}:\ |Q_{\ast}^{j,k}|\geq1\right\}$, for each $j\in\mathbb{Z}$. Also, $F_1^j:=\left\{k\in\mathbb{Z}_{+}:\ |Q_{\ast}^{j,k}|<1/{C_0^d}\right\}$ and $F_2^j:=\left\{k\in\mathbb{Z}_{+}:\ |Q_{\ast}^{j,k}|\geq1/{C_0^d}\right\}$, for each $j\in\mathbb{Z}$, where $C_{0}>1$ is the constant $C_{0}$ in Lemma \ref{lemdecompoat3} (\ref{operamaxi6}). We have the following lemmas whose proofs are similar to those of \cite{AbFt}, (4.12), (4.13), p. 1918 and \cite{SZLU}, Lemma 3.7. We omit details.

\begin{lem}\label{lemdecompoat4loc} Let $j\in\mathbb{Z}$ and $k,\ell\in\mathbb{Z}_{+}$. 
If $\ell^{j,k}:=\ell(Q_{\ast}^{j,k})<1$, then
\begin{eqnarray}
\sup_{x\in\mathbb{R}^d}|c_{j,k}(x)\eta_{j,k}(x)|\lsim\sup_{y\in 9d Q_{\ast}^{j,k}\cap(\mathbb{R}^d\backslash\mathcal{O}^j)}\mathcal{M}_{\mathrm{loc}}^{0}(f)(y)\lsim 2^j.\label{operamaxi8loc}
\end{eqnarray}
If $\ell^{j+1,\ell}:=\ell(Q_{\ast}^{j+1,\ell})<1$, then
\begin{equation}
\sup_{x\in\mathbb{R}^d}|c_{k}^{\ell}(x)\eta_{j+1,\ell}(x)|\lsim\sup_{y\in 9d Q_{\ast}^{j+1,\ell}\cap(\mathbb{R}^d\backslash\mathcal{O}^j)}\mathcal{M}_{\mathrm{loc}}^{0}(f)(y)\lsim 2^j.\label{operamaxi7loc}
\end{equation}
\end{lem}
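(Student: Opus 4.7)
The plan is to adapt the non-local arguments of \cite{AbFt} (pp.~1917--1918) and \cite{SZLU}, exploiting the hypotheses $\ell^{j,k}<1$ (resp.\ $\ell^{j+1,\ell}<1$) to pass from the global grand maximal function to its local version $\mathcal{M}_{\mathrm{loc}}^{0}$.

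For (\ref{operamaxi8loc}), since $\eta_{j,k}$ vanishes outside $Q_{\ast}^{j,k}$, I would first reduce to bounding $|c_{j,k}(x)|$ for $x\in Q_{\ast}^{j,k}$. From (\ref{operamaxi40re31}),
\begin{equation*}
|c_{j,k}(x)\eta_{j,k}(x)|\leq\sum_{n=1}^{m}|\langle f,e_{n}\tilde{\eta}_{j,k}\rangle|\cdot|e_{n}(x)|,\qquad m:=\dim\mathcal{P}_{\delta}.
\end{equation*}
The uniform bound $\|e_{n}\|_{L^{\infty}(Q_{\ast}^{j,k})}\lsim 1$ will follow from the equivalence of norms on the finite-dimensional space $\mathcal{P}_{\delta}$ together with the observation $\int|P|^{2}\tilde{\eta}_{j,k}\approx|Q_{\ast}^{j,k}|^{-1}\int_{Q_{\ast}^{j,k}}|P|^{2}$ for $P\in\mathcal{P}_{\delta}$, a consequence of the Whitney-type property $\tilde{\eta}_{j,k}\approx|Q_{\ast}^{j,k}|^{-1}$ on a subset of $Q_{\ast}^{j,k}$ of comparable measure.

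The heart of the argument is a rescaling that realises each pairing $\langle f,e_{n}\tilde{\eta}_{j,k}\rangle$ as a convolution controlled by the local grand maximal function. Setting $t:=\ell^{j,k}$ and $\Psi(z):=t^{d}\,e_{n}(x_{0}-tz)\,\tilde{\eta}_{j,k}(x_{0}-tz)$, a change of variables gives $\langle f,e_{n}\tilde{\eta}_{j,k}\rangle=(f\ast\Psi_{t})(x_{0})$. Using Markov's inequality to bound $\partial^{\alpha}e_{n}$ at scale $\ell^{j,k}$ together with the smoothness of $\tilde{\eta}_{j,k}$, I will verify that $\Psi$ is supported in a cube of unit side-length and that $\mathfrak{N}_{N}(\Psi)\lsim 1$ with a constant independent of $j,k$; hence $\Psi/C\in\mathcal{F}_{N}$ for some absolute $C$. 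Since $t=\ell^{j,k}<1$, this gives $|\langle f,e_{n}\tilde{\eta}_{j,k}\rangle|\lsim\mathcal{M}_{\mathrm{loc}}^{0}(f)(x_{0})$. Choosing $x_{0}\in 9dQ_{\ast}^{j,k}\cap(\mathbb{R}^{d}\setminus\mathcal{O}^{j})$, which is nonempty by Lemma~\ref{lemdecompoat3}(\ref{operamaxi8bis}), one has $\mathcal{M}_{\mathrm{loc}}^{0}(f)(x_{0})\leq 2^{j}$; taking the supremum over such $x_{0}$ yields (\ref{operamaxi8loc}).

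The proof of (\ref{operamaxi7loc}) proceeds along the same template using (\ref{operamaxi40re30}) and the basis $\{\pi_{n}\}$. When $Q_{\ast}^{j,k}\cap Q_{\ast}^{j+1,\ell}=\emptyset$, $c_{k}^{\ell}\equiv 0$ by (\ref{operamaxi5}); otherwise Lemma~\ref{lemdecompoat3}(\ref{operamaxi6}) provides $\ell^{j+1,\ell}\lsim\ell^{j,k}$, so that the derivatives of the extra factor $\eta_{j,k}$ remain under control at the finer scale $\ell^{j+1,\ell}$, and the same rescaling (with $t=\ell^{j+1,\ell}<1$) applies to the test function $\eta_{j,k}\pi_{n}\tilde{\eta}_{j+1,\ell}$. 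I would split the pairing $\langle(f-c_{j+1,\ell})\eta_{j,k},\pi_{n}\tilde{\eta}_{j+1,\ell}\rangle$ into its $f$- and $c_{j+1,\ell}$-parts: the former is handled exactly as above, while the latter is bounded via (\ref{operamaxi8loc}) already established at level $j+1$, together with the norm equivalence on $\mathcal{P}_{\delta}$. I expect the main obstacle to be precisely this rescaling step, where one must carefully track how the $L^{\infty}$-bounds on $e_{n}$ (or $\pi_{n}$), on $\tilde{\eta}_{j,k}$ (or $\tilde{\eta}_{j+1,\ell}$), and on all their derivatives up to order $N+1$ combine under dilation to produce a uniform estimate $\mathfrak{N}_{N}(\Psi)\lsim 1$ independent of $j,k,\ell$.
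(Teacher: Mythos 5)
Your argument is correct and is essentially the proof the paper has in mind: the authors omit the details, referring to (4.12)--(4.13) of \cite{AbFt} and \cite{SZLU}, Lemma 3.7, which is exactly the classical scheme you reconstruct (uniform $L^{\infty}$ bounds on the orthonormal basis of $\mathcal{P}_{\delta}$ via equivalence of norms, realisation of the pairings as $f\ast\Psi_{t}$ with $\mathfrak{N}_{N}(\Psi)\lsim 1$, and evaluation at a point of $9dQ_{\ast}^{j,k}\cap(\mathbb{R}^d\backslash\mathcal{O}^j)$ supplied by Lemma \ref{lemdecompoat3}). You also correctly identify the only genuinely ``local'' ingredient, namely that the hypothesis $\ell^{j,k}<1$ (resp. $\ell^{j+1,\ell}<1$) makes the dilation parameter $t\leq 1$, so that $\mathcal{M}_{\mathrm{loc}}^{0}$ suffices.
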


\begin{lem}\label{lemdecompoat5loc}
For every $j\in\mathbb{Z}$, $$\sum_{k\geq0}\sum_{\ell\in E_1^{j+1}}c_{k}^{\ell}\eta_{j+1,\ell}=0,$$ where the series converges pointwise and in $\mathcal{S'}$.
\end{lem}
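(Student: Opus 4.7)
The plan is to fix $j \in \mathbb{Z}$ and $\ell \in E_1^{j+1}$, and to prove the stronger pointwise identity $\sum_{k \geq 0} c_k^\ell \equiv 0$ as an element of $\mathcal{P}_\delta$. The lemma then follows by summing the identities $\sum_k c_k^\ell \eta_{j+1,\ell} = 0$ over $\ell \in E_1^{j+1}$; at each $x \in \mathbb{R}^d$, only boundedly many $\eta_{j+1,\ell}(x)$ are nonzero by the bounded-overlap property recorded in (\ref{caldzygm1loc0}), so the outer sum is a pointwise-finite sum of zero terms, which trivially converges both pointwise and in $\mathcal{S}'$. A preliminary observation that cleans up the argument: for each fixed $\ell$, the inner sum $\sum_k c_k^\ell$ is in fact finite, because (\ref{operamaxi5}) forces $c_k^\ell = 0$ whenever $Q_\ast^{j,k} \cap Q_\ast^{j+1,\ell} = \emptyset$, and Lemma \ref{lemdecompoat3}(\ref{operamaxi9}) bounds the number of $k$ for which the intersection is nonempty by a constant.

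For the core step, I would test the polynomial $P_\ell := \sum_{k \geq 0} c_k^\ell$ against an arbitrary $\mathfrak{p} \in \mathcal{P}_\delta$ weighted by $\eta_{j+1,\ell}$. Using the defining identity (\ref{operamaxi40re3}) and the support-disjointness observation above to legitimize swapping the (finite) sum with the distributional pairing,
\begin{align*}
\int_{\mathbb{R}^d} P_\ell(x)\,\mathfrak{p}(x)\,\eta_{j+1,\ell}(x)\,dx
&= \sum_{k \geq 0} \big\langle (f - c_{j+1,\ell})\eta_{j,k},\ \mathfrak{p}\eta_{j+1,\ell} \big\rangle \\
&= \Big\langle f - c_{j+1,\ell},\ \Big(\textstyle\sum_{k \geq 0} \eta_{j,k}\Big)\mathfrak{p}\eta_{j+1,\ell} \Big\rangle.
\end{align*}
Since $\mathrm{supp}(\eta_{j+1,\ell}) \subset Q_\ast^{j+1,\ell} \subset \mathcal{O}^{j+1} \subset \mathcal{O}^j$ and $\sum_k \eta_{j,k} = \chi_{\mathcal{O}^j}$ pointwise by (\ref{caldzygm1loc0}), the product $\big(\sum_k \eta_{j,k}\big)\eta_{j+1,\ell}$ equals $\eta_{j+1,\ell}$ as a function. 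The right-hand side therefore collapses to $\big\langle f - c_{j+1,\ell},\,\mathfrak{p}\eta_{j+1,\ell} \big\rangle$, which vanishes by the defining identity (\ref{operamaxi40re1}) of $c_{j+1,\ell} = P_\ell^{j+1}(f)$, applicable because $\ell \in E_1^{j+1}$ ensures $|Q_\ast^{j+1,\ell}| < 1$.

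Having obtained $\int_{\mathbb{R}^d} P_\ell(x)\,\mathfrak{p}(x)\,\eta_{j+1,\ell}(x)\,dx = 0$ for every $\mathfrak{p} \in \mathcal{P}_\delta$, I would take $\mathfrak{p} = \overline{P_\ell}$ to get $\int_{\mathbb{R}^d} |P_\ell(x)|^2\,\eta_{j+1,\ell}(x)\,dx = 0$, which forces $P_\ell \equiv 0$: this is precisely the non-degeneracy of the weighted $L^2$ norm (\ref{normeprecis}) on the finite-dimensional space $\mathcal{P}_\delta$, valid because $\eta_{j+1,\ell}$ is strictly positive on a nonempty open subset of $Q_\ast^{j+1,\ell}$. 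The only mildly delicate point I foresee is the interchange of the sum over $k$ with the distributional pairing, and it is fully resolved by the reduction to a finite sum via supports; the remainder is bookkeeping --- non-degeneracy of the inner product (\ref{normeprecis}) and pointwise-finiteness of the outer sum via bounded overlap.
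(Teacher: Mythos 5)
Your proof is correct and is exactly the classical argument the paper defers to (it only cites \cite{AbFt}, (4.12)--(4.13) and \cite{SZLU}, Lemma 3.7, and omits details): reduce to a finite sum in $k$ via (\ref{operamaxi5}) and Lemma \ref{lemdecompoat3}, use $\sum_k \eta_{j,k}=\chi_{\mathcal{O}^j}$ together with the defining orthogonality of $c_{j+1,\ell}$ to show the projection of the full sum vanishes, and conclude by non-degeneracy of the weighted $L^2$ norm on $\mathcal{P}_\delta$. The bounded-overlap and finiteness observations you record also justify interchanging the two summations, so the convergence claims in the stated order follow as well.
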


Now, we can give our decomposition theorem.

\begin{thm}\label{fondamentaltheoloc} 
Let $\delta\geq\left\lfloor d\left(\frac{1}{q}-1\right)\right\rfloor$ be an integer. For every $f\in\mathcal{H}_{\mathrm{loc}}^{(q,p)}$, there exist a sequence $\left\{(\textbf{a}_n, Q^n)\right\}_{n\geq 0}$ in $\mathcal{A}_{\mathrm{loc}}(q,\infty,\delta)$ and a sequence $\left\{{\lambda}_n\right\}_{n\geq 0}$ of scalars such that 
\begin{eqnarray*}
f=\sum_{n\geq 0}{\lambda}_n\textbf{a}_n\ \ \text{in}\ \mathcal{S'}\ \text{and}\ \mathcal{H}_{\mathrm{loc}}^{(q,p)}, 
\end{eqnarray*}
and, for all $\eta>0$,
\begin{eqnarray*}
\left\|\sum_{n\geq 0}\left(\frac{|\lambda_n|}{\left\|\chi_{_{Q^n}}\right\|_{q}}\right)^{\eta}\chi_{_{Q^{n}}}\right\|_{\frac{q}{\eta},\frac{p}{\eta}}^{\frac{1}{{\eta}}}\lsim\left\|f\right\|_{\mathcal{H}_{\mathrm{loc}}^{(q,p)}}.
\end{eqnarray*}
\end{thm}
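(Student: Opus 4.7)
The plan is to mimic the proof of Theorem \ref{fondamentaltheo}---the atomic decomposition for $\mathcal{H}^{(q,p)}$ obtained in \cite{AbFt}---with one crucial modification dictated by the definition of $\mathcal{A}_{\mathrm{loc}}(q,\infty,\delta)$: small Whitney cubes ($k\in E_1^j$, i.e.\ $|Q_{\ast}^{j,k}|<1$) will produce atoms with the usual vanishing moments of order $\delta$, while large cubes ($k\in E_2^j$) will produce atoms without any moment condition. For each $j\in\mathbb{Z}$ I would run the Calder\'on--Zygmund-type decomposition from Remark \ref{remarqdenseyloc} at the level set $\mathcal{O}^j=\{\mathcal{M}_{\mathrm{loc}}^{0}(f)>2^j\}$, producing the Whitney cubes $\{Q_{\ast}^{j,k}\}_{k\geq 0}$, the partition of unity $\{\eta_{j,k}\}$, the polynomials $c_{j,k}$ and $c_k^\ell$ defined by (\ref{operamaxi40re1}) and (\ref{operamaxi40re3}), and the splitting $f=g_j+b_j$. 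Arguing exactly as in the proof of Proposition \ref{denseyloc} shows that $g_j\to f$ in $\mathcal{S}'$ as $j\to+\infty$ and $g_j\to 0$ in $\mathcal{S}'$ as $j\to-\infty$, yielding the telescoping identity $f=\sum_{j\in\mathbb{Z}}(g_{j+1}-g_j)$ in $\mathcal{S}'$.

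The heart of the proof is the further decomposition of each $g_{j+1}-g_j=b_j-b_{j+1}$ into pieces $h_{j,k}$ supported in $C_0 Q_{\ast}^{j,k}$. Writing $\sum_k\eta_{j,k}=\chi_{\mathcal{O}^j}$, using $\mathcal{O}^{j+1}\subset\mathcal{O}^j$ to redistribute $b_{j+1,\ell}$ over the level-$j$ cubes, and inserting the polynomial corrections $c_k^\ell\eta_{j+1,\ell}$ only where moments are required on the level-$(j+1)$ side, one arrives at
\begin{eqnarray*}
h_{j,k}=b_{j,k}-\sum_{\ell\geq 0}\eta_{j,k}\,b_{j+1,\ell}+\sum_{\ell\in E_1^{j+1}}c_k^\ell\,\eta_{j+1,\ell},
\end{eqnarray*}
the identity $\sum_k\sum_{\ell\in E_1^{j+1}}c_k^\ell\eta_{j+1,\ell}=0$ from Lemma \ref{lemdecompoat5loc} guaranteeing that this rewriting does not change $g_{j+1}-g_j$. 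Lemmas \ref{lemdecompoat3} and \ref{lemdecompoat4loc}, combined with the pointwise bound $\mathcal{M}_{\mathrm{loc}}^{0}(f)\leq 2^{j+1}$ on $\mathbb{R}^d\setminus\mathcal{O}^{j+1}$, give the support condition $\mathrm{supp}(h_{j,k})\subset C_0 Q_{\ast}^{j,k}$ and the size estimate $\|h_{j,k}\|_\infty\lsim 2^j$. When $k\in F_1^j$, Lemma \ref{lemdecompoat3}(\ref{operamaxi6}) forces every $\ell$ with $Q_{\ast}^{j+1,\ell}\cap Q_{\ast}^{j,k}\neq\emptyset$ to satisfy $\ell\in E_1^{j+1}$, and then the defining relations of $c_{j,k}$ and $c_k^\ell$ give $h_{j,k}$ vanishing moments of order $\delta$; for $k\in F_2^j$ no moment condition is required by definition of $\mathcal{A}_{\mathrm{loc}}(q,\infty,\delta)$. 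After normalization $\textbf{a}_{j,k}=\lambda_{j,k}^{-1}h_{j,k}$ with $\lambda_{j,k}\approx 2^j\|\chi_{C_0 Q_{\ast}^{j,k}}\|_q$, the pair $(\textbf{a}_{j,k},C_0 Q_{\ast}^{j,k})$ belongs to $\mathcal{A}_{\mathrm{loc}}(q,\infty,\delta)$.

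Finally, the coefficient estimate
\begin{eqnarray*}
\left\|\sum_{j,k}\left(\frac{|\lambda_{j,k}|}{\|\chi_{C_0 Q_{\ast}^{j,k}}\|_q}\right)^{\eta}\chi_{C_0 Q_{\ast}^{j,k}}\right\|_{\frac{q}{\eta},\frac{p}{\eta}}^{1/\eta}\lsim \|f\|_{\mathcal{H}_{\mathrm{loc}}^{(q,p)}}
\end{eqnarray*}
reduces to the layer-cake comparison $\sum_{j\in\mathbb{Z}}2^{j\eta}\chi_{\mathcal{O}^j}\approx(\mathcal{M}_{\mathrm{loc}}^{0}(f))^{\eta}$ combined with Proposition \ref{operamaxima} applied to $\mathfrak{M}(\chi_{Q_{\ast}^{j,k}})$ with exponent $(d+\delta+1)/d>1$, exactly as in the proof of Theorem \ref{fondamentaltheo}. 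Convergence of the atomic series in $\mathcal{H}_{\mathrm{loc}}^{(q,p)}$ then follows from the reconstruction Theorem \ref{thafondaloc}, and its sum is identified with $f$ via the $\mathcal{S}'$-telescoping from the first step. The main obstacle is the intermediate bookkeeping: one must ensure that the polynomial corrections $c_k^\ell$ exactly match the pieces of $b_{j+1}$ which interact with a given $Q_{\ast}^{j,k}$ and are compatible with the dichotomy $E_1^j/E_2^j$. The purpose of introducing the auxiliary sets $F_1^j$ and $F_2^j$ is precisely to guarantee that, on sufficiently small Whitney cubes, no polynomial correction is ever demanded against a large Whitney cube at the next level, so that Lemma \ref{lemdecompoat5loc} applies without modification.
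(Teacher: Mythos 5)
Your proposal reproduces, essentially step for step, the paper's first proof of Theorem \ref{fondamentaltheoloc}: the level sets $\mathcal{O}^j$, the Calder\'on--Zygmund splitting $f=g_j+b_j$ from Remark \ref{remarqdenseyloc}, the telescoping $f=\sum_j(g_{j+1}-g_j)$, the pieces $h_{j,k}$ (your formula agrees with the paper's $A_{j,k}$ in (\ref{operamaxi7ajreloc}) after splitting $\sum_{\ell\geq0}$ into $E_1^{j+1}$ and $E_2^{j+1}$), the $F_1^j/F_2^j$ dichotomy for deciding which atoms need vanishing moments, and the coefficient estimate via $\sum_j 2^{j\eta}\chi_{\mathcal{O}^j}$ and Proposition \ref{operamaxima}. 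The identification of which $\ell$ can interact with a small cube $Q_{\ast}^{j,k}$, $k\in F_1^j$, is exactly the paper's argument via Lemma \ref{lemdecompoat3}(\ref{operamaxi6}).

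One step as written would fail for general $f$: the size estimate $\|h_{j,k}\|_\infty\lsim 2^j$ (and the ``almost everywhere'' telescoping) is obtained by converting the bound $\mathcal{M}_{\mathrm{loc}}^0(f)\leq 2^{j+1}$ on $\mathbb{R}^d\setminus\mathcal{O}^{j+1}$ into the pointwise bound $|f|\lsim 2^{j+1}$ there, which requires $f\in L^1_{\mathrm{loc}}$. For $q<1$ an element of $\mathcal{H}_{\mathrm{loc}}^{(q,p)}$ need not be a locally integrable function, so you must first prove the decomposition for $f\in L^1_{\mathrm{loc}}\cap\mathcal{H}_{\mathrm{loc}}^{(q,p)}$ and then pass to general $f$ by the density of this subspace established in Proposition \ref{denseyloc}; this is exactly how the paper closes the argument. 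With that reduction inserted, your proof is complete and coincides with the paper's first proof. (The paper also gives a second, genuinely different proof via Lemma \ref{lemmdepass}, decomposing $f=u+v$ with $u\in\mathcal{H}^{(q,p)}$ handled by Theorem \ref{fondamentaltheo} and the band-limited part $v$ handled by Plancherel--Polya--Nikols'kij, but your route does not rely on it.)
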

\begin{proof}
First, we assume that $f\in L^1_{\mathrm{loc}}\cap\mathcal{H}_{\mathrm{loc}}^{(q,p)}$. For each $j\in\mathbb{Z}$, we consider the level set $\mathcal{O}^j:=\left\{x\in\mathbb{R}^d:\ \mathcal{M}_{\mathrm{loc}}^0(f)(x)>2^j\right\}$. By Remark \ref{remarqdenseyloc}, we have $f=g_j+b_j$, with $b_j=\underset{k\geq 0}\sum b_{j,k}$ and $$b_{j,k}=\left\{\begin{array}{lll}(f-c_{j,k})\eta_{_{j,k}}\ ,&\text{ if }&|Q_{\ast}^{j,k}|<1\\ f\eta_{_{j,k}}\ ,&\text{ if }&|Q_{\ast}^{j,k}|\geq1,\end{array}\right.$$ where each $\eta_{j,k}$ is supported in $Q_{\ast}^{j,k}$, with analogous  estimates described in \cite{AbFt}, Lemma 4.1, pp. 1913-1914. Thus, arguing as in the proof of Theorem \ref{fondamentaltheo} (see \cite{AbFt}, Theorem 4.4, p. 1916), we obtain
\begin{eqnarray}
f=\underset{j=-\infty}{\overset{+\infty}\sum}\left(g_{j+1}-g_j\right)\ \text{almost everywhere and in}\ \mathcal{S'}.
\label{operamaxi3loc}
\end{eqnarray} 
By Lemma \ref{lemdecompoat5loc} and the fact that $\sum_{k\geq0}\eta_{j,k}b_{j+1,\ell}=\chi_{_{\mathcal{O}^j}}b_{j+1,\ell}=b_{j+1,\ell}$ for all integer $\ell\geq0$ (since $\text{supp}(b_{j+1,\ell})\subset\text{supp}(\eta_{j+1,\ell})\subset Q_{\ast}^{j+1,\ell}\subset\mathcal{O}^{j+1}\subset\mathcal{O}^j$), we have
\begin{eqnarray*}
g_{j+1}-g_j&=&(f-\underset{\ell\geq 0}\sum b_{j+1,\ell})-(f-\underset{\ell\geq 0}\sum b_{j,\ell})\\
&=&\underset{\ell\geq 0}\sum b_{j,\ell}-\underset{\ell\geq 0}\sum b_{j+1,\ell}+\sum_{k\geq0}\sum_{\ell\in E_1^{j+1}}c_{k}^{\ell}\eta_{j+1,\ell}\\
&=&\underset{\ell\geq 0}\sum b_{j,\ell}-\underset{k\geq 0}\sum\sum_{\ell\geq0}\eta_{j,k}b_{j+1,\ell}+\sum_{k\geq0}\sum_{\ell\in E_1^{j+1}}c_{k}^{\ell}\eta_{j+1,\ell}\\
&=&\underset{k\geq 0}\sum\left[b_{j,k}-\sum_{\ell\in E_2^{j+1}}\eta_{j,k}b_{j+1,\ell}-\sum_{\ell\in E_1^{j+1}}\left(\eta_{j,k}b_{j+1,\ell}-c_{k}^{\ell}\eta_{j+1,\ell}\right)\right]\\
&=&\underset{k\geq 0}\sum A_{j,k}\ ,
\end{eqnarray*}
where all series converge almost everywhere and in $\mathcal{S'}$, and
\begin{equation}
A_{j,k}:=b_{j,k}-\sum_{\ell\in E_2^{j+1}}\eta_{j,k}b_{j+1,\ell}-\sum_{\ell\in E_1^{j+1}}\left(\eta_{j,k}b_{j+1,\ell}-c_{k}^{\ell}\eta_{j+1,\ell}\right). \label{operamaxi7ajreloc}
\end{equation}
By Lemma \ref{lemdecompoat3} (\ref{operamaxi6}) and (\ref{operamaxi5}), we see that 
\begin{eqnarray}
\text{supp}(A_{j,k})\subset C_0 Q_{\ast}^{j,k}=:\widetilde{Q}^{j,k}\ . \label{operamaxi15loc}
\end{eqnarray}
Also, we claim that 
\begin{eqnarray}
|A_{j,k}|\leq C_{1}2^j\ \text{almost everywhere}, \label{operamaxi18loc}
\end{eqnarray} 
where $C_{1}>0$ is a constant independent of $f$, $j$ and $k$. To prove (\ref{operamaxi18loc}), we distinguish two cases for $k$. 

Case 1: $k\in E_1^{j}$. Then, $b_{j,k}=(f-c_{j,k})\eta_{j,k}$. Thus, since $\underset{\ell\geq 0}\sum\eta_{j+1,\ell}=\chi_{_{\mathcal{O}^{j+1}}}$, by rewriting (\ref{operamaxi7ajreloc}), we obtain 
\begin{eqnarray*}
A_{j,k}=f\chi_{_{\mathbb{R}^d\backslash\mathcal{O}^{j+1}}}\eta_{j,k}-c_{j,k}\eta_{j,k}+\eta_{j,k}\underset{\ell\in E_1^{j+1}}\sum c_{j+1,\ell}\eta_{j+1,\ell}+\underset{\ell\in E_1^{j+1}}\sum c_{k}^{\ell}\eta_{j+1,\ell}.
\end{eqnarray*}
Furthermore, $f\in L^1_{\mathrm{loc}}$ implies that $$|f(x)|\leq\mathcal{M}_{{\mathrm{loc}}_{_{0}}}(f)(x)\lsim \mathcal{M}_{\mathrm{loc}}^0(f)(x)\lsim 2^{j+1},$$ for almost all $x\in\mathbb{R}^d\backslash\mathcal{O}^{j+1}$. This, together with Lemma \ref{lemdecompoat4loc} and $\sum_{\ell\geq0}\chi_{Q_{\ast}^{j+1,\ell}}\lsim 1$, implies that $|A_{j,k}(x)|\lsim 2^j$, for almost all $x\in\mathbb{R}^d$.

Case 2: $k\in E_2^{j}$. Then, $b_{j,k}=f\eta_{j,k}$ and we obtain 
\begin{eqnarray*}
A_{j,k}=f\chi_{_{\mathbb{R}^d\backslash\mathcal{O}^{j+1}}}\eta_{j,k}+\eta_{j,k}\underset{\ell\in E_1^{j+1}}\sum c_{j+1,\ell}\eta_{j+1,\ell}+\underset{\ell\in E_1^{j+1}}\sum c_{k}^{\ell}\eta_{j+1,\ell}.
\end{eqnarray*}
Thus, as in Case 1, we have $|A_{j,k}(x)|\lsim 2^j$, for almost all $x\in\mathbb{R}^d$. Combining these two cases, we obtain (\ref{operamaxi18loc}). 

Set
\begin{eqnarray}
\lambda_{j,k}:=C_{1}2^j|\widetilde{Q}^{j,k}|^{\frac{1}{q}}\ \text{ and }\ \mathfrak a_{j,k}:=\lambda_{j,k}^{-1}A_{j,k}. \label{operamaxi17revuajloc}
\end{eqnarray}
Then, $\text{supp}(\mathfrak a_{j,k})\subset\widetilde{Q}^{j,k}$ and $\left\|\mathfrak a_{j,k}\right\|_{\infty}\leq|\widetilde{Q}^{j,k}|^{-\frac{1}{q}}$, by (\ref{operamaxi15loc}) and (\ref{operamaxi18loc}). 

When $k\in F_2^j$, we have $$|\widetilde{Q}^{j,k}|=|C_0 Q_{\ast}^{j,k}|=C_0^d|Q_{\ast}^{j,k}|\geq C_0^d/{C_0^d}=1.$$ Hence $(\mathfrak a_{j,k},\widetilde{Q}^{j,k})\in\mathcal{A}_{\mathrm{loc}}(q,\infty,\delta)$. 

When $k\in F_1^j$, we have $$|\widetilde{Q}^{j,k}|=|C_0 Q_{\ast}^{j,k}|=C_0^d|Q_{\ast}^{j,k}|<C_0^d/{C_0^d}=1$$ and
\begin{eqnarray}
\int_{\mathbb{R}^d}A_{j,k}(x)\mathfrak{p}(x)dx=0,\ \forall\ \mathfrak{p}\in\mathcal{P_{\delta}}. \label{operamaxi17loc}
\end{eqnarray}
For the proof of (\ref{operamaxi17loc}), notice that $F_1^j\subset E_1^j$, since $1/{C_0^d}<1$ (see Lemma \ref{lemdecompoat3} (\ref{operamaxi6})). Hence $b_{j,k}=(f-c_{j,k})\eta_{j,k}$ and
\begin{eqnarray*}
A_{j,k}=(f-c_{j,k})\eta_{j,k}-\sum_{\ell\in E_2^{j+1}}f\eta_{j,k}\eta_{j+1,\ell}
-\sum_{\ell\in E_1^{j+1}}\left((f-c_{j+1,\ell})\eta_{j+1,\ell}\eta_{j,k}-c_{k}^{\ell}\eta_{j+1,\ell}\right), 
\end{eqnarray*}
by (\ref{operamaxi7ajreloc}). Moreover, for every $\ell\in E_2^{j+1}$, we have $${C^d}|Q_{\ast}^{j,k}|<|Q_{\ast}^{j+1,\ell}|,$$ since $|Q_{\ast}^{j,k}|<1/{C_0^d}<1/{C^d}$ (see Lemma \ref{lemdecompoat3} (\ref{operamaxi6})) and $1\leq|Q_{\ast}^{j+1,\ell}|$. Therefore, for all $\ell\in E_2^{j+1}$, $$\text{diam}(Q_{\ast}^{j+1,\ell})>C\text{diam}(Q_{\ast}^{j,k}),$$ which implies that $$Q_{\ast}^{j,k}\cap Q_{\ast}^{j+1,\ell}=\emptyset,$$ for all $\ell\in E_2^{j+1}$, by Lemma \ref{lemdecompoat3} (\ref{operamaxi6}). Hence 
\begin{eqnarray*}
\sum_{\ell\in E_2^{j+1}}f\eta_{j,k}\ \eta_{j+1,\ell}=0.
\end{eqnarray*}
Thus,
\begin{eqnarray*}
A_{j,k}=(f-c_{j,k})\eta_{j,k}-\sum_{\ell\in E_1^{j+1}}\left((f-c_{j+1,\ell})\eta_{j+1,\ell}\ \eta_{j,k}-c_{k}^{\ell}\eta_{j+1,\ell}\right). 
\end{eqnarray*}
Then, (\ref{operamaxi17loc}) follows from (\ref{operamaxi40re1}) and (\ref{operamaxi40re3}). Thus, $(\mathfrak a_{j,k},\widetilde{Q}^{j,k})\in\mathcal{A}_{\mathrm{loc}}(q,\infty,\delta)$.\\
Therefore, by (\ref{operamaxi3loc}),
\begin{eqnarray}
f=\sum_{j=-\infty}^{+\infty}\sum_{k\geq 0}\lambda_{j,k}\mathfrak a_{j,k} \label{operamaxi20ajloc}
\end{eqnarray}
almost everywhere and in $\mathcal{S'}$, where each $(\mathfrak a_{j,k},\widetilde{Q}^{j,k})\in\mathcal{A}_{\mathrm{loc}}(q,\infty,\delta)$. 

Let us fix a real $\eta>0$. Arguing as in the proof of Theorem \ref{fondamentaltheo}, we obtain
\begin{eqnarray}
\left\|\sum_{j=-\infty}^{+\infty}\sum_{k\geq 0}\left(\frac{|\lambda_{j,k}|}{\left\|\chi_{_{\widetilde{Q}^{j,k}}}\right\|_{q}}\right)^{\eta}\chi_{_{\widetilde{Q}^{j,k}}}\right\|_{\frac{q}{\eta},\frac{p}{\eta}}^{\frac{1}{{\eta}}}\lsim\left\|f\right\|_{\mathcal{H}_{\mathrm{loc}}^{(q,p)}} \label{operamaxi19ajloc}
\end{eqnarray}
and the convergence of (\ref{operamaxi20ajloc}) also holds in $\mathcal{H}_{\mathrm{loc}}^{(q,p)}$. Endeed, for each $(\mathfrak a_{j,k},\widetilde{Q}^{j,k})\in\mathcal{A}_{\mathrm{loc}}(q,\infty,\delta)$, we have  
\begin{eqnarray}
\mathcal{M}_{{\mathrm{loc}}_{_{0}}}(\mathfrak a_{j,k})(x)\lsim\left[\mathfrak{M}\left(\chi_{_{\widetilde{Q}^{j,k}}}\right)(x)\right]^{u}\left\|\chi_{_{\widetilde{Q}^{j,k}}}\right\|_q^{-1}, \label{operamaxi19ajlocaj}
\end{eqnarray}
for all $x\in\mathbb{R}^d$, with $u=\frac{d+\delta+1}{d}$, by (\ref{inversetheo6loc}) (see the proof of Theorem \ref{thafondaloc}). With (\ref{operamaxi19ajloc}) and (\ref{operamaxi19ajlocaj}), the convergence in $\mathcal{H}_{\mathrm{loc}}^{(q,p)}$ easily follows.

The general case ($f\in\mathcal{H}_{\mathrm{loc}}^{(q,p)}$) follows from the density of $L_{\mathrm{loc}}^1\cap\mathcal{H}_{\mathrm{loc}}^{(q,p)}$ in $\mathcal{H}_{\mathrm{loc}}^{(q,p)}$ with respect to the quasi-norm $\left\|\cdot\right\|_{\mathcal{H}_{\mathrm{loc}}^{(q,p)}}$ as in the classical case with appropriate modifications. This completes the proof of Theorem \ref{fondamentaltheoloc}.
\end{proof}

\begin{remark} \label{remqdecoatloc0}
By the definition of $\mathfrak a_{j,k}$ , it is clear that $\text{supp}(\mathfrak a_{j,k})\subset\mathcal{O}^j$. Moreover, for every $j\in\mathbb{Z}$, the family $\left\{\text{supp}(\mathfrak a_{j,k})\right\}_{k\geq0}$ has the bounded intersection property, namely,
\begin{eqnarray}
\underset{k\geq 0}\sum\chi_{_{\text{supp}(\mathfrak a_{j,k})}}\lsim 1. \label{thafondamfini300}
\end{eqnarray}
Also, in Theorem \ref{fondamentaltheoloc}, we can replace $\mathcal{A}_{\mathrm{loc}}(q,\infty,\delta)$ by $\mathcal{A}_{\mathrm{loc}}(q,r,\delta)$, for any $1<r<+\infty$, since $\mathcal{A}_{\mathrm{loc}}(q,\infty,\delta)\subset\mathcal{A}_{\mathrm{loc}}(q,r,\delta)$. 
\end{remark}

We give another proof of Theorem \ref{fondamentaltheoloc}. This approach essentially uses Lemma \ref{lemmdepass}, Theorem \ref{fondamentaltheo} and Plancherel-Polya-Nikols'kij's inequality that we recall.

\begin{lem}\label{InePPNk} (Plancherel-Polya-Nikols'kij's inequality, \cite{HT}, Theorem, p. 16).
Let $\Omega$ be a compact subset of $\mathbb{R}^d$ and $0<s<+\infty$. Then, there exist two constants $c_1>0$ and $c_2>0$ such that 
\begin{eqnarray*}
\sup_{z\in\mathbb{R}^d}\frac{|\nabla\phi(x-z)|}{1+|z|^{\frac{d}{s}}}\leq c_1\sup_{z\in\mathbb{R}^d}\frac{|\phi(x-z)|}{1+|z|^{\frac{d}{s}}}\leq c_2\left[\mathfrak{M}(|\phi|^s)(x)\right]^{\frac{1}{s}},
\end{eqnarray*}
for all $\phi\in\mathcal{S}^\Omega$ and all $x\in\mathbb{R}^d$, where $\mathcal{S}^\Omega=\left\{\phi\in\mathcal{S}:\ \text{supp}(\mathcal{F}(\phi))\subset\Omega\right\}$.
\end{lem}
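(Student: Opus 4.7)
The plan is to prove this via Fourier representation followed by a weight-juggling argument for the first inequality and a subaveraging trick for the second. Fix $\psi\in\mathcal{S}$ whose Fourier transform $\widehat{\psi}$ is compactly supported and identically $1$ on $\Omega$. Then for any $\phi\in\mathcal{S}^\Omega$ we have $\widehat\phi=\widehat\phi\,\widehat\psi$, hence the reproducing identity
\[
\phi=\phi*\psi,\qquad \partial^\beta\phi=\phi*\partial^\beta\psi\qquad(\text{all multi-indices }\beta).
\]
Write $a=d/s$ and set $M(x):=\sup_{w\in\mathbb{R}^d}\frac{|\phi(x-w)|}{1+|w|^a}$.

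For the first inequality, use $\nabla\phi=\phi*\nabla\psi$, the change of variable $w=y+z$, and the elementary weight inequality $1+|w|^a\lesssim (1+|z|^a)(1+|w-z|^a)$ (valid for every $a>0$ by splitting on whether $a\le 1$ or $a>1$ and using subadditivity or convexity). This yields
\[
|\nabla\phi(x-z)|\le \int\frac{|\phi(x-w)|}{1+|w|^a}(1+|w|^a)|\nabla\psi(w-z)|\,dw
\lesssim (1+|z|^a)\,M(x)\cdot I,
\]
where $I=\int(1+|y|^a)|\nabla\psi(y)|\,dy<\infty$ since $\psi\in\mathcal{S}$. Dividing by $1+|z|^a$ and taking the sup over $z$ gives the first inequality with $c_1:=CI$.

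For the second (and harder) inequality, the main obstacle is converting the pointwise bound into one involving the Hardy--Littlewood maximal function of $|\phi|^s$, especially when $s<1$ where Hölder is unavailable. The idea is a subaveraging argument on a small ball. Fix $x$; for each $z\in\mathbb{R}^d$, the first inequality (applied to translates) gives $|\nabla\phi(x-\xi)|\le c_1(1+|\xi|^a)\,M(x)$. For any $w$ with $|w-z|\le\rho$, the mean value theorem yields
\[
|\phi(x-w)|\ge |\phi(x-z)|-\rho\,c_1(1+|z|^a+\rho^a)\,M(x).
\]
Choose $\rho=1/(4c_1)$ (a fixed constant independent of $z$); then for $w\in B(z,\rho)$,
\[
|\phi(x-w)|\ge \tfrac12|\phi(x-z)|-C(1+|z|^a)M(x)/??
\]
Taking $z$ at which $|\phi(x-z)|\ge \tfrac12(1+|z|^a)M(x)$ (or the analogue for the supremum) yields
$|\phi(x-w)|\gtrsim (1+|z|^a)M(x)$ on $B(x-z,\rho)$.

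Integrating $|\phi|^s$ over this ball, then enlarging to the ball $B(x,|z|+\rho)$ centered at $x$, gives
\[
\mathfrak{M}(|\phi|^s)(x)\gtrsim \frac{\rho^d}{(|z|+\rho)^d}(1+|z|^a)^s[M(x)]^s
\approx \frac{(1+|z|^a)^s}{(1+|z|)^d}[M(x)]^s.
\]
The critical cancellation is now $(1+|z|^a)^s\approx 1+|z|^{as}=1+|z|^d\approx(1+|z|)^d$ precisely because $a=d/s$; the ratio is uniformly bounded below. Hence $[M(x)]^s\lesssim \mathfrak{M}(|\phi|^s)(x)$, which is the second inequality with $c_2$ depending on $d$, $s$ and $\Omega$ (through the fixed choice of $\psi$). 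The delicate points are justifying that the sup in $M(x)$ is essentially attained (argue via a maximizing sequence and the fact that $M(x)<\infty$ follows from $\phi\in\mathcal{S}$) and tracking that all constants depend only on $\Omega$ and $s$, not on the particular $\phi$.
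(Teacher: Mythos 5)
The paper gives no proof of this lemma, citing Triebel's \emph{Theory of Function Spaces} (p.~16), and your argument is precisely the standard one found there: the reproducing identity $\phi=\phi*\psi$ with $\widehat{\psi}\equiv 1$ on a neighbourhood of $\Omega$ gives the gradient bound via the weight inequality $1+|w|^{a}\lesssim(1+|z|^{a})(1+|w-z|^{a})$, and the mean-value/subaveraging argument on a ball of fixed radius $\rho$ together with the cancellation $as=d$ gives the maximal-function bound. The outline is correct and the only blemishes are cosmetic: the unfinished display containing ``??'', and the loose choice of $\rho$ --- one needs $\rho\leq\min\{1,(2^{a+3}c_1)^{-1}\}$ so that the error term $\rho\,c_1\sup_{|\xi-z|\leq\rho}(1+|\xi|^{a})M(x)$ is at most $\tfrac14(1+|z|^{a})M(x)$, after which the near-maximizer argument (legitimate since $M(x)<+\infty$ for $\phi\in\mathcal{S}$) closes the proof exactly as you indicate.
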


One can also see \cite{HT1}, § 1.5.3, (iii) (4)-(6), p. 31, for this inequality. Now, we can give our second approach of the proof of theorem \ref{fondamentaltheoloc}.

\begin{proof}
Let $f\in\mathcal{H}_{\mathrm{loc}}^{(q,p)}$. Then, by Lemma \ref{lemmdepass}, there exist $u\in\mathcal{H}^{(q,p)}$ and $v\in\mathcal{H}_{\mathrm{loc}}^{(q,p)}\cap\mathcal{C}^{\infty}(\mathbb{R}^d)$ such that $f=u+v$ and 
\begin{eqnarray}
\left\|u\right\|_{\mathcal{H}^{(q,p)}}+\left\|v\right\|_{\mathcal{H}_{\mathrm{loc}}^{(q,p)}}\lsim\left\|f\right\|_{\mathcal{H}_{\mathrm{loc}}^{(q,p)}}. \label{autrapproch}
\end{eqnarray}
Moreover, by Theorem \ref{fondamentaltheo}, there exist a sequence $\left\{(\textbf{a}_n, Q^n)\right\}_{n\geq 0}$ in $\mathcal{A}(q,\infty,\delta)$ and a sequence of scalars $\left\{{\lambda}_n\right\}_{n\geq 0}$ such that $u=\sum_{n\geq 0}{\lambda}_n\textbf{a}_n$ in $\mathcal{S'}$ and $\mathcal{H}^{(q,p)}$, and for any real $\eta>0$,
\begin{eqnarray}
\left\|\sum_{n\geq 0}\left(\frac{|\lambda_n|}{\left\|\chi_{_{Q^n}}\right\|_{q}}\right)^{\eta}\chi_{_{Q^{n}}}\right\|_{\frac{q}{\eta},\frac{p}{\eta}}^{\frac{1}{{\eta}}}\lsim\left\|u\right\|_{\mathcal{H}^{(q,p)}}\lsim\left\|f\right\|_{\mathcal{H}_{\mathrm{loc}}^{(q,p)}},\label{autrapproch00}
\end{eqnarray}
by (\ref{autrapproch}). Also $u=\sum_{n\geq 0}{\lambda}_n\textbf{a}_n$ in $\mathcal{H}_{\mathrm{loc}}^{(q,p)}$, since $\mathcal{H}^{(q,p)}\hookrightarrow\mathcal{H}_{\mathrm{loc}}^{(q,p)}$.  

To end, it suffices to show that there exist a sequence $\left\{(\textbf{b}_n, R^n)\right\}_{n\geq 0}$ in $\mathcal{A}_{\mathrm{loc}}(q,\infty,\delta)$ and a sequence of scalars $\left\{\sigma_n\right\}_{n\geq 0}$ such that $v=\sum_{n\geq 0}{\sigma}_n\textbf{b}_n$ in $\mathcal{H}_{\mathrm{loc}}^{(q,p)}$ and for any real $\eta>0$,
\begin{eqnarray}
\left\|\sum_{n\geq 0}\left(\frac{|\sigma_n|}{\left\|\chi_{_{R^n}}\right\|_{q}}\right)^{\eta}\chi_{_{R^{n}}}\right\|_{\frac{q}{\eta},\frac{p}{\eta}}^{\frac{1}{{\eta}}}\lsim\left\|f\right\|_{\mathcal{H}_{\mathrm{loc}}^{(q,p)}}. \label{autrapproch0}
\end{eqnarray}
To prove this, we borrow some ideas from \cite{NEYS1}, Lemma 7.9. Let us fix a real $\eta>0$. For $m=(m_1,m_2,\ldots,m_d)\in\mathbb{Z}^d$, we let $$\sigma^m:=\left\|\chi_{_{R^m}}\right\|_{q}\sup_{x\in R^m}|v(x)|=|R^m|^{1/q}\sup_{x\in R^m}|(f\ast\theta)(x)|$$ and $$\textbf{b}^m(x):=\left\{\begin{array}{lll}\frac{1}{\sigma^m}\chi_{_{R^m}}(x)v(x),&\text{ if }&\sigma^m\neq0,\\
0,&\text{ if }&\sigma^m=0,\end{array}\right.$$ where $R^m:=\prod_{i=1}^d[m_i,m_i+1]$ and $\theta$ is the function defined in the proof of Lemma \ref{lemmdepass}, namely $\theta(x):=\left(\mathcal{F}^{-1}(\psi)\right)(x)$ with $\psi\in\mathcal{C}^{\infty}(\mathbb{R}^d)$, $\text{supp}(\psi)\subset B(0,2)$, $0\leq\psi\leq 1$ and $\psi\equiv1$ on $B(0,1)$, and $v(x):=(f\ast\theta)(x)$, for all $x\in\mathbb{R}^d$. Then, we clearly have $|R^m|=1$, $\text{supp}(\textbf{b}^m)\subset R^m$ and $$\left\|\textbf{b}^m\right\|_{\infty}\leq|R^m|^{-1/q}.$$ Hence $\left\{(\textbf{b}^m, R^m)\right\}_{m\in\mathbb{Z}^d}\subset\mathcal{A}_{\mathrm{loc}}(q,\infty,\delta)$. Furthermore, we claim that
\begin{eqnarray}
v=\sum_{m\in\mathbb{Z}^d}\sigma^m\textbf{b}^m\ \text{ in }\ \mathcal{S'}. \label{autrapproch1}
\end{eqnarray}
Endeed, by the definition of cubes $R^m$, it's easy to check that for every $m\in\mathbb{Z}^d$, 
\begin{eqnarray}
\sharp\left\{k\in\mathbb{Z}^d:\ R^m\cap R^k\neq\emptyset\right\}\leq 3^d. \label{autrapproch2}
\end{eqnarray}
Thus, $\sum_{m\in\mathbb{Z}^d}\sigma^m\textbf{b}^m$ defines a function on $\mathbb{R}^d$ and, for any $\Phi\in\mathcal{S}$,
\begin{eqnarray*}
\int_{\mathbb{R}^d}\left(\sum_{m\in\mathbb{Z}^d}|\sigma^m\textbf{b}^m(x)\Phi(x)|\right)dx&\leq& 3^d\int_{\mathbb{R}^d}|v(x)\Phi(x)|dx\\&\leq& 3^d\left\|v\right\|_{\infty}\left\|\Phi\right\|_1<+\infty,
\end{eqnarray*}
since $f\in\mathcal{H}_{\mathrm{loc}}^{(q,p)}$ and $\theta\in\mathcal{S}$ implie that $v:=f\ast\theta\in L^\infty$, by the local version of Remark \ref{remarqconver}. Therefore, by Fubini's theorem, 
\begin{eqnarray*}
\int_{\mathbb{R}^d}\left(\sum_{m\in\mathbb{Z}^d}\sigma^m\textbf{b}^m(x)\right)\Phi(x)dx&=&\sum_{m\in\mathbb{Z}^d}\int_{\mathbb{R}^d}\sigma^m\textbf{b}^m(x)\Phi(x)dx\\
&=&\sum_{m\in\mathbb{Z}^d}\int_{Q_m}v(x)\Phi(x)dx=\int_{\mathbb{R}^d}v(x)\Phi(x)dx.
\end{eqnarray*}
This establishes (\ref{autrapproch1}). Also, from the definition of $v$ and $\theta$, it follows that $\text{supp}(\mathcal{F}(v))\subset\text{supp}(\psi)\subset B(0,2)$. Moreover, for all $x\in R^m$, 
\begin{eqnarray*}
\sup_{y\in R^m}|v(y)|= \sup_{y\in R^m}|v(x-(x-y))|=\sup_{z\in Q(0,2)}|v(x-z)|,
\end{eqnarray*}
since $x,y\in R^m$ implie that $z=x-y\in Q(0,2)$. Therefore, for any $0<s<q$, we have 
\begin{eqnarray*}
\sum_{m\in\mathbb{Z}^d}\left(\frac{|\sigma^m|}{\left\|\chi_{_{R^m}}\right\|_{q}}\right)^{\eta}\chi_{_{R^m}}(x)&=&\sum_{m\in\mathbb{Z}^d}\left(\sup_{y\in R^m}|v(y)|\right)^{\eta}\chi_{_{R^m}}(x)\\
&=&\sum_{m\in\mathbb{Z}^d}\left(\sup_{z\in Q(0,2)}|v(x-z)|\right)^{\eta}\chi_{_{R^m}}(x)\\
&\lsim&\sum_{m\in\mathbb{Z}^d}\left(\sup_{z\in Q(0,2)}\frac{|v(x-z)|}{1+|z|^\frac{d}{s}}\right)^{\eta}\chi_{_{R^m}}(x)\\ 
&\lsim&\sum_{m\in\mathbb{Z}^d}\left(\sup_{z\in\mathbb{R}^d}\frac{|v(x-z)|}{1+|z|^\frac{d}{s}}\right)^{\eta}\chi_{_{R^m}}(x)\\
&\lsim&\left(\mathfrak{M}(|v|^s)(x)\right)^{\frac{\eta}{s}},
\end{eqnarray*}
for all $x\in\mathbb{R}^d$, by Lemma \ref{InePPNk} and (\ref{autrapproch2}). Thus,
\begin{eqnarray*}
\left\|\sum_{m\in\mathbb{Z}^d}\left(\frac{|\sigma^m|}{\left\|\chi_{_{R^m}}\right\|_{q}}\right)^{\eta}\chi_{_{R^m}}\right\|_{\frac{q}{\eta},\frac{p}{\eta}}^{\frac{1}{{\eta}}}&\lsim&\left\|\left(\mathfrak{M}(|v|^s)\right)^{\frac{\eta}{s}}\right\|_{\frac{q}{\eta},\frac{p}{\eta}}^{\frac{1}{{\eta}}}\\
&\lsim&\left\|v\right\|_{q,p}\lsim\left\|f\right\|_{\mathcal{H}_{\mathrm{loc}}^{(q,p)}},
\end{eqnarray*}
by Proposition \ref{operamaximacalebr} and the local version of Remark \ref{remarqconver} (\ref{0conver1}). We rearrange the $\textbf{b}^m$ 's, $R^m$ 's and $\sigma^m$ 's to obtain $\left\{(\textbf{b}_n, R^n)\right\}_{n\geq 0}\subset\mathcal{A}_{\mathrm{loc}}(q,\infty,\delta)$ and a sequence of scalars $\left\{\sigma_n\right\}_{n\geq 0}$ such that $v=\sum_{n\geq 0}{\sigma}_n\textbf{b}_n$ in $\mathcal{H}_{\mathrm{loc}}^{(q,p)}$ and (\ref{autrapproch0}) holds. Thus, $$f=\sum_{n\geq 0}{\lambda}_n\textbf{a}_n+\sum_{n\geq 0}{\sigma}_n\textbf{b}_n\text{ in } \mathcal{S'} \text{ and } \mathcal{H}_{\mathrm{loc}}^{(q,p)},$$ with 
\begin{eqnarray*}
&&\left\|\sum_{n\geq 0}\left(\frac{|\lambda_n|}{\left\|\chi_{_{Q^n}}\right\|_{q}}\right)^{\eta}\chi_{_{Q^{n}}}+\sum_{n\geq 0}\left(\frac{|\sigma_n|}{\left\|\chi_{_{R^n}}\right\|_{q}}\right)^{\eta}\chi_{_{R^{n}}}\right\|_{\frac{q}{\eta},\frac{p}{\eta}}^{\frac{1}{{\eta}}}\\
&\lsim&\left\|\sum_{n\geq 0}\left(\frac{|\lambda_n|}{\left\|\chi_{_{Q^n}}\right\|_{q}}\right)^{\eta}\chi_{_{Q^{n}}}\right\|_{\frac{q}{\eta},\frac{p}{\eta}}^{\frac{1}{{\eta}}}+\left\|\sum_{n\geq 0}\left(\frac{|\sigma_n|}{\left\|\chi_{_{R^n}}\right\|_{q}}\right)^{\eta}\chi_{_{R^{n}}}\right\|_{\frac{q}{\eta},\frac{p}{\eta}}^{\frac{1}{{\eta}}}\lsim\left\|f\right\|_{\mathcal{H}_{\mathrm{loc}}^{(q,p)}},
\end{eqnarray*}
by (\ref{autrapproch00}) and (\ref{autrapproch0}). This finishes the second proof of Theorem \ref{fondamentaltheoloc}.
\end{proof}

\begin{remark} \label{remqdecoatloc}
Let $0<\eta\leq 1$, $1<r\leq+\infty$ and $\delta\geq\left\lfloor d\left(\frac{1}{q}-1\right)\right\rfloor$ be an integer. For simplicity, we denote by $\mathcal{H}_{\mathrm{loc},fin}^{(q,p)}$ the subspace of $\mathcal{H}_{\mathrm{loc}}^{(q,p)}$ consisting of finite linear combinations of $(q,r,\delta)$-atoms (for $\mathcal{H}_{\mathrm{loc}}^{(q,p)}$). By Theorems \ref{thafondaloc}, \ref{thafondamloc} and \ref{fondamentaltheoloc}, we have 
\begin{enumerate}
\item If $r=+\infty$, then for all $f\in\mathcal{H}_{\mathrm{loc}}^{(q,p)}$, 
\begin{eqnarray*}
\left\|f\right\|_{\mathcal{H}_{\mathrm{loc}}^{(q,p)}}\approx\inf\left\{\left\|\sum_{n\geq 0}\left(\frac{|\lambda_n|}{\left\|\chi_{_{Q^n}}\right\|_{q}}\right)^{\eta}\chi_{_{Q^{n}}}\right\|_{\frac{q}{\eta},\frac{p}{\eta}}^{\frac{1}{\eta}}: f=\sum_{n\geq 0}{\lambda}_n\textbf{a}_n\right\},
\end{eqnarray*} 
where the infimum is taken over all decompositions of $f$ using $(q,\infty,\delta)$-atom $\textbf{a}_n$ supported on the cube $Q^n$.
\item If $\max\left\{p,1\right\}<r<+\infty$ and $0<\eta<q$, then for all $f\in\mathcal{H}_{\mathrm{loc}}^{(q,p)}$, 
\begin{eqnarray*}
\left\|f\right\|_{\mathcal{H}_{\mathrm{loc}}^{(q,p)}}\approx\inf\left\{\left\|\sum_{n\geq 0}\left(\frac{|\lambda_n|}{\left\|\chi_{_{Q^n}}\right\|_{q}}\right)^{\eta}\chi_{_{Q^{n}}}\right\|_{\frac{q}{\eta},\frac{p}{\eta}}^{\frac{1}{\eta}}: f=\sum_{n\geq 0}{\lambda}_n\textbf{a}_n\right\},
\end{eqnarray*}
where the infimum is taken over all decompositions of $f$ using $(q,r,\delta)$-atom $\textbf{a}_n$ supported on the cube $Q^n$.
\item $\mathcal{H}_{\mathrm{loc},fin}^{(q,p)}$ is dense in $\mathcal{H}_{\mathrm{loc}}^{(q,p)}$ in the quasi-norm $\left\|.\right\|_{\mathcal{H}_{\mathrm{loc}}^{(q,p)}}$.
\item $\mathcal{H}_{\mathrm{loc}}^q$ is dense in $\mathcal{H}_{\mathrm{loc}}^{(q,p)}$ in the quasi-norm $\left\|.\right\|_{\mathcal{H}_{\mathrm{loc}}^{(q,p)}}$.  
\item $\mathcal{H}_{\mathrm{loc}}^{(q,p)}\cap L^r$ is dense in $\mathcal{H}_{\mathrm{loc}}^{(q,p)}$ in the quasi-norm $\left\|.\right\|_{\mathcal{H}_{\mathrm{loc}}^{(q,p)}}$.
\end{enumerate} 
\end{remark}

\begin{cor}\label{densiesdschwloc}
The Schwartz space $\mathcal{S}$ is dense in $\mathcal{H}_{\mathrm{loc}}^{(q,p)}$ with respect to the quasi-norm $\left\|\cdot\right\|_{\mathcal{H}_{\mathrm{loc}}^{(q,p)}}$, for $0<q<+\infty$ and $q\leq p<+\infty$. 
\end{cor}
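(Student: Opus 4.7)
The plan is to split the argument according to the value of $q$. For $1 < q < +\infty$ and $q \leq p < +\infty$, Theorem~\ref{gto}(1) identifies $\mathcal{H}_{\mathrm{loc}}^{(q,p)}$ with $(L^q,\ell^p)$ as (quasi-)normed spaces, so it suffices to invoke the well-known density of $\mathcal{S}$ in the Banach amalgam $(L^q,\ell^p)$ for $1 < q, p < +\infty$, which follows from Proposition~\ref{okaido} together with a standard cutoff-and-mollify argument. The substantive case is $0 < q \leq 1$ with $q \leq p < +\infty$, which fits the framework of Section~3, and I focus on this case from now on.

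My starting point will be Remark~\ref{remqdecoatloc}(3), which asserts that finite linear combinations of $(q,\infty,\delta)$-atoms (for any integer $\delta \geq \lfloor d(1/q - 1)\rfloor$) form a dense subspace of $\mathcal{H}_{\mathrm{loc}}^{(q,p)}$. By the quasi-triangle inequality and the fact that $\mathcal{S}$ is a linear space, the problem reduces to approximating a single $(q,\infty,\delta)$-atom $\textbf{a}$ associated with a cube $Q$ by Schwartz functions in the quasi-norm $\|\cdot\|_{\mathcal{H}_{\mathrm{loc}}^{(q,p)}}$.

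The key construction is a mollification with vanishing moments. I choose $\phi \in \mathcal{C}_c^{\infty}(\mathbb{R}^d)$ supported in $B(0,1)$ with $\int \phi\,dx = 1$ and $\int x^\beta \phi(x)\,dx = 0$ for all $1 \leq |\beta| \leq \delta$, and set $\phi_\epsilon(x) := \epsilon^{-d}\phi(x/\epsilon)$ for $\epsilon \in (0,1)$. Then $\textbf{a} \ast \phi_\epsilon \in \mathcal{C}_c^{\infty}(\mathbb{R}^d) \subset \mathcal{S}$, and a direct binomial-moment computation shows that $\textbf{a} \ast \phi_\epsilon$ inherits the vanishing moments of $\textbf{a}$ up to order $\delta$, because the moments of $\phi_\epsilon$ of order $\leq \delta$ collapse to $\delta_{\beta,0}$. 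Let $\widetilde{Q}$ denote the cube concentric with $Q$ of side-length $\ell(Q) + 2\epsilon$, so that $g_\epsilon := \textbf{a} - \textbf{a} \ast \phi_\epsilon$ is supported in $\widetilde{Q}$. Fix $r \in (\max\{p,1\}, +\infty)$. Since $\textbf{a} \in L^{\infty}$ with compact support we have $\textbf{a} \in L^r$, and the classical mollifier property yields $\|g_\epsilon\|_r \to 0$ as $\epsilon \to 0^+$.

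Setting $\mu_\epsilon := \|g_\epsilon\|_r\,|\widetilde{Q}|^{1/q - 1/r}$, the normalized difference $g_\epsilon/\mu_\epsilon$ meets the size condition $\|g_\epsilon/\mu_\epsilon\|_r \leq |\widetilde{Q}|^{1/r - 1/q}$ and inherits the needed vanishing moments, so $(g_\epsilon/\mu_\epsilon,\widetilde{Q}) \in \mathcal{A}_{\mathrm{loc}}(q,r,\delta)$; the uniform bound \eqref{locfondamentalpropo} then delivers $\|g_\epsilon\|_{\mathcal{H}_{\mathrm{loc}}^{(q,p)}} \lsim \mu_\epsilon \to 0$, whence $\textbf{a} \ast \phi_\epsilon \to \textbf{a}$ in $\mathcal{H}_{\mathrm{loc}}^{(q,p)}$, and the corollary follows. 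The main obstacle is the bookkeeping on cube sizes: when $|Q| < 1$ one genuinely needs the vanishing moments of $g_\epsilon/\mu_\epsilon$, which is precisely what the moment-vanishing choice of $\phi$ guarantees; when $|Q| \geq 1$ one automatically has $|\widetilde{Q}| \geq 1$ and no moments are required, so the atom conditions come for free. Handling both regimes with a single mollifier $\phi$ is what makes the approximation uniform across all atoms and turns the density assertion into a one-line application of \eqref{locfondamentalpropo}.
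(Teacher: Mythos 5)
Your argument is correct, and for the substantive case $0<q\leq 1$ it takes a genuinely different route from the paper. The paper's proof reduces to the density of $\mathcal{H}_{\mathrm{loc}}^{q}$ in $\mathcal{H}_{\mathrm{loc}}^{(q,p)}$ (Remark \ref{remqdecoatloc}) and then cites Goldberg's result that $\mathcal{S}$ is dense in $\mathcal{H}_{\mathrm{loc}}^{q}$, so it leans on an external theorem; you instead stay inside the paper's own machinery, reducing via Remark \ref{remqdecoatloc} to the approximation of a single local atom and then mollifying, normalizing the error $g_\epsilon=\textbf{a}-\textbf{a}\ast\phi_\epsilon$ into a $(q,r,\delta)$-atom with small coefficient and invoking the uniform bound (\ref{locfondamentalpropo}). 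This is essentially the mechanism of the paper's Lemma \ref{finilemmodifvra1loc} (proved later for a different purpose), so your proof is self-contained and arguably more in the spirit of the paper, at the cost of redoing an argument the paper outsources. Two small remarks: the extra moment conditions $\int x^\beta\phi\,dx=0$ for $1\leq|\beta|\leq\delta$ are harmless but unnecessary, since the binomial identity $\int x^\beta(\textbf{a}\ast\phi_\epsilon)\,dx=\sum_{\gamma\leq\beta}\binom{\beta}{\gamma}\bigl(\int y^{\gamma}\textbf{a}(y)\,dy\bigr)\bigl(\int z^{\beta-\gamma}\phi_\epsilon(z)\,dz\bigr)$ already vanishes term by term when $\textbf{a}$ has vanishing moments up to order $\delta$ (and when $|Q|\geq 1$ no moments are required of $g_\epsilon/\mu_\epsilon$ anyway, as you note); and one should record explicitly that the constant in (\ref{locfondamentalpropo}) is uniform over atoms and that $\|\cdot\|_{\mathcal{H}_{\mathrm{loc}}^{(q,p)}}$ is homogeneous, which is what converts the atom bound into $\|g_\epsilon\|_{\mathcal{H}_{\mathrm{loc}}^{(q,p)}}\lsim\mu_\epsilon\to 0$. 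The case $1<q\leq p<+\infty$ is handled the same way in both proofs, via Theorem \ref{gto} and the known density of nice functions in the Banach amalgam.
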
 
\begin{proof}
We distinguish two cases for $q$.  

Case 1: $0<q\leq1$. It's well known that $\mathcal{S}$ is dense in $\mathcal{H}_{\mathrm{loc}}^q$ with respect to the quasi-norm $\left\|\cdot\right\|_{\mathcal{H}_{\mathrm{loc}}^q}$, by \cite{DGG}, p. 35. It follows that $\mathcal{S}$ is dense in $\mathcal{H}_{\mathrm{loc}}^q$ with respect to the quasi-norm $\left\|\cdot\right\|_{\mathcal{H}_{\mathrm{loc}}^{(q,p)}}$, since $\mathcal{H}_{\mathrm{loc}}^q\hookrightarrow\mathcal{H}_{\mathrm{loc}}^{(q,p)}$. Furthermore, $\mathcal{H}_{\mathrm{loc}}^q$ is dense in $\mathcal{H}_{\mathrm{loc}}^{(q,p)}$ with respect to the quasi-norm $\left\|\cdot\right\|_{\mathcal{H}_{\mathrm{loc}}^{(q,p)}}$, by Remark \ref{remqdecoatloc}. Therefore, $\mathcal{S}$ is dense in $\mathcal{H}_{\mathrm{loc}}^{(q,p)}$ in the quasi-norm $\left\|\cdot\right\|_{\mathcal{H}_{\mathrm{loc}}^{(q,p)}}$. 

Case 2: $1<q<+\infty$. Then, $1<q\leq p<+\infty$. Hence $\mathcal{H}_{\mathrm{loc}}^{(q,p)}=(L^q,\ell^p)$ with norms equivalence, by Theorem \ref{gto}. Denote by $\mathcal{C}_{\mathrm{comp}}(\mathbb{R}^d)$ the space of continuous complex values functions on $\mathbb{R}^d$ with compact support. It's clear that $\mathcal{C}_{\mathrm{comp}}(\mathbb{R}^d)\subset L^q\subset(L^q,\ell^p)$. Moreover, $\mathcal{C}_{\mathrm{comp}}(\mathbb{R}^d)$ is dense in $(L^q,\ell^p)$ with respct to the norm $\left\|\cdot\right\|_{q,p}$ , by \cite{BDD}, Section 7, p. 77. Hence $L^q$ is dense in $(L^q,\ell^p)$ in the norm $\left\|\cdot\right\|_{q,p}$. Also, $\mathcal{S}$ is dense in $L^q$ in the norm $\left\|\cdot\right\|_{q,p}$ , since $\mathcal{S}$ is dense in $L^q$ in the norm $\left\|\cdot\right\|_q$ and $L^q\hookrightarrow(L^q,\ell^p)$. Therefore, $\mathcal{S}$ is dense in $(L^q,\ell^p)$ in the norm $\left\|\cdot\right\|_{q,p}$. It finally follows that $\mathcal{S}$ is dense in $\mathcal{H}_{\mathrm{loc}}^{(q,p)}$ with respect to the norm $\left\|\cdot\right\|_{\mathcal{H}_{\mathrm{loc}}^{(q,p)}}$, since $\mathcal{H}_{\mathrm{loc}}^{(q,p)}=(L^q,\ell^p)$ and $\left\|\cdot\right\|_{q,p}\approx\left\|\cdot\right\|_{\mathcal{H}_{\mathrm{loc}}^{(q,p)}}$. This finishes the proof in Case 2 and hence, the proof of Corollary \ref{densiesdschwloc}. 
\end{proof}

We end this subsection by giving the local version of \cite{AbFt}, Theorem 4.9, p. 1924. For this, we define a quasi-norm on $\mathcal{H}_{\mathrm{loc},fin}^{(q,p)}$ the subspace of $\mathcal{H}_{\mathrm{loc}}^{(q,p)}$ consisting of finite linear combinations of $(q,r,\delta)$-atoms, with $1<r\leq+\infty$ and $\delta\geq\left\lfloor d\left(\frac{1}{q}-1\right)\right\rfloor$ an integer fixed.
\begin{enumerate}
\item When $r=+\infty$, we fix $0<\eta\leq 1$. For every $f\in\mathcal{H}_{\mathrm{loc},fin}^{(q,p)}$, we set 
\begin{eqnarray*}
\left\|f\right\|_{\mathcal{H}_{\mathrm{loc},fin}^{(q,p)}}:=\inf\left\{\left\|\sum_{n=0}^m\left(\frac{|\lambda_n|}{\left\|\chi_{_{Q^n}}\right\|_{q}}\right)^{\eta}\chi_{_{Q^{n}}}\right\|_{\frac{q}{\eta},\frac{p}{\eta}}^{\frac{1}{\eta}}: f=\sum_{n=0}^m{\lambda}_n\mathfrak{a}_n,\ m\in\mathbb{Z}_{+}\right\},
\end{eqnarray*}
where the infimum is taken over all finite decompositions of $f$ using $(q,\infty,\delta)$-atom $\mathfrak{a}_n$ supported on the cube $Q^n$.
\item When $\max\left\{p,1\right\}<r<+\infty$, we fix $0<\eta<q$. we set 
\begin{eqnarray*}
\left\|f\right\|_{\mathcal{H}_{\mathrm{loc},fin}^{(q,p)}}:=\inf\left\{\left\|\sum_{n=0}^m\left(\frac{|\lambda_n|}{\left\|\chi_{_{Q^n}}\right\|_{q}}\right)^{\eta}\chi_{_{Q^{n}}}\right\|_{\frac{q}{\eta},\frac{p}{\eta}}^{\frac{1}{\eta}}: f=\sum_{n=0}^m{\lambda}_n\mathfrak{a}_n,\ m\in\mathbb{Z}_{+}\right\},
\end{eqnarray*}
where the infimum is taken over all finite decompositions of $f$ using $(q,r,\delta)$-atom $\mathfrak{a}_n$ supported on the cube $Q^n$. 
\end{enumerate}

It's straightward to see that, in the two cases, $\left\|\cdot\right\|_{\mathcal{H}_{\mathrm{loc},fin}^{(q,p)}}$ defines a quasi-norm on $\mathcal{H}_{\mathrm{loc},fin}^{(q,p)}$.

\begin{remark} \label{remarqdecofinieloc}
With the above assumptions, for any $(\textbf{a},Q)\in\mathcal{A}_{\mathrm{loc}}(q,r,\delta)$, 
\begin{eqnarray*}
\left\|\textbf{a}\right\|_{\mathcal{H}_{\mathrm{loc},fin}^{(q,p)}}\leq\left\|\left(\frac{1}{\left\|\chi_{_{Q}}\right\|_q}\right)^{\eta}\chi_{_{Q}}\right\|_{\frac{q}{\eta},\frac{p}{\eta}}^{\frac{1}{\eta}}=\frac{1}{\left\|\chi_{_{Q}}\right\|_q}\left\|\chi_{_{Q}}\right\|_{q,p},
\end{eqnarray*}
since $\textbf{a}=1\textbf{a}$ is a finite decomposition of $(q,r,\delta)$-atoms. 
\end{remark}

Also, we will need the following lemma whose proof is similar to the one of \cite{AbFt}, Lemma 4.8, p. 1923. We omit details.
\begin{lem}\label{lemmodifvra1loc}
Let $f\in\mathcal{H}_{\mathrm{loc}}^{(q,p)}$ such that $\left\|f\right\|_{\mathcal{H}_{\mathrm{loc}}^{(q,p)}}=1$ and $\text{supp}(f)\subset B(0,R)$ with $R>1$. Then, for all $x\notin B(0,4R)$,
\begin{eqnarray*}
\mathcal{M}_{\mathrm{loc}}^0(f)(x)\leq C(\varphi,d,q,p)R^{-\frac{d}{p}},
\end{eqnarray*}
where $C(\varphi,d,q,p)>0$ is a constant independent of $f$ and $x$.
\end{lem}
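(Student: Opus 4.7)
The plan is to reduce the grand-maximal estimate to a Hardy--Littlewood maximal bound on $|\mathcal{M}_{\mathrm{loc}_0}(f)|^r$ for some small $r$, exploiting the crucial fact that $\varphi$ has compact support (so $\mathcal{M}_{\mathrm{loc}_0}(f)$ is itself compactly supported). Then the pointwise bound $R^{-d/p}$ drops out by elementary averaging.

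\smallskip
First, I will invoke the pointwise comparison between maximal functions: for any $0<r<\min\{q,p\}$,
\[\mathcal{M}^0_{\mathrm{loc}}(f)(x)\leq C\left[\mathfrak{M}\bigl(|\mathcal{M}_{\mathrm{loc}_0}(f)|^r\bigr)(x)\right]^{1/r},\qquad x\in\mathbb{R}^d,\]
with $C=C(r,d,q,p,\varphi,N)$. This is the local analogue of the classical Fefferman--Stein pointwise maximal-function inequality, derivable from the intermediate steps in the proof of Theorem \ref{bravoooh} (via the auxiliary maximal function $\mathcal{M}^{\ast\ast}_{\varphi,b,\mathrm{loc}}$).

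\smallskip
Second, since $\varphi$ is supported in $B(0,1)$, $\varphi_t$ is supported in $B(0,t)\subset B(0,1)$ for every $t\in(0,1]$; hence for $y\in\mathrm{supp}(f)\subset B(0,R)$ and $|z|>R+1$ one has $\varphi_t(z-y)=0$, so $\mathrm{supp}(\mathcal{M}_{\mathrm{loc}_0}(f))\subset B(0,R+1)$. For $x\notin B(0,4R)$, the average $\frac{1}{|B(x,r_0)|}\int_{B(x,r_0)}|\mathcal{M}_{\mathrm{loc}_0}(f)|^r dy$ is therefore nonzero only when $r_0\geq|x|-(R+1)\geq 3R-1$, giving $|B(x,r_0)|\gsim R^d$.

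\smallskip
Third, using H\"older's inequality together with the discrete Jensen estimate
\[\|g\|_{L^q(B(0,R+1))}\lsim R^{d(1/q-1/p)}\|g\|_{q,p}\qquad\text{(valid for }q\leq p\text{ on a set meeting }\sim R^d\text{ unit cubes),}\]
and the norm equivalence $\|\mathcal{M}_{\mathrm{loc}_0}(f)\|_{q,p}\approx\|f\|_{\mathcal{H}_{\mathrm{loc}}^{(q,p)}}=1$ from Theorem \ref{bravoooh}, one obtains
\[\|\mathcal{M}_{\mathrm{loc}_0}(f)\|_{L^r(B(0,R+1))}^r\leq|B(0,R+1)|^{1-r/q}\|\mathcal{M}_{\mathrm{loc}_0}(f)\|_{L^q(B(0,R+1))}^r\lsim R^{d(1-r/p)}.\]
Combining these estimates yields $\mathfrak{M}\bigl(|\mathcal{M}_{\mathrm{loc}_0}(f)|^r\bigr)(x)\lsim R^{-d}\cdot R^{d(1-r/p)}=R^{-rd/p}$, and taking $r$-th roots gives $\mathcal{M}^0_{\mathrm{loc}}(f)(x)\lsim R^{-d/p}$ as desired.

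\smallskip
\textbf{Main obstacle.} The delicate point is the first step: establishing the pointwise Fefferman--Stein comparison between $\mathcal{M}^0_{\mathrm{loc}}$ and $[\mathfrak{M}(|\mathcal{M}_{\mathrm{loc}_0}(f)|^r)]^{1/r}$ for the local maximal operators. While this is not stated explicitly in the excerpt, it is implicit in the proof of the local version of Theorem \ref{bravoooh}; the restriction to $t\in(0,1]$ needs to be tracked through the argument but does not affect the validity of the inequality. Once this is in hand, the remaining ingredients in steps two and three are essentially computational, using only the compact support of $\varphi$, H\"older's inequality, and the elementary amalgam-to-$L^q$ comparison.
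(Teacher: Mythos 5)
Your proof is correct, but it takes a different route from the one the paper intends: the paper omits the argument and refers to the proof of \cite{AbFt}, Lemma 4.8, which transports information from $\mathrm{supp}(f)$ to the far-away point $x$ by hand, i.e.\ by showing that for $\psi\in\mathcal{F}_N$ the translated/rescaled test function $\psi_t(x-\cdot)$ is, up to a controlled constant, an admissible test function centered at any $y\in B(0,2R)$, whence $|f\ast\psi_t(x)|\lsim\inf_{y\in B(0,2R)}\mathcal{M}_{\mathrm{loc}}^0(f)(y)$, and then concluding with the same amalgam counting over the $\approx R^d$ unit cubes meeting $B(0,2R)$ that you use in your third step. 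You instead transport via the Hardy--Littlewood maximal operator, using the pointwise Fefferman--Stein inequality $\mathcal{M}_{\mathrm{loc}}^0(f)\lsim[\mathfrak{M}(|\mathcal{M}_{{\mathrm{loc}}_{_{0}}}(f)|^r)]^{1/r}$ together with the observation that $\mathcal{M}_{{\mathrm{loc}}_{_{0}}}(f)$ is supported in $B(0,R+1)$ because $\varphi$ has compact support and $t\leq 1$. This is arguably cleaner in the local setting, where the cap $t\leq1$ makes the direct rescaling argument awkward (a dilation parameter of order $R>1$ is not admissible for $\mathcal{M}_{\mathrm{loc}}^0$); the price is that you must use the \emph{pointwise} form of the maximal comparison, which the paper only records at the level of $(L^q,\ell^p)$-norms in Theorem \ref{bravoooh}, so you are correct to flag it as the step requiring justification from the proof of that theorem rather than from its statement. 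Your exponent bookkeeping ($\|\mathcal{M}_{{\mathrm{loc}}_{_{0}}}(f)\|_{L^q(B(0,R+1))}\lsim R^{d(1/q-1/p)}$ via H\"older over the cubes, then $L^q\to L^r$ H\"older, then division by $|B(x,r_0)|\gsim R^d$) is accurate.

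Two minor caveats you should make explicit. First, the absorption step in the classical proof of $\mathcal{M}_{\Phi,b}^{\ast\ast}(f)\lsim[\mathfrak{M}((\mathcal{M}_{\Phi}f)^r)]^{1/r}$ requires a priori finiteness of the left-hand side; for $f\in\mathcal{H}_{\mathrm{loc}}^{(q,p)}$ this holds almost everywhere by the local version of Theorem \ref{bravoooh}, and the resulting a.e.\ bound upgrades to the asserted bound at \emph{every} $x\notin B(0,4R)$ because $\mathcal{M}_{\mathrm{loc}}^0(f)$ is lower semicontinuous. Second, the parameters must be chosen compatibly: $r=d/b$ with $\max\{d/q,d/p\}<b<N$ and $N=\lfloor b\rfloor+1$, so that $r<\min\{q,p\}=q$ and the constant depends only on $\varphi,d,q,p$, as required.
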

 
Now, we can give our result which is the following.
 
\begin{thm} \label{thafondamfiniloc}
Let $\delta\geq\left\lfloor d\left(\frac{1}{q}-1\right)\right\rfloor$ be an integer and $\max\left\{p,1\right\}<r\leq+\infty$. 
\begin{enumerate}
\item If $r<+\infty$, we fix $0<\eta<q$. Then $\left\|\cdot\right\|_{\mathcal{H}_{\mathrm{loc}}^{(q,p)}}$ and $\left\|\cdot\right\|_{\mathcal{H}_{\mathrm{loc},fin}^{(q,p)}}$ are equivalent on $\mathcal{H}_{\mathrm{loc},fin}^{(q,p)}$. \label{thafondamfini01loc}
\item If $r=+\infty$, we fix $0<\eta\leq 1$. Then $\left\|\cdot\right\|_{\mathcal{H}_{\mathrm{loc}}^{(q,p)}}$ and $\left\|\cdot\right\|_{\mathcal{H}_{\mathrm{loc},fin}^{(q,p)}}$ are equivalent on $\mathcal{H}_{\mathrm{loc},fin}^{(q,p)}\cap\mathcal{C}(\mathbb{R}^d)$, where $\mathcal{C}(\mathbb{R}^d)$ denotes the space of continuous complex values functions on $\mathbb{R}^d$. \label{thafondamfini02loc}
\end{enumerate}
\end{thm}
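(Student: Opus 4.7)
The inequality $\|f\|_{\mathcal{H}_{\mathrm{loc}}^{(q,p)}}\lsim\|f\|_{\mathcal{H}_{\mathrm{loc},fin}^{(q,p)}}$ is immediate: every finite atomic decomposition of $f$ is an admissible input for Theorem \ref{thafondaloc} (when $r=+\infty$) or Theorem \ref{thafondamloc} (when $\max\{p,1\}<r<+\infty$), and passing to the infimum yields the estimate. For the reverse direction, which is the real content of the theorem, I normalize $\|f\|_{\mathcal{H}_{\mathrm{loc}}^{(q,p)}}=1$ by homogeneity; since $f$ is a finite linear combination of compactly supported atoms, I fix $R>1$ with $\supp(f)\subset B(0,R)$.

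The plan is to apply Theorem \ref{fondamentaltheoloc} to write $f=\sum_{j\in\mathbb{Z},k\geq 0}\lambda_{j,k}\mathfrak{a}_{j,k}$ with $(\mathfrak{a}_{j,k},\widetilde{Q}^{j,k})\in\mathcal{A}_{\mathrm{loc}}(q,\infty,\delta)\subset\mathcal{A}_{\mathrm{loc}}(q,r,\delta)$, whose controlling sum is bounded by a constant multiple of $1$, and then to split this series into a finite part plus a single-atom tail. By Lemma \ref{lemmodifvra1loc}, whenever $2^j>C(\varphi,d,q,p)R^{-d/p}$ the level set $\mathcal{O}^j$ lies in $B(0,4R)$; combined with the Whitney-type control of the side-lengths of the $Q_\ast^{j,k}$, this places every such $\widetilde{Q}^{j,k}=C_0 Q_\ast^{j,k}$ inside a single cube $\widetilde{Q}$ of side-length $\approx R$, valid for all $j\geq j_1:=j_1(R)$. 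Since $R>1$, we have $|\widetilde{Q}|\geq 1$, so no vanishing moments will be required for an atom associated with $\widetilde{Q}$.

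I then choose integers $j_2\geq j_1$ and $K_j\in\mathbb{N}$ ($j_1\leq j\leq j_2$) and set $h:=\sum_{j=j_1}^{j_2}\sum_{k=0}^{K_j}\lambda_{j,k}\mathfrak{a}_{j,k}$ and $\ell:=f-h$. Then $\supp(h)\subset\widetilde{Q}$, and combined with $\supp(f)\subset\widetilde{Q}$ this gives $\supp(\ell)\subset\widetilde{Q}$. The finite-atomic quasi-norm of $h$ is controlled by a partial sum of the expression appearing in Theorem \ref{fondamentaltheoloc}, hence is $\lsim\|f\|_{\mathcal{H}_{\mathrm{loc}}^{(q,p)}}$. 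For the tail, the pointwise bound $|\lambda_{j,k}\mathfrak{a}_{j,k}|\leq C_1 2^j$ and the bounded-overlap property \eqref{thafondamfini300} yield $\sum_{j,k}|\lambda_{j,k}\mathfrak{a}_{j,k}|\lsim \mathcal{M}_{\mathrm{loc}}^0(f)$ pointwise; since $\mathcal{M}_{\mathrm{loc}}^0(f)\in L^r(\widetilde{Q})$ (from $f\in L^\infty$ in case (2), and from $f\in L^r$ with $r>1$ together with the $L^r$-boundedness of the maximal operator in case (1)), dominated convergence allows me to choose $j_2$ and the $K_j$ large enough to make $\|\ell\|_r$ as small as $|\widetilde{Q}|^{1/r}/\|\chi_{\widetilde{Q}}\|_{q,p}$. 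Then $\ell$ is a constant multiple of a $(q,r,\delta)$-atom associated with $\widetilde{Q}$, whose coefficient contributes a bounded amount to the finite-atomic quasi-norm. Consequently $f=h+\ell$ is a finite atomic decomposition realizing the bound $\|f\|_{\mathcal{H}_{\mathrm{loc},fin}^{(q,p)}}\lsim\|f\|_{\mathcal{H}_{\mathrm{loc}}^{(q,p)}}$.

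The main obstacle is the simultaneous control of the support and the $L^r$-size of $\ell$, which rests on the geometric estimate of Lemma \ref{lemmodifvra1loc} together with the dominated-convergence argument driven by the maximal-function majorant. In case (2) there is an additional technical requirement that $\ell$ be continuous, which is precisely why one restricts to $\mathcal{H}_{\mathrm{loc},fin}^{(q,p)}\cap\mathcal{C}(\mathbb{R}^d)$: each atom $\mathfrak{a}_{j,k}$ produced by the Calder\'on--Zygmund construction inherits continuity from $f$ through the smooth partition of unity $\eta_{j,k}$ and the polynomial corrections $c_{j,k}$ and $c_k^\ell$, so the finite sum $h$ is continuous and therefore $\ell=f-h$ is continuous as well.
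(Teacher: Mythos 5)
Your treatment of part (1) is essentially the paper's argument, with one cosmetic reorganization: the paper makes the \emph{entire} low-level sum $\sum_{j\le j'}\sum_k\lambda_{j,k}\mathfrak a_{j,k}$ into a single $L^\infty$-atom $h$ on $Q(0,8R)$ via the pointwise bound $|h|\lsim 2^{j'}\approx R^{-d/p}$, and only the high-level tail is approximated in $L^r$; you instead fold the low-level sum into the remainder $\ell$. That is repairable, but as stated your smallness claim is imprecise: the low-level part of your $\ell$ is \emph{not} made small by dominated convergence (it is unaffected by the choice of $j_2$ and $K_j$); it is merely \emph{bounded} by the atom threshold $\approx R^{-d/p}|\widetilde Q|^{1/r}$ through the pointwise bound $\sum_{j<j_1}C_12^j\lsim 2^{j_1}$ and the bounded overlap. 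You should separate these two mechanisms explicitly, as the paper does.

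The genuine gap is in part (2), $r=+\infty$. Your entire smallness mechanism is ``dominated convergence driven by the maximal-function majorant,'' but there is no dominated convergence theorem in $L^\infty$: a series converging pointwise with an $L^\infty$ majorant need not have tails tending to $0$ in the sup norm, and indeed each level $j$ of the Calder\'on--Zygmund decomposition can contain infinitely many Whitney cubes (they shrink near $\partial\mathcal{O}^j$), so no finite truncation controls $\|\ell-\ell_{\mathrm{finite}}\|_\infty$. You invoke continuity of $f$ only to conclude that $\ell$ is continuous, but that alone does not produce a single $(q,\infty,\delta)$-atom out of the infinite tail. This is precisely why the hypothesis $f\in\mathcal{C}(\mathbb{R}^d)$ appears and why the paper's proof of part (2) is the technical core of the theorem: one uses the \emph{uniform} continuity of $f$ to split $\ell=\ell_1^{\epsilon}+\ell_2^{\epsilon}$ according to whether $\mathrm{diam}(Q_{j,k})\ge\gamma$ or $<\gamma$; the first piece is a genuinely finite sum (only finitely many Whitney cubes of diameter $\ge\gamma$ fit in $B(0,4R)$ with bounded overlap, and only finitely many levels $j'<j\le j''$ occur since $\mathcal{O}^j=\emptyset$ for $2^j\ge\|f\|_\infty$), while each term of the second piece is shown to be pointwise $\lsim\epsilon$ via the estimates on $\tilde f=(f-f(x_{j,k}))\chi_{Q_{j,k}}$ and the shifted polynomials $\tilde c_{j,k}$, $\tilde c_k^{i}$ (the local analogue of Lemma \ref{lemdecompoat4loc}), giving $|\ell_2^{\epsilon}|\lsim(j''-j')\epsilon$ and hence a single $L^\infty$-atom after choosing $\epsilon$. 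Without this argument, or a substitute for it, your proof of part (2) does not go through.
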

\begin{proof}
We first prove the part (\ref{thafondamfini01loc}). We consider an integer $\delta\geq\left\lfloor d\left(\frac{1}{q}-1\right)\right\rfloor$, $\max\left\{1,p\right\}<r<+\infty$ and $0<\eta<q$. Let $f\in\mathcal{H}_{\mathrm{loc},fin}^{(q,p)}$. From Theorem \ref{thafondamloc}, we deduce that   
\begin{eqnarray*}
\left\|f\right\|_{\mathcal{H}_{\mathrm{loc}}^{(q,p)}}\lsim \left\|f\right\|_{\mathcal{H}_{\mathrm{loc},fin}^{(q,p)}}.
\end{eqnarray*}
It remains to show that 
\begin{eqnarray}
\left\|f\right\|_{\mathcal{H}_{\mathrm{loc},fin}^{(q,p)}}\lsim\left\|f\right\|_{\mathcal{H}_{\mathrm{loc}}^{(q,p)}}. \label{thafondamfini03loc}
\end{eqnarray} 
By homogeneity of the quasi-norm $\left\|\cdot\right\|_{\mathcal{H}_{\mathrm{loc},fin}^{(q,p)}}$, we may assume that $\left\|f\right\|_{\mathcal{H}_{\mathrm{loc}}^{(q,p)}}=1$. Since $f$ is a finite linear combination of $(q,r,\delta)$-atoms, there exists a real $R>1$ such that $\text{supp}(f)\subset B(0,R)$. Hence
\begin{eqnarray}
\mathcal{M}_{\mathrm{loc}}^0(f)(x)\leq C_{\varphi,d,q,p}R^{-\frac{d}{p}}, \label{rev1thafondamfini03loc}
\end{eqnarray}
for all $x\notin B(0,4R)$, by Lemma \ref{lemmodifvra1loc}. For each $j\in\mathbb{Z}$, we set $$\mathcal{O}^j:=\left\{x\in\mathbb{R}^d:\ \mathcal{M}_{\mathrm{loc}}^0(f)(x)>2^j\right\}\cdot$$ Denote by $j'$ the largest integer $j$ such that $2^{j}<C_{\varphi,d,q,p}R^{-\frac{d}{p}}$. Then, 
\begin{eqnarray}
\mathcal{O}^j\subset B(0,4R),\label{thafondamfini1loc}
\end{eqnarray}
for all $j>j'$, by (\ref{rev1thafondamfini03loc}). Furthermore, since $f\in L^r$, by the proof (first approach) of Theorem \ref{fondamentaltheoloc}, there exist a sequence $\left\{\left(\mathfrak a_{j,k},Q_{j,k}\right)\right\}_{(j,k)\in\mathbb{Z}\times\mathbb{Z}_{+}}$ in $\mathcal{A}_{\mathrm{loc}}(q,r,\delta)$ and a sequence of scalars $\left\{\lambda_{j,k}\right\}_{(j,k)\in\mathbb{Z}\times\mathbb{Z}_{+}}$ such that $f=\sum_{j=-\infty}^{+\infty}\sum_{k\geq 0}\lambda_{j,k}\mathfrak a_{j,k}$ almost everywhere and in $\mathcal{S'}$, with  
\begin{equation}
\text{supp}(\mathfrak a_{j,k})\subset\mathcal{O}^j,\ \ |\lambda_{j,k}\mathfrak a_{j,k}|\lsim 2^j\text{ a.e and }\ Q_{j,k}=C_0 Q_{j,k}^{\ast}\ , \label{rev2thafondamfini03loc}
\end{equation}
for each $j\in\mathbb{Z}$, where the family of cubes $\left\{Q_{j,k}^{\ast}\right\}_{k\geq 0}$ satisfies  
\begin{eqnarray}
\underset{k\geq 0}\bigcup Q_{j,k}^{\ast}=\mathcal{O}^j\text{ and }\ \underset{k\geq 0}\sum\chi_{_{Q_{j,k}^{\ast}}}\lsim 1. \label{rev3thafondamfini03loc}
\end{eqnarray}
Also, 
\begin{eqnarray}
\left\|\sum_{j=-\infty}^{+\infty}\underset{k\geq 0}\sum\left(\frac{|\lambda_{j,k}|}{\left\|\chi_{_{Q_{j,k}}}\right\|_{q}}\right)^{\eta}\chi_{_{Q_{j,k}}}\right\|_{\frac{q}{\eta},\frac{p}{\eta}}^{\frac{1}{\eta}}\lsim\left\|f\right\|_{\mathcal{H}_{\mathrm{loc}}^{(q,p)}}.\label{rev4thafondamfini03loc}
\end{eqnarray}
Set $h:=\sum_{j\leq j'}\sum_{k\geq 0}\lambda_{j,k}\mathfrak a_{j,k}$ and $\ell:=\sum_{j>j'}\sum_{k\geq 0}\lambda_{j,k}\mathfrak a_{j,k}$ , where the series converge almost everywhere and in $\mathcal{S'}$. We have $f=h+\ell$ and $\text{supp}(\ell)\subset\bigcup_{j>j'}\mathcal{O}^j\subset B(0,4R)$, by (\ref{rev2thafondamfini03loc}) and (\ref{thafondamfini1loc}). It follows that $\text{supp}(h)\subset B(0,4R)$, since $f=\ell=0$ sur $\mathbb{R}^d\backslash{B(0,4R)}$. Also, with (\ref{rev2thafondamfini03loc}) and (\ref{rev3thafondamfini03loc}), arguing as in the proof of \cite{AbFt}, Theorem 4.9, p. 1924, we have 
\begin{eqnarray*}
\left\|\ell\right\|_{r}\lsim\left\|\mathcal{M}_{\mathrm{loc}}^0(f)\right\|_{r}\approx\left\|f\right\|_{r}<+\infty,
\end{eqnarray*}
since $f\in L^r$, $r>1$ and $N\geq\left\lfloor\frac{d}{q}\right\rfloor+1>\left\lfloor\frac{d}{r}\right\rfloor+1$. Hence $\ell\in L^r$ and the series $\sum_{j>j'}\sum_{k\geq 0}\lambda_{j,k}\mathfrak a_{j,k}$ converges to $\ell$ in $L^r$, by the Lebesgue dominated convergence theorem in $L^r$. Thus, $h=f-\ell\in L^r$. Moreover, since $\sum_{k\geq 0}\chi_{_{\text{supp}(\mathfrak a_{j,k})}}\lsim 1$ (see Remark \ref{remqdecoatloc0} (\ref{thafondamfini300})), arguing as in the proof of \cite{AbFt}, Theorem 4.9, p. 1924, we obtain $|h(x)|\leq C_{\varphi,d,q,p,\delta}R^{-\frac{d}{p}}$, for almost all $x\in\mathbb{R}^d$, by the choice of $j'$. Setting $$C_{2}:=\left(C_{\varphi,d,q,p,\delta}R^{-\frac{d}{p}}\right)^{-1}\left\|\chi_{_{Q(0,8R)}}\right\|_{q}^{-1},$$ we have $|C_{2}h(x)|\leq\left\|\chi_{_{Q(0,8R)}}\right\|_{q}^{-1}$, for almost all $x\in\mathbb{R}^d$. Therefore, $(C_{2}h,Q(0,8R))\in\mathcal{A}_{\mathrm{loc}}(q,\infty,\delta)\subset\mathcal{A}_{\mathrm{loc}}(q,r,\delta)$, since $\text{supp}(h)\subset B(0,4R)\subset Q(0,8R)$. 

Now, we rewrite $\ell$ as a finite linear combination of $(q,r,\delta)$-atoms. For every positive integer $i$, we set $$F_i:=\left\{(j,k)\in\mathbb{Z}\times\mathbb{Z}_{+}: j>j', |j|+k\leq i\right\}$$ and define the finite sum $\ell_i$ by $\ell_i=\sum_{(j,k)\in F_i}\lambda_{j,k}\mathfrak a_{j,k}$. The convergence of the series $\sum_{j>j'}\sum_{k\geq 0}\lambda_{j,k}\mathfrak a_{j,k}$ to $\ell$ in $L^r$ implies that there exists an integer $i_0>0$ such that $\left\|\ell-\ell_{i_0}\right\|_{r}\leq|Q(0,8R)|^{\frac{1}{r}-\frac{1}{q}}$. Hence $(\ell-\ell_{i_0},Q(0,8R))\in\mathcal{A}_{\mathrm{loc}}(q,r,\delta)$, since $\text{supp}(\ell-\ell_{i_0})\subset B(0,4R)\subset Q(0,8R)$. Thus, $\ell=(\ell-\ell_{i_0})+\ell_{i_0}$ is a finite linear combination of $(q,r,\delta)$-atoms, and
\begin{eqnarray*}
f=h+\ell={C_{2}}^{-1}(C_{2}h)+(\ell-\ell_{i_0})+\ell_{i_0} 
\end{eqnarray*}
is a finite decomposition of $f$ as $(q,r,\delta)$-atoms. Therefore, arguing as in the proof of \cite{AbFt}, Theorem 4.9, p. 1924, we obtain  
\begin{eqnarray*}
\left\|f\right\|_{\mathcal{H}_{\mathrm{loc},fin}^{(q,p)}}\lsim I_1+I_2+I_3\lsim 1,
\end{eqnarray*}
where $$I_1=\left\|\left(\frac{{C_{2}}^{-1}}{\left\|\chi_{_{Q(0,8R)}}\right\|_{q}}\right)^{\eta}\chi_{_{Q(0,8R)}}\right\|_{\frac{q}{\eta},\frac{p}{\eta}}^{\frac{1}{\eta}},\  I_2=\left\|\left(\frac{1}{\left\|\chi_{_{Q(0,8R)}}\right\|_{q}}\right)^{\eta}\chi_{_{Q(0,8R)}}\right\|_{\frac{q}{\eta},\frac{p}{\eta}}^{\frac{1}{\eta}}$$ and $$I_3=\left\|\underset{(j,k)\in F_{i_0}}\sum\left(\frac{|\lambda_{j,k}|}{\left\|\chi_{_{Q_{j,k}}}\right\|_{q}}\right)^{\eta}\chi_{_{Q_{j,k}}}\right\|_{\frac{q}{\eta},\frac{p}{\eta}}^{\frac{1}{\eta}}.$$ 
This implies (\ref{thafondamfini03loc}) which completes the proof of (\ref{thafondamfini01loc}).  
 
For the part (\ref{thafondamfini02loc}), we fix $0<\eta\leq 1$. Let $f\in\mathcal{H}_{\mathrm{loc},fin}^{(q,p)}\cap\mathcal{C}(\mathbb{R}^d)$. As in (\ref{thafondamfini01loc}), we have $\left\|f\right\|_{\mathcal{H}_{\mathrm{loc}}^{(q,p)}}\lsim\left\|f\right\|_{\mathcal{H}_{\mathrm{loc},fin}^{(q,p)}}$, by Theorem \ref{thafondaloc}. For the reverse inequality, we also may assume that $\left\|f\right\|_{\mathcal{H}_{\mathrm{loc}}^{(q,p)}}=1$, by homogeneity. We have $f\in L_{\mathrm{loc}}^1$. Thus, as in the proof of (\ref{thafondamfini01loc}), there exist a sequence $\left\{\left(\mathfrak a_{j,k},Q_{j,k}\right)\right\}_{(j,k)\in\mathbb{Z}\times\mathbb{Z}_{+}}$ in $\mathcal{A}_{\mathrm{loc}}(q,\infty,\delta)$ and a sequence of scalars $\left\{\lambda_{j,k}\right\}_{(j,k)\in\mathbb{Z}\times\mathbb{Z}_{+}}$ such that $f=\sum_{j=-\infty}^{+\infty}\sum_{k\geq 0}\lambda_{j,k}\mathfrak a_{j,k}$ almost everywhere and in  $\mathcal{S'}$, and (\ref{rev2thafondamfini03loc}), (\ref{rev3thafondamfini03loc}) and (\ref{rev4thafondamfini03loc}) hold. Notice that each $(q,\infty,\delta)$-atom $\mathfrak a_{j,k}$ is continuous by examining its definition (see (\ref{operamaxi7ajreloc}) and (\ref{operamaxi17revuajloc})), since $f$ is continuous. Also, since $\left\|\Phi\right\|_{1}\leq\mathfrak{N}_{N}(\Phi)\leq 1$ for any $\Phi\in\mathcal{F}_{N}$, it's easy to see that 
\begin{eqnarray}
\mathcal{M}_{\mathrm{loc}}^0(f)(x)\leq\left\|f\right\|_{\infty}. \label{thafondamfini5loc}
\end{eqnarray}
for all $x\in\mathbb{R}^d$. Then, it follows from (\ref{thafondamfini5loc}) that $\mathcal{O}^j=\emptyset$, for all $j\in\mathbb{Z}$ such that $\left\|f\right\|_{\infty}\leq 2^j$. Moreover, as in the part (\ref{thafondamfini01loc}), there exists a real $R>1$ such that $\text{supp}(f)\subset B(0,R)$ so that (\ref{rev1thafondamfini03loc}) holds. Let $h$ and $\ell$ be the functions as defined in the part (\ref{thafondamfini01loc}). Arguing as in (\ref{thafondamfini01loc}), we obtain that $\text{supp}(\ell)\subset B(0,4R)$ and $(C_{2}'h,Q(0,8R))\in\mathcal{A}_{\mathrm{loc}}(q,\infty,\delta)$ with $C_{2}':=\left(C_{\varphi,d,q,p,\delta}R^{-\frac{d}{p}}\right)^{-1}\left\|\chi_{_{Q(0,8R)}}\right\|_{q}^{-1}$. Denote by $j''$ the largest integer $j$ such that $2^{j}<\left\|f\right\|_{\infty}$. Then 
\begin{eqnarray*}
\ell:=\sum_{j>j'}\sum_{k\geq 0}\lambda_{j,k}\mathfrak a_{j,k}=\sum_{j'<j\leq j''}\sum_{k\geq 0}\lambda_{j,k}\mathfrak a_{j,k}.
\end{eqnarray*}		
Let $\epsilon>0$. Since $f$ is uniformly continuous, there exists a real $\gamma>0$ such that, for all $x,y\in\mathbb{R}^d$, $|x-y|<\gamma$ implies that $|f(x)-f(y)|<\epsilon$. Without loss of generality, we may assume that $\gamma<1$. Write $\ell=\ell_1^{\epsilon}+\ell_2^{\epsilon}$, with $$\ell_1^{\epsilon}:=\underset{(j,k)\in G_1}\sum\lambda_{j,k}\mathfrak a_{j,k}\ \ \text{ and }\ \ \ell_2^{\epsilon}:=\underset{(j,k)\in G_2}\sum\lambda_{j,k}\mathfrak a_{j,k}\ ,$$ where $$G_1:=\left\{(j,k)\in\mathbb{Z}\times\mathbb{Z}_{+}: \text{diam}(Q_{j,k})\geq\gamma,\  j'<j\leq j''\right\}$$ and $$G_2:=\left\{(j,k)\in\mathbb{Z}\times\mathbb{Z}_{+}: \text{diam}(Q_{j,k})<\gamma,\  j'<j\leq j''\right\}.$$ Using (\ref{rev2thafondamfini03loc}), the construction of cubes $Q_{j,k}^\ast$ ($Q_{j,k}^\ast=c\tilde{Q}_{j,k}$, for some $1<c<5/4$, where the family of cubes $\left\{\tilde{Q}_{j,k}\right\}_{k\geq0}$ corresponds to the Whitney decomposition of $\mathcal{O}^j$) and (\ref{thafondamfini1loc}), we see that $\text{card}(G_1)<+\infty$. Therefore, $\ell_1^{\epsilon}$ is continuous. 
 
For any $(j,k)\in G_2$ and $x\in Q_{j,k}$, we clearly have $|f(x)-f(x_{j,k})|<\epsilon$, where $x_{j,k}$ stands for the center of $Q_{j,k}$. Set $$\tilde{f}(x):=[f(x)-f(x_{j,k})]\chi_{_{Q_{j,k}}}(x) \text{ and } \tilde{c}_{j,k}(x):=c_{j,k}(x)-f(x_{j,k}),$$ for all $x\in\mathbb{R}^d$, where $c_{j,k}$ is the polynomial defined by (\ref{operamaxi40re1}). We have $[\tilde{f}(x)-\tilde{c}_{j,k}(x)]\eta_{j,k}(x)=[f(x)-c_{j,k}(x)]\eta_{j,k}(x)$, for all $x\in\mathbb{R}^d$, where $\eta_{j,k}$ is the function in (\ref{operamaxi40re1}), since $\text{supp}(\eta_{j,k})\subset Q_{j,k}^\ast\subset Q_{j,k}$. Hence 
\begin{eqnarray*}
\int_{\mathbb{R}^d}[\tilde{f}(x)-\tilde{c}_{j,k}(x)]\eta_{j,k}(x)\mathfrak{p}(x)dx=0, 
\end{eqnarray*}
for all $\mathfrak{p}\in\mathcal{P_{\delta}}$, by (\ref{operamaxi40re1}). Since $|\tilde{f}(x)|<\epsilon$ for all $x\in\mathbb{R}^d$ implies that $\mathcal{M}_{\mathrm{loc}}^0(\tilde{f})(x)\leq\epsilon$ for all $x\in\mathbb{R}^d$, and $\ell^{j,k}:=\ell(Q_{j,k}^\ast)<\frac{\gamma}{C_0\sqrt{d}}<1$, then by Lemma \ref{lemdecompoat4loc} (\ref{operamaxi8loc}), we have 
\begin{eqnarray}
\sup_{x\in\mathbb{R}^d}|\tilde{c}_{j,k}(x)\eta_{j,k}(x)|\lsim\sup_{y\in\mathbb{R}^d}\mathcal{M}_{\mathrm{loc}}^0(\tilde{f})(y)\lsim\epsilon. \label{thafondamfini6loc}
\end{eqnarray}
Let $i\geq0$ be an integer. Denote by $\tilde{c}_{k}^{i}$ the polynomial of $\mathcal{P_{\delta}}$ such that 
\begin{eqnarray*}
\int_{\mathbb{R}^d}[\tilde{f}(x)-\tilde{c}_{j+1,i}(x)]\eta_{j,k}(x)\mathfrak{p}(x)\eta_{j+1,i}(x)dx=\int_{\mathbb{R}^d}\tilde{c}_{k}^{i}(x)\mathfrak{p}(x)\eta_{j+1,i}(x)dx,
\end{eqnarray*}
for all $\mathfrak{p}\in\mathcal{P_{\delta}}$, where $\tilde{c}_{j+1,i}(x)=c_{j+1,i}(x)-f(x_{j,k})$. The above expression means that $\tilde{c}_{k}^{i}$ is the orthogonal projection of $(\tilde{f}-\tilde{c}_{j+1,i})\eta_{j,k}$ on $\mathcal{P_{\delta}}$ with respect to the norm (\ref{normeprecis}). Since $\text{supp}(\eta_{j,k})\subset Q_{j,k}^\ast\subset Q_{j,k}$, we have $[\tilde{f}(x)-\tilde{c}_{j+1,i}(x)]\eta_{j,k}(x)=[f(x)-c_{j+1,i}(x)]\eta_{j,k}(x)$, for all $x\in\mathbb{R}^d$. Hence $\tilde{c}_{k}^{i}=c_{k}^{i}$, by (\ref{operamaxi40re3}). Then, we infer from Lemma \ref{lemdecompoat4loc} (\ref{operamaxi7loc}) that 
\begin{eqnarray}
\sup_{y\in\mathbb{R}^d}|\tilde{c}_{k}^{i}(y)\eta_{j+1,i}(y)|\lsim\sup_{y\in\mathbb{R}^d}\mathcal{M}_{\mathrm{loc}}^0(\tilde{f})(y)\lsim\epsilon. \label{thafondamfini7loc}
\end{eqnarray} 
Endeed, since $\tilde{c}_{k}^{i}=c_{k}^{i}$, we have $\tilde{c}_{k}^{i}=0$, if $Q_{j,k}^\ast\cap Q_{j+1,i}^\ast=\emptyset$, by (\ref{operamaxi5}). In this case, (\ref{thafondamfini7loc}) is clearly verified. If $Q_{j,k}^\ast\cap Q_{j+1,i}^\ast\neq\emptyset$, then $\text{diam}(Q_{j+1,i}^\ast)\leq C\text{diam}(Q_{j,k}^\ast)$, by Lemma \ref{lemdecompoat3} (\ref{operamaxi6}). Thus, 
\begin{eqnarray*}
\ell^{j+1,i}:=\ell(Q_{j+1,i}^\ast)\leq\frac{C}{\sqrt{d}}\text{diam}(Q_{j,k}^\ast)<\frac{C}{C_0\sqrt{d}}\gamma<1,
\end{eqnarray*}
since $1<C<C_0$ (see Lemma \ref{lemdecompoat3} (\ref{operamaxi6})). Moreover,
\begin{eqnarray}
\int_{\mathbb{R}^d}[\tilde{f}(x)-\tilde{c}_{j+1,i}(x)]\eta_{j+1,i}(x)\mathfrak{p}(x)dx=0,\label{recjkiloc}
\end{eqnarray}
for all $\mathfrak{p}\in\mathcal{P_{\delta}}$. Endeed, $Q_{j,k}^\ast\cap Q_{j+1,i}^\ast\neq\emptyset$ implies that $\text{supp}(\eta_{j+1,i})\subset Q_{j+1,i}^\ast\subset C_0Q_{j,k}^\ast=:Q_{j,k}$, by lemma \ref{lemdecompoat3} (\ref{operamaxi6}). Hence $[\tilde{f}(x)-\tilde{c}_{j+1,i}(x)]\eta_{j+1,i}(x)=[f(x)-c_{j+1,i}(x)]\eta_{j+1,i}(x)$, for all $x\in\mathbb{R}^d$, which implies (\ref{recjkiloc}), by (\ref{operamaxi40re1}). Therefore, we can apply Lemma \ref{lemdecompoat4loc} (\ref{operamaxi7loc}) to obtain (\ref{thafondamfini7loc}). 

Also, when $Q_{j,k}^\ast\cap Q_{j+1,i}^\ast\neq\emptyset$, we have
\begin{eqnarray}
\sup_{y\in\mathbb{R}^d}|\tilde{c}_{j+1,i}(y)\eta_{j+1,i}(y)|\lsim\sup_{y\in\mathbb{R}^d}\mathcal{M}_{\mathrm{loc}}^0(\tilde{f})(y)\lsim\epsilon,
\label{thafondamfini8loc}
\end{eqnarray} 
by Lemma \ref{lemdecompoat4loc} (\ref{operamaxi8loc}), since (\ref{recjkiloc}) holds and $\ell^{j+1,i}<1$. Moreover, since $\ell^{j,k}<1$, it follows from the definition of $\lambda_{j,k}\mathfrak a_{j,k}$ (see (\ref{operamaxi7ajreloc}) and (\ref{operamaxi17revuajloc})) that  
\begin{eqnarray*}
\lambda_{j,k}\mathfrak a_{j,k}&=&(f-c_{j,k})\eta_{j,k}-\underset{i\in E_1^{j+1}}\sum(f-c_{j+1,i})\eta_{j+1,i}\eta_{j,k} + \underset{i\in E_1^{j+1}}\sum c_{k}^{i}\eta_{j+1,i}\\
&=&\tilde{f}\eta_{j,k}(1-\underset{i\in E_1^{j+1}}\sum\eta_{j+1,i})-\tilde{c}_{j,k}\eta_{j,k}+\eta_{j,k}\underset{i\in E_1^{j+1}}\sum\tilde{c}_{j+1,i}\eta_{j+1,i}+\underset{i\in E_1^{j+1}}\sum\tilde{c}_{k}^{i}\eta_{j+1,i}.
\end{eqnarray*}
Also, by $0\leq\underset{i\in E_1^{j+1}}\sum\eta_{j+1,i}(x)\leq\underset{i\geq 0}\sum\eta_{j+1,i}(x)=\chi_{_{\mathcal{O}^{j+1}}}(x)\leq1$, we have  
\begin{eqnarray*}
\left|\tilde{f}(x)\eta_{j,k}(x)\left(1-\underset{i\in E_1^{j+1}}\sum\eta_{j+1,i}(x)\right)\right|\leq|\tilde{f}(x)|\leq\epsilon, 
\end{eqnarray*}
for all $x\in Q_{j,k}$ and $(j,k)\in G_2$. From this together with (\ref{thafondamfini6loc}), (\ref{thafondamfini7loc}); (\ref{thafondamfini8loc}) and the fact that $\underset{i\geq 0}\sum\chi_{_{Q_{j+1,i}^{\ast}}}\lsim 1$, it follows that  
\begin{eqnarray*}
|\lambda_{j,k}\mathfrak a_{j,k}(x)|\lsim\epsilon,
\end{eqnarray*}
for all $x\in Q_{j,k}$ and $(j,k)\in G_2$. Thus, using the fact that $\underset{k\geq 0}\sum\chi_{_{\text{supp}(\mathfrak a_{j,k})}}\lsim 1$ (see Remark \ref{remqdecoatloc0} (\ref{thafondamfini300})), we obtain  
\begin{eqnarray}
|\ell_2^{\epsilon}(x)|\leq C_3\sum_{j'<j\leq j''}\epsilon=C_3(j''-j')\epsilon, \label{recjki1loc}
\end{eqnarray}
for all $x\in\mathbb{R}^d$, where $C_3>0$ is a constant independent of $x$, $f$; $k$ and $\epsilon$. We deduce from this estimate that $\ell$ is continuous. Therefore, $h=f-\ell$ is continuous and $C_2'h$ is a continuous $(q,\infty,\delta)$-atom.

Now, we find a finite decomposition of $\ell$ as continuous $(q,\infty,\delta)$-atoms by using again the splitting $\ell=\ell_1^{\epsilon}+\ell_2^{\epsilon}$. Endeed, it is clear that, for all $\epsilon>0$, $\ell_1^{\epsilon}$ is a finite linear combination of continuous $(q,\infty,\delta)$-atoms. Also, $\ell_2^{\epsilon}=\ell-\ell_1^{\epsilon}$ is continuous and $\text{supp}\left(\ell_2^{\epsilon}\right)\subset B(0,4R)\subset Q(0,8R)$. Hence taking $\epsilon=[C_3(j''-j')]^{-1}|Q(0,8R)|^{-1/q}$, we have $|\ell_2^{\epsilon}(x)|\leq|Q(0,8R)|^{-1/q}$, by (\ref{recjki1loc}), so that $(\ell_2^{\epsilon},Q(0,8R))\in\mathcal{A}_{\mathrm{loc}}(q,\infty,\delta)$. This establishes the finite  decomposition of $\ell$ as  continuous $(q,\infty,\delta)$-atoms.   

Thus, $f=h+\ell={C_2'}^{-1}(C_2'h)+\ell_1^{\epsilon}+\ell_2^{\epsilon}$ is a finite decomposition of $f$ as continuous $(q,\infty,\delta)$-atoms and as in the part (\ref{thafondamfini01loc}), we obtain $$\left\|f\right\|_{\mathcal{H}_{\mathrm{loc},fin}^{(q,p)}}\lsim 1.$$ This finishes the proof of the part (\ref{thafondamfini02loc}) and hence of Theorem \ref{thafondamfiniloc}.
\end{proof}

We give a useful result of density. 

\begin{lem}\label{finilemmodifvra1loc}
Let $\mathcal{H}_{\mathrm{loc},fin}^{(q,p)}$ be the subspace of $\mathcal{H}_{\mathrm{loc}}^{(q,p)}$ consisting of finite linear combinations of $(q,\infty,\delta)$-atoms. Then, $\mathcal{H}_{\mathrm{loc},fin}^{(q,p)}\cap\mathcal{C}^{\infty}(\mathbb{R}^d)$ is a dense subspace of $\mathcal{H}_{\mathrm{loc},fin}^{(q,p)}$ in the quasi-norm $\left\|\cdot\right\|_{\mathcal{H}_{\mathrm{loc}}^{(q,p)}}$. 
\end{lem}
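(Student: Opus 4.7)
The plan is to approximate each $f\in\mathcal{H}_{\mathrm{loc},fin}^{(q,p)}$ by its mollifications. Fix $\phi\in\mathcal{C}^\infty(\mathbb{R}^d)$ with $\text{supp}(\phi)\subset B(0,1)$ and $\int_{\mathbb{R}^d}\phi(x)\,dx=1$, and set $\phi_\epsilon(x)=\epsilon^{-d}\phi(x/\epsilon)$. Writing $f=\sum_{n=0}^{m}\lambda_n\mathfrak{a}_n$ with $(\mathfrak{a}_n,Q^n)\in\mathcal{A}_{\mathrm{loc}}(q,\infty,\delta)$, the goal is to show that, for all sufficiently small $\epsilon>0$, $f\ast\phi_\epsilon\in\mathcal{H}_{\mathrm{loc},fin}^{(q,p)}\cap\mathcal{C}^\infty(\mathbb{R}^d)$ and $\|f-f\ast\phi_\epsilon\|_{\mathcal{H}_{\mathrm{loc}}^{(q,p)}}\to 0$ as $\epsilon\to 0^+$.

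For the first assertion, restrict $\epsilon$ so that $Q^n+B(0,\epsilon)\subset 2Q^n$ for every $n=0,\dots,m$. Then each $\mathfrak{a}_n\ast\phi_\epsilon$ is smooth, supported in $2Q^n$, satisfies $\|\mathfrak{a}_n\ast\phi_\epsilon\|_\infty\leq\|\mathfrak{a}_n\|_\infty\leq|Q^n|^{-1/q}=2^{d/q}|2Q^n|^{-1/q}$, and a direct Fubini computation shows it inherits the vanishing moments of $\mathfrak{a}_n$. When $|Q^n|<1$ the moments of $\mathfrak{a}_n$ up to order $\delta$ persist in $\mathfrak{a}_n\ast\phi_\epsilon$; when $|Q^n|\geq 1$ no moment condition is required on $2Q^n$. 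Hence $2^{-d/q}(\mathfrak{a}_n\ast\phi_\epsilon)$ is a $(q,\infty,\delta)$-atom associated with $2Q^n$, and $f\ast\phi_\epsilon=\sum_{n}\lambda_n\,\mathfrak{a}_n\ast\phi_\epsilon$ is in $\mathcal{H}_{\mathrm{loc},fin}^{(q,p)}\cap\mathcal{C}^\infty(\mathbb{R}^d)$.

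For the convergence, fix $r\in(\max\{p,1\},\infty)$ and $\eta\in(0,q)$. Since $\mathcal{A}_{\mathrm{loc}}(q,\infty,\delta)\subset\mathcal{A}_{\mathrm{loc}}(q,r,\delta)$ (Remark \ref{fondamentalremarloc}), the difference $f-f\ast\phi_\epsilon=\sum_{n=0}^m\lambda_n(\mathfrak{a}_n-\mathfrak{a}_n\ast\phi_\epsilon)$ is a finite $(q,r,\delta)$-atomic combination after renormalization. Setting $\beta_n(\epsilon):=|2Q^n|^{1/q-1/r}\|\mathfrak{a}_n-\mathfrak{a}_n\ast\phi_\epsilon\|_r$, the element $\tilde{\mathfrak{a}}_{n,\epsilon}:=\beta_n(\epsilon)^{-1}(\mathfrak{a}_n-\mathfrak{a}_n\ast\phi_\epsilon)$ is a $(q,r,\delta)$-atom on $2Q^n$ (support, $L^r$-normalization, and vanishing moments all check out), while $\beta_n(\epsilon)\to 0$ as $\epsilon\to 0^+$ because $\mathfrak{a}_n\in L^r$ and convolution with $\phi_\epsilon$ is an $L^r$-approximate identity for $r<\infty$. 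Applying Theorem \ref{thafondamfiniloc}(\ref{thafondamfini01loc}) together with Minkowski's inequality in $\|\cdot\|_{q/\eta,p/\eta}$ (a genuine norm, since $\eta<q\leq p$) and the identity $\|\chi_{E}\|_{q/\eta,p/\eta}=\|\chi_{E}\|_{q,p}^\eta$, we obtain
\begin{eqnarray*}
\|f-f\ast\phi_\epsilon\|_{\mathcal{H}_{\mathrm{loc}}^{(q,p)}}&\lsim&\left\|\sum_{n=0}^{m}\left(\frac{|\lambda_n|\,\beta_n(\epsilon)}{\|\chi_{2Q^n}\|_q}\right)^\eta\chi_{2Q^n}\right\|_{\frac{q}{\eta},\frac{p}{\eta}}^{\frac{1}{\eta}}\\
&\lsim&\left(\sum_{n=0}^{m}|\lambda_n|^\eta\,\beta_n(\epsilon)^\eta\,\frac{\|\chi_{2Q^n}\|_{q,p}^\eta}{\|\chi_{2Q^n}\|_q^\eta}\right)^{\frac{1}{\eta}},
\end{eqnarray*}
which tends to $0$ because the sum is finite and each $\beta_n(\epsilon)\to 0$.

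The principal obstacle is coordinating the two atomic frameworks: $f\ast\phi_\epsilon$ must live in the $(q,\infty,\delta)$-finite space (so that it actually lies in $\mathcal{H}_{\mathrm{loc},fin}^{(q,p)}$ as defined in the lemma), yet the smallness of $\|f-f\ast\phi_\epsilon\|_{\mathcal{H}_{\mathrm{loc}}^{(q,p)}}$ is obtained only through the $(q,r,\delta)$-atomic finite norm with $r<\infty$, where the $L^r$ approximate-identity convergence provides the required decay. The equivalence in Theorem \ref{thafondamfiniloc}(\ref{thafondamfini01loc}) is precisely what bridges these two viewpoints; if one tried to stay at $r=\infty$, no decay of $\mathfrak{a}_n-\mathfrak{a}_n\ast\phi_\epsilon$ in the relevant norm would be available, since the original atom need not be continuous.
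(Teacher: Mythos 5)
Your proposal is correct and follows essentially the same route as the paper: mollify $f$, check that each $\mathfrak{a}_n\ast\phi_\epsilon$ is (a constant multiple of) a smooth $(q,\infty,\delta)$-atom on a slightly dilated cube, and control $f-f\ast\phi_\epsilon$ by exhibiting it as a small-coefficient $(q,r,\delta)$-atomic combination with $r<\infty$, where the $L^r$ approximate-identity convergence supplies the decay. The only cosmetic difference is that the paper packages the whole error $f-f\ast\varphi_t$ as a single $(q,s,\delta)$-atom on one large cube $Q(0,2(R+1))$ and invokes $\left\|\mathbf{a}\right\|_{\mathcal{H}_{\mathrm{loc}}^{(q,p)}}\leq C$, whereas you renormalize atom by atom and sum via the triangle inequality in $(L^{q/\eta},\ell^{p/\eta})$; both are valid.
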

\begin{proof}
We use some arguments of the proof of \cite{YDYS}, Theorem 6.4. Let $f\in\mathcal{H}_{\mathrm{loc},fin}^{(q,p)}$. Then, $f$ is a finite linear combination of $(q,\infty,\delta)$-atoms. Therefore, there exists a real $R>0$ such that $\text{supp}(f)\subset B(0,R)$. Furthermore, for all $0<t\leq1$, we have  
\begin{eqnarray}
f\ast\varphi_t\in\mathcal{C}^{\infty}(\mathbb{R}^d)\ \text{ and }\ \text{supp}(f\ast\varphi_t)\subset B(0,R+1) \label{rev1finilemmodifvra1loc}.
\end{eqnarray}
Assume that $f=\sum_{n=0}^j{\lambda}_n\textbf{a}_n$ with $\left\{(\textbf{a}_n, Q^n)\right\}_{n=0}^j\subset\mathcal{A}_{\mathrm{loc}}(q,\infty,\delta)$ and $\left\{\lambda_n\right\}_{n=0}^j\subset\mathbb{C}$. Then, for all $0<t\leq1$,
\begin{eqnarray*}
f\ast\varphi_t=\sum_{n=0}^j{\lambda}_n(\textbf{a}_n\ast\varphi_t).
\end{eqnarray*}
Now, for any $0<t\leq1$ and $n\in\left\{0,1,\ldots,j\right\}$, we prove that there exists a constant $c_{t,n}>0$ such that $c_{t,n}(\textbf{a}_n\ast\varphi_t)$ is a $(q,\infty,\delta)$-atom lying in $\mathcal{C}^{\infty}(\mathbb{R}^d)$, which implies that for any $0<t\leq1$,
\begin{eqnarray*}
f\ast\varphi_t\in\mathcal{H}_{\mathrm{loc},fin}^{(q,p)}\cap\mathcal{C}^{\infty}(\mathbb{R}^d).
\end{eqnarray*}
For $n\in\left\{0,1,\ldots,j\right\}$, we have $\text{supp}(\textbf{a}_n)\subset Q^n:=Q(x^n,\ell_n)$. Then $\text{supp}(\textbf{a}_n\ast\varphi_t)\subset \widetilde{Q}^n:=Q(x^n,\ell_n+2t)$. Moreover, $\textbf{a}_n\ast\varphi_t\in\mathcal{C}^{\infty}$ and
\begin{eqnarray*}
\left\|\textbf{a}_n\ast\varphi_t\right\|_{\infty}\leq\left\|\varphi\right\|_{1}\left\|\textbf{a}_n\right\|_{\infty}\leq\left(\left\|\varphi\right\|_{1}\frac{|Q^n|^{-\frac{1}{q}}}{|\widetilde{Q}^n|^{-\frac{1}{q}}}\right)|\widetilde{Q}^n|^{-\frac{1}{q}}.
\end{eqnarray*}
Also, for all $\beta\in\mathbb{Z}_{+}^d$ with $|\beta|\leq\delta$, $\int_{\mathbb{R}^d}x^{\beta}\textbf{a}_n(x)dx=0$ implies that 
\begin{eqnarray*}
\int_{\mathbb{R}^d}x^{\beta}(\textbf{a}_n\ast\varphi_t)(x)dx=0.
\end{eqnarray*}
Thus, with $c_{t,n}:=\left(\left\|\varphi\right\|_{1}\frac{|Q^n|^{-\frac{1}{q}}}{|\widetilde{Q}^n|^{-\frac{1}{q}}}\right)^{-1}=\left\|\varphi\right\|_{1}^{-1}\left(\frac{|Q^n|}{|\widetilde{Q}^n|}\right)^{\frac{1}{q}}$, $c_{t,n}(\textbf{a}_n\ast\varphi_t)$ is a $(q,\infty,\delta)$-atom lying in $\mathcal{C}^{\infty}(\mathbb{R}^d)$. 

Likewise, $\text{supp}(f-f\ast\varphi_t)\subset B(0,R+1)$, by (\ref{rev1finilemmodifvra1loc}), and $f-f\ast\varphi_t$ has the same vanishing moments as $f$. Let $1<s<\infty$. We have 
\begin{eqnarray}
\lim_{t\rightarrow 0}\left\|f-f\ast\varphi_t\right\|_{s}\rightarrow0, \label{clairmaintloc} 
\end{eqnarray}
since $f\in L^s$ and $\int_{\mathbb{R}^d}\varphi(x)dx=1$. Without loss of generality, we may assume that $\left\|f-f\ast\varphi_t\right\|_{s}>0$, when $t$ is small enough. Set $$c_t:=\left\|f-f\ast\varphi_t\right\|_{s}|Q(0,2(R+1))|^{1/q-1/s}$$ and $\mathfrak a_t:=(c_t)^{-1}(f-f\ast\varphi_t)$. Then, $\mathfrak a_t$ is a $(q,s,\delta)$-atom, $f-f\ast\varphi_t=c_t\mathfrak a_t$ and $c_t\rightarrow0$ as $t\rightarrow0$, by (\ref{clairmaintloc}). Thus, $\left\|f-f\ast\varphi_t\right\|_{\mathcal{H}_{\mathrm{loc}}^{(q,p)}}\lsim c_t\rightarrow 0$ as $t\rightarrow 0$.
The proof of Lemma \ref{finilemmodifvra1loc} is complete. 
\end{proof}

If $\mathcal{H}_{\mathrm{loc},fin}^{(q,p)}$ is the subspace of $\mathcal{H}_{\mathrm{loc}}^{(q,p)}$ consisting of finite linear combinations of $(q,\infty,\delta)$-atoms, then it follows from Lemma \ref{finilemmodifvra1loc} that $\mathcal{H}_{\mathrm{loc},fin}^{(q,p)}\cap\mathcal{C}(\mathbb{R}^d)$ is dense in $\mathcal{H}_{\mathrm{loc}}^{(q,p)}$ with respect to the quasi-norm $\left\|\cdot\right\|_{\mathcal{H}_{\mathrm{loc}}^{(q,p)}}$, since $\mathcal{H}_{\mathrm{loc},fin}^{(q,p)}$ is dense in $\mathcal{H}_{\mathrm{loc}}^{(q,p)}$ in the quasi-norm $\left\|\cdot\right\|_{\mathcal{H}_{\mathrm{loc}}^{(q,p)}}$.

\subsection{Molecular Decomposition} 

Our definition of molecule for $\mathcal{H}_{\mathrm{loc}}^{(q,p)}$ spaces is the one of $\mathcal{H}^{(q,p)}$ spaces (see \cite{AbFt}, Definition 5.1, p. 1931) except that, when $|Q|\geq1$, the condition (3) in \cite{AbFt}, Definition 5.1, is not requiered. Thus, a $(q,r,\delta)$-atom for $\mathcal{H}_{\mathrm{loc}}^{(q,p)}$ is a $(q,r,\delta)$-molecule for $\mathcal{H}_{\mathrm{loc}}^{(q,p)}$. We denote by $\mathcal{M}\ell_{\mathrm{loc}}(q,r,\delta)$ the set of all $(\textbf{m},Q)$ such that $\textbf{m}$ is a $(q,r,\delta)$-molecule centered at $Q$ (for $\mathcal{H}_{\mathrm{loc}}^{(q,p)}$).

We have the following decomposition theorem as an application of Theorem \ref{fondamentaltheoloc}.

\begin{thm} \label{fondamentaltheomolecloc}
Let $1<r\leq+\infty$ and $\delta\geq\left\lfloor d\left(\frac{1}{q}-1\right)\right\rfloor$ be an integer. Then, for every $f\in\mathcal{H}_{\mathrm{loc}}^{(q,p)}$, there exist a sequence $\left\{(\textbf{m}_n, Q^n)\right\}_{n\geq 0}$ in $\mathcal{M}\ell_{\mathrm{loc}}(q,r,\delta)$ and a sequence of scalars $\left\{{\lambda}_n\right\}_{n\geq 0}$ such that 
\begin{eqnarray*}
f=\sum_{n\geq 0}{\lambda}_n\textbf{m}_n\ \ \text{in}\ \mathcal{S'}\ \text{and}\ \mathcal{H}_{\mathrm{loc}}^{(q,p)},
\end{eqnarray*}
and, for all $\eta>0$,
\begin{eqnarray*}
\left\|\sum_{n\geq 0}\left(\frac{|\lambda_n|}{\left\|\chi_{_{Q^n}}\right\|_{q}}\right)^{\eta}\chi_{_{Q^{n}}}\right\|_{\frac{q}{\eta},\frac{p}{\eta}}^{\frac{1}{{\eta}}}\lsim\left\|f\right\|_{\mathcal{H}_{\mathrm{loc}}^{(q,p)}}.
\end{eqnarray*}
\end{thm}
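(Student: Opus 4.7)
The plan is to derive the molecular decomposition directly from the atomic decomposition of Theorem \ref{fondamentaltheoloc} by showing that every atom produced there is already a molecule. Concretely, given $f\in\mathcal{H}_{\mathrm{loc}}^{(q,p)}$, I will first apply Theorem \ref{fondamentaltheoloc} to obtain a sequence $\{(\textbf{a}_n,Q^n)\}_{n\geq 0}$ in $\mathcal{A}_{\mathrm{loc}}(q,\infty,\delta)$ and scalars $\{\lambda_n\}_{n\geq 0}$ such that $f=\sum_{n\geq 0}\lambda_n\textbf{a}_n$ in both $\mathcal{S}'$ and $\mathcal{H}_{\mathrm{loc}}^{(q,p)}$, together with the sought control
\begin{equation*}
\left\|\sum_{n\geq 0}\left(\frac{|\lambda_n|}{\left\|\chi_{_{Q^n}}\right\|_{q}}\right)^{\eta}\chi_{_{Q^{n}}}\right\|_{\frac{q}{\eta},\frac{p}{\eta}}^{\frac{1}{{\eta}}}\lsim\left\|f\right\|_{\mathcal{H}_{\mathrm{loc}}^{(q,p)}}.
\end{equation*}

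Next, I will verify the two embeddings $\mathcal{A}_{\mathrm{loc}}(q,\infty,\delta)\subset\mathcal{A}_{\mathrm{loc}}(q,r,\delta)\subset\mathcal{M}\ell_{\mathrm{loc}}(q,r,\delta)$, for any $1<r\leq+\infty$. The first inclusion is essentially Remark \ref{fondamentalremarloc}: if $\textbf{a}$ is supported in $Q$ with $\|\textbf{a}\|_{\infty}\leq|Q|^{-1/q}$, then H\"older's inequality gives $\|\textbf{a}\|_{r}\leq|Q|^{1/r}\|\textbf{a}\|_{\infty}\leq|Q|^{1/r-1/q}$, so the $L^r$-size condition of a $(q,r,\delta)$-atom holds; the vanishing moments (when $|Q|<1$) are unchanged. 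The second inclusion is immediate from the definition of a $(q,r,\delta)$-molecule adopted here (as in \cite{AbFt}, Definition 5.1, with the vanishing condition dropped for $|Q|\geq 1$): an atom supported on $Q$ trivially satisfies both the $L^r$-size condition centered at $Q$ and the decay condition outside $Q$, because it vanishes there.

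Setting $\textbf{m}_n:=\textbf{a}_n$ therefore yields the claimed molecular decomposition $f=\sum_{n\geq 0}\lambda_n\textbf{m}_n$, with convergence in $\mathcal{S}'$ and $\mathcal{H}_{\mathrm{loc}}^{(q,p)}$ inherited from the atomic one, and the required norm estimate inherited verbatim. There is no real obstacle to this argument: the content of the theorem lies in Theorem \ref{fondamentaltheoloc}, and the molecular statement is a corollary. The only point that deserves a careful sentence in the write-up is the observation that the size and decay axioms defining a molecule in $\mathcal{M}\ell_{\mathrm{loc}}(q,r,\delta)$ reduce, for a function supported on $Q$, to the atomic $L^r$-size condition alone, which ensures that the passage from atoms to molecules introduces no loss either in the decomposition or in the constants.
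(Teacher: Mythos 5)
Your argument is exactly the paper's: Theorem \ref{fondamentaltheomolecloc} is presented there as an immediate application of Theorem \ref{fondamentaltheoloc}, preceded by the observation that every $(q,r,\delta)$-atom for $\mathcal{H}_{\mathrm{loc}}^{(q,p)}$ is a $(q,r,\delta)$-molecule, so the atomic decomposition transfers verbatim. Your write-up is correct and, if anything, slightly more explicit than the paper about the inclusions $\mathcal{A}_{\mathrm{loc}}(q,\infty,\delta)\subset\mathcal{A}_{\mathrm{loc}}(q,r,\delta)\subset\mathcal{M}\ell_{\mathrm{loc}}(q,r,\delta)$.
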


Also, we have the two following reconstruction theorems which are the local analogues of \cite{AbFt}, Theorems 5.2 and 5.3, pp. 1931-1932.  

\begin{thm} \label{thafondammolec1loc}
Let $0<\eta\leq 1$ and $\delta\geq\left\lfloor d\left(\frac{1}{q}-1\right)\right\rfloor$ be an integer. Then, for all sequences $\left\{(\textbf{m}_n, Q^n)\right\}_{n\geq 0}$ in $\mathcal{M}\ell_{\mathrm{loc}}(q,\infty,\delta)$ and all sequences of scalars $\left\{{\lambda}_n\right\}_{n\geq 0}$ satisfying 
\begin{eqnarray}
\left\|\sum_{n\geq 0}\left(\frac{|\lambda_n|}{\left\|\chi_{_{Q^n}}\right\|_{q}}\right)^{\eta}\chi_{_{Q^{n}}}\right\|_{\frac{q}{\eta},\frac{p}{\eta}}<+\infty, \label{inversetheor2molec1loc}
\end{eqnarray}
the series $f:=\sum_{n\geq 0}{\lambda}_n\textbf{m}_n$ converges in $\mathcal{S'}$ and $\mathcal{H}_{\mathrm{loc}}^{(q,p)}$, with 
\begin{eqnarray*}
\left\|f\right\|_{\mathcal{H}_{\mathrm{loc}}^{(q,p)}}\lsim\left\|\sum_{n\geq 0}\left(\frac{|\lambda_n|}{\left\|\chi_{_{Q^n}}\right\|_{q}}\right)^{\eta}\chi_{_{Q^{n}}}\right\|_{\frac{q}{\eta},\frac{p}{\eta}}^{\frac{1}{\eta}}.
\end{eqnarray*}
\end{thm}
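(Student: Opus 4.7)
The plan is to reduce to the already-proved atomic reconstruction theorem (Theorem \ref{thafondaloc}) by establishing, for each local $(q,\infty,\delta)$-molecule $\textbf{m}_n$ centered at $Q^n$, a pointwise estimate of the form
\begin{equation*}
\mathcal{M}_{{\mathrm{loc}}_{_{0}}}(\textbf{m}_n)(x)\lsim\left[\mathfrak{M}(\chi_{_{Q^n}})(x)\right]^{\mu}\left\|\chi_{_{Q^n}}\right\|_{q}^{-1}
\end{equation*}
for all $x\in\mathbb{R}^d$, where $\mu:=\frac{d+\delta+1}{d}>1$. This is the molecular analogue of estimate (\ref{inversetheo6loc}) used in the proof of Theorem \ref{thafondaloc}, and once it is in hand, the argument of Theorem \ref{thafondaloc} together with Proposition \ref{operamaxima} (applied with $u=\mu$ so that $\min\{qu,pu\}>1$ for a sufficiently large $\delta$) delivers both the convergence of $f=\sum_{n\geq 0}\lambda_n\textbf{m}_n$ in $\mathcal{H}_{\mathrm{loc}}^{(q,p)}$ (and hence in $\mathcal{S}'$, by Corollary \ref{priscatun}) and the claimed quasi-norm bound.

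To prove the pointwise estimate, I would split into two cases according to the side-length of the cube $Q^n$. When $|Q^n|<1$, the molecule satisfies the vanishing moment condition $\int x^\beta\textbf{m}_n(x)dx=0$ for $|\beta|\leq\delta$, so the Taylor expansion argument of \cite{AbFt}, Theorem 5.2, applies verbatim: decomposing $\textbf{m}_n$ through its shell decomposition $\textbf{m}_n=\sum_{i\geq 0}\textbf{m}_n\chi_{S_i^n}$ (with $S_0^n=Q^n$, $S_i^n=2^iQ^n\setminus 2^{i-1}Q^n$) and exploiting the molecular decay $\|\textbf{m}_n\chi_{S_i^n}\|_\infty\leq 2^{-i\epsilon}|2^iQ^n|^{-1/q}$ together with the cancellation, one controls $|(\textbf{m}_n\ast\varphi_t)(x)|$ in the same way as for an atom, producing the required bound. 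When $|Q^n|\geq 1$, no vanishing moments are available, but here the restriction $0<t\leq 1$ in $\mathcal{M}_{{\mathrm{loc}}_{_{0}}}$ becomes an asset: for $x\in S_i^n$ with $i\geq 2$, the set $\{y:|y-x|\leq t\}\subset S_{i-1}^n\cup S_i^n\cup S_{i+1}^n$ (since $t\leq 1\leq\ell(Q^n)$), so that
\begin{equation*}
\mathcal{M}_{{\mathrm{loc}}_{_{0}}}(\textbf{m}_n)(x)\lsim\sum_{k=i-1}^{i+1}\|\textbf{m}_n\chi_{S_k^n}\|_\infty\chi_{S_i^n}(x)\lsim 2^{-i\epsilon}|2^iQ^n|^{-1/q}\chi_{S_i^n}(x),
\end{equation*}
while for $x$ in a bounded dilate of $Q^n$ the size condition of the molecule directly gives the bound; summing over shells and absorbing $2^{-i\epsilon}$ into the $\mu$-th power of $\mathfrak{M}(\chi_{Q^n})(x)\approx 2^{-id}$ (adjusting $\delta$ so $\epsilon>d(\mu-1)$, which is automatic from the definition of molecule) closes the estimate.

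The main obstacle is the $|Q^n|\geq 1$ case, where the interplay between the non-compact support of the molecule, the absence of cancellation, and the local nature of $\mathcal{M}_{{\mathrm{loc}}_{_{0}}}$ has to be managed carefully; in particular one must verify that the molecular decay exponent $\epsilon$ inherited from \cite{AbFt}, Definition 5.1, is large enough that $\sum_i 2^{-i\epsilon}[\mathfrak{M}(\chi_{2^iQ^n})]^\mu\lsim[\mathfrak{M}(\chi_{Q^n})]^\mu$ pointwise after taking $\eta$-th powers and applying Proposition \ref{operamaxima}. Once the pointwise bound is secured, the rest of the argument (quasi-triangle inequality in $(L^{q/\eta},\ell^{p/\eta})$, then Proposition \ref{operamaxima} to remove the maximal operators, then Theorem \ref{bravoooh} to pass from $\mathcal{M}_{{\mathrm{loc}}_{_{0}}}$ to the $\mathcal{H}_{\mathrm{loc}}^{(q,p)}$ quasi-norm) is routine and parallel to the proof of Theorem \ref{thafondaloc}, and convergence in $\mathcal{S}'$ follows from the continuous embedding $\mathcal{H}_{\mathrm{loc}}^{(q,p)}\hookrightarrow\mathcal{S}'$ established in Remark \ref{remarqconver} and its local version.
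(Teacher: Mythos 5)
Your proposal is correct and follows essentially the same route as the paper: both reduce the theorem to the pointwise domination $\mathcal{M}_{{\mathrm{loc}}_{_{0}}}(\textbf{m}_n)\lsim\left[\mathfrak{M}(\chi_{_{Q^n}})\right]^{\frac{d+\delta+1}{d}}\left\|\chi_{_{Q^n}}\right\|_q^{-1}$, obtained by treating the cases $|Q^n|<1$ (vanishing moments plus the argument of \cite{AbFt}, Theorem 5.2) and $|Q^n|\geq 1$ (locality of $\mathcal{M}_{{\mathrm{loc}}_{_{0}}}$ plus molecular decay, which the paper delegates to \cite{NEYS1}, (7.6)) separately, and then conclude exactly as in the atomic reconstruction theorem. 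The only difference is that you sketch the shell-by-shell estimates directly where the paper cites external references; your minor bookkeeping issues (the exact shells met by $B(x,t)$ when $i=2$, and the precise lower bound on the decay exponent $\epsilon$) are easily repaired and do not affect the argument.
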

\begin{proof}
Let $\left\{(\textbf{m}_n, Q^n)\right\}_{n\geq 0}$ be a sequence of elements of $\mathcal{M}\ell_{\mathrm{loc}}(q,\infty,\delta)$ and $\left\{{\lambda}_n\right\}_{n\geq 0}$ be a sequence of scalars satisfying (\ref{inversetheor2molec1loc}). Consider an integer $j\geq 0$. Set $J_1=\left\{n\in\left\{0,1,\ldots,j\right\}:\ |Q^n|<1\right\}$ and $J_2=\left\{n\in\left\{0,1,\ldots,j\right\}:\ |Q^n|\geq1\right\}$. We have 
\begin{eqnarray*}
\mathcal{M}_{{\mathrm{loc}}_{_{0}}}\left(\sum_{n\in J_1}\lambda_n\textbf{m}_n\right)(x)\lsim\sum_{n\in J_1}|\lambda_n|\left(\mathfrak{M}(\textbf{m}_n)(x)\chi_{_{\widetilde{Q^n}}}(x)+\frac{\left[\mathfrak{M}(\chi_{_{Q^{n}}})(x)\right]^{u}}{\left\|\chi_{_{Q^{n}}}\right\|_q}\right),
\end{eqnarray*}
for all $x\in\mathbb{R}^d$, where $\widetilde{Q^n}=2\sqrt{d}Q^n$ and $u=\frac{d+\delta+1}{d}$ , by \cite{NEYS}, (5.2), p. 3698, and  
\begin{eqnarray*}
\mathcal{M}_{{\mathrm{loc}}_{_{0}}}\left(\sum_{n\in J_2}\lambda_n\textbf{m}_n\right)(x)\lsim\sum_{n\in J_2}|\lambda_n|\left(\mathfrak{M}(\textbf{m}_n)(x)\chi_{_{\widetilde{Q^n}}}(x)+\frac{\left[\mathfrak{M}(\chi_{_{Q^{n}}})(x)\right]^{u}}{\left\|\chi_{_{Q^{n}}}\right\|_q}\right),
\end{eqnarray*}
for all $x\in\mathbb{R}^d$, where $\widetilde{Q^n}=4\sqrt{d}Q^n$ and $u=\frac{d+\delta+1}{d}$ , by \cite{NEYS1}, (7.6), p. 955. Hence   
\begin{eqnarray*}
\mathcal{M}_{{\mathrm{loc}}_{_{0}}}\left(\sum_{n=0}^j\lambda_n\textbf{m}_n\right)(x)&\leq&\mathcal{M}_{{\mathrm{loc}}_{_{0}}}\left(\sum_{n\in J_1}\lambda_n\textbf{m}_n\right)(x)+\mathcal{M}_{{\mathrm{loc}}_{_{0}}}\left(\sum_{n\in J_2}\lambda_n\textbf{m}_n\right)(x)\\
&\lsim&\sum_{n=0}^j|\lambda_n|\left(\mathfrak{M}(\textbf{m}_n)(x)\chi_{_{\widetilde{Q^n}}}(x)+\frac{\left[\mathfrak{M}(\chi_{_{Q^{n}}})(x)\right]^{u}}{\left\|\chi_{_{Q^{n}}}\right\|_q}\right),
\end{eqnarray*}
for all $x\in\mathbb{R}^d$, where $\widetilde{Q^n}=4\sqrt{d}Q^n$ and $u=\frac{d+\delta+1}{d}\cdot$ Thus, 
\begin{eqnarray*}
\mathcal{M}_{{\mathrm{loc}}_{_{0}}}\left(\sum_{n=0}^j\lambda_n\textbf{m}_n\right)(x)\lsim\sum_{n=0}^j|\lambda_n|\left(\frac{\left[\mathfrak{M}(\chi_{_{Q^{n}}})(x)\right]^{u}}{\left\|\chi_{_{Q^{n}}}\right\|_q}\right),
\end{eqnarray*}
for all $x\in\mathbb{R}^d$, by the proof of \cite{AbFt}, Theorem 5.2, p. 1931. And we end as in the proof of \cite{AbFt}, Theorem 4.3, pp. 1914-1915.  
\end{proof}

\begin{thm} \label{thafondammolec2loc}
Let $\max\left\{p,1\right\}<r<+\infty$, $0<\eta<q$ and $\delta\geq\left\lfloor d\left(\frac{1}{q}-1\right)\right\rfloor$ be an integer. Then, for all sequences $\left\{(\textbf{m}_n, Q^n)\right\}_{n\geq 0}$ in $\mathcal{M}\ell_{\mathrm{loc}}(q,r,\delta)$ and all sequences of scalars $\left\{{\lambda}_n\right\}_{n\geq 0}$ such that 
\begin{eqnarray}
\left\|\sum_{n\geq 0}\left(\frac{|\lambda_n|}{\left\|\chi_{_{Q^n}}\right\|_{q}}\right)^{\eta}\chi_{_{Q^{n}}}\right\|_{\frac{q}{\eta},\frac{p}{\eta}}<+\infty, \label{inversetheor2molecloc}
\end{eqnarray}
the series $f:=\sum_{n\geq 0}{\lambda}_n\textbf{m}_n$ converges in $\mathcal{S'}$ and $\mathcal{H}_{\mathrm{loc}}^{(q,p)}$, with 
\begin{eqnarray*}
\left\|f\right\|_{\mathcal{H}_{\mathrm{loc}}^{(q,p)}}\lsim\left\|\sum_{n\geq 0}\left(\frac{|\lambda_n|}{\left\|\chi_{_{Q^n}}\right\|_{q}}\right)^{\eta}\chi_{_{Q^{n}}}\right\|_{\frac{q}{\eta},\frac{p}{\eta}}^{\frac{1}{\eta}}.
\end{eqnarray*}
\end{thm}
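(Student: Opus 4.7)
The strategy is to reduce Theorem~\ref{thafondammolec2loc} to the atomic reconstruction Theorem~\ref{thafondamloc} by promoting from atoms to molecules the pointwise estimate (\ref{inversetheor5loc}) that drives the latter. Concretely, the plan is to show that for every $(\textbf{m}_n,Q^n)\in\mathcal{M}\ell_{\mathrm{loc}}(q,r,\delta)$ and every $x\in\mathbb{R}^d$,
\begin{eqnarray*}
\mathcal{M}_{{\mathrm{loc}}_{_{0}}}(\textbf{m}_n)(x)\lsim \mathfrak{M}(\textbf{m}_n)(x)\chi_{_{\widetilde{Q^n}}}(x)+\frac{\left[\mathfrak{M}(\chi_{_{Q^n}})(x)\right]^{u}}{\left\|\chi_{_{Q^n}}\right\|_q},
\end{eqnarray*}
with $\widetilde{Q^n}=4\sqrt{d}\,Q^n$ and $u=(d+\delta+1)/d$. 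When $|Q^n|<1$, the molecule enjoys the vanishing moment condition up to order $\delta$, and the bound follows from the standard Taylor-cancellation argument for molecules carried out in \cite{NEYS}, (5.2), p.~3698. When $|Q^n|\geq 1$, the vanishing moments are no longer available, but one still obtains the same estimate from \cite{NEYS1}, (7.6), p.~955, thanks to the strict locality of $\mathcal{M}_{{\mathrm{loc}}_{_{0}}}$, which forces $\mathcal{M}_{{\mathrm{loc}}_{_{0}}}(\textbf{m}_n)$ to be essentially supported in $\widetilde{Q^n}$; this is precisely the two-case splitting already used in the proof of Theorem~\ref{thafondammolec1loc}.

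Once the pointwise estimate is in hand, the remainder of the argument is a verbatim repetition of the atomic proof of Theorem~\ref{thafondamloc}, equivalently of \cite{AbFt}, Theorem~4.6, p.~1921. For a partial sum $S_N:=\sum_{n=0}^N\lambda_n\textbf{m}_n$, applying the estimate termwise splits $\mathcal{M}_{{\mathrm{loc}}_{_{0}}}(S_N)$ into two pieces. The ``maximal of characteristic'' piece $\sum_n|\lambda_n|\left[\mathfrak{M}(\chi_{Q^n})\right]^u\left\|\chi_{Q^n}\right\|_q^{-1}$ is controlled via the vector-valued maximal inequality of Proposition~\ref{operamaxima} applied in the amalgam $(L^{q/\eta},\ell^{p/\eta})$: the hypothesis $0<\eta<q$ makes these exponents exceed one, and the choice $\delta\geq\left\lfloor d(1/q-1)\right\rfloor$ makes $u$ large enough. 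The ``molecular'' piece $\sum_n|\lambda_n|\mathfrak{M}(\textbf{m}_n)\chi_{\widetilde{Q^n}}$ is handled via Proposition~\ref{propofonda}: the $L^r$-boundedness of $\mathfrak{M}$ (legitimate since $r>1$) together with $\left\|\textbf{m}_n\right\|_r\leq|Q^n|^{1/r-1/q}$ gives $\left\|\mathfrak{M}(\textbf{m}_n)\chi_{\widetilde{Q^n}}\right\|_r\lsim|\widetilde{Q^n}|^{1/r-1/q}$, and after passing to the $\eta$-th power one applies Proposition~\ref{propofonda} with parameters $(q/\eta,p/\eta,r/\eta)$, whose hypothesis $1<q/\eta,p/\eta<r/\eta$ is exactly $\max\{p,1\}<r<+\infty$ combined with $\eta<q$.

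Adding the two contributions and using the local version of (\ref{jaifito}), along with the reverse Minkowski inequality of Proposition~\ref{InverseHoldMink3} when raising back from the $\eta$-th power, one obtains a uniform bound on $\left\|S_N\right\|_{\mathcal{H}_{\mathrm{loc}}^{(q,p)}}$ by the right-hand side of (\ref{inversetheor2molecloc}); applying the same argument to tails shows that $(S_N)$ is Cauchy in the quasi-Banach space $\mathcal{H}_{\mathrm{loc}}^{(q,p)}$ (Corollary~\ref{priscatun}), hence converges there and in $\mathcal{S}'$ (by the local analogue of Remark~\ref{remarqconver}) to some $f$ satisfying the announced estimate. The main obstacle, relative to the $r=+\infty$ case of Theorem~\ref{thafondammolec1loc}, is that the molecular term $\sum_n|\lambda_n|\mathfrak{M}(\textbf{m}_n)\chi_{\widetilde{Q^n}}$ can no longer be absorbed into the characteristic piece via an $L^\infty$ bound on $\textbf{m}_n$; one genuinely has to invoke Proposition~\ref{propofonda}, and this is the precise reason one must impose both $r>1$ (to use the $L^r$ maximal bound) and $r>p$ (for the proposition's exponent constraint), i.e., $\max\{p,1\}<r<+\infty$.
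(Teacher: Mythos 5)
Your proposal is correct and follows essentially the same route as the paper: the same pointwise estimate $\mathcal{M}_{{\mathrm{loc}}_{_{0}}}(\textbf{m}_n)\lsim \mathfrak{M}(\textbf{m}_n)\chi_{_{\widetilde{Q^n}}}+\left[\mathfrak{M}(\chi_{_{Q^n}})\right]^{u}\left\|\chi_{_{Q^n}}\right\|_q^{-1}$ obtained by the same two-case split ($|Q^n|<1$ via \cite{NEYS}, (5.2); $|Q^n|\geq 1$ via \cite{NEYS1}, (7.6)), followed by the same reduction to the finite-$r$ atomic machinery (Proposition \ref{operamaxima} for the characteristic piece, Proposition \ref{propofonda} for the molecular piece) and the Cauchy argument in the quasi-Banach space. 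No substantive differences to report.
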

\begin{proof}
Let $\left\{(\textbf{m}_n, Q^n)\right\}_{n\geq 0}$ be a sequence of elements of $\mathcal{M}\ell_{\mathrm{loc}}(q,r,\delta)$ and $\left\{{\lambda}_n\right\}_{n\geq 0}$ be a sequence of scalars satisfying (\ref{inversetheor2molecloc}). Consider an integer $j\geq 0$. Proceeding as in the proof of Theorem \ref{thafondammolec1loc}, we have  
\begin{eqnarray*}
\mathcal{M}_{{\mathrm{loc}}_{_{0}}}\left(\sum_{n=0}^j\lambda_n\textbf{m}_n\right)(x)\lsim\sum_{n=0}^j|\lambda_n|\left(\mathfrak{M}(\textbf{m}_n)(x)\chi_{_{\widetilde{Q^n}}}(x)+\frac{\left[\mathfrak{M}(\chi_{_{Q^{n}}})(x)\right]^{u}}{\left\|\chi_{_{Q^{n}}}\right\|_q}\right),
\end{eqnarray*}
for all $x\in\mathbb{R}^d$, where $\widetilde{Q^n}=4\sqrt{d}Q^n$ and $u=\frac{d+\delta+1}{d}\cdot$ Thus, arguing as in the proof of \cite{AbFt}, Theorem 5.3, p. 1932, we obtain 
\begin{eqnarray*}
\left\|\mathcal{M}_{{\mathrm{loc}}_{_{0}}}\left(\sum_{n=0}^j{\lambda}_n\textbf{m}_n\right)\right\|_{q,p}\lsim\left\|\sum_{n=0}^j\left(\frac{|\lambda_n|}{\left\|\chi_{_{Q^n}}\right\|_{q}}\right)^{\eta}\chi_{_{Q^{n}}}\right\|_{\frac{q}{\eta},\frac{p}{\eta}}^{\frac{1}{\eta}}.
\end{eqnarray*}
And we end as in the proof of \cite{AbFt}, Theorem 4.3, pp. 1914-1915.  
\end{proof}

\section{Characterization of the dual space of $\mathcal{H}_{\mathrm{loc}}^{(q,p)}$}

In this section, we consider an integer $\delta\geq\left\lfloor d\left(\frac{1}{q}-1\right)\right\rfloor$. Let $1<r\leq+\infty$. We denote by $L_{\mathrm{comp}}^r(\mathbb{R}^d)$ the subspace of $L^r$- functions with compact support, and, for all cube $Q$, $L^r(Q)$ stands for the subspace of $L^r$-functions supported in $Q$. If $f\in L_{\mathrm{loc}}^1$ and $Q$ is a cube, then, by Riesz's representation theorem, there exists an unique polynomial of $\mathcal{P_{\delta}}$ (we recall that $\mathcal{P_{\delta}}:=\mathcal{P_{\delta}}(\mathbb{R}^d)$ is the space of polynomial functions of degree at most $\delta$) that we denote by $P_Q^{\delta}(f)$ such that, for all $\mathfrak{q}\in\mathcal{P_{\delta}}$,
\begin{eqnarray}
\int_{Q}\left[f(x)-P_Q^{\delta}(f)(x)\right]\mathfrak{q}(x)dx=0. \label{Campanato1}
\end{eqnarray}

By following \cite{YDYS}, Definition 7.1, p. 51, we introduce the following definition.

\begin{defn}\label{defindudualloc} $\left({\mathcal{L}}_{r,\phi,\delta}^{\mathrm{loc}}(\mathbb{R}^d)\right)$. Let $1\leq r\leq+\infty$ and $\phi: \mathcal{Q}\rightarrow (0,+\infty)$ be a function. The space ${\mathcal{L}}_{r,\phi,\delta}^{\mathrm{loc}}(\mathbb{R}^d)$ is the set of all $f\in L_{\mathrm{loc}}^r$ such that $\left\|f\right\|_{{\mathcal{L}}_{r,\phi,\delta}^{\mathrm{loc}}}<+\infty$, where 
\begin{eqnarray*}
\left\|f\right\|_{{\mathcal{L}}_{r,\phi,\delta}^{\mathrm{loc}}}:&=&\sup_{\underset{|Q|\geq1}{Q\in\mathcal{Q}}}\frac{1}{\phi(Q)}\left(\frac{1}{|Q|}\int_{Q}|f(x)|^rdx\right)^{\frac{1}{r}}\\
&+&\sup_{\underset{|Q|<1}{Q\in\mathcal{Q}}}\frac{1}{\phi(Q)}\left(\frac{1}{|Q|}\int_{Q}\left|f(x)-P_Q^{\delta}(f)(x)\right|^rdx\right)^{\frac{1}{r}},
\end{eqnarray*}
when $r<+\infty$, and
\begin{eqnarray*}
\left\|f\right\|_{\mathcal{L}_{r,\phi,\delta}^{\mathrm{loc}}}:=\sup_{\underset{|Q|\geq1}{Q\in\mathcal{Q}}}\frac{1}{\phi(Q)}\left\|f\right\|_{L^{\infty}(Q)}+\sup_{\underset{|Q|<1}{Q\in\mathcal{Q}}}\frac{1}{\phi(Q)}\left\|f-P_Q^{\delta}(f)\right\|_{L^{\infty}(Q)}, 
\end{eqnarray*}
when $r=+\infty$.
\end{defn}
 
For simplicity, we just denote $\mathcal{L}_{r,\phi,\delta}^{\mathrm{loc}}(\mathbb{R}^d)$ by $\mathcal{L}_{r,\phi,\delta}^{\mathrm{loc}}$. We consider the function $\phi_{1}:\mathcal{Q}\rightarrow(0,+\infty)$ 
defined by 
\begin{eqnarray}
\phi_{1}(Q)=\frac{\left\|\chi_{_{Q}}\right\|_{q,p}}{|Q|}\ ,
\label{Campanatonolocnloc}
\end{eqnarray} 
for all $Q\in\mathcal{Q}$. Also, we set, for every $T\in\left(\mathcal{H}^{(q,p)}\right)^{\ast}$, $$\left\|T\right\|:=\left\|T\right\|_{\left(\mathcal{H}^{(q,p)}\right)^{\ast}}=\sup_{\underset{\left\|f\right\|_{\mathcal{H}^{(q,p)}}\leq 1}{f\in\mathcal{H}^{(q,p)}}}|T(f)|$$ and $\mathcal{L}_{r,\phi_{1},\delta}:=\mathcal{L}_{r,\phi_{1},\delta}(\mathbb{R}^d)$ the set of all $f\in L_{\mathrm{loc}}^r$ such that $\left\|f\right\|_{\mathcal{L}_{r,\phi_{1},\delta}}<+\infty$ (see \cite{AbFt2}, Definition 3.2). In \cite{AbFt2}, we obtained the following duality result for $\mathcal{H}^{(q,p)}$, with $0<q\leq p\leq 1$. 
 
\begin{thm} [\cite{AbFt2}, Theorem 3.3] \label{theoremdual}
Suppose that $0<q\leq p\leq 1$. Let $1<r\leq+\infty$. Then, $\left(\mathcal{H}^{(q,p)}\right)^{\ast}$ is isomorphic to $\mathcal{L}_{r',\phi_1,\delta}$ with equivalent norms, where $\frac{1}{r}+\frac{1}{r'}=1$. More precisely, we have the following assertions:  
\begin{enumerate}
\item Let $g\in\mathcal{L}_{r',\phi_1,\delta}$. Then, the mapping 
$$T_g:f\in\mathcal{H}_{fin}^{(q,p)}\longmapsto\int_{\mathbb{R}^d}g(x)f(x)dx,$$ where $\mathcal{H}_{fin}^{(q,p)}$ is the subspace of $\mathcal{H}^{(q,p)}$ consisting of finite linear combinations of $(q,r,\delta)$-atoms, extends to a unique continuous linear functional on $\mathcal{H}^{(q,p)}$ such that
\begin{eqnarray*}
\left\|T_g\right\|\leq C\left\|g\right\|_{\mathcal{L}_{r',\phi_{1},\delta}}, 
\end{eqnarray*}
where $C>0$ is a constant independent of $g$. \label{dualpoint1}
\item Conversely, for any $T\in\left(\mathcal{H}^{(q,p)}\right)^{\ast}$, there exists $g\in\mathcal{L}_{r',\phi_1,\delta}$ such that $$T(f)=\int_{\mathbb{R}^d}g(x)f(x)dx,\ \text{ for all }\ f\in\mathcal{H}_{fin}^{(q,p)},$$ and 
\begin{eqnarray*}
\left\|g\right\|_{\mathcal{L}_{r',\phi_{1},\delta}}\leq C\left\|T\right\|, 
\end{eqnarray*}
where $C>0$ is a constant independent of $T$. \label{dualpoint2}
\end{enumerate}
\end{thm}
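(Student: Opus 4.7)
The plan is to establish the two assertions separately. For (\ref{dualpoint1}) I would pair the atomic decomposition of $\mathcal{H}^{(q,p)}$ (Theorem \ref{fondamentaltheo}) with the reverse Minkowski inequality for amalgams (Proposition \ref{InverseHoldMink3}). For (\ref{dualpoint2}) I would use a Hahn--Banach extension combined with Riesz representation on cube-localised $L^r$-spaces, followed by a gluing argument across an exhaustion of $\mathbb{R}^d$.

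To handle (\ref{dualpoint1}), start with a single $(q,r,\delta)$-atom $\textbf{a}$ supported on a cube $Q$. Using the vanishing moments of $\textbf{a}$ to replace $g$ by $g-P_Q^\delta(g)$ inside the integral, H\"older's inequality together with $\|\textbf{a}\|_{r}\leq|Q|^{1/r-1/q}$ and the definition of $\mathcal{L}_{r',\phi_1,\delta}$ give
$$|T_g(\textbf{a})|\leq|Q|^{1/r'}\phi_1(Q)\|g\|_{\mathcal{L}_{r',\phi_1,\delta}}\cdot|Q|^{1/r-1/q}=\|g\|_{\mathcal{L}_{r',\phi_1,\delta}}\frac{\|\chi_{_{Q}}\|_{q,p}}{\|\chi_{_{Q}}\|_{q}}.$$
For a finite combination $f=\sum_{n}\lambda_n\textbf{a}_n$ on cubes $Q^n$, the triangle inequality followed by Proposition \ref{InverseHoldMink3} (or standard Minkowski in the limiting case $q=p=1$) yields
$$|T_g(f)|\leq\|g\|_{\mathcal{L}_{r',\phi_1,\delta}}\left\|\sum_{n}\frac{|\lambda_n|}{\|\chi_{_{Q^n}}\|_{q}}\chi_{_{Q^n}}\right\|_{q,p},$$
and the non-local analogue of Theorem \ref{thafondamfiniloc} (proved in \cite{AbFt}) at $\eta=1$ bounds the right-hand side by $\|g\|_{\mathcal{L}_{r',\phi_1,\delta}}\|f\|_{\mathcal{H}^{(q,p)}}$. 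Density of finite atomic combinations then delivers the unique continuous extension to $\mathcal{H}^{(q,p)}$.

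For (\ref{dualpoint2}), fix $T\in(\mathcal{H}^{(q,p)})^{\ast}$ and introduce, for each cube $Q$, the space $L_0^r(Q):=\{h\in L^r(Q):\int x^\beta h=0,\ |\beta|\leq\delta\}$. Every $h\in L_0^r(Q)$, renormalised by $\|h\|_r|Q|^{1/q-1/r}$, is a $(q,r,\delta)$-atom, hence $\|h\|_{\mathcal{H}^{(q,p)}}\lsim\|h\|_{L^r(Q)}\cdot|Q|^{1/q-1/r}$; consequently $T|_{L_0^r(Q)}$ is a continuous linear functional on a closed subspace of $L^r(Q)$. Hahn--Banach extension and Riesz representation (with the standard duality/Radon--Nikodym workaround when $r=\infty$, $r'=1$) then produce a representative $g_Q\in L^{r'}(Q)$, unique modulo $\mathcal{P}_{\delta}|_{Q}$, with $T(h)=\int_Q g_Q h$ for all $h\in L_0^r(Q)$. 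Fixing a nested exhaustion $Q_1\subset Q_2\subset\cdots\uparrow\mathbb{R}^d$, I would inductively correct $g_{Q_{k+1}}$ by a polynomial of $\mathcal{P}_{\delta}$ so that $g_{Q_{k+1}}|_{Q_k}=g_{Q_k}$, and define $g\in L^{r'}_{\mathrm{loc}}$ as the coherent limit. The $\mathcal{L}_{r',\phi_1,\delta}$ bound then emerges by testing $T$ against $\tilde h:=h-P_Q^\delta(h)$ for arbitrary $h\in L^r(Q)$ with $\|h\|_{L^r(Q)}\leq 1$: since $\tilde h\in L_0^r(Q)$ is an atom up to the scaling $|Q|^{1/q-1/r}$, the identity $T(\tilde h)=\int_Q(g-P_Q^\delta(g))h$ and taking the supremum over such $h$ give $\|g-P_Q^\delta(g)\|_{L^{r'}(Q)}\lsim\|T\|\cdot|Q|^{1/r'}\phi_1(Q)$, which is precisely $\|g\|_{\mathcal{L}_{r',\phi_1,\delta}}\lsim\|T\|$.

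The principal obstacle is the gluing step of part (\ref{dualpoint2}): one must reconcile the polynomial indeterminacy of each local representative $g_Q$ to obtain a single globally defined $g\in L^{r'}_{\mathrm{loc}}$, which demands a canonical choice of $P_Q^\delta$ and a careful inductive polynomial adjustment along the nested exhaustion. A secondary subtlety is the boundary case $r=\infty$, where direct application of Riesz representation on $L^\infty(Q)$ must be replaced by a Radon--Nikodym / duality argument, or else one first proves the theorem for $1<r<\infty$ and recovers the $r=\infty$ case via the uniqueness-modulo-polynomials and Lebesgue differentiation.
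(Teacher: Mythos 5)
You should first note that the paper never proves this theorem: it is imported verbatim from \cite{AbFt2}, Theorem 3.3, and the only argument here that resembles it is the proof of the local analogue, Theorem \ref{theoremdualloc}. Measured against that template, your part (\ref{dualpoint1}) is exactly the intended argument (single-atom estimate via $g-P_Q^{\delta}(g)$ and H\"older, then Proposition \ref{InverseHoldMink3} and the finite-atomic-norm equivalence). One cosmetic point: the finite-atomic theorem for $1<r<+\infty$ is stated for $0<\eta<q$, not for $\eta=1$; but since $\sum_n c_n\chi_{_{Q^n}}\leq\bigl(\sum_n c_n^{\eta}\chi_{_{Q^n}}\bigr)^{1/\eta}$ pointwise for $0<\eta\leq1$, your $\eta=1$ quantity is dominated by the $\eta<q$ quantity and the step closes. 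Your part (\ref{dualpoint2}) is the classical Coifman--Weiss gluing argument, which is indeed what the non-local theorem requires; the paper's local proof can sidestep the polynomial ambiguity on large cubes (local atoms on cubes with $|Q|\geq1$ carry no moment conditions) and disposes of small cubes precisely by quoting the present theorem, so your route is the one that must be taken here.

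There is, however, one step in part (\ref{dualpoint2}) that fails as written. You pass from $\left\|h\right\|_{\mathcal{H}^{(q,p)}}\lsim\left\|h\right\|_{L^r(Q)}|Q|^{1/q-1/r}$ (i.e.\ the crude bound $\left\|\textbf{a}\right\|_{\mathcal{H}^{(q,p)}}\lsim1$ for a single atom) to $\left\|g-P_Q^{\delta}(g)\right\|_{L^{r'}(Q)}\lsim\left\|T\right\||Q|^{1/r'}\phi_1(Q)$. The crude bound only yields $\left\|g-P_Q^{\delta}(g)\right\|_{L^{r'}(Q)}\lsim\left\|T\right\||Q|^{1/q-1/r}$, and since $\left\|\chi_{_{Q}}\right\|_{q,p}\leq\left\|\chi_{_{Q}}\right\|_{q}=|Q|^{1/q}$ for $q\leq p$ (Proposition \ref{ratotute}), one has $|Q|^{1/r'}\phi_1(Q)=|Q|^{-1/r}\left\|\chi_{_{Q}}\right\|_{q,p}\leq|Q|^{1/q-1/r}$: the inequality points the wrong way, so for large cubes you do not recover the $\phi_1$ normalisation in Definition \ref{defindudualloc}. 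The repair is to use the sharp single-atom estimate furnished by the reconstruction theorem, $\left\|\textbf{a}\right\|_{\mathcal{H}^{(q,p)}}\lsim\left\|\chi_{_{Q}}\right\|_{q,p}/\left\|\chi_{_{Q}}\right\|_{q}$ (the non-local analogue of Remark \ref{remarqdecofinieloc}); then $|T(\tilde h)|\lsim\left\|T\right\|\left\|\tilde h\right\|_{L^r(Q)}|Q|^{-1/r}\left\|\chi_{_{Q}}\right\|_{q,p}$, and the supremum over $\left\|h\right\|_{L^r(Q)}\leq1$ (together with the $L^r$-boundedness of $P_Q^{\delta}$) gives exactly your displayed bound. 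With that correction, and your stated treatment of the $r=+\infty$ endpoint, the argument is complete.
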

 
Now, for all $T\in\left(\mathcal{H}_{\mathrm{loc}}^{(q,p)}\right)^{\ast}$, for simplicity, we set $$\left\|T\right\|:=\left\|T\right\|_{\left(\mathcal{H}_{\mathrm{loc}}^{(q,p)}\right)^{\ast}}=\sup_{\underset{\left\|f\right\|_{\mathcal{H}_{\mathrm{loc}}^{(q,p)}}\leq 1}{f\in\mathcal{H}_{\mathrm{loc}}^{(q,p)}}}|T(f)|.$$ With Definition \ref{defindudualloc} and Theorem \ref{theoremdual}, we have our following characterization of the dual of $\mathcal{H}_{\mathrm{loc}}^{(q,p)}$, when $0<q\leq p\leq 1$. 

\begin{thm} \label{theoremdualloc}
Suppose that $0<q\leq p\leq 1$. Let $1<r\leq+\infty$. Then, $\left(\mathcal{H}_{\mathrm{loc}}^{(q,p)}\right)^{\ast}$ is isomorphic to $\mathcal{L}_{r',\phi_1,\delta}^{\mathrm{loc}}$ with equivalent norms, where $\frac{1}{r}+\frac{1}{r'}=1$. More precisely, we have the following assertions: 
\begin{enumerate}
\item Let $g\in\mathcal{L}_{r',\phi_1,\delta}^{\mathrm{loc}}$. Then, the mapping
$$T_g:f\in\mathcal{H}_{\mathrm{loc},fin}^{(q,p)}\longmapsto\int_{\mathbb{R}^d}g(x)f(x)dx,$$ where $\mathcal{H}_{\mathrm{loc},fin}^{(q,p)}$ is the subspace of $\mathcal{H}_{\mathrm{loc}}^{(q,p)}$ consisting of finite linear combinations of $(q,r,\delta)$-atoms, extends to a unique continuous linear functional on $\mathcal{H}_{\mathrm{loc}}^{(q,p)}$ such that
\begin{eqnarray*}
\left\|T_g\right\|\leq C\left\|g\right\|_{\mathcal{L}_{r',\phi_{1},\delta}^{\mathrm{loc}}}, 
\end{eqnarray*}
 where $C>0$ is a constant independent of $g$. \label{1theoremdualloc}
\item Conversely, for any $T\in\left(\mathcal{H}_{\mathrm{loc}}^{(q,p)}\right)^{\ast}$, there exists $g\in\mathcal{L}_{r',\phi_1,\delta}^{\mathrm{loc}}$ such that $$T(f)=\int_{\mathbb{R}^d}g(x)f(x)dx,\ \text{ for all }\ f\in\mathcal{H}_{\mathrm{loc},fin}^{(q,p)}\ ,$$
and 
\begin{eqnarray*}
\left\|g\right\|_{\mathcal{L}_{r',\phi_{1},\delta}^{\mathrm{loc}}}\leq C\left\|T\right\|, 
\end{eqnarray*}
 where $C>0$ is a constant independent of $T$. \label{2theoremdualloc}
\end{enumerate}
\end{thm}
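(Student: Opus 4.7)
The plan is to follow the strategy of Theorem \ref{theoremdual} while substituting at each step the tools developed in Section 3 for their global counterparts: the atomic decomposition (Theorem \ref{fondamentaltheoloc}), the finite-atomic norm equivalence (Theorem \ref{thafondamfiniloc}), and the density of $\mathcal{H}_{\mathrm{loc},fin}^{(q,p)}\cap\mathcal{C}(\mathbb{R}^d)$ in $\mathcal{H}_{\mathrm{loc}}^{(q,p)}$ (Lemma \ref{finilemmodifvra1loc} combined with Theorem \ref{thafondamfiniloc}). The two-piece nature of the norm $\left\|\cdot\right\|_{\mathcal{L}_{r',\phi_1,\delta}^{\mathrm{loc}}}$ will mirror, at every estimate, the two-case definition of a local $(q,r,\delta)$-atom (vanishing moments being required only when $|Q|<1$).

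For part (\ref{1theoremdualloc}), given $g\in\mathcal{L}_{r',\phi_1,\delta}^{\mathrm{loc}}$ and a local $(q,r,\delta)$-atom $\textbf{a}$ supported in a cube $Q$, I would bound $\left|\int_{\mathbb{R}^d}g(x)\textbf{a}(x)\,dx\right|$ by casework. If $|Q|<1$, I use the vanishing moments to rewrite the integral as $\int_{Q}\bigl(g-P_Q^{\delta}(g)\bigr)\textbf{a}\,dx$ and then apply H\"older's inequality together with the first supremum in Definition \ref{defindudualloc}; if $|Q|\geq 1$, I apply H\"older directly and invoke the second supremum. In either case a size computation collapses to the uniform bound $\left|\int g\textbf{a}\,dx\right|\lsim\left\|g\right\|_{\mathcal{L}_{r',\phi_1,\delta}^{\mathrm{loc}}}\left\|\chi_{_{Q}}\right\|_{q,p}\left\|\chi_{_{Q}}\right\|_{q}^{-1}$. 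Summing over a finite decomposition $f=\sum_{n=0}^{m}\lambda_n\textbf{a}_n$ and invoking Proposition \ref{InverseHoldMink3} (to pass from a sum of $(L^q,\ell^p)$-quasi-norms to the norm of the sum when $p\leq 1$) yields $|T_g(f)|\lsim\left\|g\right\|_{\mathcal{L}_{r',\phi_1,\delta}^{\mathrm{loc}}}\left\|f\right\|_{\mathcal{H}_{\mathrm{loc},fin}^{(q,p)}}$. Theorem \ref{thafondamfiniloc} then identifies the right-hand quasi-norm with $\left\|f\right\|_{\mathcal{H}_{\mathrm{loc}}^{(q,p)}}$, and the density supplied by Theorem \ref{thafondamfiniloc}(\ref{thafondamfini02loc}) and Lemma \ref{finilemmodifvra1loc} (when $r=+\infty$, passing through the continuous subspace) extends $T_g$ uniquely to a continuous functional on $\mathcal{H}_{\mathrm{loc}}^{(q,p)}$ satisfying the claimed bound.

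For part (\ref{2theoremdualloc}), I would build $g$ cube by cube. For every cube $Q$ with $|Q|\geq 1$, any $h\in L^r(Q)$ with $\left\|h\right\|_{r}\leq|Q|^{1/r-1/q}$ is (up to normalisation) a $(q,r,\delta)$-atom for $\mathcal{H}_{\mathrm{loc}}^{(q,p)}$, so $L^r(Q)\hookrightarrow\mathcal{H}_{\mathrm{loc}}^{(q,p)}$ continuously and the Riesz representation theorem produces a unique $g_Q\in L^{r'}(Q)$ representing $T$ on $L^r(Q)$. For cubes $Q$ with $|Q|<1$ the analogous embedding holds only on the subspace $L^r_{\delta,0}(Q):=\{h\in L^r(Q):\int_Q h\mathfrak{p}\,dx=0,\ \forall\mathfrak{p}\in\mathcal{P}_\delta\}$; Riesz combined with Hahn--Banach then yields a representative determined modulo $\mathcal{P}_\delta$. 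Exhausting $\mathbb{R}^d$ by an increasing sequence of cubes of volume at least $1$, the uniqueness of the large-cube representatives makes them automatically consistent on overlaps and pastes them into a single $g\in L^{r'}_{\mathrm{loc}}$ with $T(f)=\int_{\mathbb{R}^d}g(x)f(x)\,dx$ for every $f\in\mathcal{H}_{\mathrm{loc},fin}^{(q,p)}$. Testing $T$ against $\left\|\chi_{_{Q}}\right\|_{q,p}^{-1}\left\|\chi_{_{Q}}\right\|_{q}$-normalised local atoms supported in an arbitrary $Q$ extracts, in the two regimes, the two halves of the estimate $\left\|g\right\|_{\mathcal{L}_{r',\phi_1,\delta}^{\mathrm{loc}}}\lsim\left\|T\right\|$.

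The main obstacle I anticipate is the consistency of the gluing step: on cubes of volume less than one the Riesz representative is only determined modulo $\mathcal{P}_\delta$, so one has to verify that the uniquely determined representatives on the exhausting sequence of large cubes restrict to the small sub-cubes with the correct polynomial normalisation, so that both halves of the local Campanato seminorm are simultaneously controlled by $\left\|T\right\|$. This delicate matching across the threshold $|Q|=1$ is the local analogue of the passage from $\mathrm{BMO}$ to $\mathrm{bmo}$, and is precisely where the two-part structure of $\left\|\cdot\right\|_{\mathcal{L}_{r',\phi_1,\delta}^{\mathrm{loc}}}$ earns its keep; once this is settled, the density of $\mathcal{H}_{\mathrm{loc},fin}^{(q,p)}$ in $\mathcal{H}_{\mathrm{loc}}^{(q,p)}$ finishes both directions of the duality.
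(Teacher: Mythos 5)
Your part (\ref{1theoremdualloc}) argument coincides with the paper's: split the finite atomic decomposition into small and large cubes, use vanishing moments plus H\"older on the former and H\"older alone on the latter, invoke Proposition \ref{InverseHoldMink3}, identify $\left\|f\right\|_{\mathcal{H}_{\mathrm{loc},fin}^{(q,p)}}$ with $\left\|f\right\|_{\mathcal{H}_{\mathrm{loc}}^{(q,p)}}$ via Theorem \ref{thafondamfiniloc}, and extend by density. (A small slip: you associated the first supremum in Definition \ref{defindudualloc} with $|Q|<1$ and the second with $|Q|\geq 1$; the paper's convention is the reverse.) In part (\ref{2theoremdualloc}), your construction of $g$ is also the paper's: build Riesz representatives only on cubes with $|Q|\geq 1$, exhaust by an increasing sequence of such cubes, and paste using uniqueness. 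The divergence is in how the small-cube half of $\left\|g\right\|_{\mathcal{L}_{r',\phi_1,\delta}^{\mathrm{loc}}}$ is controlled. You propose testing $T$ directly against small-cube local atoms; the paper instead restricts $T$ to $\mathcal{H}^{(q,p)}\subset\mathcal{H}_{\mathrm{loc}}^{(q,p)}$, notes $\left\|T|_{\mathcal{H}^{(q,p)}}\right\|_{\left(\mathcal{H}^{(q,p)}\right)^{\ast}}\leq\left\|T\right\|$, and imports the small-cube bound wholesale from Theorem \ref{theoremdual}(\ref{dualpoint2}). Both routes are valid; yours is more self-contained but requires the (standard) equivalence $\inf_{\mathfrak{p}\in\mathcal{P}_\delta}\left\|g-\mathfrak{p}\right\|_{L^{r'}(Q)}\approx\left\|g-P_Q^{\delta}(g)\right\|_{L^{r'}(Q)}$, while the paper's detour through the global duality theorem is shorter but recycles a harder earlier result.

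The ``main obstacle'' you flag at the end --- matching the modulo-$\mathcal{P}_\delta$ small-cube representatives with the globally pasted $g$ --- is not in fact an obstacle, and you leave it unresolved. You never need small-cube Riesz/Hahn--Banach representatives at all. Once $g\in L_{\mathrm{loc}}^{r'}$ is produced from the large-cube representatives and verified to satisfy $T(f)=\int_{\mathbb{R}^d}gf$ for every $f\in\mathcal{H}_{\mathrm{loc},fin}^{(q,p)}$, the polynomial $P_Q^{\delta}(g)$ is canonically determined for every small cube $Q$ by (\ref{Campanato1}); there is no normalization ambiguity left to match. Testing $T$ against suitably normalized elements of $L^r(Q)$ with vanishing moments then bounds $\inf_{\mathfrak{p}}\left\|g-\mathfrak{p}\right\|_{L^{r'}(Q)}$ directly, and the equivalence just cited finishes the small-cube estimate. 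So your approach succeeds, but the paragraph presenting the ``delicate matching across $|Q|=1$'' as the crux of the argument overstates the difficulty; the construction of $g$ is entirely a large-cube affair, and the two halves of the seminorm decouple once $g$ exists.
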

\begin{proof} 
We borrow some ideas from \cite{YDYS}, Theorem 7.5. We distinguish two cases: $1<r<+\infty$ and $r=+\infty$. 

Suppose that $1<r<+\infty$. First, we prove (\ref{1theoremdualloc}). Fix $0<\eta<q$. Let $g\in\mathcal{L}_{r',\phi_1,\delta}^{\mathrm{loc}}$ , where $\frac{1}{r}+\frac{1}{r'}=1$. Consider the mapping $T_g$ defined on $\mathcal{H}_{\mathrm{loc},fin}^{(q,p)}$ by $$T_g(f)=\int_{\mathbb{R}^d}g(x)f(x)dx,\ \forall\ f\in\mathcal{H}_{\mathrm{loc},fin}^{(q,p)}.$$ It is easy to verify that $T_g$ is well defined and linear. Let $f\in\mathcal{H}_{\mathrm{loc},fin}^{(q,p)}$. Then, there exist a finite sequence $\left\{\left(a_n,Q^n\right)\right\}_{n=0}^m$ of elements of $\mathcal{A}_{\mathrm{loc}}(q,r,\delta)$ and a finite sequence of scalars $\left\{\lambda_n\right\}_{n=0}^m$ such that $f=\sum_{n=0}^m\lambda_n a_n$. Set $J_1=\left\{n\in\left\{0,1,\ldots,m\right\}:\ |Q^n|<1\right\}$ and $J_2=\left\{n\in\left\{0,1,\ldots,m\right\}:\ |Q^n|\geq1\right\}$. We have $$|T_g(f)|\leq I_1+I_2,$$ with $$I_1=\left|\int_{\mathbb{R}^d}\left(\sum_{n\in J_1}\lambda_n a_n(x)\right)g(x)dx\right| \text{ and } I_2=\left|\int_{\mathbb{R}^d}\left(\sum_{n\in J_2}\lambda_n a_n(x)\right)g(x)dx\right|.$$ With Proposition \ref{InverseHoldMink3}, by arguing as in the proof of Theorem \ref{theoremdual}, we obtain  
\begin{eqnarray*}
I_1&\leq&\left\|\sum_{n\in J_1}\left(\frac{|\lambda_n|}{\left\|\chi_{_{Q^n}}\right\|_q}\right)^{\eta}\chi_{_{Q^n}}\right\|_{\frac{q}{\eta},\frac{p}{\eta}}^{\frac{1}{\eta}}\left[\sup_{\underset{|Q|<1}{Q\in\mathcal{Q}}}\frac{1}{\phi_{1}(Q)}\left(\frac{1}{|Q|}\int_{Q}\left|g(x)-P_{Q}^{\delta}(g)(x)\right|^{r'}dx\right)^{\frac{1}{r'}}\right]\\
&\leq&\left\|\sum_{n=0}^m\left(\frac{|\lambda_n|}{\left\|\chi_{_{Q^n}}\right\|_q}\right)^{\eta}\chi_{_{Q^n}}\right\|_{\frac{q}{\eta},\frac{p}{\eta}}^{\frac{1}{\eta}}\left[\sup_{\underset{|Q|<1}{Q\in\mathcal{Q}}}\frac{1}{\phi_{1}(Q)}\left(\frac{1}{|Q|}\int_{Q}\left|g(x)-P_{Q}^{\delta}(g)(x)\right|^{r'}dx\right)^{\frac{1}{r'}}\right].
\end{eqnarray*}
Also, 
\begin{eqnarray*}
I_2&\leq&\left\|\sum_{n\in J_2}\left(\frac{|\lambda_n|}{\left\|\chi_{_{Q^n}}\right\|_q}\right)^{\eta}\chi_{_{Q^n}}\right\|_{\frac{q}{\eta},\frac{p}{\eta}}^{\frac{1}{\eta}}\left[\sup_{\underset{|Q|\geq1}{Q\in\mathcal{Q}}}\frac{1}{\phi_{1}(Q)}\left(\frac{1}{|Q|}\int_{Q}\left|g(x)\right|^{r'}dx\right)^{\frac{1}{r'}}\right]\\
&\leq&\left\|\sum_{n=0}^m\left(\frac{|\lambda_n|}{\left\|\chi_{_{Q^n}}\right\|_q}\right)^{\eta}\chi_{_{Q^n}}\right\|_{\frac{q}{\eta},\frac{p}{\eta}}^{\frac{1}{\eta}}\left[\sup_{\underset{|Q|\geq1}{Q\in\mathcal{Q}}}\frac{1}{\phi_{1}(Q)}\left(\frac{1}{|Q|}\int_{Q}\left|g(x)\right|^{r'}dx\right)^{\frac{1}{r'}}\right].
\end{eqnarray*}
Hence 
\begin{eqnarray*}
|T_g(f)|\leq\left\|f\right\|_{\mathcal{H}_{\mathrm{loc},fin}^{(q,p)}}\left\|g\right\|_{{\mathcal{L}}_{r',\phi_{1},\delta}^{\mathrm{loc}}}\lsim \left\|f\right\|_{\mathcal{H}_{\mathrm{loc}}^{(q,p)}}\left\|g\right\|_{{\mathcal{L}}_{r',\phi_{1},\delta}^{\mathrm{loc}}},
\end{eqnarray*}
by Theorem \ref{thafondamfiniloc}. This shows that $g\in\left(\mathcal{H}_{\mathrm{loc}}^{(q,p)}\right)^{\ast}$ and 
\begin{eqnarray*}
\left\|g\right\|:=\left\|T_g\right\|\lsim\left\|g\right\|_{\mathcal{L}_{r',\phi_{1},\delta}^{\mathrm{loc}}}, 
\end{eqnarray*}
since $\mathcal{H}_{\mathrm{loc},fin}^{(q,p)}$ is dense in $\mathcal{H}_{\mathrm{loc}}^{(q,p)}$ with respect to the quasi-norm $\left\|\cdot\right\|_{\mathcal{H}_{\mathrm{loc}}^{(q,p)}}$. 
 
Now, prove (\ref{2theoremdualloc}). Let $T\in\left(\mathcal{H}_{\mathrm{loc}}^{(q,p)}\right)^{\ast}$. Fix $0<\eta<q$. Let $Q$ be a cube such that $\ell(Q)\geq1$. We first prove that  
\begin{eqnarray}
\left(\mathcal{H}_{\mathrm{loc}}^{(q,p)}\right)^{\ast}\subset\left(L^r(Q)\right)^{\ast}.\label{Hanbancloc}
\end{eqnarray}
For any $f\in L^r(Q)\backslash\left\{0\right\}$, we set $$a(x):=\left\|f\right\|_{L^r(Q)}^{-1}|Q|^{\frac{1}{r}-\frac{1}{q}}f(x)\ ,$$ for all $x\in\mathbb{R}^d$. Clearly, $(a,Q)\in\mathcal{A}_{\mathrm{loc}}(q,r,\delta)$. Thus, $f\in\mathcal{H}_{\mathrm{loc},fin}^{(q,p)}\subset\mathcal{H}_{\mathrm{loc}}^{(q,p)}$ and
\begin{eqnarray*}
\left\|f\right\|_{\mathcal{H}_{\mathrm{loc}}^{(q,p)}}=\left\|f\right\|_{L^r(Q)}|Q|^{\frac{1}{q}-\frac{1}{r}}\left\|a\right\|_{\mathcal{H}_{\mathrm{loc}}^{(q,p)}}\lsim|Q|^{\frac{1}{q}-\frac{1}{r}}\left\|f\right\|_{L^r(Q)}. 
\end{eqnarray*}
Hence
\begin{eqnarray}
|T(f)|\leq\left\|T\right\|\left\|f\right\|_{\mathcal{H}_{\mathrm{loc}}^{(q,p)}}\lsim |Q|^{\frac{1}{q}-\frac{1}{r}}\left\|T\right\|\left\|f\right\|_{L^r(Q)}, \label{Campanato5loc}
\end{eqnarray}
for all $f\in L^r(Q)$. Thus, $T$ is a continuous linear functional on $L^r(Q)$, with 
\begin{eqnarray*}
\left\|T\right\|_{\left(L^r(Q)\right)^{\ast}}:=\sup_{\underset{\left\|f\right\|_{L^r(Q)}\leq 1}{f\in L^r(Q)}}|T(f)|\lsim|Q|^{\frac{1}{q}-\frac{1}{r}}\left\|T\right\|.
\end{eqnarray*}
This proves (\ref{Hanbancloc}).

Therefore, since $\left(L^{r}(Q)\right)^{\ast}$ is isomorphic to $L^{r'}(Q)$, there exists $g^Q\in L^{r'}(Q)$ such that 
\begin{eqnarray}
T(f)=\int_{Q}f(x)g^Q(x)dx, \text{ for all } f\in L^{r}(Q), \label{Campanato6loc}
\end{eqnarray}
Now, for every cube $Q$ such that $\ell(Q)\geq1$, we consider the fonction $g^Q$ as defined above. Let $\left\{Q_n\right\}_{n\geq 1}$ be an increasing sequence of cubes which converges to $\mathbb{R}^d$ and $\ell(Q_1)\geq1$. For each cube $Q_n$, we have 
\begin{eqnarray}
T(f)=\int_{Q_n}f(x)g^{Q_n}(x)dx, 
\label{Campanato8loc}
\end{eqnarray}
for all $f\in L^r(Q_n)$, by (\ref{Campanato6loc}).

Now, we construct a function $g\in L_{\mathrm{loc}}^{r'}(\mathbb{R}^d)$ such that
\begin{eqnarray}
T(f)=\int_{Q_n}f(x)g(x)dx, \label{Campanato9loc}
\end{eqnarray}
for all $f\in L^r(Q_n)$ and all $n\geq1$. First, assume that $f\in L^r(Q_1)$. Then, 
\begin{eqnarray*}
T(f)=\int_{Q_1}f(x)g^{Q_1}(x)dx,
\end{eqnarray*}
by (\ref{Campanato8loc}). Since $L^r(Q_1)\subset L^r(Q_2)$, also we have
\begin{eqnarray*}
T(f)=\int_{Q_2}f(x)g^{Q_2}(x)dx=\int_{Q_1}f(x)g^{Q_2}(x)dx, 
\end{eqnarray*}
by (\ref{Campanato8loc}). Thus, for all $f\in L^r(Q_1)$,
\begin{eqnarray*}
\int_{Q_1}f(x)\left[g^{Q_1}(x)-g^{Q_2}(x)\right]dx=0.
\end{eqnarray*}
This implies that $g^{Q_1}(x)=g^{Q_2}(x)$, for almost all $x\in Q_1$. Thus, after changing values of $g^{Q_1}$ (or $g^{Q_2}$) on a set of measure zero, we have $g^{Q_1}(x)=g^{Q_2}(x)$, for all $x\in Q_1$. Arguing as above, we obtain
\begin{eqnarray}
g^{Q_n}(x)=g^{Q_{n+1}}(x), \label{Campanato12loc}
\end{eqnarray}
for all $x\in Q_n$ and all $n\geq1$. Set 
\begin{eqnarray}
g_1(x):=g^{Q_1}(x),\ \text{ if }\ x\in Q_1 \label{Campanato13loc}
\end{eqnarray}
and $$g_{n+1}(x):=\left\{\begin{array}{lll}g^{Q_n}(x),&\text{ if }&x\in Q_n\ ,\\ 
g^{Q_{n+1}}(x),&\text{ if }&x\in Q_{n+1}\backslash{Q_n}\ ,\end{array}\right.$$ for all $n\geq 1$. We have 
\begin{eqnarray}
g_{n+1}(x)=g^{Q_{n+1}}(x), \label{Campanato14loc}
\end{eqnarray}
for all $x\in Q_{n+1}$ and all $n\geq 1$, by (\ref{Campanato12loc}). With (\ref{Campanato13loc}) and (\ref{Campanato14loc}), we define the function $g$ on $\mathbb{R}^d$ by $$g(x):=g_n(x)=g^{Q_n}(x)\ ,\ \text{ if }\ x\in Q_n\ ,$$  
for all $n\geq 1$. Then, $g\in L_{\mathrm{loc}}^{r'}$, since $g_n\in L_{\mathrm{loc}}^{r'}$, for all $n\geq 1$, and  
\begin{eqnarray*}
\int_{Q_{n}}f(x)g(x)dx=\int_{Q_{n}}f(x)g_{n}(x)dx=T(f),
\end{eqnarray*}
for all $f\in L^r(Q_{n})$, for all $n\geq 1$, by (\ref{Campanato8loc}). Thus, the function $g$ satisfies (\ref{Campanato9loc}).

To end, we show that $g\in\mathcal{L}_{r',\phi_1,\delta}^{\mathrm{loc}}$ and
\begin{eqnarray}
T(f)=\int_{\mathbb{R}^d}f(x)g(x)dx, \label{Campanato15loc}  
\end{eqnarray}
for all $f\in\mathcal{H}_{\mathrm{loc},fin}^{(q,p)}$. Let $f\in\mathcal{H}_{\mathrm{loc},fin}^{(q,p)}$. We have $f\in L_{\mathrm{comp}}^r(\mathbb{R}^d)$, since $\mathcal{H}_{\mathrm{loc},fin}^{(q,p)}\subset L_{\mathrm{comp}}^r(\mathbb{R}^d)$. Thus, there exists an integer $n\geq 1$ such that $f\in L^r(Q_n)$. Hence (\ref{Campanato15loc}) holds, by (\ref{Campanato9loc}). 

Now, we prove that $g\in\mathcal{L}_{r',\phi_1,\delta}^{\mathrm{loc}}$. Let $Q$ be a cube with $\ell(Q)\geq1$, and $f\in L^r(Q)$ such that $\left\|f\right\|_{L^r(Q)}\leq 1$. Set $$a(x):=|Q|^{\frac{1}{r}-\frac{1}{q}}f(x),$$ for all $x\in\mathbb{R}^d$. We have $(a,Q)\in\mathcal{A}_{\mathrm{loc}}(q,r,\delta)$. Hence 
\begin{eqnarray*}
T(a)=\int_{\mathbb{R}^d}a(x)g(x)dx=\int_{Q}a(x)g(x)dx,
\end{eqnarray*}
by (\ref{Campanato15loc}). Since $T\in\left(\mathcal{H}_{\mathrm{loc}}^{(q,p)}\right)^{\ast}$, we have 
\begin{eqnarray*}
\left|\int_{Q}a(x)g(x)dx\right|=|T(a)|&\leq&\left\|T\right\|\left\|a\right\|_{\mathcal{H}_{\mathrm{loc}}^{(q,p)}}\\
&\lsim&\left\|a\right\|_{\mathcal{H}_{\mathrm{loc},fin}^{(q,p)}}\left\|T\right\| \lsim \frac{1}{\left\|\chi_{_{Q}}\right\|_q}\left\|\chi_{_{Q}}\right\|_{q,p}\left\|T\right\|,
\end{eqnarray*}
by Theorem \ref{thafondamfiniloc} and Remark \ref{remarqdecofinieloc}. Thus, for all $f\in L^r(Q)$ with $\left\|f\right\|_{L^r(Q)}\leq 1$, we have 
\begin{eqnarray*}
\left|\int_{Q}f(x)g(x)dx\right|&\lsim&|Q|^{\frac{1}{q}-\frac{1}{r}}\frac{1}{\left\|\chi_{_{Q}}\right\|_q}\left\|\chi_{_{Q}}\right\|_{q,p}\left\|T\right\|\\
&=&C|Q|^{-\frac{1}{r}}\left\|\chi_{_{Q}}\right\|_{q,p}\left\|T\right\|.
\end{eqnarray*}
This implies that 
\begin{eqnarray*}
\frac{1}{\phi_1(Q)}\left(\frac{1}{|Q|}\int_{Q}\left|g(x)\right|^{r'}dx\right)^{\frac{1}{r'}}\lsim\left\|T\right\|.
\end{eqnarray*}
Therefore, 
\begin{eqnarray}
\sup_{\underset{|Q|\geq1}{Q\in\mathcal{Q}}}\frac{1}{\phi_{1}(Q)}\left(\frac{1}{|Q|}\int_{Q}\left|g(x)\right|^{r'}dx\right)^{\frac{1}{r'}}\lsim \left\|T\right\|. \label{Campanato16loc} 
\end{eqnarray}
Moreover, since $\mathcal{H}^{(q,p)}\subset\mathcal{H}_{\mathrm{loc}}^{(q,p)}$ and $\left\|f\right\|_{\mathcal{H}_{\mathrm{loc}}^{(q,p)}}\leq\left\|f\right\|_{\mathcal{H}^{(q,p)}}$ for all $f\in\mathcal{H}^{(q,p)}$, we have $\left(\mathcal{H}_{\mathrm{loc}}^{(q,p)}\right)^{\ast}\subset\left(\mathcal{H}^{(q,p)}\right)^{\ast}$ and $T|_{\mathcal{H}^{(q,p)}}\in\left(\mathcal{H}^{(q,p)}\right)^{\ast}$. Since (\ref{Campanato15loc}) holds for all $f\in\mathcal{H}_{fin}^{(q,p)}$, from Theorem \ref{theoremdual} (\ref{dualpoint2}), we deduce that $g\in\mathcal{L}_{r',\phi_{1},\delta}$ and 
\begin{eqnarray*}
\left\|g\right\|_{\mathcal{L}_{r',\phi_{1},\delta}}\lsim\left\|T|_{\mathcal{H}^{(q,p)}}\right\|_{\left(\mathcal{H}^{(q,p)}\right)^{\ast}}\lsim\left\|T\right\|.
\end{eqnarray*} 
Hence 
\begin{equation}
\sup_{\underset{|Q|<1}{Q\in\mathcal{Q}}}\frac{1}{\phi_{1}(Q)}\left(\frac{1}{|Q|}\int_{Q}\left|g(x)-P_{Q}^{\delta}(g)(x)\right|^{r'}dx\right)^{\frac{1}{r'}}\lsim\left\|T\right\|.\label{Campanato17loc} 
\end{equation}
Combining (\ref{Campanato16loc}) and (\ref{Campanato17loc}), we obtain $\left\|g\right\|_{\mathcal{L}_{r',\phi_{1},\delta}^{\mathrm{loc}}}\lsim\left\|T\right\|$ and $g\in\mathcal{L}_{r',\phi_{1},\delta}^{\mathrm{loc}}$. This finishes the proof of (\ref{2theoremdualloc}) and hence, the proof in Case $1<r<+\infty$.

The proof in Case $r=+\infty$ is similar to the one of Theorem \ref{theoremdual}, we omit details. This completes the proof of Theorem \ref{theoremdualloc}.
\end{proof}

\begin{remark}
\begin{enumerate}
\item The space $\mathcal{L}_{r,\phi_{1},\delta}^{\mathrm{loc}}$ is independent of $1\leq r<+\infty$ and the integer $\delta\geq\left\lfloor d\left(\frac{1}{q}-1\right)\right\rfloor$, by Theorem \ref{theoremdualloc}.
\item When $q=p=1$, we have $\mathcal{H}_{\mathrm{loc}}^{(q,p)}=\mathcal{H}_{\mathrm{loc}}^1$ and $\phi_{1}\equiv1$. Thus, taking $r=+\infty$ and $\delta=0$ in Theorem \ref{theoremdualloc}, we obtain $\mathcal{L}_{1,\phi_{1},0}^{\mathrm{loc}}=\mathrm{bmo}(\mathbb{R}^d)$ the dual of $\mathcal{H}_{\mathrm{loc}}^1$ introduced by Goldberg in \cite{DGG} and which is the set of all $f\in L_{\mathrm{loc}}^1$ such that  
\begin{eqnarray*}
\left\|f\right\|_{\mathrm{bmo}}:=\sup_{\underset{|Q|\geq1}{Q\in\mathcal{Q}}}\frac{1}{|Q|}\int_{Q}|f(x)|dx+\sup_{\underset{|Q|<1}{Q\in\mathcal{Q}}}\frac{1}{|Q|}\int_{Q}\left|f(x)-f_Q\right|dx<+\infty,
\end{eqnarray*}
where $f_Q=\frac{1}{|Q|}\int_{Q}f(x)dx$.
\item When $q=p<1$, we have $\mathcal{H}_{\mathrm{loc}}^{(q,p)}=\mathcal{H}_{\mathrm{loc}}^q$ and $\phi_{1}(Q)=|Q|^{\frac{1}{q}-1}$, for all $Q\in\mathcal{Q}$. Thus, taking $r=+\infty$ and $\delta=\left\lfloor d\left(\frac{1}{q}-1\right)\right\rfloor$ in Theorem \ref{theoremdualloc}, we obtain $\mathcal{L}_{1,\phi_{1},\delta}^{\mathrm{loc}}=\Lambda_{d\left(\frac{1}{q}-1\right)}$ the dual of $\mathcal{H}_{\mathrm{loc}}^q$ defined by Goldberg in \cite{DGG}, Theorem 5, p. 40. 
\item Also, when $q=p<1$, by taking $1<r<+\infty$, we obtain $\mathcal{L}_{r',\phi_{1},\delta}^{\mathrm{loc}}=\mathrm{bmo^{r'}_{\rho,\omega}}(\mathbb{R}^d)$ the dual of $h_{\omega}^{\Phi}(\mathbb{R}^d)=\mathcal{H}_{\mathrm{loc}}^q$ defined in \cite{YDYS}, in the case where $\Phi(t)=t^q$ and $\omega\equiv1$, with $\rho(t)=t^{-1}/\Phi^{-1}(t^{-1})$, $0<t<+\infty$. 
\item As in \cite{AbFt2}, Remark 3.4, we do not know to characterize the dual space of $\mathcal{H}_{\mathrm{loc}}^{(q,p)}$, whenever $0<q\leq1<p<+\infty$.
\end{enumerate}
\end{remark}

\section{Boundedness of Pseudo-differential Operators}
 
In this section, unless otherwise specified, we assume that $0<q\leq 1$ and $q\leq p<+\infty$. In \cite{DGG}, Theorem 4, D. Goldberg showed that pseudo-differential operators of a certain class ($\mathcal{S}^0$) are bounded on $\mathcal{H}_{\mathrm{loc}}^q$. This result has been extended to other generalizations of $\mathcal{H}_{\mathrm{loc}}^q$ spaces (see \cite{YDYS}, \cite{YDYSB}). We prove that this result extends to $\mathcal{H}_{\mathrm{loc}}^{(q,p)}$ spaces. To see this, we recall the definition of a pseudo-differential operator and some results we will need. For more informations about pseudo-differential operators, the reader can refer to \cite{JD}, \cite{MRVT}, \cite{MA}, \cite{MET}, \cite{HT1} and \cite{MWW}.

\begin{defn}[$\mathcal{S}_{\rho,\sigma}^\mu$] Let $\mu\in\mathbb{R}$, $0\leq\rho\leq1$ and $0\leq\sigma\leq1$. $\mathcal{S}_{\rho,\sigma}^\mu$ is the collection of all complex-values $\mathcal{C}^{\infty}$ functions $\psi(x,\xi)$ in $\mathbb{R}^d\times\mathbb{R}^d$, such that, for all multi-indices $\alpha=(\alpha_1,\ldots,\alpha_d)$ and $\beta=(\beta_1,\ldots,\beta_d)$ there exists a positive number $C_{\alpha\beta}$ with 
\begin{eqnarray*}
|\partial_{x}^\alpha\partial_{\xi}^\beta\psi(x,\xi)|\leq C_{\alpha,\beta}(1+|\xi|)^{\mu+\sigma|\alpha|-\rho|\beta|},\ \ x\in\mathbb{R}^d,\ \xi\in\mathbb{R}^d.
\end{eqnarray*}
\end{defn}

Every element of $\mathcal{S}_{\rho,\sigma}^\mu$ is called symbol. Let $\psi(x,\xi)\in\mathcal{S}_{\rho,\sigma}^\mu$. The operator $T$ defined by  
\begin{eqnarray}
T(f)(x)=\int_{\mathbb{R}^d}\psi(x,\xi)e^{2\pi ix\xi}\widehat{f}(\xi)d\xi, \label{pseudodif1}
\end{eqnarray}
for all $f\in\mathcal{S}$ and all $x\in\mathbb{R}^d$, is called a $\mathcal{S}_{\rho,\sigma}^\mu$ pseudo-differential operator. For every $\mathcal{S}_{\rho,\sigma}^\mu$ pseudo-differential operator $T$, (\ref{pseudodif1}) also writes  
\begin{eqnarray}
T(f)(x)=\int_{\mathbb{R}^d}K(x,x-y)f(y)dy, \label{pseudodif2}
\end{eqnarray}
where 
\begin{eqnarray}
K(x,z)=\int_{\mathbb{R}^d}\psi(x,\xi)e^{2\pi iz\xi}d\xi. \label{pseudodif3}
\end{eqnarray}
For (\ref{pseudodif2}) and (\ref{pseudodif3}), See \cite{JD}, Chap. 5, pp. 113-114 or \cite{MA}, Chap.VI, (8), p. 235; (43), p. 250. When $\mu=\sigma=0$ and $\rho=1$, we just denote by $\mathcal{S}^0$ the class $\mathcal{S}_{1,0}^0$. 

In the rest of this section, we are interested in $\mathcal{S}^0$ pseudo-differential operators. It's well known that, when $\psi\in\mathcal{S}^0$, for every $f\in L^2$, (\ref{pseudodif2}) is valid for almost all $x\notin\mathrm{supp}(f)$, and $T$ is a bounded operator on $L^r$, for any $1<r<+\infty$ (see \cite{JD}, Chap. 5, pp. 113-114 and \cite{MA}, Chap.VI, Proposition 4, p. 250). Also, we have the following

\begin{lem}[\cite{DGG}, Lemma 6] \label{lempseudodif} 
Let $T$ be a $\mathcal{S}^0$ pseudo-differential operator. If $\phi\in\mathcal{S}$, then $T_t(f)=\phi_t\ast T(f)$ has a symbol $\psi_t$ which satisfies 
\begin{eqnarray}
|\partial_{x}^\alpha\partial_{\xi}^\beta\psi_t(x,\xi)|\leq C_{\alpha,\beta}(1+|\xi|)^{-|\beta|},\ x\in\mathbb{R}^d,\ \xi\in\mathbb{R}^d, \label{pseudodif4}
\end{eqnarray}
and a kernel $K_t(x,z)=FT_\xi\psi_t(x,\xi)$ which satisfies
\begin{eqnarray}
|\partial_{x}^\alpha\partial_{z}^\beta K_t(x,z)|\leq C_{\alpha,\beta}|z|^{-d-|\beta|},\ x\in\mathbb{R}^d,\ z\in\mathbb{R}^d\backslash\left\{0\right\}, \label{pseudodif5}
\end{eqnarray}
where $C_{\alpha,\beta}>0$ is a constant independent of $t$ if $0<t\leq1$.
\end{lem}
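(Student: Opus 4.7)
The plan is to derive an explicit integral representation of $\psi_t$ and then obtain both estimates via integration by parts, with the key point being uniformity in $t\in(0,1]$. Starting from $T(f)(x)=\int\psi(x,\xi)e^{2\pi ix\xi}\widehat{f}(\xi)d\xi$ for $f\in\mathcal{S}$, computing $(\phi_t\ast Tf)(x)$ and interchanging the order of integration, then rescaling $z=tu$, yields
\begin{equation*}
\psi_t(x,\xi)=\int_{\mathbb{R}^d}\phi_t(z)\psi(x-z,\xi)e^{-2\pi iz\xi}dz=\int_{\mathbb{R}^d}\phi(u)\psi(x-tu,\xi)e^{-2\pi itu\xi}du.
\end{equation*}

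For the symbol estimate, applying Leibniz to $\partial_x^\alpha\partial_\xi^\beta\psi_t$ under the rescaled form gives
\begin{equation*}
\partial_x^\alpha\partial_\xi^\beta\psi_t(x,\xi)=\sum_{\gamma+\delta=\beta}c_{\gamma,\delta}(-2\pi it)^{|\delta|}\int\phi(u)(\partial_x^\alpha\partial_\xi^\gamma\psi)(x-tu,\xi)\,u^\delta e^{-2\pi itu\xi}du.
\end{equation*}
The hypothesis $\psi\in\mathcal{S}^0$ bounds the middle factor by $(1+|\xi|)^{-|\gamma|}$, which is insufficient when $|\delta|>0$. To recover the missing $(1+|\xi|)^{-|\delta|}$ decay, I would integrate by parts using the identity $(1-\Delta_u)^N e^{-2\pi itu\xi}=(1+4\pi^2 t^2|\xi|^2)^N e^{-2\pi itu\xi}$ with $2N\geq|\delta|$. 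Each $u$-derivative falling on $\psi(x-tu,\xi)$ produces a harmless factor of $t\leq 1$ together with an $x$-derivative of $\psi$ (which, by $\mathcal{S}^0$, preserves the bound $(1+|\xi|)^{-|\gamma|}$); derivatives on $\phi$ and on $u^\delta$ leave Schwartz-decaying, $u$-integrable factors. Combining with the elementary estimates $(1+4\pi^2 t^2|\xi|^2)^{-N}\leq C(1+t|\xi|)^{-|\delta|}$ and $t/(1+t|\xi|)\leq 1/(1+|\xi|)$ (valid for $0<t\leq 1$) converts the spurious $t^{|\delta|}$ into the required $(1+|\xi|)^{-|\delta|}$, giving altogether the bound $(1+|\xi|)^{-|\beta|}$.

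For the kernel bound, the uniform-in-$t$ symbol estimate just proved reduces matters to a standard Calder\'on--Zygmund-type computation. Writing
\begin{equation*}
\partial_x^\alpha\partial_z^\beta K_t(x,z)=\int_{\mathbb{R}^d}(2\pi i\xi)^\beta(\partial_x^\alpha\psi_t)(x,\xi)e^{2\pi iz\xi}d\xi,
\end{equation*}
I would split the integration region at $|\xi|=|z|^{-1}$, bounding the low-frequency part trivially and integrating by parts $M$ times on the tail against $(1-\Delta_\xi)e^{2\pi iz\xi}=(1+4\pi^2|z|^2)e^{2\pi iz\xi}$, with $2M$ large enough that $|\xi|^{|\beta|-2M}$ is integrable over $|\xi|\geq|z|^{-1}$. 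This yields $|\partial_x^\alpha\partial_z^\beta K_t(x,z)|\leq C|z|^{-d-|\beta|}$, and since the constants depend only on finitely many $\mathcal{S}^0$ seminorms of $\psi_t$, which are controlled uniformly in $t\in(0,1]$ by the previous step, the same is true of the kernel constants.

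The chief technical obstacle is precisely this uniformity in $t$: the $\xi$-differentiation of the oscillatory phase $e^{-2\pi itu\xi}$ generates a potentially singular factor $t^{|\delta|}$, and the integration by parts must produce exactly the matching gain in $t$. The inequality $t/(1+t|\xi|)\leq 1/(1+|\xi|)$ for $0<t\leq 1$ is the arithmetic pivot on which the whole argument turns.
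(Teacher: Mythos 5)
The paper gives no proof of this lemma at all: it is quoted verbatim from Goldberg \cite{DGG}, Lemma 6, so the only thing to compare against is the standard argument, which is what you reconstruct. Your computation of $\psi_t(x,\xi)=\int_{\mathbb{R}^d}\phi(u)\psi(x-tu,\xi)e^{-2\pi i tu\cdot\xi}\,du$ and the symbol estimate are correct and are essentially Goldberg's own proof: integrating by parts in $u$ to gain $(1+t|\xi|)^{-|\delta|}$ and then using $t/(1+t|\xi|)\le 1/(1+|\xi|)$ for $0<t\le 1$ is exactly the right way to convert the factor $t^{|\delta|}$ produced by differentiating the phase into the missing decay $(1+|\xi|)^{-|\delta|}$, uniformly in $t$; note also that each $u$-derivative landing on $\psi(x-tu,\xi)$ only produces $x$-derivatives, which by the $\mathcal{S}^0$ hypothesis do not degrade the $\xi$-decay.

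One correction is needed in the kernel step. On the tail $|\xi|\ge|z|^{-1}$ you must integrate by parts against $\Delta_\xi^{M}e^{2\pi iz\cdot\xi}=(-4\pi^2|z|^2)^{M}e^{2\pi iz\cdot\xi}$, not against $(1-\Delta_\xi)^{M}$: with the latter, the identity term in the expansion of $(1-\Delta_\xi)^{M}$ leaves the integrand of size $(1+|\xi|)^{|\beta|}$ with no gain, while the prefactor $(1+4\pi^2|z|^2)^{-M}$ is of order $1$ for small $|z|$, so the high-frequency integral diverges and the estimate does not close. Using $\Delta_\xi^{M}$ (together with a smooth cutoff at $|\xi|\sim|z|^{-1}$ or a dyadic decomposition of $\psi_t$ in $\xi$, so as to avoid the boundary terms your sharp truncation would otherwise create), every surviving term carries exactly $2M$ $\xi$-derivatives, the integrand is $O\bigl((1+|\xi|)^{|\beta|-2M}\bigr)$, and one obtains $|z|^{-2M}\cdot|z|^{2M-d-|\beta|}=|z|^{-d-|\beta|}$ as required, with constants depending only on finitely many symbol seminorms of $\psi_t$, hence uniform in $t\in(0,1]$ by your first step.
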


Our main result in this section is the following theorem.

\begin{thm} \label{theopseudodif}
Let $T$ be a $\mathcal{S}^0$ pseudo-differential operator. Then, $T$ extends to a bounded operator from $\mathcal{H}_{\mathrm{loc}}^{(q,p)}$ to $\mathcal{H}_{\mathrm{loc}}^{(q,p)}$, for all $0<q\leq1$. 
\end{thm}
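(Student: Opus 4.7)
The plan is to reduce to the atomic decomposition of $\mathcal{H}_{\mathrm{loc}}^{(q,p)}$ and then mimic, for each atom, the pointwise maximal estimate that drives the proof of the reconstruction Theorem \ref{thafondaloc}. By Remark \ref{remqdecoatloc}(3) and Theorem \ref{thafondamfiniloc}, it suffices to bound $\|T(f)\|_{\mathcal{H}_{\mathrm{loc}}^{(q,p)}}$ by $\|f\|_{\mathcal{H}_{\mathrm{loc}}^{(q,p)}}$ for $f$ a finite linear combination $\sum_{n=0}^{m}\lambda_n\mathbf{a}_n$ of $(q,\infty,\delta)$-atoms, where $\delta\geq\left\lfloor d\left(\frac{1}{q}-1\right)\right\rfloor$ is an integer to be fixed later. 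The core step is the uniform pointwise estimate
$$\mathcal{M}_{{\mathrm{loc}}_{_{0}}}(T(\mathbf{a}))(x)\lsim\left[\mathfrak{M}(\chi_{Q})(x)\right]^{\frac{d+\delta+1}{d}}\|\chi_{Q}\|_q^{-1},$$
valid for every $(\mathbf{a},Q)\in\mathcal{A}_{\mathrm{loc}}(q,\infty,\delta)$, with implicit constant independent of $(\mathbf{a},Q)$. Granted this bound, the proof of Theorem \ref{thafondaloc} (which invokes only Proposition \ref{operamaxima}, Proposition \ref{InverseHoldMink3}, and the coefficient estimate in Theorem \ref{fondamentaltheoloc}) transports it verbatim into $\|T(f)\|_{\mathcal{H}_{\mathrm{loc}}^{(q,p)}}\lsim\|f\|_{\mathcal{H}_{\mathrm{loc}}^{(q,p)}}$, after which the convergence of $T(f)=\sum_n\lambda_n T(\mathbf{a}_n)$ in $\mathcal{S}'$ and $\mathcal{H}_{\mathrm{loc}}^{(q,p)}$ follows in the usual way.

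To produce the pointwise estimate, I write, for $\phi\in\mathcal{F}_N$ and $0<t\leq 1$, $\phi_t\ast T(\mathbf{a})(x)=T_t(\mathbf{a})(x)=\int_{\mathbb{R}^d}K_t(x,x-y)\mathbf{a}(y)\,dy$ and exploit the kernel bounds $|\partial_x^\alpha\partial_z^\beta K_t(x,z)|\leq C_{\alpha,\beta}|z|^{-d-|\beta|}$ provided uniformly in $t\in(0,1]$ by Lemma \ref{lempseudodif}. For $x\in\widetilde Q:=4\sqrt d\,Q$, the $L^2$-boundedness of $T$ (hence of every $T_t$) together with the atomic size bound $\|\mathbf{a}\|_\infty\leq|Q|^{-1/q}$ yield $|T_t(\mathbf{a})(x)|\lsim\mathfrak{M}(\mathbf{a})(x)\chi_{\widetilde Q}(x)$, which is dominated by the right-hand side of the displayed estimate. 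For $x\notin\widetilde Q$ the argument splits according to the size of $Q$. When $\ell(Q)<1$, the atom $\mathbf{a}$ has vanishing moments up to order $\delta$; subtracting from $y\mapsto K_t(x,x-y)$ its degree-$\delta$ Taylor polynomial about the center $x_Q$ of $Q$ and applying Lemma \ref{lempseudodif} yields the standard Calder\'on--Zygmund off-diagonal estimate, reproducing the required pointwise bound exactly as in the proof of \cite{AbFt}, Theorem 4.3. When $\ell(Q)\geq 1$ no cancellation is available, but integrating by parts $M$ times in $\xi$ in $K_t(x,z)=\int_{\mathbb{R}^d}\psi_t(x,\xi)e^{2\pi iz\xi}\,d\xi$ and invoking the $\xi$-decay of $\psi_t$ in Lemma \ref{lempseudodif} gives $|K_t(x,z)|\leq C_M|z|^{-M}$ for every integer $M>d$, uniformly in $t\in(0,1]$; choosing $M>d+\delta+1$ and integrating against $|\mathbf{a}|\leq|Q|^{-1/q}\chi_Q$ then yields $|T_t(\mathbf{a})(x)|\lsim|x-x_Q|^{-M}|Q|^{1-1/q}$ for $x\notin\widetilde Q$, which is again controlled by the right-hand side of the displayed estimate.

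The main obstacle will be the large-cube regime $\ell(Q)\geq 1$: since the underlying $\mathcal{S}^0$-kernel has only the marginal $|z|^{-d}$ decay off the diagonal, and the atom $\mathbf{a}$ carries no vanishing moments in this regime, the off-diagonal decay of $T(\mathbf{a})$ must come entirely from the $\varphi_t$-smoothing of $T$. This is precisely what makes Lemma \ref{lempseudodif} indispensable, and precisely why the argument works only in the local Hardy-amalgam space: one exploits only the smoothed operators $T_t$ with $0<t\leq 1$, never the full operator $T$, whose kernel lacks the requisite rapid off-diagonal decay. Once the pointwise bound is secured in both regimes, the remainder of the proof is the reconstruction argument of Theorem \ref{thafondaloc} transported mutatis mutandis, together with the standard convergence arguments in $\mathcal{S}'$ and $\mathcal{H}_{\mathrm{loc}}^{(q,p)}$ for the series representing $T(f)$.
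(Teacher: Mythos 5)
There is a genuine gap in the on-diagonal (local) part of your argument, and it is tied to your choice of $(q,\infty,\delta)$-atoms. For $x\in\widetilde Q$ you claim $|T_t(\mathbf{a})(x)|\lsim\mathfrak{M}(\mathbf{a})(x)\chi_{\widetilde Q}(x)$ from the $L^2$-boundedness of $T$ and the size bound $\left\|\mathbf{a}\right\|_\infty\leq|Q|^{-1/q}$. This does not follow: $L^2$-boundedness gives an integrated bound on $T(\mathbf{a})$, not a pointwise one, and the uniform-in-$t$ kernel estimate $|K_t(x,z)|\lsim|z|^{-d}$ from Lemma \ref{lempseudodif} is not locally integrable near $z=0$, so there is no pointwise domination of $\sup_{0<t\leq1}|T_t(\mathbf{a})(x)|$ by $\mathfrak{M}(\mathbf{a})(x)$. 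What one actually gets is $\mathcal{M}_{{\mathrm{loc}}_{_{0}}}(T(\mathbf{a}))(x)\lsim\mathfrak{M}(T(\mathbf{a}))(x)$, and since an $\mathcal{S}^0$ operator is \emph{not} bounded on $L^\infty$ (it maps $L^\infty$ only into $\mathrm{BMO}$), the quantity $\mathfrak{M}(T(\mathbf{a}))\chi_{\widetilde Q}$ cannot be controlled pointwise by $\left\|\chi_{_{Q}}\right\|_q^{-1}$ for an $\infty$-atom. Consequently your claimed global pointwise estimate $\mathcal{M}_{{\mathrm{loc}}_{_{0}}}(T(\mathbf{a}))(x)\lsim\left[\mathfrak{M}(\chi_{_{Q}})(x)\right]^{\frac{d+\delta+1}{d}}\left\|\chi_{_{Q}}\right\|_q^{-1}$ fails on $\widetilde Q$, and the ``verbatim transport'' of the proof of Theorem \ref{thafondaloc} does not go through.

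The repair is the route the paper takes: fix a \emph{finite} $r>\max\left\{2,p\right\}$ and work with $(q,r,\delta)$-atoms (recall $\mathcal{A}_{\mathrm{loc}}(q,\infty,\delta)\subset\mathcal{A}_{\mathrm{loc}}(q,r,\delta)$). Split $\left\|\mathcal{M}_{{\mathrm{loc}}_{_{0}}}(T(f))\right\|_{q,p}\lsim I+J$; handle the on-diagonal term $I$ in integrated form, using $\mathcal{M}_{{\mathrm{loc}}_{_{0}}}(T(\mathbf{a}_n))\lsim\mathfrak{M}(T(\mathbf{a}_n))$, the $L^r$-boundedness of $\mathfrak{M}$ and of $T$ to get $\left\|\mathfrak{M}(T(\mathbf{a}_n))\chi_{_{\widetilde{Q^n}}}\right\|_r\lsim|Q^n|^{\frac1r-\frac1q}$, and then Proposition \ref{propofonda} (this is the argument of Theorem \ref{thafondamloc}, with $0<\eta<q$, not Theorem \ref{thafondaloc}). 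Your off-diagonal analysis for $J$ --- moment cancellation against the Taylor polynomial of $K_t(x,x-\cdot)$ when $\ell(Q)<1$, and the rapid decay $|K_t(x,z)|\leq C_M|z|^{-M}$ for $|z|\geq1$ when $\ell(Q)\geq1$ --- is exactly the paper's and is correct. Note also that the concluding appeal to the finite atomic norm equivalence (Theorem \ref{thafondamfiniloc}) requires $r<+\infty$ to apply on all of $\mathcal{H}_{\mathrm{loc},fin}^{(q,p)}$; for $r=+\infty$ it is only available on continuous finite combinations, which is a second reason the $\infty$-atom route is problematic here.
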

\begin{proof}
Let $\delta\geq\left\lfloor d\left(\frac{1}{q}-1\right)\right\rfloor$ be an integer and $r>\max\left\{2;p\right\}$ be a real. Consider the set $\mathcal{A}_{\mathrm{loc}}(q,r,\delta)$. Let $f\in\mathcal{H}_{\mathrm{loc},fin}^{(q,p)}$. Then, there exist a finite sequence $\left\{(\textbf{a}_n, Q^n)\right\}_{n=0}^j$ of elements of $\mathcal{A}_{\mathrm{loc}}(q,r,\delta)$ and  a finite sequence of scalars $\left\{{\lambda}_n\right\}_{n=0}^j$ such that $f=\sum_{n=0}^j\lambda_n\textbf{a}_n$. Set $\widetilde{Q^n}:=4\sqrt{d}Q^n$, $n\in\left\{0,1,\ldots,j\right\}$, and denote by $x_n$ and $\ell_n$ respectively the center and side-length of $Q^n$. We have 
\begin{eqnarray*}
\mathcal{M}_{{\mathrm{loc}}_{_{0}}}(T(f))(x)\leq\sum_{n=0}^j|\lambda_n|\left[\mathcal{M}_{{\mathrm{loc}}_{_{0}}}(T(\textbf{a}_n))(x)\chi_{_{\widetilde{Q^n}}}(x)+\mathcal{M}_{{\mathrm{loc}}_{_{0}}}(T(\textbf{a}_n))(x)\chi_{_{\mathbb{R}^d\backslash{\widetilde{Q^n}}}}(x)\right],
\end{eqnarray*}
for all $x\in\mathbb{R}^d$. Hence
\begin{eqnarray*}
\left\|\mathcal{M}_{{\mathrm{loc}}_{_{0}}}(T(f))\right\|_{q,p}\lsim I+J, 
\end{eqnarray*}
with $$I=\left\|\sum_{n=0}^j|\lambda_n|\mathcal{M}_{{\mathrm{loc}}_{_{0}}}(T(\textbf{a}_n))\chi_{_{\widetilde{Q^n}}}\right\|_{q,p}\ \text{ and }\ J=\left\|\sum_{n=0}^j|\lambda_n|\mathcal{M}_{{\mathrm{loc}}_{_{0}}}(T(\textbf{a}_n))\chi_{_{\mathbb{R}^d\backslash{\widetilde{Q^n}}}}\right\|_{q,p}.$$ Fix $0<\eta<q$. Arguing as in the proof of \cite{AbFt2}, Theorem 4.20, we obtain 
\begin{eqnarray*}
I\leq\left\|\sum_{n=0}^j|\lambda_n|^{\eta}\left(\mathcal{M}_{{\mathrm{loc}}_{_{0}}}(T(\textbf{a}_n))\chi_{_{\widetilde{Q^n}}}\right)^{\eta}\right\|_{\frac{q}{\eta},\frac{p}{\eta}}^{\frac{1}{\eta}}\lsim \left\|\sum_{n=0}^j\left(\frac{|\lambda_n|}{\left\|\chi_{_{Q^n}}\right\|_{q}}\right)^{\eta}\chi_{_{Q^n}}\right\|_{\frac{q}{\eta},\frac{p}{\eta}}^{\frac{1}{\eta}}.
\end{eqnarray*}
To estimate $J$, it suffices to prove that 
\begin{eqnarray}
\mathcal{M}_{{\mathrm{loc}}_{_{0}}}(T(\textbf{a}_n))(x)\lsim\frac{\left[\mathfrak{M}(\chi_{_{Q^n}})(x)\right]^{\vartheta}}{\left\|\chi_{_{Q^n}}\right\|_q}\ , \label{applicattheo17loc}
\end{eqnarray}
for all $x\notin\widetilde{Q^n}$, where $\vartheta=\frac{d+\delta+1}{d}\cdot$ To show (\ref{applicattheo17loc}), we distinguish two cases: Case $\ell_n<1$ and Case $\ell_n\geq1$.

Case: $\ell_n<1$. Then, the atom $\textbf{a}_n$ satisfies the vanishing condition. Consider $x\notin\widetilde{Q^n}$. Let $0<t\leq1$. With Lemma \ref{lempseudodif} (\ref{pseudodif5}), by arguing as in the proof of \cite{AbFt2}, Theorem 4.20, we obtain   
\begin{eqnarray*}
|(T(\textbf{a}_n)\ast\varphi_t)(x)|=\left|\int_{Q^n}K_t(x,x-z)\textbf{a}_n(z)dz\right|\lsim \frac{\left[\mathfrak{M}(\chi_{_{Q^n}})(x)\right]^{\vartheta}}{\left\|\chi_{_{Q^n}}\right\|_q}\ ,
\end{eqnarray*}
with $\vartheta=\frac{d+\delta+1}{d}\cdot$ Hence $$\mathcal{M}_{{\mathrm{loc}}_{_{0}}}(T(\textbf{a}_n))(x)\lsim\frac{\left[\mathfrak{M}(\chi_{_{Q^n}})(x)\right]^{\vartheta}}{\left\|\chi_{_{Q^n}}\right\|_q}\ ,$$ with $\vartheta=\frac{d+\delta+1}{d}$ , when $\ell_n<1$.

Case: $\ell_n\geq1$. Consider $x\notin\widetilde{Q^n}$. Let $0<t\leq1$. We have 
\begin{eqnarray*}
|(T(\textbf{a}_n)\ast\varphi_t)(x)|\leq\int_{Q^n}|K_t(x,x-z)||\textbf{a}_n(z)|dz.
\end{eqnarray*}
Also, for all $z\in Q^n$, we have 
\begin{eqnarray}
1<|x-z|\approx|x-x_n|. \label{pseudodif7}
\end{eqnarray} 
Combining (\ref{pseudodif7}) with Lemma \ref{lempseudodif} (\ref{pseudodif4}), we have, by \cite{MA}, (9), p. 235 (also see \cite{MET}, (0.5.5), p. 16), for all integer $M>0$, the existence of a constant $C_M>0$ independent of $t$, such that  
\begin{eqnarray}
|K_t(x,x-z)|\leq C_M|x-z|^{-M} \leq C(M)|x-x_n|^{-M}. \label{pseudodif8} 
\end{eqnarray}
Taking $M=d+\delta+1$, it follows from (\ref{pseudodif8}) that 
\begin{eqnarray*}
|(T(\textbf{a}_n)\ast\varphi_t)(x)|&\leq&C(M)|x-x_n|^{-M}\int_{Q^n}|\textbf{a}_n(z)|dz\\
&\lsim&\frac{1}{\left\|\chi_{_{Q^n}}\right\|_q}\frac{\ell_n^d}{|x-x_n|^{d+\delta+1}}\\
&\lsim&\frac{1}{\left\|\chi_{_{Q^n}}\right\|_q}\frac{\ell_n^{d+\delta+1}}{|x-x_n|^{d+\delta+1}}\lsim\frac{\left[\mathfrak{M}(\chi_{_{Q^n}})(x)\right]^{\vartheta}}{\left\|\chi_{_{Q^n}}\right\|_q}\ , 
\end{eqnarray*}
with $\vartheta=\frac{d+\delta+1}{d}$, since $\ell_n\geq1$ implies that $\ell_n^d\leq\ell_n^{d+\delta+1}$. Hence
$$\mathcal{M}_{{\mathrm{loc}}_{_{0}}}(T(\textbf{a}_n))(x)\lsim\frac{\left[\mathfrak{M}(\chi_{_{Q^n}})(x)\right]^{\vartheta}}{\left\|\chi_{_{Q^n}}\right\|_q}\ ,$$ with $\vartheta=\frac{d+\delta+1}{d}$ , when $\ell_n\geq1$.

Combining these two cases, we obtain (\ref{applicattheo17loc}).\\ With (\ref{applicattheo17loc}), we obtain 
\begin{eqnarray*}
J\lsim \left\|\sum_{n=0}^j\left(\frac{|\lambda_n|}{\left\|\chi_{_{Q^n}}\right\|_{q}}\right)^{\eta}\chi_{_{Q^{n}}}\right\|_{\frac{q}{\eta},\frac{p}{\eta}}^{\frac{1}{\eta}}.
\end{eqnarray*}
Finally, 
\begin{eqnarray*}
\left\|\mathcal{M}_{{\mathrm{loc}}_{_{0}}}(T(f))\right\|_{q,p}\lsim\left\|\sum_{n=0}^j\left(\frac{|\lambda_n|}{\left\|\chi_{_{Q^n}}\right\|_{q}}\right)^{\eta}\chi_{_{Q^n}}\right\|_{\frac{q}{\eta},\frac{p}{\eta}}^{\frac{1}{\eta}}.
\end{eqnarray*}
Thus, 
\begin{eqnarray*}
\left\|T(f)\right\|_{\mathcal{H}_{\mathrm{loc}}^{(q,p)}}=\left\|\mathcal{M}_{{\mathrm{loc}}_{_{0}}}(T(f))\right\|_{q,p}\lsim\left\|f\right\|_{\mathcal{H}_{\mathrm{loc},fin}^{(q,p)}}\lsim\left\|f\right\|_{\mathcal{H}_{\mathrm{loc}}^{(q,p)}},
\end{eqnarray*}
by Theorem \ref{thafondamfiniloc}. Therefore, $T$ is bounded from $\mathcal{H}_{\mathrm{loc},fin}^{(q,p)}$ to $\mathcal{H}_{\mathrm{loc}}^{(q,p)}$ and the density of $\mathcal{H}_{\mathrm{loc},fin}^{(q,p)}$ in $\mathcal{H}_{\mathrm{loc}}^{(q,p)}$ with respect to the quasi-norm $\left\|\cdot\right\|_{\mathcal{H}_{\mathrm{loc}}^{(q,p)}}$ yields the result. 
\end{proof}

\begin{cor} \label{theopseudodifbis}
Let $T$ be a $\mathcal{S}^0$ pseudo-differential operator. Then, $T$ extends to a bounded operator from $\mathcal{H}_{\mathrm{loc}}^{(q,p)}$ to $\mathcal{H}_{\mathrm{loc}}^{(q_1,p_1)}$, if $q_1\leq q$ and $p\leq p_1$, for all $0<q\leq1$. 
\end{cor}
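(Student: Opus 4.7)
The plan is to reduce Corollary~\ref{theopseudodifbis} to Theorem~\ref{theopseudodif} by combining the already-known boundedness $T:\mathcal{H}_{\mathrm{loc}}^{(q,p)}\to\mathcal{H}_{\mathrm{loc}}^{(q,p)}$ with a continuous embedding $\mathcal{H}_{\mathrm{loc}}^{(q,p)}\hookrightarrow\mathcal{H}_{\mathrm{loc}}^{(q_1,p_1)}$ valid under the hypotheses $q_1\leq q$ and $p\leq p_1$. The hypotheses of Theorem~\ref{theopseudodif} are in force at level $(q,p)$ by assumption; moreover, $0<q_1\leq 1$ follows from $0<q_1\leq q\leq 1$ and $q_1\leq p_1<+\infty$ follows from $q_1\leq q\leq p\leq p_1$, so the target space $\mathcal{H}_{\mathrm{loc}}^{(q_1,p_1)}$ also fits within the standing framework of the paper.

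To establish the embedding I would apply Proposition~\ref{ratotute} to the local maximal function $\mathcal{M}_{\mathrm{loc}_\varphi}(f)$, which by definition governs $\|\cdot\|_{\mathcal{H}_{\mathrm{loc}}^{(q,p)}}$. Part~(2) of Proposition~\ref{ratotute}, together with $p\leq p_1$, gives $\|\mathcal{M}_{\mathrm{loc}_\varphi}(f)\|_{q_1,p_1}\leq\|\mathcal{M}_{\mathrm{loc}_\varphi}(f)\|_{q_1,p}$; while part~(3) of the same proposition, read with the roles of $q$ and $q_1$ interchanged (legitimate since $q_1\leq q$), yields $\|\mathcal{M}_{\mathrm{loc}_\varphi}(f)\|_{q_1,p}\leq\|\mathcal{M}_{\mathrm{loc}_\varphi}(f)\|_{q,p}$. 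Chaining these two inequalities produces
\begin{eqnarray*}
\|f\|_{\mathcal{H}_{\mathrm{loc}}^{(q_1,p_1)}}=\|\mathcal{M}_{\mathrm{loc}_\varphi}(f)\|_{q_1,p_1}\leq\|\mathcal{M}_{\mathrm{loc}_\varphi}(f)\|_{q,p}=\|f\|_{\mathcal{H}_{\mathrm{loc}}^{(q,p)}},
\end{eqnarray*}
proving the embedding $\mathcal{H}_{\mathrm{loc}}^{(q,p)}\hookrightarrow\mathcal{H}_{\mathrm{loc}}^{(q_1,p_1)}$.

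Composing this embedding with Theorem~\ref{theopseudodif} applied at level $(q,p)$ then gives, for every $f\in\mathcal{H}_{\mathrm{loc}}^{(q,p)}$,
\begin{eqnarray*}
\|T(f)\|_{\mathcal{H}_{\mathrm{loc}}^{(q_1,p_1)}}\lsim\|T(f)\|_{\mathcal{H}_{\mathrm{loc}}^{(q,p)}}\lsim\|f\|_{\mathcal{H}_{\mathrm{loc}}^{(q,p)}},
\end{eqnarray*}
which is exactly the boundedness asserted in the corollary. Since the argument is a two-step composition invoking only Proposition~\ref{ratotute} and the previously established Theorem~\ref{theopseudodif}, I do not anticipate any real obstacle; the only piece of bookkeeping is the symmetric reading of Proposition~\ref{ratotute}(3) used to obtain the $q_1$-versus-$q$ comparison in the embedding step.
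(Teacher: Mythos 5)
Your proposal is correct and follows exactly the route the paper intends: the paper's own proof consists of the single line that the corollary "follows from Proposition \ref{ratotute} and Theorem \ref{theopseudodif}," and your chaining of parts (2) and (3) of Proposition \ref{ratotute} applied to $\mathcal{M}_{\mathrm{loc}_\varphi}(T(f))$ to obtain the embedding $\mathcal{H}_{\mathrm{loc}}^{(q,p)}\hookrightarrow\mathcal{H}_{\mathrm{loc}}^{(q_1,p_1)}$, followed by composition with Theorem \ref{theopseudodif}, is precisely the omitted detail.
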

\begin{proof}
Corollary \ref{theopseudodifbis} follows from Proposition \ref{ratotute} and Theorem \ref{theopseudodif}. 
\end{proof}

As consequence of the proof of Theorem \ref{theopseudodif}, every convolution operator $T$ as in \cite{AbFt2}, Theorem 4.20, which satisfies in addition a certain condition, can be extended to a bounded operator from $\mathcal{H}_{\mathrm{loc}}^{(q,p)}$ to $\mathcal{H}_{\mathrm{loc}}^{(q,p)}$. More precisely, we have the following result.

\begin{prop}\label{propopseudodif} 
Let $T$ be a convolution operator, $T(f)=K\ast f,\  f\in\mathcal{S}$, where the distribution $K$ satisfies the following conditions:
\begin{enumerate}
\item $|\widehat{K}(x)|\leq A$, \label{pseudodif9} 
\item $|\partial^\beta{K}(x)|\leq B|x|^{-d-|\beta|}$, for all $|\beta|\leq\left\lfloor d\left(\frac{1}{q}-1\right)\right\rfloor+1$; \label{pseudodif10} 
\item $|K(x)|\leq C_K|x|^{-\frac{d}{q}-\gamma}$, with $\gamma\geq1$, \label{pseudodif11} 
\end{enumerate}
where $A>0$, $B>0$ and $C_K>0$ are constants independent of $x$ and $\beta$. Then, $T$ extends to a bounded operator from $\mathcal{H}_{\mathrm{loc}}^{(q,p)}$ to $\mathcal{H}_{\mathrm{loc}}^{(q,p)}$.
\end{prop}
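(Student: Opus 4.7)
The plan is to mirror the proof of Theorem \ref{theopseudodif} line by line, the only novelty being that hypothesis (\ref{pseudodif11}) replaces the pseudo-differential kernel estimate (\ref{pseudodif5}) in the large-cube case. First I would fix $\delta = \lfloor d(1/q - 1) \rfloor$ and a real $\max\{p,1\} < r < +\infty$, and work with atoms in $\mathcal{A}_{\mathrm{loc}}(q,r,\delta)$. Thanks to Theorem \ref{thafondamfiniloc}(\ref{thafondamfini01loc}) and the density of $\mathcal{H}_{\mathrm{loc},fin}^{(q,p)}$ in $\mathcal{H}_{\mathrm{loc}}^{(q,p)}$, the problem reduces to proving the a priori bound $\|T(f)\|_{\mathcal{H}_{\mathrm{loc}}^{(q,p)}} \lesssim \|f\|_{\mathcal{H}_{\mathrm{loc},fin}^{(q,p)}}$ for every finite atomic sum $f = \sum_{n=0}^{j} \lambda_n \mathbf{a}_n$.

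Following the splitting of Theorem \ref{theopseudodif}, I would write $\|\mathcal{M}_{\mathrm{loc}_{0}}(T(f))\|_{q,p} \lesssim I + J$, where $I$ collects the contributions of $\mathcal{M}_{\mathrm{loc}_{0}}(T(\mathbf{a}_n))\chi_{\widetilde{Q^n}}$ and $J$ those of $\mathcal{M}_{\mathrm{loc}_{0}}(T(\mathbf{a}_n))\chi_{\mathbb{R}^d \setminus \widetilde{Q^n}}$, with $\widetilde{Q^n} = 4\sqrt{d}\, Q^n$. The near part $I$ is handled exactly as in \cite{AbFt2}, Theorem 4.20, since conditions (\ref{pseudodif9})--(\ref{pseudodif10}) make $T$ a standard Calder\'on--Zygmund operator (condition (\ref{pseudodif9}) yields $L^2$-boundedness via Plancherel, condition (\ref{pseudodif10}) with $|\beta|=1$ gives the H\"ormander kernel estimate) so $T$ is bounded on $L^r$. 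The far part $J$ splits further according to whether $\ell_n := \ell(Q^n) < 1$ or $\ell_n \geq 1$. The small-cube subcase is identical to \cite{AbFt2}, Theorem 4.20: the vanishing moments of $\mathbf{a}_n$ up to order $\delta$ combine with (\ref{pseudodif10}) to give the pointwise estimate $\mathcal{M}_{\mathrm{loc}_{0}}(T(\mathbf{a}_n))(x) \lesssim [\mathfrak{M}(\chi_{Q^n})(x)]^{\vartheta} / \|\chi_{Q^n}\|_q$ with $\vartheta = (d+\delta+1)/d$ for $x \notin \widetilde{Q^n}$.

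The main obstacle, and the only new step, is the far-part estimate in the subcase $\ell_n \geq 1$, where $\mathbf{a}_n$ carries no moment condition and the Calder\'on--Zygmund cancellation argument collapses. This is precisely where hypothesis (\ref{pseudodif11}) enters. I would write $(T(\mathbf{a}_n) \ast \varphi_t)(x) = \iint \varphi_t(u)\, K(x-u-z)\, \mathbf{a}_n(z)\, dz\, du$, observe that $x \notin \widetilde{Q^n}$ combined with $|u| \leq t \leq 1$ and $\ell_n \geq 1$ forces $|x - u - z| \approx |x - x_n| \geq 2\sqrt{d}\, \ell_n \geq 2\sqrt{d} > 1$, and apply (\ref{pseudodif11}) together with the H\"older bound $\|\mathbf{a}_n\|_1 \leq \ell_n^{d - d/q}$ to obtain $\mathcal{M}_{\mathrm{loc}_{0}}(T(\mathbf{a}_n))(x) \lesssim |x - x_n|^{-d/q - \gamma}\, \ell_n^{d - d/q}$. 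Since $\mathfrak{M}(\chi_{Q^n})(x) \approx \ell_n^d/|x-x_n|^d$ for $x \notin \widetilde{Q^n}$, the desired majorization $\mathcal{M}_{\mathrm{loc}_{0}}(T(\mathbf{a}_n))(x) \lesssim [\mathfrak{M}(\chi_{Q^n})(x)]^{\vartheta}/\|\chi_{Q^n}\|_q$ reduces to the numerical inequality $|x-x_n|^{d\vartheta - d/q - \gamma} \lesssim \ell_n^{d\vartheta - d}$. The choice $\delta = \lfloor d(1/q-1)\rfloor$ ensures $d\vartheta - d/q = \delta + 1 + d - d/q \leq 1$, so the hypothesis $\gamma \geq 1$ renders the left exponent non-positive; together with $|x - x_n| > 1$, $\ell_n \geq 1$ and $d\vartheta - d = \delta + 1 \geq 0$ this is immediate.

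Combining both subcases of $J$ with the bound for $I$ yields, for any fixed $0 < \eta < q$, the estimate $\|\mathcal{M}_{\mathrm{loc}_{0}}(T(f))\|_{q,p} \lesssim \|\sum_{n=0}^{j}(|\lambda_n|/\|\chi_{Q^n}\|_q)^{\eta} \chi_{Q^n}\|_{q/\eta,p/\eta}^{1/\eta}$, which is $\approx \|f\|_{\mathcal{H}_{\mathrm{loc}}^{(q,p)}}$ by Theorem \ref{thafondamfiniloc}. A standard density argument then extends $T$ to a bounded operator on all of $\mathcal{H}_{\mathrm{loc}}^{(q,p)}$.
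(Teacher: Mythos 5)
Your proposal is correct and follows essentially the same route as the paper's proof: reduce to finite atomic sums via Theorem \ref{thafondamfiniloc}, reuse the argument of \cite{AbFt2}, Theorem 4.20 for the near part and the small-cube far part, and invoke hypothesis (\ref{pseudodif11}) only in the far part for cubes with $\ell_n\geq 1$, where the size bound $\|\mathbf{a}_n\|_1\leq \ell_n^{d-d/q}$ together with $|x-x_n|>1$, $\ell_n\geq 1$ and $d+\delta+1\leq d/q+\gamma$ yields the majorization by $[\mathfrak{M}(\chi_{Q^n})]^{\vartheta}/\|\chi_{Q^n}\|_q$. Your reformulation of that last step as the single exponent inequality $|x-x_n|^{d\vartheta-d/q-\gamma}\lsim \ell_n^{d\vartheta-d}$ is just a tidier presentation of the same chain of inequalities the paper writes out.
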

\begin{proof}
Conditions (\ref{pseudodif9}) and (\ref{pseudodif10}) imply that $T$ is bounded on $L^r$, for any $1<r<+\infty$, by  \cite{AbFt2}, Remark 4.14 and \cite{JD}, Theorem 5.1. Set $\delta=\left\lfloor d\left(\frac{1}{q}-1\right)\right\rfloor\cdot$ Let $r>\max\left\{2;p\right\}$ be a real. Consider the set $\mathcal{A}_{\mathrm{loc}}(q,r,\delta)$. Let $f\in\mathcal{H}_{\mathrm{loc},fin}^{(q,p)}$. Then, there exist a finite sequence $\left\{(\textbf{a}_n, Q^n)\right\}_{n=0}^j$ in $\mathcal{A}_{\mathrm{loc}}(q,r,\delta)$ and a finite sequence of scalars $\left\{{\lambda}_n\right\}_{n=0}^j$ such that $f=\sum_{n=0}^j\lambda_n\textbf{a}_n$. Set $\widetilde{Q^n}:=4\sqrt{d}Q^n$, $n\in\left\{0,1,\ldots,j\right\}$, and denote by $x_n$ and $\ell_n$ respectively the center and side-length of $Q^n$. We have
\begin{eqnarray*}
\left\|\mathcal{M}_{{\mathrm{loc}}_{_{0}}}(T(f))\right\|_{q,p} \lsim I+J,
\end{eqnarray*}
with $$I=\left\|\sum_{n=0}^j|\lambda_n|\mathcal{M}_{{\mathrm{loc}}_{_{0}}}(T(\textbf{a}_n))\chi_{_{\widetilde{Q^n}}}\right\|_{q,p}\ \text{and }\ J=\left\|\sum_{n=0}^j|\lambda_n|\mathcal{M}_{{\mathrm{loc}}_{_{0}}}(T(\textbf{a}_n))\chi_{_{\mathbb{R}^d\backslash{\widetilde{Q^n}}}}\right\|_{q,p}.$$ Fix $0<\eta<q$. As in the proof of Theorem \ref{theopseudodif}, we have 
\begin{eqnarray*}
I\lsim \left\|\sum_{n=0}^j\left(\frac{|\lambda_n|}{\left\|\chi_{_{Q^n}}\right\|_{q}}\right)^{\eta}\chi_{_{Q^n}}\right\|_{\frac{q}{\eta},\frac{p}{\eta}}^{\frac{1}{\eta}}.
\end{eqnarray*}
To estimate $J$, it suffices to show that 
\begin{eqnarray}
\mathcal{M}_{{\mathrm{loc}}_{_{0}}}(T(\textbf{a}_n))(x)\lsim\frac{\left[\mathfrak{M}(\chi_{_{Q^n}})(x)\right]^{\vartheta}}{\left\|\chi_{_{Q^n}}\right\|_q}\ , \label{applicattheo17loc1}
\end{eqnarray}
for all $x\notin\widetilde{Q^n}$, where $\vartheta=\frac{d+\delta+1}{d}\cdot$ To prove (\ref{applicattheo17loc1}), we distinguish two cases, as in the proof of Theorem \ref{theopseudodif}.

Case: $\ell_n<1$. Consider $x\notin\widetilde{Q^n}$. For all $0<t\leq1$, we put $$K^{(t)}:=K\ast\varphi_t.$$ We have 
\begin{eqnarray}
\sup_{0<t\leq1}|\widehat{K^{(t)}}(z)|\leq\left\|\widehat{\varphi}\right\|_{\infty}A\ , \label{applicattheo18loc}
\end{eqnarray}
for all $z\in\mathbb{R}^d$, and  
\begin{eqnarray}
\sup_{0<t\leq1}|\partial^{\beta}_z K^{(t)}(z)|\leq \frac{C_{\varphi,A,B,\delta,d}}{|z|^{d+|\beta|}}\ , \label{applicattheo19loc}
\end{eqnarray}
for all $z\neq0$ and all multi-indices $\beta$ with $|\beta|\leq\delta+1$. The proof of (\ref{applicattheo18loc}) and (\ref{applicattheo19loc}) is similar to the one of (4.41) and (4.42) of the proof of \cite{AbFt2}, Theorem 4.20, we omit details. With (\ref{applicattheo19loc}), we obtain $$\mathcal{M}_{{\mathrm{loc}}_{_{0}}}(T(\textbf{a}_n))(x)\lsim\frac{\left[\mathfrak{M}(\chi_{_{Q^n}})(x)\right]^{\vartheta}}{\left\|\chi_{_{Q^n}}\right\|_q}\ ,$$ where $\vartheta=\frac{d+\delta+1}{d}\cdot$

Case: $\ell_n\geq1$. Consider $x\notin\widetilde{Q^n}$. For all $0<t\leq1$, we have  
\begin{eqnarray*}
|(T(\textbf{a}_n)\ast\varphi_t)(x)|\leq\int_{Q^n}|K^{(t)}(x-z)||\textbf{a}_n(z)|dz.
\end{eqnarray*}
Furthermore, for all $z\in Q^n$, we have
\begin{eqnarray}
\frac{3}{2}<|x-z|\approx|x-x_n| \label{pseudodif12}
\end{eqnarray}
and 
\begin{eqnarray}
|K^{(t)}(x-z)|\leq C(\varphi,d,q,\gamma,C_K)|x-z|^{-\frac{d}{q}-\gamma}, \label{pseudodif13}
\end{eqnarray}
where $C(\varphi,d,q,\gamma,C_K)>0$ is a constant independent of $t$, $x$ and $z$. We prove a general version of (\ref{pseudodif13}), namely,
\begin{eqnarray}
|K^{(t)}(z)|\leq C(\varphi,d,q,\gamma,C_K)|z|^{-\frac{d}{q}-\gamma}, \label{pseudodif14}
\end{eqnarray}
for all $|z|\geq\frac{3}{2}\cdot$ For the proof of (\ref{pseudodif14}), consider $z\in\mathbb{R}^d$ such that $|z|\geq\frac{3}{2}\cdot$ Then, 
\begin{eqnarray*}
K^{(t)}(z)=\int_{\mathbb{R}^d}K(z-y)\varphi_t(y)dy=\int_{|y|<t}K(z-y)\varphi_t(y)dy,
\end{eqnarray*}
since $\text{supp}(\varphi)\subset B(0,1)$ and $|z-y|\geq|z|-|y|>\frac{3}{2}-t>0$, for all $|y|<t$. Hence 
\begin{eqnarray*}
|K^{(t)}(z)|\leq\int_{|y|<t}\frac{C_K}{|z-y|^{\frac{d}{q}+\gamma}}|\varphi_t(y)|dy,
\end{eqnarray*}
by (\ref{pseudodif11}). But, for all $|y|<t$, we have 
\begin{eqnarray*}
|z-y|\geq|z|-|y|>|z|-\frac{2}{3}|z|=\frac{1}{3}|z|,
\end{eqnarray*}
since $|z|\geq\frac{3}{2}\geq\frac{3}{2}t>\frac{3}{2}|y|$. Therefore, 
\begin{eqnarray*}
|K^{(t)}(z)|&\leq&3^{\frac{d}{q}+\gamma}\frac{C_K}{|z|^{\frac{d}{q}+\gamma}}\int_{|y|<t}|\varphi_t(y)|dy\\
&\leq&C(\varphi,d,q,\gamma,C_K)|z|^{-\frac{d}{q}-\gamma},
\end{eqnarray*}
with $C(\varphi,d,q,\gamma,C_K):=3^{\frac{d}{q}+\gamma}\left\|\varphi\right\|_{1}C_K>0$. This establishes  (\ref{pseudodif14}). Then, (\ref{pseudodif13}) immediately follows from (\ref{pseudodif14}), by (\ref{pseudodif12}).\\
With (\ref{pseudodif12}) and (\ref{pseudodif13}), by proceeding as in the proof of Theorem \ref{theopseudodif}, we obtain   
\begin{eqnarray*}
|(T(\textbf{a}_n)\ast\varphi_t)(x)|&\lsim &\frac{1}{\left\|\chi_{_{Q^n}}\right\|_q}\frac{\ell_n^d}{|x-x_n|^{\frac{d}{q}+\gamma}}\lsim \frac{1}{\left\|\chi_{_{Q^n}}\right\|_q}\frac{\ell_n^{d+\delta+1}}{|x-x_n|^{\frac{d}{q}+\gamma}}\\
&\lsim &\frac{1}{\left\|\chi_{_{Q^n}}\right\|_q}\frac{\ell_n^{d+\delta+1}}{|x-x_n|^{d+\delta+1}}\lsim\frac{\left[\mathfrak{M}(\chi_{_{Q^n}})(x)\right]^{\vartheta}}{\left\|\chi_{_{Q^n}}\right\|_q}\ ,
\end{eqnarray*}
with $\vartheta=\frac{d+\delta+1}{d}$ , since $d+\delta+1\leq\frac{d}{q}+\gamma$ and $|x-x_n|>1$. Hence $$\mathcal{M}_{{\mathrm{loc}}_{_{0}}}(T(\textbf{a}_n))(x)\lsim\frac{\left[\mathfrak{M}(\chi_{_{Q^n}})(x)\right]^{\vartheta}}{\left\|\chi_{_{Q^n}}\right\|_q}\ ,$$ with $\vartheta=\frac{d+\delta+1}{d}\cdot$ 

Combining these two cases, we obtain (\ref{applicattheo17loc1}).\\ With (\ref{applicattheo17loc1}), we end as in the proof of Theorem  \ref{theopseudodif}.
\end{proof}

In Proposition \ref{propopseudodif}, the hypothesis on $\gamma$ can be weakened by taking, in (\ref{pseudodif11}), 
\begin{eqnarray}
\gamma>d\left(\frac{1}{q}-1\right)-\left\lfloor d\left(\frac{1}{q}-1\right)\right\rfloor\cdot \label{pseudodif16}
\end{eqnarray}
In fact, when $\gamma\geq1$, we clearly have (\ref{pseudodif16}), since $$0\leq d\left(\frac{1}{q}-1\right)-\left\lfloor d\left(\frac{1}{q}-1\right)\right\rfloor<1.$$ We are going to prove Proposition \ref{propopseudodif} under Condition (\ref{pseudodif16}). The proof is similar to the previous, we only present points which change.

\begin{proof}
With the same elements of the previous proof, only the estimation of $J$ presents some modifications. Endeed, in Case : $\ell_n\geq1$, we have $$\mathcal{M}_{{\mathrm{loc}}_{_{0}}}(T(\textbf{a}_n))(x)\leq C(\varphi,d,q,\gamma,C_K)\frac{\left[\mathfrak{M}(\chi_{_{Q^n}})(x)\right]^{\vartheta}}{\left\|\chi_{_{Q^n}}\right\|_q}\ ,$$ with $\vartheta=\frac{d+\delta+\gamma}{d}\cdot$ Thus, the expression of $\vartheta$ is no longer the same in the two cases (Case: $\ell_n<1$ and Case: $\ell_n\geq1$). And hence, we have no longer (\ref{applicattheo17loc1}), for all $x\notin\widetilde{Q^n}$, $n\in\left\{0,1,\ldots,j\right\}$. To overcome the problem, we write 
\begin{eqnarray*}
\sum_{n=0}^j|\lambda_n|\mathcal{M}_{{\mathrm{loc}}_{_{0}}}(T(\textbf{a}_n))\chi_{_{\mathbb{R}^d\backslash{\widetilde{Q^n}}}}&=&\sum_{n:\ \ell_n<1}|\lambda_n|\mathcal{M}_{{\mathrm{loc}}_{_{0}}}(T(\textbf{a}_n))\chi_{_{\mathbb{R}^d\backslash{\widetilde{Q^n}}}}\\
&+&\sum_{n:\ \ell_n\geq1}|\lambda_n|\mathcal{M}_{{\mathrm{loc}}_{_{0}}}(T(\textbf{a}_n))\chi_{_{\mathbb{R}^d\backslash{\widetilde{Q^n}}}}.
\end{eqnarray*}
Thus, $J\lsim J_1+J_2$, with $J_1=\left\|\sum_{n:\ \ell_n<1}|\lambda_n|\mathcal{M}_{{\mathrm{loc}}_{_{0}}}(T(\textbf{a}_n))\chi_{_{\mathbb{R}^d\backslash{\widetilde{Q^n}}}}\right\|_{q,p}$ and $J_2=\left\|\sum_{n:\ \ell_n\geq1}|\lambda_n|\mathcal{M}_{{\mathrm{loc}}_{_{0}}}(T(\textbf{a}_n))\chi_{_{\mathbb{R}^d\backslash{\widetilde{Q^n}}}}\right\|_{q,p}$. We have   
\begin{eqnarray*}
J_1\lsim\left\|\sum_{n:\ \ell_n<1}|\lambda_n|\frac{\left[\mathfrak{M}(\chi_{_{Q^n}})\right]^{\vartheta}}{\left\|\chi_{_{Q^n}}\right\|_q}\right\|_{q,p}&\lsim& \left\|\sum_{n:\ \ell_n<1}\left(\frac{|\lambda_n|}{\left\|\chi_{_{Q^n}}\right\|_{q}}\right)^{\eta}\chi_{_{Q^{n}}}\right\|_{\frac{q}{\eta},\frac{p}{\eta}}^{\frac{1}{\eta}}\\
&\lsim& \left\|\sum_{n=0}^j\left(\frac{|\lambda_n|}{\left\|\chi_{_{Q^n}}\right\|_{q}}\right)^{\eta}\chi_{_{Q^{n}}}\right\|_{\frac{q}{\eta},\frac{p}{\eta}}^{\frac{1}{\eta}},
\end{eqnarray*}
where $\vartheta=\frac{d+\delta+1}{d}\cdot$ Likewise,  
\begin{eqnarray*}
J_2\lsim\left\|\sum_{n:\ \ell_n\geq1}|\lambda_n|\frac{\left[\mathfrak{M}(\chi_{_{Q^n}})\right]^{\vartheta}}{\left\|\chi_{_{Q^n}}\right\|_q}\right\|_{q,p}\lsim\left\|\sum_{n=0}^j\left(\frac{|\lambda_n|}{\left\|\chi_{_{Q^n}}\right\|_{q}}\right)^{\eta}\chi_{_{Q^{n}}}\right\|_{\frac{q}{\eta},\frac{p}{\eta}}^{\frac{1}{\eta}},
\end{eqnarray*}
where $\vartheta=\frac{d+\delta+\gamma}{d}\cdot$ Hence 
\begin{eqnarray*}
J\lsim\left\|\sum_{n=0}^j\left(\frac{|\lambda_n|}{\left\|\chi_{_{Q^n}}\right\|_{q}}\right)^{\eta}\chi_{_{Q^{n}}}\right\|_{\frac{q}{\eta},\frac{p}{\eta}}^{\frac{1}{\eta}}.
\end{eqnarray*}
And we end as in the previous proof.
\end{proof}

Another consequence of Theorem \ref{theopseudodif} is that $\mathcal{H}_{\mathrm{loc}}^{(q,p)}$ ($0<q\leq1$, $q\leq p$) is stable under multiplication by the Schwartz class $\mathcal{S}$. Endeed, let $\phi\in\mathcal{S}$. The operator $T$ defined by  
\begin{eqnarray*}
T(f)(x)=\phi(x)f(x),\ x\in\mathbb{R}^d, 
\end{eqnarray*}
for all $f\in\mathcal{S}$, is clearly a $\mathcal{S}^0$ pseudo-differential operator. Therefore, by Theorem \ref{theopseudodif}, $T$ extends to a bounded operator on $\mathcal{H}_{\mathrm{loc}}^{(q,p)}$. Thus, we can define by extension the product of an element $f\in\mathcal{H}_{\mathrm{loc}}^{(q,p)}$ and a function $\phi\in\mathcal{S}$ such that the product also denoted by $\phi f\in\mathcal{H}_{\mathrm{loc}}^{(q,p)}$. 

This result extends the one of \cite{DGG} to $\mathcal{H}_{\mathrm{loc}}^{(q,p)}$ spaces, for $0<q\leq1$ and $q\leq p$.

Let $T_\psi$ be the $\mathcal{S}^0$ pseudo-differential operator associated with $\psi\in\mathcal{S}^0$. By \cite{MA}, (4), p. 233, 
\begin{eqnarray}
\left\langle T_\psi f,g \right\rangle=\left\langle f,(T_\psi)^{\ast}g \right\rangle, \label{pseudodif18}
\end{eqnarray}
for all $f,g\in\mathcal{S}$, where $\left\langle f,g \right\rangle$ denotes $\int_{\mathbb{R}^d}f(x)\overline{g}(x)dx$ and $(T_\psi)^{\ast}$ is the adjoint of $T_\psi$. Also, there exists $\psi^{\ast}\in\mathcal{S}^0$ such that $(T_\psi)^{\ast}=T_{\psi^{\ast}}$, by \cite{MA}, Proposition, p. 259. Combining these facts with Theorems \ref{theopseudodif} and \ref{theoremdualloc}, we have the following result.

\begin{cor}
Suppose that $0<q\leq p\leq1$. Let $\delta\geq\left\lfloor d\left(\frac{1}{q}-1\right)\right\rfloor$ be an integer, $1<r\leq+\infty$ and $T$ be a $\mathcal{S}^0$ pseudo-differential operator. Then, there exists a positive constant $C$ such that, for all $f\in\mathcal{L}_{r',\phi_1,\delta}^{\mathrm{loc}}$ ,
\begin{eqnarray*}
\left\|T(f)\right\|_{\mathcal{L}_{r',\phi_{1},\delta}^{\mathrm{loc}}}\leq C\left\|f\right\|_{\mathcal{L}_{r',\phi_{1},\delta}^{\mathrm{loc}}}.
\end{eqnarray*}
\end{cor}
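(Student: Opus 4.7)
The plan is to argue purely by duality, combining Theorem \ref{theoremdualloc} (which identifies $\mathcal{L}_{r',\phi_1,\delta}^{\mathrm{loc}}$ with $\left(\mathcal{H}_{\mathrm{loc}}^{(q,p)}\right)^{\ast}$) with Theorem \ref{theopseudodif} (boundedness of any $\mathcal{S}^0$ pseudo-differential operator on $\mathcal{H}_{\mathrm{loc}}^{(q,p)}$). The key observation, recorded just before the statement, is that the formal adjoint of $T = T_\psi$ is again of the form $T_{\psi^{\ast}}$ with $\psi^{\ast}\in\mathcal{S}^0$, and hence Theorem \ref{theopseudodif} applies equally to $T^{\ast}$.

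First, given $f\in\mathcal{L}_{r',\phi_1,\delta}^{\mathrm{loc}}$, I would associate to it the continuous linear functional $\Lambda_f\in\left(\mathcal{H}_{\mathrm{loc}}^{(q,p)}\right)^{\ast}$ given on the dense subspace $\mathcal{H}_{\mathrm{loc},fin}^{(q,p)}$ by $\Lambda_f(h) := \int_{\mathbb{R}^d}f(x)h(x)dx$, with $\|\Lambda_f\|\approx\|f\|_{\mathcal{L}_{r',\phi_1,\delta}^{\mathrm{loc}}}$ by Theorem \ref{theoremdualloc}. Next, since $T^{\ast}\colon\mathcal{H}_{\mathrm{loc}}^{(q,p)}\to\mathcal{H}_{\mathrm{loc}}^{(q,p)}$ is bounded by Theorem \ref{theopseudodif}, the composition $\Lambda_f\circ T^{\ast}$ is also a continuous linear functional on $\mathcal{H}_{\mathrm{loc}}^{(q,p)}$, with norm at most $\|\Lambda_f\|\,\|T^{\ast}\|_{\mathcal{H}_{\mathrm{loc}}^{(q,p)}\to\mathcal{H}_{\mathrm{loc}}^{(q,p)}}$. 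Applying the converse direction of Theorem \ref{theoremdualloc} produces a unique $\widetilde{f}\in\mathcal{L}_{r',\phi_1,\delta}^{\mathrm{loc}}$ representing this functional, which we define to be $T(f)$; then
\begin{eqnarray*}
\left\|T(f)\right\|_{\mathcal{L}_{r',\phi_{1},\delta}^{\mathrm{loc}}}&\lesssim& \left\|\Lambda_f\circ T^{\ast}\right\|_{\left(\mathcal{H}_{\mathrm{loc}}^{(q,p)}\right)^{\ast}}\\
&\leq& \left\|\Lambda_f\right\|_{\left(\mathcal{H}_{\mathrm{loc}}^{(q,p)}\right)^{\ast}}\left\|T^{\ast}\right\|_{\mathcal{H}_{\mathrm{loc}}^{(q,p)}\to\mathcal{H}_{\mathrm{loc}}^{(q,p)}}\lesssim\left\|f\right\|_{\mathcal{L}_{r',\phi_{1},\delta}^{\mathrm{loc}}},
\end{eqnarray*}
which is the sought estimate.

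It remains to check that this $T(f)$ agrees with the usual action of $T$ whenever the latter makes sense; this is where the identity (\ref{pseudodif18}) enters. For $h\in\mathcal{H}_{\mathrm{loc},fin}^{(q,p)}\subset L^r$ and $f\in\mathcal{S}$, the identity $\langle T_\psi f,h\rangle=\langle f,T_{\psi^{\ast}}h\rangle$ holds by \cite{MA}, so $\Lambda_f(T^{\ast}h)=\int_{\mathbb{R}^d}Tf(x)h(x)dx$, showing that $\widetilde{f}$ agrees with the classical $T(f)$ on $\mathcal{S}$, and more generally on any subspace where the pointwise definition is available.

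The main obstacle I anticipate is a technical consistency issue rather than a conceptual one: verifying that the formula $\Lambda_f(T^{\ast}h)=\int_{\mathbb{R}^d}T(f)(x)h(x)dx$ is literally the integral representation required by Theorem \ref{theoremdualloc} (\ref{2theoremdualloc}), for every $h\in\mathcal{H}_{\mathrm{loc},fin}^{(q,p)}$. This reduces to the transpose identity (\ref{pseudodif18}) together with the fact that $\mathcal{H}_{\mathrm{loc},fin}^{(q,p)}\subset L^r$ for some $r>1$, so $T^{\ast}h\in L^r$ and the pairing against $f\in\mathcal{L}_{r',\phi_1,\delta}^{\mathrm{loc}}\subset L^{r'}_{\mathrm{loc}}$ is well-defined on compact sets; a standard density/approximation argument (invoking Lemma \ref{finilemmodifvra1loc}) then transfers the Schwartz-level adjoint identity to the dual pairing, completing the verification.
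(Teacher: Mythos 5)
Your argument is correct and is precisely the one the paper intends: the paper offers no written proof beyond the remark that the corollary follows by ``combining'' the adjoint identity (\ref{pseudodif18}), the fact that $(T_\psi)^{\ast}=T_{\psi^{\ast}}$ with $\psi^{\ast}\in\mathcal{S}^0$, Theorem \ref{theopseudodif}, and the duality Theorem \ref{theoremdualloc}, which is exactly the transpose-plus-duality scheme you carry out. The only cosmetic point worth noting is that Theorem \ref{theoremdualloc} uses the bilinear pairing $\int gf$ while (\ref{pseudodif18}) is sesquilinear, so strictly one should work with the transpose $T^{t}h=\overline{T^{\ast}(\overline{h})}$, which is again an $\mathcal{S}^0$ operator; this does not affect the argument.
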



\begin{thebibliography}{MTW1}
\bibitem{AbFt} Z. V. d. P. Able, J. Feuto, \textit{Atomic decomposition of Hardy-amalgam spaces}, J. Math. Anal. Appl. 455 (2017), 1899-1936.
\bibitem{AbFt2} Z. V. d. P. Able, J. Feuto, \textit{Duals of Hardy-amalgam spaces $\mathcal{H}^{(q,p)}$ and boundedness of operators}, preprint.
\bibitem{AT} W. Abu-Shammala and A. Torchinsky, \textit{The Hardy-Lorentz spaces $H^{p,q}(\mathbb R^n)$}, Studia Math. 182 3 (2007), 283-294.
\bibitem{BDD} J. P. Bertrandias, C. Datry and C. Dupuis, \textit{Unions et intersections d’espaces} $L^p$ \textit{invariantes par translation ou convolution}, Ann. Inst. Fourier (Grenoble), 28 (1978), 53-84.
\bibitem{MBOW} M. Bownik, \textit{Anisotropic Hardy spaces and waveletes}, Mem. Amer. Math. Soc, 164 (2003).
\bibitem{MBBLDYYZ} M. Bownik, Baode Li, Dachun Yang, and Yuan Zhou, \textit{Weighted anisotropic Hardy spaces and their applications in boundedness of sublinear operators}, Indiana Univ. Math. J., 57 (2008) 3065-3100.
\bibitem{RBHS} R. C. Busby and H. A. Smith, \textit{Product-convolution operators and mixed-norm spaces}, Trans. Amer. Math. Soc., 263 (1981), 309-341.
\bibitem{CLHS} C. Carton-Lebrun, H. P. Heinig and S.C. Hofmann, \textit{Integral operators on weighted amalgams}, Studia Math., 109 (1994), 133-157.
\bibitem{DCUSLW} D. Cruz-Uribe, Sfo and L. D. Wang, \textit{Variable Hardy spaces}, Indiana Univ. Math. J., 63 (2014), 447-493.
\bibitem{JD} J. Duoandikoetxea, \textit{Fourier Analysis}, Grad. Stud. Math., vol. 29, American Mathematical Society, Providence, RI, (2001), translated and revised from the 1995 Spanish original by D. Cruz-Uribe.
\bibitem{FS} C. Fefferman, E. M. Stein, \textit{ $H^p$ spaces of several variables}, Acta. Math., 129 (1972), 137-193. 
\bibitem{FSTW} J. J. F. Fournier and J. Stewart, \textit{Amalgams of} $L^p$ and $l^p$, Bull. Amer. Math. Soc., 13 (1) (1985), 1-21.
\bibitem{Gc} J. Garc\'ia-Cuerva, Weighted $H^p$ spaces. Dissertationes Math., 162 (1979), 1-63. 
\bibitem{DGG} D. Goldberg, \textit{A local version of real Hardy spaces}, Duke Math. Journal., 46 (1979), 27-42.
\bibitem{LGkos} L. Grafakos, \textit{Classical Fourier analysis}, Second edition. Graduate Texts in Mathematics, 250. Springer, New York, 2009.
\bibitem{LG} L. Grafakos, \textit{Modern Fourier analysis}, Second edition. Graduate Texts in Mathematics, 250. Springer, New York, 2009. 
\bibitem{GH} G. Hoepfner, \textit{Hardy spaces, its variants and applications}, IV Workshop on Geometric Analysis of PDE and Several Complex Variables, Serra Negra-SP, Brazil, (2007).
\bibitem{FH} F. Holland, \textit{Harmonic analysis on amalgams of} $L^p$ and $l^q$, J. London Math. Soc., 2 (10) (1975), 295-305.
\bibitem{JHWH} J. Houyu, W. Henggeng, \textit{Decomposition of Hardy-Morrey spaces}, J. Math. Anal. Appl. 354 (2009), 99-110. 
\bibitem{Ky} L. D. Ky, \textit{New Hardy spaces of Musielak-Orlicz type and boundedness of sublinear operators}. Integral Equations Operator Theory 78 (2014), no. 1, 115-150.
\bibitem{RL} R. H. Latter, \textit{ A characterization of $\mathcal{H}^{p}(\mathbb{R}^d)$ in terms of atoms}, studia Math., 62 (1978), 93-101.
\bibitem{LSUYY} Y. Liang, Y. Sawano, T. Ullrich, D. Yang and W. Yuan, \textit{ A new framework for generalized 
Besov-type and Triebel-Lizorkin-type spaces}, Dissertationes Math. (Rozprawy Mat.). 489 (2013), 1-114. 
\bibitem{SZLU} S. Z. Lu, \textit{Four Lectures on Real $H^p$ Spaces}, World Scientific Publishing Co., Inc., River Edge, NJ, 1995.
\bibitem{NEYS} E. Nakai and Y. Sawano, \textit{Hardy spaces with variable exponents and generalized Campanato
spaces}, J. Funct. Anal., 262, (2012), 3665-3748.
\bibitem{NEYS1} E. Nakai and Y. Sawano, \textit{Orlicz-Hardy spaces and their dual}, Sci. China Math., 57, (2014), 903-962, doi: 10.1007/s11425-014-4798-y.
\bibitem{MRVT} M. Ruzhansky, V. Turunen, \textit{Pseudo-differential operators and symmetries: Background Analysis and Advanced Topics}, Vol. 2 of Pseudo-Differential Operators Theory and Applications, Birkh\"auser Verlag AG
, Bassel, 2010.
\bibitem{SAY} Y. Sawano \textit{Atomic decompositions of Hardy spaces with variable exponents and its application
to bounded linear operators}, Integral Equations Operator Theory, 77 (2013), 123-148.
\bibitem{EMS} E. M. Stein, \textit{Singular integrals and differentiability properties of functions}, Princeton
Univ. Press, Princeton, N. J., 1970.
\bibitem{MA} E. M. Stein, \textit{Harmonic analysis: real-variable methods, orthogonality, and oscillatory integral}, Princeton Mathematical Series, 43. Monographs in Harmonic Analysis, III.Princeton University Press, Princeton, NJ, 1993. 
\bibitem{JSTW} J. Stewart, \textit{Fourier transforms of unbounded measures}, Canad. J. Math., 31 (1979), 1281-1292.
\bibitem{LT} L. Tang, \textit{Weighted local Hardy spaces and their applications}, Illinois J. Math., 56 (2012), 453-495 (or arXiv: 1004.5294).
\bibitem{MET} M. E. Taylor, \textit{Pseudodifferential operators and nonlinear PDE}, Progress in Mathematics,
100, Birkh\"auser, Boston, 1991.
\bibitem{HT} H. Triebel, \textit{Theory of Function Spaces}. Basel: Birkh\"auser Verlag, 1983.
\bibitem{HT1} H. Triebel, \textit{Theory of Function Spaces II}. Basel: Birkh\"auser Verlag, 1992.
\bibitem{NW} N. Wiener, \textit{On the representation of functions by trigonometric integrals}, Math. Z., 24 (1926),
 575-616.
\bibitem{MWW} M. W. Wong, \textit{An Introduction to Pseudo-Differential Operators}, Second Edition,
World Scientific, 1999.
\bibitem{YDYS} D. Yang, S. Yang, \textit{Weighted local Orlicz-Hardy spaces with applications to pseudo-differential operators}, Dissertationes Math. (Rozprawy Mat.), 478 (2011), 1-78.
\bibitem{YDYSB} D. Yang, S. Yang, \textit{Local Hardy spaces of Musielak-Orlicz type and their applications}. Sci China Math, 2012, 55, doi: 10.1007/s11425-000-0000-0.
\end{thebibliography}
\end{document}